\documentclass[11pt]{amsart}
\usepackage[T1]{fontenc}
\usepackage{lmodern}
\usepackage[utf8]{inputenc}
\usepackage{amsmath,amssymb,amsthm,mathtools,mathrsfs}
\usepackage{enumitem,array}
\usepackage{needspace}
\usepackage{hyperref}
\usepackage[margin=1.15in]{geometry}
\hypersetup{
  colorlinks=true,
  citecolor=blue,
  linkcolor=blue,
  urlcolor=blue,
  pdftitle={Jacobi Endpoint Pencils and Sharp Interlacing for Centered Binomial Samples},
  pdfauthor={Seokho Jin},
  pdfsubject={Jacobi endpoint-pencil preservation, sharp interlacing for centered binomial samples, and arithmetic applications},
  pdfkeywords={Jacobi spectral multipliers, endpoint-pencil preservation, real-rooted pencils, weak and strict interlacing, total nonnegativity, centered binomial samples, Dedekind zeta functions}
}

\newtheorem{theorem}{Theorem}[section]
\newtheorem{corollary}[theorem]{Corollary}
\newtheorem{proposition}[theorem]{Proposition}
\newtheorem{lemma}[theorem]{Lemma}

\theoremstyle{definition}
\newtheorem{definition}[theorem]{Definition}
\newtheorem{example}[theorem]{Example}
\theoremstyle{remark}
\newtheorem{remark}[theorem]{Remark}

\newcommand{\C}{\mathbb C}
\newcommand{\R}{\mathbb R}
\newcommand{\T}{\mathbb T}
\newcommand{\diag}{\operatorname{diag}}
\newcommand{\Residue}{\operatorname*{res}}
\newcommand{\Resultant}{\operatorname{Res}}
\newcommand{\Disc}{\operatorname{Disc}}
\newcommand{\Zset}{\mathcal Z}
\newcommand{\JacOp}{\mathscr J}
\newcommand{\JacMult}{\mathscr M}

\title[Jacobi endpoint pencils]{Jacobi Endpoint Pencils and Sharp Interlacing for Centered Binomial Samples}
\author{Seokho Jin}
\address{Department of Mathematics, Chung-Ang University, 84 Heukseok-ro, Dongjak-gu, Seoul 06974, Republic of Korea}
\email{archimed@cau.ac.kr}
\date{}
\subjclass[2020]{Primary 26C10, 15B48; Secondary 11M26, 11F67, 11R42, 30D15, 33C45}
\keywords{Jacobi spectral multipliers, endpoint-pencil preservation, real-rooted pencils, weak and strict interlacing, total nonnegativity, centered binomial samples, Dedekind zeta functions}

\begin{document}
\raggedbottom

\begin{abstract}
For an even or odd real entire function $H$ of order at most one, let
$B_{2n+1}[H]$
denote its centered binomial sample of odd degree.  After removing the zero at
$z=\pm1$ forced by the parity of $H$ and writing $x=z+z^{-1}$, one obtains a
real quotient $C_n(x)$.  We prove uniform strip conditions on the zeros of
$H$ under which $C_n$ and $C_{n+1}$ generate a real-rooted pencil for every
$n$.  For even $H$ the optimal uniform half-width is $\sqrt{15/28}$, whereas
for odd $H$ the half-width $1$ is sufficient.  This conclusion is genuinely
stronger than separate unit-circle-rootedness of the two sampled polynomials:
the latter may hold while the adjacent quotients fail to interlace.

The structural result is a theorem for Jacobi spectral multipliers.  For every
$0<\nu<2$, the quotient problem becomes preservation of the endpoint pencil
$y^n(y+t)$.  We obtain explicit fixed-$n$ and uniform strip thresholds and
determine the exact threshold for $n=1$.  The proof combines Bernstein
variation diminution with total nonnegativity of finite Jacobi matrices
attached to the zero orbits of $H$; a possible unpaired outer real pair, which
is not covered by the full defect-class argument, is treated directly on the
endpoint pencil.  For nonpolynomial even sources satisfying the fixed-$n$
strip condition, a remote-zero-orbit deformation removes common zeros whenever
the adjacent images have simple zeros in $(0,4)$.  This yields strict
interlacing for quotient families associated with Dedekind zeta derivatives
and with nested critical-value blocks of self-dual newforms.
\end{abstract}

\maketitle

\section{Introduction and main results}

Period polynomials and related constructions built from completed
$L$-functions lead to binomially weighted polynomials whose coefficients are
values or derivatives of those functions at arguments symmetric about the
center of the functional equation
\cite{CFI,DiamantisRolenSurvey,DiamantisRolenCrelle,ElGuindyRaji,JMOS}.

To isolate this common structure, for a function $H$, an integer $d\ge0$, and
a spacing $\delta>0$, set
\begin{equation}\label{eq:centered-B-notation}
 B_{d,\delta}[H](z)
 =\sum_{j=0}^{d}\binom dj
 H\!\left(\delta\left(j-\frac d2\right)\right)z^j,
 \qquad B_d[H]=B_{d,1}[H].
\end{equation}
We call $B_{d,\delta}[H]$ the \emph{centered binomial sample} of $H$ of
degree $d$ and spacing $\delta$, and call $d$ the \emph{sampling degree}.
We call a polynomial or entire function $H$ \emph{real} when
$H(\overline u)=\overline{H(u)}$, equivalently when $H$ is real-valued on
$\mathbb R$.

We fix the source function and vary the odd sampling degree.  Passing from
degree $2n+1$ to $2n+3$
retains the previous half-integer sampling sites, adds one site at each end,
and changes every binomial weight.  After the endpoint zero forced by parity
is removed, one obtains consecutive members of a real quotient family.  We
ask whether this nested family carries a Sturm-type order as the sampling
window grows.

Such an order is family-level information unavailable at any fixed degree.
In the arithmetic applications, once common zeros have been excluded, the
resulting strict order yields signed resultants among values of completed
\(L\)-function derivatives and, at derivative order zero, inequalities among
completed special values.  Fixed-degree unit-circle support alone gives no
relative ordering of adjacent quotient zero sets.

Let $d=2n+1$.  If $F$ is real and even, then $B_{2n+1}[F]$ is reciprocal and
has the forced zero $z=-1$; if $G$ is real and odd, then $B_{2n+1}[G]$ is
anti-reciprocal and has the forced zero $z=1$.  Removing these endpoint factors
gives unique real polynomials $C_{F,n}^{-}$ and $C_{G,n}^{+}$ such that
\begin{align}
 B_{2n+1}[F](z)
 &=z^n(z+1)C_{F,n}^{-}(z+z^{-1}),
 \label{eq:intro-even-quotient}\\
 B_{2n+1}[G](z)
 &=z^n(z-1)C_{G,n}^{+}(z+z^{-1}).
 \label{eq:intro-odd-quotient}
\end{align}
Thus $n=(d-1)/2$ is the \emph{quotient index}; neighboring quotient indices
$n$ and $n+1$ correspond to sampling degrees $2n+1$ and $2n+3$, hence to the
nested sampling windows described above.  Since $x=z+z^{-1}$ maps the unit
circle onto $[-2,2]$, fixed-degree unit-circle support becomes real-root
support for each quotient.  Our concern is the additional order across
quotient indices within this single nested family.

Write
\[
 \T=\{z\in\C:|z|=1\},\qquad
 S_h=\{u\in\C:|\Re u|\le h\},
\]
and let $\Zset(H)$ denote the zero set of $H$ without multiplicity.  A
nonzero polynomial is \emph{unit-circle-rooted} if all its zeros lie on
$\T$.  For real polynomials $A$ and $B$, write $A\preceq B$ if every
nonzero member $B+tA$, $t\in\R$, is real-rooted.  This is the weak Obreschkoff
relation \cite{Obreschkoff1963}; for coprime polynomials of adjacent exact
degrees, it is equivalent to weak interlacing.  We set $0\preceq0$, and
declare $0\preceq B$ and $B\preceq0$ for every nonzero real-rooted polynomial
$B$.  Degree drops and the strict interlacing conventions are fixed in
Section~\ref{sec:sampling-framework}.

\begin{theorem}[Weak interlacing for centered binomial samples]
\label{thm:main-sturm}
\begin{enumerate}[label=\textup{(\roman*)}]
\item
Let $F\not\equiv0$ be a real entire function of order at most one satisfying
$F(-u)=F(u)$, and assume
\[
 \Zset(F)\subseteq S_{\sqrt{15/28}}.
\]
Then
\[
 C_{F,n}^{-}\preceq C_{F,n+1}^{-}
 \qquad(n\ge0).
\]
The strip constant is optimal uniformly in $n$: for every
$\varepsilon>0$, there is an even polynomial with real coefficients and
zeros in $S_{\sqrt{15/28}+\varepsilon}$ for which the relation fails at
$n=1$, corresponding to sampling degrees $3$ and $5$.

\item
Let $G\not\equiv0$ be a real entire function of order at most one satisfying
$G(-u)=-G(u)$, and assume
\[
 \Zset(G)\subseteq S_1.
\]
Then
\[
 C_{G,n}^{+}\preceq C_{G,n+1}^{+}
 \qquad(n\ge0).
\]
\end{enumerate}
\end{theorem}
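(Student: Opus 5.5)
The plan is to reduce to polynomial sources, encode the passage from $B_{2n+1}$ to the quotient as a Jacobi spectral multiplier, and verify orbit by orbit that each factor preserves the relevant endpoint pencil.

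\emph{Reduction to polynomials.} Since $F$ (resp.\ $G$) is real, has order at most one, and has the stated parity, its Hadamard factorization groups the zeros into orbits $\{\pm\rho,\pm\overline\rho\}$. The exponential factor has the form $e^{bu}$, and evenness or oddness forces $b=0$, so $F$ is a locally uniform limit of even real polynomials whose zeros lie among those of $F$, hence in the same strip. For odd $G$ one also gets the extra factor $u$. The relation $\preceq$ is closed under coefficientwise limits by Hurwitz's theorem, with the degenerate conventions $0\preceq B$ covering collapse. Each $C_{\cdot,n}$ depends only on finitely many samples, so it suffices to treat polynomial $H$.

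\emph{Operator form of the sample.} For $d=2n+1$ we use the identity
\[
B_d[H](z)=H\!\left(z\tfrac{d}{dz}-\tfrac d2\right)(1+z)^d .
\]
An orbit factor $(u^2-\rho^2)(u^2-\overline\rho^2)$, a real pair $u^2-r^2$, or an imaginary pair $u^2+s^2$ becomes a polynomial in the operator $\Theta=(z\frac{d}{dz}-\frac d2)^2$. After the substitution $x=z+z^{-1}$ and removal of $z^n(z\pm1)$, the operator $\Theta$ becomes a second-order hypergeometric operator in $y=x+2$ (or $2-x$). This operator is diagonal on a Jacobi basis with parameter $\nu\in(0,2)$, namely $\nu=1$ for the even case and the shifted value for the odd case.

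Two consequences follow. First, $C_{H,n}$ is the image $\JacMult_H(y^n)$ of the endpoint monomial. Second, up to a positive constant and a real shift, $C_{H,n+1}$ is the image of $y^n(y+t)$ for an explicit $t$. The problem therefore becomes showing that the Jacobi multiplier $\JacMult_H$, a product of the orbit multipliers, preserves the pencil $\{y^n,\,y^n(y+t)\}$, i.e.\ maps it to a pair $A\preceq B$.

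\emph{Factor-by-factor preservation.} Each factor will be handled in the Bernstein basis on $[0,4]$, the image of $[-2,2]$. For a single orbit we write the multiplier as a finite Jacobi (tridiagonal) matrix acting on Bernstein coefficients. The strip condition $|\Re\rho|\le h$ should be exactly what makes this matrix totally nonnegative. Bernstein variation diminution then shows that the image of any pencil member $B+tA$ has no more sign changes than allowed, which gives real-rootedness in $(0,4)$. Iterating over orbits propagates the Obreschkoff relation.

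Imaginary pairs and complex quartets will be treated by the full defect-class argument. Computing the TN minors for a quartet with $|\Re\rho|\le h$ and minimizing over $n$ will give the uniform threshold: the binding case is $n=1$, where the condition reduces to $h^2\le 15/28$. For odd $G$ the extra factor $u$ shifts $\nu$, and the corresponding minors stay nonnegative up to $h=1$. A possible unpaired outer real pair $u^2-r^2$, which the defect-class argument does not cover, will be checked directly on the endpoint pencil: its multiplier sends $y^n(y+t)$ to an explicit two-term combination, and I will verify the interlacing by a sign check at the root of the lower-degree image.

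\emph{Optimality.} Take $F(u)=(u^2-\rho^2)(u^2-\overline\rho^2)$ with $\rho=h+i\eta$, $h=\sqrt{15/28}+\varepsilon$, and $\eta$ tuned. At $n=1$, $C_1$ is linear and $C_2$ is quadratic, so $C_1\preceq C_2$ fails exactly when $C_2$ evaluated at the root of $C_1$ has the wrong sign, or when $C_2$ has nonreal roots. This is an explicit polynomial inequality in $(h,\eta)$ whose boundary is $h^2=15/28$. I expect the main obstacle to be the total nonnegativity computation for complex quartets uniformly in $n$: isolating why $n=1$ is extremal and why the minors for larger $n$ are implied by it.
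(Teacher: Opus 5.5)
There is a genuine gap, and it sits at the threshold itself. You attribute $\sqrt{15/28}$ to total nonnegativity of quartet factors at $n=1$, but it cannot come from there, for two reasons.

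First, the defect-class argument needs the checkerboard Bernstein matrix to be nonsingular and totally nonnegative at \emph{every} Bernstein degree $M$. Sign variation equals the interval root count only for large $M$, so there is nothing to minimize over $n$.

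Second, for a reflected quartet $\pm(a\pm ib)$ the matrix $\mathbf J_{M,\nu}(\tfrac14-a^2+b^2)^2+4a^2b^2I_{M+1}$ is checkerboard TN for all $M$, all $0<\nu<2$, and all $|a|\le1$. Its shifted solid minors reduce to partial products of Euler's cosine product.

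The constant comes instead from a single outer real pair. The even endpoint is $\nu=3/2$, not $\nu=1$; at $\nu=1$ the single-pair bound would be $\sqrt{11/12}$. At $\nu=3/2$, the image $(\JacOp_{3/2}+\tfrac14-a^2)\bigl(y(y+t)\bigr)$ is a quadratic in $y$. Its discriminant, viewed as a quadratic in $t$, has discriminant $12(25-4a^2)(28a^2-15)$. This is positive for $15/28<a^2<25/4$, which gives a pencil member with nonreal roots.

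Consequently your optimality construction fails. For $\sqrt{15/28}<h\le1$, a quartet with real part $h$ preserves the endpoint pencil for every $\eta>0$. Its limit $\eta\to0$, the double pair $(u^2-h^2)^2$, is also admissible, so no tuning of $\eta$ works. The correct witness is $F_a(u)=u^2-a^2$ with $a$ slightly above $\sqrt{15/28}$.

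The second gap is the case of several outer real pairs, which your plan never addresses. A factor $\JacOp_\nu+\alpha$ with $\alpha<0$ has the negative eigenvalue $\alpha$. So its checkerboard matrix is not TN, and it need not preserve the one-defect class. Once it has acted, the pencil is no longer of the form $y^n(y+t)$, so a direct check on the endpoint pencil can absorb at most one such factor.

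The remaining outer pairs must be grouped two at a time. You then need that $(\JacOp_\nu+\alpha)(\JacOp_\nu+\beta)$ is checkerboard TN for $(\alpha,\beta)\in[-3/4,0]^2$. In the paper this comes from three ingredients: positivity of the divided differences of $\prod_{j<\ell}(j(j+1)+\tau)$ on $[-3/4,0]$, transfer to consecutive subblocks by Cauchy interlacing, and a pentadiagonal TN criterion.

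The assembly also needs the right invariant. It is not real-rootedness in $(0,4)$, since the input $y^n(y+t)$ already has the root $-t$. It is not factor-by-factor propagation of $\preceq$ either, because the individual factors need not preserve $\preceq$ for arbitrary pairs. It is membership in $\mathcal E_{n+1}$:
\begin{enumerate}
\item commute the unpaired factor to the front;
\item show, by the quadratic computation, that it maps every $y^n(y+t)$ into $\mathcal E_{n+1}$;
\item use that every other factor preserves $\mathcal E_{n+1}$ as a class.
\end{enumerate}

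Finally, the variation-diminishing step must run through the inverse. Checkerboard TN of $[\mathcal A]_M$ makes $[\mathcal A]_M^{-1}$ TN, which yields $\mathrm N_{[0,4]}(\mathcal AP)\ge\mathrm N_{[0,4]}(P)$. TN of the forward matrix, as your plan is phrased, would only give the opposite inequality, which is useless here.
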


Fixed-degree unit-circle support does not imply the adjacent-degree
Obreschkoff relation.  Indeed, let
\[
 \sqrt{\frac{15}{28}}<a\le\frac{\sqrt3}{2},
 \qquad F_a(u)=u^2-a^2.
\]
Then $B_3[F_a]$ and $B_5[F_a]$ are both unit-circle-rooted, but
$C_{F_a,1}^{-}\not\preceq C_{F_a,2}^{-}$.  Thus
Theorem~\ref{thm:main-sturm} cannot be obtained by applying a fixed-degree
support theorem separately at the two sampling degrees; the endpoint-pencil
analysis is essential.

A related preprint
\cite[Theorems~1.1(2) and~1.2]{JinSharpSampling} gives the sharp
fixed-degree strip condition for unit-circle support and, under the
corresponding strict strip condition and its nondegeneracy hypothesis, proves
simplicity and strict cyclic interlacing of consecutive derivative samples at
fixed sampling degree.  Interlacing of full
period-polynomial zeros under variation of the modular weight or level was
studied in \cite{BrelandEtAl}, and interlacing of odd period-polynomial zeros
across different weights in \cite{KoMackenzieXue}.

All comparisons across sampling degree and all Jacobi preservation results
are proved here.  The only theorem-level inputs from the companion preprint
are the fixed-degree support theorem and the simplicity and derivative-order
interlacing conclusions just described.  Its rightmost-shift argument is also
adapted in Appendix~\ref{app:newform-nondegeneracy} to the arbitrary-spacing
newform nondegeneracy estimate.  For the even family, its fixed-degree theorem
\cite[Theorem~1.1(2)]{JinSharpSampling} gives support in $[-2,2]$ for every
nonconstant quotient.  For the odd family it applies for
$n\ge2$; the case $n=0$ is constant, and the exceptional degree-three
quotient $C_{G,1}^{+}$ is handled by
Lemma~\ref{lem:odd-endpoint-support}.

The reduced polynomials admit a Jacobi spectral representation.  Put
$y=x+2$ and, for $0<\nu<2$, define
\[
 \JacOp_{\nu}
 =y(y-4)\frac{d^2}{dy^2}+(2y-4\nu)\frac{d}{dy}.
\]
Let $\mathsf P_r^{(\nu)}$ be its monic eigenpolynomial of degree $r$,
normalized by
\[
 \JacOp_\nu\mathsf P_r^{(\nu)}
 =r(r+1)\mathsf P_r^{(\nu)},
\]
and, for a real-valued function \(F\) on the positive half-integers, define
the Jacobi spectral multiplier \(\JacMult_{F,\nu}:\R[y]\to\R[y]\) by
\[
 \JacMult_{F,\nu}\mathsf P_r^{(\nu)}
 =F(r+1/2)\mathsf P_r^{(\nu)}.
\]
Section~\ref{sec:sampling-framework} proves
\[
 C_{F,n}^{-}(y-2)=\JacMult_{F,3/2}(y^n).
\]
Every odd entire function $G$ can be written uniquely as $G(u)=uF(u)$ with
$F$ even and entire.  In this notation,
\[
 C_{G,n}^{+}(y-2)
 =\left(n+\frac12\right)\JacMult_{F,1/2}(y^n).
\]
Consequently, the operator-level results below need only be formulated for
even spectral sources: at $\nu=1/2$, after factoring $G(u)=uF(u)$, they
already contain the odd quotient family.
The two endpoint factorizations force the parameters $\nu=3/2$ and
$\nu=1/2$.  Varying $\nu$ over $0<\nu<2$ keeps the Jacobi spectrum
$r(r+1)$ fixed while changing the endpoint geometry.  This places the even
and odd arithmetic quotients in a common operator family and separates the
two constraints that determine the admissible strip width.

For $N\ge0$, define the one-defect class
\[
 \mathcal E_N
 =\{0\}\cup
 \left\{
 P\in\R[y]_{\le N}\setminus\{0\}:
 P\text{ has at most one zero outside }[0,4]
 \right\},
\]
where zeros are counted with multiplicity.  Since
$y^n(y+t)\in\mathcal E_{n+1}$ for every $t\in\mathbb R$, we prove the
adjacent-degree relation through the stronger interval statement that
$\JacMult_{F,\nu}$ preserves this endpoint pencil inside
$\mathcal E_{n+1}$.  Full preservation of $\mathcal E_{n+1}$ implies
endpoint-pencil preservation, which in turn implies separate real-rootedness
of the two endpoint images, but neither implication reverses in general.

\Needspace{8\baselineskip}
\begin{theorem}[Endpoint-pencil thresholds for Jacobi spectral multipliers]
\label{thm:jacobi-family-endpoint-pencil}
Let $0<\nu<2$, and for $n\ge1$ define
\[
 \mathfrak e_{n,\nu}
 =\frac{n(n+1)(2-\nu)}{2n+\nu},
 \qquad
 \mathfrak h_{n,\nu}
 =\min\left\{1,\sqrt{\frac14+\mathfrak e_{n,\nu}}\right\}.
\]
Let $F\not\equiv0$ be real and even, and assume that $F$ is either a
polynomial or an entire function of order at most one.
\begin{enumerate}[label=\textup{(\roman*)},leftmargin=*,
 itemsep=.25\baselineskip,topsep=.25\baselineskip,parsep=0pt]
\item Fix $n\ge1$.  If
\[
 \Zset(F)\subseteq S_{\mathfrak h_{n,\nu}},
\]
then
\[
 \JacMult_{F,\nu}\bigl(y^n(y+t)\bigr)
 \in\mathcal E_{n+1}
 \qquad(t\in\mathbb R),
\]
and hence
\[
 \JacMult_{F,\nu}(y^n)
 \preceq
 \JacMult_{F,\nu}(y^{n+1}).
\]
If $\mathfrak e_{n,\nu}\le3/4$, the strip width $\mathfrak h_{n,\nu}$ is
sharp for this fixed value of $n$, including the cap boundary
$\mathfrak e_{n,\nu}=3/4$, where $\mathfrak h_{n,\nu}=1$.

\item If
\[
 \Zset(F)\subseteq S_{\mathfrak h_{1,\nu}},
\]
then
\[
 \JacMult_{F,\nu}(y^n)
 \preceq
 \JacMult_{F,\nu}(y^{n+1})
 \qquad(n\ge0).
\]
When $\nu\ge10/11$, the width $\mathfrak h_{1,\nu}$ is sharp uniformly in
$n$.
\end{enumerate}
\end{theorem}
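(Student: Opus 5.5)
The plan is to reduce everything to elementary building blocks given by the zeros of $F$. By Hadamard factorization, an even real $F$ of order at most one factors as $c\,u^{2m}\prod(1-u^2/\rho^2)$ over its zero orbits $\{\pm\rho,\pm\bar\rho\}$. Correspondingly, $\JacMult_{F,\nu}$ factors as a commuting product of the elementary multipliers $\JacMult_{u^2-\rho^2,\nu}$, or of the real quartic multiplier for a nonreal orbit. On the spectral side, $(r+\tfrac12)^2=r(r+1)+\tfrac14$, so
\[
 \JacMult_{u^2-a,\nu}=\JacOp_\nu+\tfrac14-a .
\]
Each building block is therefore a second-order differential operator. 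A quadruple orbit gives $(\JacOp_\nu+\tfrac14-\rho^2)(\JacOp_\nu+\tfrac14-\bar\rho^2)$, which is real, and the strip condition $|\Re\rho|\le h$ becomes a condition on the real part of $\rho^2$ in terms of $|\Im\rho|$. For the transcendental case, we pass to the limit through the partial products. Hurwitz's theorem together with closedness of $\mathcal E_{n+1}$ (at most one zero outside $[0,4]$, degree $\le n+1$) carries the conclusion over, since all operators act on the finite-dimensional space $\R[y]_{\le n+1}$.

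\textbf{Key steps.} First, I will check that a factor with $\rho$ purely imaginary or real and $\rho^2\le\tfrac14$ preserves $\mathcal E_{N}$ outright. In the eigenbasis $\mathsf P_r^{(\nu)}$ the operator $\JacOp_\nu+c$ with $c\ge 0$ is tridiagonal, i.e. a finite Jacobi matrix. Its action in the Bernstein basis on $[0,4]$ has total nonnegativity, so variation diminution bounds the sign changes of the Bernstein coefficients, and hence the number of zeros outside $[0,4]$. Second, nonreal orbits with $|\Re\rho|\le h$ are handled by the same total-nonnegativity argument for the product of two conjugate factors. I expect to verify this on $2\times2$ blocks, where the constraint $\Re\rho^2\le \tfrac14+\mathfrak e_{n,\nu}$, capped at $1$, appears.

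Third, the real orbits with $\tfrac14<\rho^2\le\mathfrak h_{n,\nu}^2$ genuinely move a zero out of $[0,4]$. Because of this I will not ask for full $\mathcal E_{n+1}$-preservation. Instead I will work directly with the pencil $y^n(y+t)$. I will compute $(\JacOp_\nu+\tfrac14-a)(y^n(y+t))$ explicitly. The key quantity is the lowest coefficient, since $\JacOp_\nu y^n=n(n+1)y^n-n(n-1+2\nu)\cdot 2y^{n-1}+\dots$. The condition that the image keeps at most one zero outside $[0,4]$ for all $t$ reduces to a sign condition at the endpoint $y=0$. I expect this condition to produce exactly $a\le \tfrac14+\mathfrak e_{n,\nu}$ with $\mathfrak e_{n,\nu}=n(n+1)(2-\nu)/(2n+\nu)$. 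At most one such unpaired outer real pair can be admitted. It is applied last, after the defect-class-preserving factors, and it must be shown to act correctly on any element of $\mathcal E_{n+1}$ that arises from the pencil.

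\textbf{Consequences and sharpness.} From $\mathcal E_{n+1}$-membership for every $t$, the relation $\JacMult(y^n)\preceq\JacMult(y^{n+1})$ follows, because a polynomial in $\mathcal E_{n+1}$ has at most one nonreal-capable zero and so is real-rooted. Part (ii) follows because $\mathfrak e_{n,\nu}$ is increasing in $n$, so $\mathfrak h_{1,\nu}\le\mathfrak h_{n,\nu}$; the case $n=0$ is trivial. For fixed-$n$ sharpness I will take $F=u^2-a^2$ with $a$ slightly above $\mathfrak h_{n,\nu}$. For a suitable $t$ the quadratic-in-$t$ discriminant becomes negative near $y=0$, which produces a nonreal pair. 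For uniform sharpness when $\nu\ge10/11$, the same example must fail at $n=1$ while the cap is not active; this is where the inequality $\mathfrak e_{1,\nu}\le 3/4$, i.e. $\nu\ge 10/11$, enters.

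I expect the main obstacle to be the third step. It requires showing that a single outer real pair, combined with arbitrary in-strip orbits, still maps the endpoint pencil into $\mathcal E_{n+1}$ even though the class is not preserved, and the order in which the factors are applied must be justified carefully.
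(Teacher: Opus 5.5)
Your architecture matches the paper's. You factor $F$ into zero orbits, use total nonnegativity plus Bernstein variation diminution for the class-preserving factors, compute directly on the pencil for what remains, and truncate for entire $F$. However, your treatment of outer real orbits has a genuine gap.

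\textbf{Outer real pairs.} The hypothesis $\Zset(F)\subseteq S_{\mathfrak h_{n,\nu}}$ allows arbitrarily many real pairs $\pm a$ with $1/2<a\le\mathfrak h_{n,\nu}$. Each gives a factor $\JacOp_\nu+\alpha$ with $\alpha=1/4-a^2<0$ that need not preserve $\mathcal E_{n+1}$. Nothing in the hypothesis limits you to one such pair, and your plan has no mechanism for the others.
\begin{itemize}
\item The missing idea is to pair them. For $\alpha,\beta\in[-3/4,0]$, the product $(\JacOp_\nu+\alpha)(\JacOp_\nu+\beta)$ has a pentadiagonal checkerboard Bernstein matrix that is totally nonnegative in every size. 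Its $\pm1$-shifted solid minors reduce, via the shifted cofactor identity and Cauchy interlacing $\lambda_j\ge j(j+1)$, to divided differences of $f_\ell(\tau)=\prod_{j<\ell}(j(j+1)+\tau)$, which is increasing on $[-3/4,0]$. So pairs of outer factors preserve every defect class, and at most one outer factor is left over. The cap $\mathfrak h_{n,\nu}\le1$ is what guarantees $\alpha\ge-3/4$ here.
\item The leftover factor must be applied first, not last. All factors are polynomials in $\JacOp_\nu$ and commute, so you can compute $(\JacOp_\nu+\alpha)(y^n(y+t))=y^{n-1}q_{n,t}(y)$ on the explicit input and only then apply the class-preserving factors. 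Applying it last would require it to keep non-explicit members of $\mathcal E_{n+1}$ in the class, which a single outer factor does not do in general.
\item The threshold $\alpha\ge-\mathfrak e_{n,\nu}$ is not a sign condition at $y=0$. It is exactly the condition that the discriminant of the quadratic $q_{n,t}$, itself a quadratic in $t$, is nonnegative for every real $t$. You then still need a separate argument, using $q_{n,t}(0)$, $q_{n,t}(4)$ and Vieta, to place one root of $q_{n,t}$ in $[0,4]$.
\end{itemize}

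\textbf{Reflected quartets.} This step also fails as proposed. For a reflected quartet $\pm(a\pm ib)$ the admissible range is $|a|\le1$, independent of $n$ and $\nu$; $\mathfrak e_{n,\nu}$ plays no role there. The range also cannot be detected on $2\times2$ blocks, for two reasons.
\begin{itemize}
\item The root-count argument needs total nonnegativity of the checkerboard Bernstein matrix at every degree $M$, because the variation count is exact only after degree elevation $M\to\infty$.
\item For $a>1$, total nonnegativity breaks only at large size. The relevant shifted solid minors are controlled by $\Im\prod_{j<\ell}\bigl((j+1/2)^2-(a-ib)^2\bigr)$. This stays positive for all $\ell$ exactly when the partial sums of the factor arguments stay below $\pi$, which by Euler's product for $\cos\pi w$ happens iff $a\le1$.
\end{itemize}
Turning these shifted-minor signs into full total nonnegativity of a pentadiagonal matrix also requires a propagation argument, via Pl\"ucker relations and a quasi-initial-minor criterion.

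Your handling of entire sources, of part (ii), and of the sharpness examples is fine.
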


The endpoint values $\nu=3/2$ and $\nu=1/2$ recover the two parts of
Theorem~\ref{thm:main-sturm}.  The exact $n=1$ theorem in
Section~\ref{sec:exact-first-pencil} is a quantitative refinement of these
endpoint-pencil bounds, whereas
Theorem~\ref{thm:remote-orbit-strictification} upgrades weak to strict
interlacing once its additional nonpolynomiality, simplicity, and interiority
hypotheses have been verified.

The two components of $\mathfrak h_{n,\nu}$ come from different zero-orbit
obstructions.  The quantity $\mathfrak e_{n,\nu}$ is the exact single-orbit
threshold in the shifted Jacobi parameter for a possible unpaired outer real
pair, whereas the cap at $1$ is the sharp Jacobi-uniform total nonnegativity
range for a reflected nonreal quartet.  Thus the strip width is the minimum
of two independent geometric constraints.

For the first nontrivial pencil $n=1$, the endpoint problem reduces to three
scalar inequalities, and the exact optimal strip can be determined for every
$0<\nu<2$.  The extremizer changes from a repeated outer real pair to a
single unpaired outer real pair at
\[
 \nu_0=\frac{\sqrt{17}-1}{4}.
\]
At the even endpoint $\nu=3/2$, the exact width is
$\sqrt{15/28}$; at the odd endpoint $\nu=1/2$, it is
$1.065615\ldots$, larger than the proved uniform width $1$.  The exact
formula and its sharpness are stated in
Theorem~\ref{thm:exact-first-jacobi-pair} in
Section~\ref{sec:exact-first-pencil}.

Strict interlacing cannot be forced by strip containment alone: adjacent
images may share a factor even when all source zeros lie strictly inside the
relevant strip.  The following theorem gives the strict form used in the
applications and states all of its additional hypotheses.

\begin{theorem}[Strict interlacing for nonpolynomial even entire functions]
\label{thm:remote-orbit-strictification}
Fix $0<\nu<2$ and $n\ge1$.  Let $E$ be a nonpolynomial even real entire
function of order at most one such that
\[
 \Zset(E)\subseteq S_{\mathfrak h_{n,\nu}}.
\]
Suppose that all zeros of
\[
 \JacMult_{E,\nu}(y^n)
 \quad\text{and}\quad
 \JacMult_{E,\nu}(y^{n+1})
\]
are simple and lie in $(0,4)$.  Then the two zero sets strictly interlace.
\end{theorem}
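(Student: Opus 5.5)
Let $P=\JacMult_{E,\nu}(y^n)$ and $Q=\JacMult_{E,\nu}(y^{n+1})$. By Theorem~\ref{thm:jacobi-family-endpoint-pencil}(i), $P\preceq Q$. Both have simple zeros in $(0,4)$ by hypothesis, so weak interlacing holds. The only remaining obstruction to strict interlacing is a common zero $y_0\in(0,4)$. The plan is to argue by contradiction: assume $P(y_0)=Q(y_0)=0$, and perturb $E$ in a way that keeps the pencil property while separating the two zero sets in the wrong direction.

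The perturbation uses the nonpolynomiality of $E$. Since $E$ has infinitely many zeros (or is $e^{cu^2}$-type, which order $\le1$ and evenness rule out unless $E$ has infinitely many zero orbits), there are zero orbits $\{\pm\rho\}$ with $|\rho|\to\infty$ inside the strip. Pick a remote orbit $\pm(a+ib)$ with $|a|\le\mathfrak h_{n,\nu}$ and $|b|$ large, and consider the one-parameter deformation $E_s$. In this deformation the factor $(1-u^2/\rho^2)$ is replaced by $(1-u^2/\rho_s^2)$, with $\rho_s$ moving along the vertical line $\Re\rho=a$ (or symmetrically paired for the reflected quartet). Each $E_s$ still satisfies $\Zset(E_s)\subseteq S_{\mathfrak h_{n,\nu}}$ and is real, even, of order $\le1$, so Theorem~\ref{thm:jacobi-family-endpoint-pencil}(i) gives $P_s\preceq Q_s$ for all $s$. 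On the spectral side, changing one factor multiplies the spectral values $E(r+1/2)$, for $r\le n+1$, by $(1-(r+\tfrac12)^2/\rho_s^2)/(1-(r+\tfrac12)^2/\rho^2)$. For $|\rho|$ large this is $1+c_s(r+\tfrac12)^2+O(|\rho|^{-4})$. Consequently $P_s=P+c_s\,\JacMult_{E,\nu}\JacOp_\nu' (y^n)+\cdots$, i.e., to first order the deformation adds $c_s\,\JacMult_{E,\nu}(\JacOp_\nu+\tfrac14)(y^n)$ to $P$, and similarly for $Q$.

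Next compute the motion of the common zero. By simplicity, for small $c$ the zeros $y_P(c)$ and $y_Q(c)$ near $y_0$ are analytic in $c$, with derivatives $-\dot P(y_0)/P'(y_0)$ and $-\dot Q(y_0)/Q'(y_0)$, where $\dot P=\JacMult_{E,\nu}(\JacOp_\nu+\tfrac14)y^n$. Weak interlacing for all $c$ of both signs, since a remote orbit can be moved slightly in either direction while staying in the strip, forces equality of these derivatives, because the zeros of $P_s$ and $Q_s$ must remain weakly ordered in the fixed order determined by the adjacent pencil. That gives the identity $\dot P(y_0)Q'(y_0)=\dot Q(y_0)P'(y_0)$. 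Repeating the argument with second-order terms, and with several independent remote orbits (the coefficients $c_s$ of different orbits give independent powers $(r+\tfrac12)^{2k}$), yields $\JacMult_{E,\nu}p(\JacOp_\nu)(y^n)$ and $\JacMult_{E,\nu}p(\JacOp_\nu)(y^{n+1})$ vanishing compatibly at $y_0$ for every polynomial $p$. Since the eigenvalues $r(r+1)$, $0\le r\le n+1$, are distinct, choosing $p$ as Lagrange interpolants shows that the $y_0$-evaluation functional annihilates too much. Concretely, it forces $E(r+\tfrac12)\mathsf P_r^{(\nu)}(y_0)$ times the expansion coefficients of $y^n$ and $y^{n+1}$ to satisfy a rank condition that fails. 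The failure comes from $\mathsf P_{n+1}^{(\nu)}(y_0)$ and the leading coefficients, together with the nonvanishing of $E$ at half-integers, which follows from the strip condition with $\mathfrak h\le1<3/2$ except possibly at $r=0$, to be handled separately. This contradiction excludes common zeros.

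The main obstacle is the step from weak interlacing along the deformation to the first-order identity. One has to make sure the deformation can be taken two-sided without leaving $S_{\mathfrak h_{n,\nu}}$; this is why a remote nonreal orbit moved vertically is used. One also has to check that a common simple zero of a weakly interlacing pair must move in lockstep. I would first try the cleaner route of computing the Wronskian $W=P'Q-PQ'$, which is of one sign by $P\preceq Q$ and vanishes at $y_0$. Then $y_0$ is a double zero of $W$ at which $W$ attains an extremum, and the first-order variation $\dot W(y_0)$ must vanish. If that identity alone does not yield the contradiction, I would bring in the Lagrange-interpolation family of remote orbits.
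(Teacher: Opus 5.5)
You set up the same framework as the paper. You deform a remote imaginary pair or reflected quartet of $E$ vertically, so the strip hypothesis, and hence weak interlacing, persists along the path. Since the orbit can be moved slightly in both directions inside the strip, your conclusion that every collapsed gap has zero derivative is also legitimate.

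The genuine gap is that you never show the resulting first-order identity is false. For one orbit $R$ the identity reads $\mathcal L_{y_0}(A_R)=0$. Here $\mathcal L_{y_0}(A)=(AP)(y_0)/P'(y_0)-(AQ)(y_0)/Q'(y_0)$, and $A_R$ is the \emph{exact} generator of the deformation; your Wronskian variant $\dot W(y_0)=0$ is the same identity. For an imaginary pair $\{\pm ib\}$, $A_R$ is, up to a positive factor and an additive multiple of the identity (invisible at a common zero), $\JacOp_\nu(\operatorname{Id}+\JacOp_\nu/L)^{-1}$ with $L=b^2+\frac14$. It is not $\JacOp_\nu+\frac14$, and for a single orbit the identity yields no contradiction by itself. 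Two ingredients are missing.

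First, you need strict transversality of the Jacobi flow at a simple interior collision. Because $P(y_0)=Q(y_0)=0$, the drift term $(2y-4\nu)D_y$ cancels. Write $P=gP_0$, $Q=gQ_0$ with $g=\gcd(P,Q)$ square-free. Then
\[
 \mathcal L_{y_0}(\JacOp_\nu)
 =y_0(y_0-4)\left(\frac{P''(y_0)}{P'(y_0)}-\frac{Q''(y_0)}{Q'(y_0)}\right)
 =2y_0(y_0-4)\,\frac{(P_0'Q_0-P_0Q_0')(y_0)}{P_0(y_0)\,Q_0(y_0)}\neq0 .
\]
This is nonzero because the coprime reduced pair strictly interlaces, so its Wronskian has a strict sign on $\R$, and because $y_0(y_0-4)<0$; this is where interiority is used.

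Second, you need a limit. Apply the exact identity along a sequence of orbits $R_\ell$ whose imaginary parts tend to infinity. Then use $c_\ell^{-1}A_{R_\ell}\to\JacOp_\nu$ in operator norm on $\R[y]_{\le n+1}$, together with continuity of the finitely many gap functionals, to get $\mathcal L_{y_0}(\JacOp_\nu)=0$. With these two steps your first route closes. It is then the paper's argument, which runs it one-sidedly: $\gamma'_-(1)\le0$ against $\mathcal L_{\xi}(A_{R_\ell})>0$ for large $\ell$.

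The fallback you sketch does not fill the gap.
\begin{itemize}
\item Once the first derivative of a nonnegative gap vanishes, second-order terms give only the inequality $\gamma''\ge0$, not further identities.
\item The claim that different orbits supply \emph{independent powers} is unproved. Exact first-order identities for infinitely many distinct orbits can be made to span all operators diagonal in the Jacobi basis (for imaginary pairs, by a residue argument in $L$). That still requires an argument, and quartets need separate treatment.
\item The \emph{rank condition that fails} is never checked, and $r=0$ is left open.
\end{itemize}
Given identities for all Jacobi projections, such a contradiction can be arranged. The relation $((n+1)(n+2)-\JacOp_\nu)y^{n+1}=4(n+1)(n+\nu)y^n$ makes the ratios of the $\mathsf P_r^{(\nu)}$-coefficients of $y^{n+1}$ and $y^n$ pairwise distinct, and $\mathsf P_{n+1}^{(\nu)}(y_0)=0$ forces $\mathsf P_n^{(\nu)}(y_0)\neq0$. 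None of this appears in your proposal, so as written the contradiction is asserted rather than derived.
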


For Dedekind zeta functions, the fixed-degree theorem, combined with the
nondegeneracy verification and parity-correct descent in
Section~\ref{sec:arithmetic-applications}, gives the vertical derivative-order
relation and supplies the simple interior zeros needed for
Theorem~\ref{thm:remote-orbit-strictification}.  The latter upgrades the
sampling-degree relation proved here to strict horizontal interlacing.  The
endpoint-pencil and remote-orbit theorems also compare nested critical-value
blocks for self-dual newforms.

Already at sampling degrees $3$ and $5$, the horizontal order yields a signed
resultant inequality among neighboring values of completed Dedekind zeta
derivatives and, at derivative order zero, a positive inequality among
completed special values.

\medskip
\noindent\textit{Relation to zero-preserver theory.}
Classical zero-preserver theory classifies operators acting on full stability
or hyperbolicity classes
\cite{BorceaBrandenCircular,BrandenChasseStrip}.  Diagonal hyperbolicity
preservers in orthogonal-polynomial bases have also been studied.  In
particular, Bates and Yoshida \cite{BatesYoshida} obtain classes of multiplier
sequences for Jacobi polynomials from their differential equation; the
Legendre case and polynomially interpolated Legendre sequences were further
studied in \cite{BlakemanDavisForgacsUrabe,ChasseForgacsPiotrowski}.  The
present operators are not asserted to preserve the full hyperbolicity class.
Adjacent sampling degrees select only the two-dimensional pencil generated by
$y^n$ and $y^{n+1}$.  Endpoint-pencil preservation is therefore weaker than
full-class preservation but stronger than separate real-rootedness.  The
available full-class classifications do not supply these sharp endpoint-pencil
thresholds.

\medskip
\noindent\textit{Proof strategy.}
Each reduction addresses a different obstruction.  Reciprocity forces the
coordinate $x=z+z^{-1}$, while removal of the forced endpoint selects the
Jacobi parameter.  The relative-zero question is a pencil problem rather than
a pair of separate support problems, so it is formulated through the weak
Obreschkoff relation.  To control zeros on $[0,4]$, Bernstein variation
diminution converts the problem into total nonnegativity of finite
checkerboard matrices.  Hadamard factorization then decomposes a real even
source into its zero-orbit factors.  Real pairs contained in $S_{1/2}$,
purely imaginary pairs, reflected nonreal quartets, and paired outer real
pairs are handled by
total nonnegativity; a possible unpaired outer real factor need not preserve
the full defect class, and its action must be computed directly on
$y^n(y+t)$.

Finally, total nonnegativity still permits a common zero.  A finite
counterexample shows that replacing total nonnegativity by total positivity
on the relevant Jacobi block still does not
exclude such a collision.  Strictness is therefore obtained by moving a
sufficiently remote zero orbit.  On each fixed Jacobi block, the resulting
infinitesimal deformation approaches the Jacobi flow, which opens every
simple interior collision in the required direction.  Thus the endpoint-pencil
theorem gives weak interlacing, while the remote-orbit theorem uses a
deformation induced by a distant zero orbit of the source to exclude common
interior zeros.  Together they yield strict
interlacing in the arithmetic applications.

Sections~\ref{sec:sampling-framework}--\ref{sec:endpoint-sturm} prove the
weak endpoint-pencil theorem: Section~\ref{sec:sampling-framework} derives the
Jacobi model, Sections~\ref{sec:variation-diminution} and
\ref{sec:zero-orbit-factors} establish the variation-diminishing orbit
preservers, and Section~\ref{sec:endpoint-sturm} treats the remaining unpaired
real factor and the endpoint specializations.  Section~\ref{sec:exact-first-pencil}
determines the exact threshold for the first nontrivial endpoint pencil.
Section~\ref{sec:strict-consecutive-degrees} proves strictness, and
Section~\ref{sec:arithmetic-applications} gives the Dedekind zeta and newform
applications.  Technical approximation, asymptotic, and coefficient
arguments are collected in Appendix~\ref{app:algebraic-certificates}.

\section{From centered samples to Jacobi endpoint pencils}
\label{sec:sampling-framework}

The reciprocal reduction and removal of the forced endpoint have
complementary roles.  The coordinate $x=z+z^{-1}$ identifies $z$ with
$z^{-1}$ and maps the unit circle onto $[-2,2]$, while the removed endpoint
factor selects one of two Jacobi bases.  After the translation $y=x+2$, the
problem becomes an interval-root problem on $[0,4]$ for a Jacobi spectral
multiplier.  The Chebyshev and Jacobi polynomial facts used here are classical;
see \cite[Chapter~IV]{SzegoOrthogonal} and \cite[Chapter~18]{NISTHandbook}.
These identities give the exact quotient formulas, identify the two endpoint
values $\nu=3/2$ and $\nu=1/2$, and rewrite consecutive sampling degrees as
the single pencil generated by $y^n$ and $y^{n+1}$.

\subsection{The quotient model}

Let \(U_r\) be the Chebyshev polynomial of the second kind, characterized for
\(0<\theta<\pi\) by
\[
 U_r(\cos\theta)=\frac{\sin((r+1)\theta)}{\sin\theta}
 \qquad(r\ge0),
\]
with the endpoint values understood by continuity, and set \(U_{-1}=0\).  Put
\[
 H_r^-(x)=U_r(x/2)-U_{r-1}(x/2),\qquad
 H_r^+(x)=U_r(x/2)+U_{r-1}(x/2).
\]
We introduce the translated endpoint bases
\[
 \widehat H_r^{\pm}(y)=H_r^\pm(y-2),\qquad y=x+2.
\]
For \(0<\theta<\pi\),
\[
 H_r^-(2\cos\theta)
 =\frac{\cos((r+1/2)\theta)}{\cos(\theta/2)},
 \qquad
 H_r^+(2\cos\theta)
 =\frac{\sin((r+1/2)\theta)}{\sin(\theta/2)}.
\]
These identities extend to \(\theta=0,\pi\) by continuity.
Let \(F\) be even and \(G\) odd, both real-valued on $\mathbb R$.
Using the parity of $F$ and $G$ and the symmetry of the binomial
coefficients, pair the terms indexed by $j=n-r$ and $j=n+1+r$ in the
centered samples.  The Laurent-polynomial identities
\[
 \frac{z^{-r}(1+z^{2r+1})}{z+1}=H_r^-(z+z^{-1}),
 \qquad
 \frac{z^{-r}(z^{2r+1}-1)}{z-1}=H_r^+(z+z^{-1}),
\]
then give, upon comparison with
\eqref{eq:intro-even-quotient}--\eqref{eq:intro-odd-quotient}, the quotient
formulas
\begin{align*}
 C_{F,n}^{-}(x)
 &=\sum_{r=0}^n \binom{2n+1}{n-r}F(r+1/2)H_r^-(x),\\
 C_{G,n}^{+}(x)
 &=\sum_{r=0}^{n}\binom{2n+1}{n-r}G(r+1/2)H_r^+(x).
\end{align*}
Taking $F\equiv1$ in the even formula and using
\[
 (1+z)^{2n+1}=z^n(z+1)(z+z^{-1}+2)^n
\]
gives
\[
 y^n=\sum_{r=0}^n \binom{2n+1}{n-r}\widehat H_r^{-}(y).
\]
This identity gives the even spectral representation in the next subsection;
the odd representation follows from the endpoint-raising identity.
Thus the sample values enter only through a diagonal multiplier; the
quotient bases and the interval \([-2,2]\) are forced by the reciprocal or
anti-reciprocal endpoint factor.  The weak and strict interlacing conventions,
including degree drops and zero polynomials, are collected later in this section.

\subsection{The Jacobi operator and endpoint pencils}

The multiplier underlying the reduced-polynomial formulas is diagonal in
the following one-parameter Jacobi basis.  Put
\[
 D_y=\frac{d}{dy},\qquad y=x+2,\qquad
 \JacOp_{\nu}
 =y(y-4)D_y^2+(2y-4\nu)D_y,
 \qquad 0<\nu<2.
\]
For each \(r\ge0\), let \(\mathsf P_r^{(\nu)}\) be the unique monic
degree-\(r\) eigenpolynomial
\[
 \JacOp_{\nu}\mathsf P_r^{(\nu)}
 =r(r+1)\mathsf P_r^{(\nu)}.
\]
Up to normalization it is
\[
 P_r^{(1-\nu,\nu-1)}(y/2-1),
\]
where \(P_r^{(\alpha,\beta)}\) denotes the standard Jacobi polynomial
\cite[Chapter~IV]{SzegoOrthogonal}, and the spectrum \(r(r+1)\) is
independent of \(\nu\).  The two translated endpoint quotient bases are
\[
 \mathsf P_r^{(3/2)}=\widehat H_r^{-},
 \qquad
 \mathsf P_r^{(1/2)}=\widehat H_r^{+}.
\]
The resulting endpoint dictionary is
\[
\begin{array}{c|c|c}
\text{parity of the function} & \text{removed endpoint} & \text{Jacobi parameter}\\
\hline
F\text{ even} & z=-1 & \nu=3/2\\
G=uF\text{ odd} & z=1 & \nu=1/2
\end{array}
\]

\begin{remark}[Why the paper treats odd sampling degrees]
The one-endpoint reduction is specific to odd sampling degrees.  At positive
even degree, even sources have no forced endpoint and, after reciprocal
normalization, expand in the first-kind Chebyshev basis.  Odd sources force
both $z=1$ and $z=-1$ and, after removing $z^2-1$, expand in the second-kind
basis.  The resulting endpoint geometry is different and is left for
separate study.
\end{remark}

Since \(\{\mathsf P_r^{(\nu)}\}_{r\ge0}\) is a basis of \(\R[y]\), a
real-valued function \(F\) on the positive half-integers determines a unique
linear operator \(\JacMult_{F,\nu}:\R[y]\to\R[y]\), defined by
\[
 \JacMult_{F,\nu}\mathsf P_r^{(\nu)}
 =F(r+1/2)\mathsf P_r^{(\nu)}.
\]
For a polynomial $h$, the spectral-calculus notation $h(\JacOp_\nu)$ is
characterized by $h(\JacOp_\nu)\mathsf P_r^{(\nu)}
=h(r(r+1))\mathsf P_r^{(\nu)}$.
At the even endpoint,
\[
 \JacMult_{F,3/2}(y^n)=C_{F,n}^{-}(y-2).
\]
For $G(u)=uF(u)$ with $G$ odd, the $\nu=1/2$ relation uses the
endpoint-raising identity.

\begin{lemma}[Endpoint-raising identity]
\label{lem:endpoint-raising}
For every \(r\ge0\),
\[
 \left(yD_y+\frac12\right)\widehat H_r^{-}
 =\left(r+\frac12\right)\widehat H_r^{+}.
\]
Consequently, with \(G(u)=uF(u)\) as above, for every \(n\ge0\),
\begin{equation}\label{eq:odd-multiplier-representation}
 C_{G,n}^{+}(y-2)
 =\left(n+\frac12\right)\JacMult_{F,1/2}(y^n).
\end{equation}
\end{lemma}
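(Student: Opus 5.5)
We prove the identity in the trigonometric variable $y=x+2=2+2\cos\theta=4\cos^2(\theta/2)$, $0<\theta<\pi$, and extend to all $y$ because both sides are polynomials. In this variable $dy/d\theta=-2\sin\theta=-4\sin(\theta/2)\cos(\theta/2)$. Hence
\[
 yD_y=-\frac{4\cos^2(\theta/2)}{4\sin(\theta/2)\cos(\theta/2)}\frac{d}{d\theta}
 =-\cot(\theta/2)\frac{d}{d\theta}.
\]
From the previous subsection we have $\widehat H_r^{-}(y)=\cos((r+\tfrac12)\theta)/\cos(\theta/2)$. Differentiating this quotient gives
\[
 \frac{d}{d\theta}\widehat H_r^{-}
 =\frac{-(r+\frac12)\sin((r+\frac12)\theta)\cos(\theta/2)
 +\frac12\cos((r+\frac12)\theta)\sin(\theta/2)}{\cos^2(\theta/2)}.
\]
Multiplying by $-\cot(\theta/2)$ produces two terms. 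The first is $(r+\tfrac12)\sin((r+\tfrac12)\theta)/\sin(\theta/2)=(r+\tfrac12)\widehat H_r^{+}$. The second is $-\tfrac12\cos((r+\tfrac12)\theta)/\cos(\theta/2)=-\tfrac12\widehat H_r^{-}$. Adding $\tfrac12\widehat H_r^{-}$ cancels the second term, which proves the first assertion. Both sides are polynomials in $y$ that agree on $(0,4)$, so they are equal identically.

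For \eqref{eq:odd-multiplier-representation}, apply the operator $L=yD_y+\tfrac12$ to $\JacMult_{F,3/2}(y^n)$. By the expansion $y^n=\sum_{r}\binom{2n+1}{n-r}\widehat H_r^{-}$ and the definition of $\JacMult_{F,3/2}$ (recall $\mathsf P_r^{(3/2)}=\widehat H_r^{-}$),
\[
 \JacMult_{F,3/2}(y^n)=\sum_{r=0}^n\binom{2n+1}{n-r}F(r+\tfrac12)\widehat H_r^{-}.
\]
Applying $L$ termwise and using the first assertion gives
\[
 L\,\JacMult_{F,3/2}(y^n)=\sum_{r=0}^n\binom{2n+1}{n-r}(r+\tfrac12)F(r+\tfrac12)\widehat H_r^{+}.
\]
Since $(r+\tfrac12)F(r+\tfrac12)=G(r+\tfrac12)$, the quotient formula for $C_{G,n}^{+}$ identifies this sum as $C_{G,n}^{+}(y-2)$.

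It remains to show that the same sum equals $(n+\tfrac12)\JacMult_{F,1/2}(y^n)$. Apply $L$ to the expansion of $y^n$ itself. Since $Ly^n=(n+\tfrac12)y^n$, we get
\[
 (n+\tfrac12)y^n=\sum_{r}\binom{2n+1}{n-r}(r+\tfrac12)\widehat H_r^{+}.
\]
This expresses $y^n$ in the basis $\mathsf P_r^{(1/2)}=\widehat H_r^{+}$ with coefficients $\binom{2n+1}{n-r}(r+\tfrac12)/(n+\tfrac12)$. Applying $\JacMult_{F,1/2}$ multiplies each coefficient by $F(r+\tfrac12)$, and multiplying through by $n+\tfrac12$ recovers exactly the sum above.

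We rely on the basis identification $\mathsf P_r^{(1/2)}=\widehat H_r^{+}$ asserted in the text. If it needs checking, apply $\JacOp_{1/2}$ in the $\theta$-variable to $\sin((r+\tfrac12)\theta)/\sin(\theta/2)$ and verify the eigenvalue $r(r+1)$. The main point requiring care is the derivative computation. Beyond that, the only subtlety is the continuity extension to the endpoints $\theta=0,\pi$, which is automatic because both sides are polynomials.
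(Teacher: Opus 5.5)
Your proof is correct and follows essentially the same route as the paper. You use the substitution $y=4\cos^2(\theta/2)$ with $yD_y=-\cot(\theta/2)\,d/d\theta$ to get the raising identity, then apply $yD_y+\tfrac12$ to the expansion of $y^n$ in the $\widehat H_r^{-}$ basis and use the diagonality of $\JacMult_{F,1/2}$ in the $\widehat H_r^{+}$ basis. Your intermediate step through $(yD_y+\tfrac12)\JacMult_{F,3/2}(y^n)$ is harmless but unnecessary, since the quotient formula already identifies the final sum with $C_{G,n}^{+}(y-2)$ directly.
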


\begin{proof}
For \(0<\theta<\pi\), set \(y=4\cos^2(\theta/2)\).  Then
\[
 yD_y=-\cot(\theta/2)\frac{d}{d\theta}.
\]
Applying this operator to
\[
 \widehat H_r^{-}(y)
 =H_r^-(2\cos\theta)
 =\frac{\cos((r+1/2)\theta)}{\cos(\theta/2)}
\]
gives the first identity for \(0<y<4\), and hence everywhere, since both
sides are polynomials in \(y\).  Applying \(yD_y+1/2\) to the expansion
\[
 y^n=\sum_{r=0}^n\binom{2n+1}{n-r}\widehat H_r^{-}(y)
\]
then gives
\[
 \left(n+\frac12\right)y^n
 =\sum_{r=0}^n\binom{2n+1}{n-r}
 \left(r+\frac12\right)\widehat H_r^{+}(y).
\]
Since \(G(r+1/2)=(r+1/2)F(r+1/2)\), diagonal application of
\(\JacMult_{F,1/2}\) proves \eqref{eq:odd-multiplier-representation}.
\end{proof}

For a nonzero real polynomial \(P\) and a real interval \(I\), let
\(\mathrm N_I(P)\) denote the number of zeros of \(P\) in \(I\), counted with
multiplicity.  For integers \(N,q\ge0\), define the interval-defect class
\[
 \mathcal E_{N,q}
 =\{0\}\cup
 \left\{P\in\R[y]_{\le N}\setminus\{0\}:
 \mathrm N_{[0,4]}(P)\ge\deg P-q
 \right\}.
\]
Thus a nonzero member of $\mathcal E_{N,q}$ has at most $q$ zeros outside
$[0,4]$, counted with multiplicity, and \(\mathcal E_{N,0}\) is the class of
polynomials rooted in \([0,4]\).
This extends the notation from the Introduction: \(\mathcal E_{N,1}=\mathcal E_N\),
the one-defect class used for the pencils generated by $y^n$ and
$y^{n+1}$.  Every nonzero member of \(\mathcal E_N\) is real-rooted: a nonreal
zero would bring its conjugate and hence create at least two defects.  The
classes are invariant under real scaling.  Their topological closedness and
simultaneous generic approximation are established in
Lemma~\ref{lem:generic-approximation}, after the Bernstein notation has been
introduced.

The two consecutive monomials generate the pencil
\[
 \JacMult_{F,\nu}\bigl(y^n(y+t)\bigr),\qquad t\in\R.
\]
Since $y^n(y+t)$ has at most one zero outside $[0,4]$, every input belongs to
$\mathcal E_{n+1}$.  We prove the adjacent-degree relation through preservation
of this pencil; preservation of all of $\mathcal E_{n+1}$ is neither required
nor generally available.

\begin{definition}[Endpoint-pencil preservation]
\label{def:endpoint-pencil-preservation}
Let $0<\nu<2$, let $F$ be real-valued on the positive half-integers, and let
$n\ge0$.  The family $\{y^n(y+t):t\in\R\}$ is the \emph{endpoint pencil
indexed by $n$}.  We say that $\JacMult_{F,\nu}$ preserves this pencil if
\[
 \JacMult_{F,\nu}\bigl(y^n(y+t)\bigr)\in\mathcal E_{n+1}
 \qquad(t\in\R).
\]
\end{definition}

By linearity,
\[
 \JacMult_{F,\nu}\bigl(y^n(y+t)\bigr)
 =\JacMult_{F,\nu}(y^{n+1})+t\JacMult_{F,\nu}(y^n).
\]
Every member of $\mathcal E_{n+1}$ is either zero or real-rooted.  Hence,
under the zero-polynomial convention, endpoint-pencil preservation implies
\[
 \JacMult_{F,\nu}(y^n)\preceq\JacMult_{F,\nu}(y^{n+1}).
\]

\subsection{Degree, limits, and interlacing conventions}

We retain the notation $B_{d,\delta}[F]$ from
\eqref{eq:centered-B-notation}.  For nonzero real polynomials $A$ and $B$,
let $D$ be their greatest common divisor.  Then $A\preceq B$ if and only if
$D$ is real-rooted and the zeros of $A/D$ and $B/D$ weakly interlace, counted
with multiplicity.  In particular, the degrees of the two reduced polynomials
differ by at most one.

We use \emph{sampling degree} for the index $d$ in
$B_{d,\delta}[F]$; after cancellation, the actual polynomial degree may be
smaller.  Throughout, the degree of a nonzero polynomial means this actual
degree.  If a limiting polynomial or a pencil member loses its leading
coefficient, all root and interlacing assertions are interpreted in the
resulting lower degree; the zero polynomial is covered by the convention in
the Introduction.

Unless explicitly stated otherwise, $d,m,n$ are nonnegative integers in
their displayed ranges.  Here $d$ denotes a sampling degree, $m$ a derivative
order, and $n$ the endpoint-pencil index (and hence the index of the odd degree
$2n+1$ in centered-sample applications).  Unless an actual polynomial
degree is explicitly mentioned, ``fixed degree'' means fixed sampling degree.
When two finite sets $X,Y\subset\R$ of distinct points weakly or strictly
interlace, their elements are viewed as labelled increasing lists whose entries
alternate.  Weak interlacing permits equality between an $X$-entry and a
$Y$-entry, recording a common point, whereas strict interlacing requires all
displayed inequalities to be strict.  Their cardinalities differ by at most
one.  The same convention applies after adjoining endpoints.

\begin{lemma}[Closure of the weak Obreschkoff relation]
\label{lem:obreschkoff-closure}
Let \(A_j\to A\) and \(B_j\to B\) coefficientwise, with all degrees uniformly
bounded.  If \(A_j\preceq B_j\) for every \(j\), then \(A\preceq B\) under the
zero-polynomial convention above.
\end{lemma}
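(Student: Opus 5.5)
The plan is to reduce the limit statement to a closedness property of the set of real-rooted polynomials of bounded degree, applied along each fixed pencil direction, and to handle the degenerate zero-polynomial cases separately. Let $N$ be a uniform bound for the degrees. Since $A_j\to A$ and $B_j\to B$ coefficientwise in $\R[y]_{\le N}$, we have $B_j+tA_j\to B+tA$ coefficientwise for each fixed $t\in\R$. The basic fact we will invoke is the Hurwitz-type closure statement: if real polynomials $P_j$ of degree at most $N$ are real-rooted (or zero) and $P_j\to P\ne0$ coefficientwise, then $P$ is real-rooted. To prove it, write $P_j=c_j\prod(y-\rho_{j,k})$. Roots that escape to infinity correspond exactly to the drop of the actual degree, and the remaining roots converge to the roots of $P$ by continuity of roots (Hurwitz's theorem applied on a large disc, or the argument principle). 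A nonreal root of $P$ would have to be a limit of roots of $P_j$ in a small disc disjoint from $\R$, and for large $j$ no such roots exist. This is exactly the degree-drop convention fixed above.

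First case: $A$ and $B$ are not both zero. For each $t$ with $B+tA\neq0$, the polynomial $B+tA$ is the coefficientwise limit of $B_j+tA_j$. Each $B_j+tA_j$ is either zero or real-rooted, because $A_j\preceq B_j$ (under the convention, $0\preceq B_j$ and $B_j\preceq 0$ already mean that the nonzero member is real-rooted). The closure fact therefore gives real-rootedness of $B+tA$. This is the definition of $A\preceq B$ when both are nonzero. When exactly one of them is zero, say $A=0\ne B$, the convention requires that $B$ itself be real-rooted, and this is the case $t=0$. The case $B=0\ne A$ follows symmetrically, using $\lim_{t\to\infty}t^{-1}(B_j+tA_j)=A_j$; more directly, take the members $t^{-1}B_j+A_j$ with $t$ large, or simply observe that $A_j$ is real-rooted whenever it is nonzero, since the pencil relation forces real-rootedness of $A_j$ by scaling $t\to\infty$ together with closure at fixed $j$. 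Second case: $A=B=0$. Then $0\preceq0$ holds by convention.

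The only delicate point is the claim that $A_j\preceq B_j$ with $A_j\ne0$ forces $A_j$ to be real-rooted, and likewise for $B_j$. For $B_j$ this is the member $t=0$. For $A_j$, we apply the closure fact to $s(B_j+s^{-1}A_j)=sB_j+A_j$ as $s\to0$, for fixed $j$. Combined with the degree-drop interpretation of the convention, this handles every configuration, so I expect the closure fact with degree drops to be the main (though standard) obstacle.
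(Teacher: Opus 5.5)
Your proposal is correct and follows the paper's proof: fix $t$, observe that each $B_j+tA_j$ is zero or real-rooted and converges coefficientwise to $B+tA$, and invoke closedness of the zero-or-real-rooted polynomials of bounded degree, which you additionally justify via Hurwitz's theorem. Your separate treatment of the zero-polynomial cases, including the detour through real-rootedness of $A_j$, is harmless but unnecessary. The convention coincides with the pencil definition; for instance, when $B=0\ne A$ the member at $t=1$ already shows that $A$ is real-rooted.
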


\begin{proof}
Fix \(t\in\R\).  Every member \(B_j+tA_j\) is either zero or real-rooted,
and these polynomials converge coefficientwise to \(B+tA\).  The class
consisting of the zero polynomial and the real-rooted polynomials of uniformly
bounded degree is closed under coefficientwise limits.  Hence \(B+tA\) is
either zero or real-rooted for every real \(t\), which is precisely
\(A\preceq B\).
\end{proof}

\subsection{Canonical truncations and strip-preserving differentiation}

We use symmetric canonical truncations to pass the polynomial zero-orbit
arguments to entire functions.  The same lemma preserves the zero strip under
differentiation.

\begin{lemma}[Symmetric canonical truncations and strip-preserving derivatives]
\label{lem:derivative-strip}
Let \(h\ge0\), and let \(F\not\equiv0\) be a real entire function of order at
most one.  Assume that \(F(-u)=\epsilon F(u)\) for some
\(\epsilon\in\{\pm1\}\), and suppose that \(\Zset(F)\subseteq S_h\).  Then
there are real polynomials \(F_L\) of
the same parity as \(F\), all of whose zeros lie in \(S_h\), such that
\(F_L\to F\) locally uniformly.  Consequently, for every \(m\ge0\), the
derivative \(F^{(m)}\) is either identically zero or has all zeros in
\(S_h\).
\end{lemma}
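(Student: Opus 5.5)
The plan is to use the Hadamard factorization and then a Laguerre/Gauss--Lucas type argument for the derivative statement. Since $F$ is real, of order at most one, and has parity $\epsilon$, its zeros are symmetric under $u\mapsto -u$ and $u\mapsto\overline u$. Write $F(u)=c\,u^{k}e^{au}\prod_\rho E_1(u/\rho)$. Parity forces $a=0$, once the nonzero zeros are paired as $\pm\rho$ (with $k$ even or odd according to $\epsilon$). Grouping $\rho$ with $-\rho$ then gives
\[
F(u)=c\,u^{k}\prod_{\rho}\Bigl(1-\frac{u^2}{\rho^2}\Bigr),
\]
where the product runs over one representative of each pair $\{\rho,-\rho\}$. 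The genus-one factors cancel pairwise, and since the order is at most one, $\sum|\rho|^{-2}<\infty$, so this product converges absolutely and locally uniformly. Realness gives $c\in\R$. Moreover, the set of pairs is closed under conjugation.

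For the truncations, order the pairs by modulus. Let $F_L$ be $c\,u^k$ times the product over pairs with $|\rho|\le L$. Each $F_L$ is real: a conjugate pair $\{\rho,-\rho\},\{\overline\rho,-\overline\rho\}$ has equal moduli, so both are included or excluded together. Each $F_L$ has the parity of $F$, and its zeros are a subset of $\Zset(F)\subseteq S_h$. Locally uniform convergence $F_L\to F$ follows from the absolute convergence of $\sum|\rho|^{-2}$ by the standard estimate $|\prod(1+w_j)-1|\le e^{\sum|w_j|}-1$ on compacta. Truncating by the modulus cutoff rather than by an arbitrary enumeration is exactly what keeps each $F_L$ real.

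For derivatives, the key step is that a polynomial with all zeros in the convex set $S_h$ has a derivative that is either zero or has all zeros in $S_h$, by Gauss--Lucas. Iterating gives the result for $F_L^{(m)}$. By Weierstrass, $F_L^{(m)}\to F^{(m)}$ locally uniformly. If $F^{(m)}\not\equiv0$, Hurwitz's theorem shows that every zero of $F^{(m)}$ is a limit of zeros of $F_L^{(m)}$ (for large $L$ these are not identically zero, since they converge to a nonzero function). Because $S_h$ is closed, every zero of $F^{(m)}$ lies in $S_h$.

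The main subtlety is the Hadamard step: showing that the exponential factor $e^{au}$ disappears and that the symmetric grouping is legitimate. Parity gives $F(-u)/F(u)=\epsilon$, and comparing the factorizations forces $e^{2au}$ to be constant, so $a=0$. After that, the convergence of $\sum|\rho|^{-2}$ for order at most one (Hadamard/Borel) makes the paired product converge without convergence factors. The case $h=0$ (all zeros purely imaginary) requires no change.
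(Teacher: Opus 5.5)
Your proposal is correct and follows essentially the same route as the paper. Both arguments use Hadamard factorization, pair $\pm\rho$ to cancel the genus-one exponentials, use parity to force the linear exponential to vanish, take conjugation-stable finite truncations, and finish with Gauss--Lucas plus Hurwitz. Your explicit modulus cutoff for the truncations, and your pointwise form of Hurwitz (instead of applying it on each component of $\C\setminus S_h$), are only cosmetic differences.
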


\begin{proof}
Hadamard factorization for entire functions of order at most one
\cite[Chapter~I]{LevinEntire} gives a genus-at-most-one product for \(F\).
Write
\(E_1(z)=(1-z)e^z\) for the canonical factor of genus one.  Pairing each
nonzero zero \(\rho\) with \(-\rho\) cancels the exponential factors:
\[
 E_1(u/\rho)E_1(-u/\rho)=1-u^2/\rho^2.
\]
Because an entire function of order at most one has
\(\sum_{\rho\ne0}|\rho|^{-2}<\infty\), the paired product converges locally
uniformly.  Let \(\kappa\) be the multiplicity of the zero at the origin.  The
parity relation forces \((-1)^\kappa=\epsilon\).  After removing
\(u^\kappa\) and the paired factors associated with the nonzero zeros, the
remaining zero-free exponential has the form \(e^{au+b}\).  The quotient is
even, so \(a=0\), and reality on the real axis makes the remaining constant
real.  Thus, up to a nonzero real constant, \(F\) is the locally uniform
product of \(u^\kappa\) and real even polynomial factors associated with
conjugation-stable pairs and quartets of nonzero zeros.  Finite
conjugation-stable truncations have the same parity as \(F\); every zero of
every truncation is a zero of \(F\), so it remains in \(S_h\).

For each \(m\), local uniform convergence gives
\(F_L^{(m)}\to F^{(m)}\) locally uniformly.  If
\(F^{(m)}\not\equiv0\), then \(F_L^{(m)}\not\equiv0\) for all sufficiently
large \(L\).  Gauss--Lucas places all zeros of these derivatives in the same
closed convex strip.  Hurwitz's theorem on each component of
\(\C\setminus S_h\) then shows that \(F^{(m)}\) is zero-free outside
\(S_h\).  This proves the assertion, including the alternative
\(F^{(m)}\equiv0\).
\end{proof}

\section{Variation diminution on the Jacobi interval}
\label{sec:variation-diminution}

The root problem is now on $[0,4]$.  Variation-diminishing
transformations originate in the work of Schoenberg
\cite{SchoenbergVariation}; for their relation to orthogonal-polynomial
systems, see Hirschman \cite{HirschmanVariation}.  We use the modern matrix
formulations in
\cite{GantmacherKrein,KarlinTP,PinkusTPM,FallatJohnsonTN}, together with
Bernstein coefficient variation on a compact interval
\cite{LorentzBernstein,FaroukiBernstein}.  Because the root count is controlled by
$[\mathcal A]_M^{-1}$, rather than directly by $[\mathcal A]_M$, we fix the
scaling $y=4t$, the degree-elevation convention, and the checkerboard signs
explicitly.

Proposition~\ref{prop:interval-root-monotonicity} shows that if every
checkerboard Bernstein matrix
\[
 \Sigma_M[\mathcal A]_M\Sigma_M
\]
is nonsingular and totally nonnegative, then the number of zeros in $[0,4]$
cannot decrease under $\mathcal A$.  After establishing that statement, we
compute the explicit checkerboard Bernstein matrix of $\JacOp_\nu+\alpha$.
This finite Jacobi matrix is the input for all zero-orbit calculations in the
next section.

\subsection{Total nonnegativity and checkerboard inverses}

A finite matrix is \emph{totally nonnegative} (TN) if every minor is
nonnegative.
For \(M\ge0\), the checkerboard sign matrix is
\[
 \Sigma_M=\diag((-1)^k)_{k=0}^{M}.
\]
We call an $(M+1)\times(M+1)$ matrix $A$ \emph{checkerboard TN} if
$\Sigma_MA\Sigma_M$ is totally nonnegative.
A square matrix whose rows and columns have the same ordered index set has
\emph{bandwidth} $w$ if $A_{ij}=0$ whenever $|i-j|>w$; tridiagonal and
pentadiagonal mean bandwidth $1$ and $2$, respectively.

If a matrix $A$ has ordered row index set $R$ and ordered column index set $C$,
then $A[I\mid J]$ denotes the submatrix with rows $I\subseteq R$ and columns
$J\subseteq C$.  When $A$ is square and both index sets are the same
ordered set $\Omega$, the symbols $I^c$ and $J^c$ mean
$\Omega\setminus I$ and $\Omega\setminus J$, respectively, with the
inherited order.  In Lemma~\ref{lem:checkerboard-inverse},
$\Omega=\{0,1,\ldots,M\}$.  A principal minor uses the same row and column set.  It is
\emph{contiguous} if that common set is consecutive, and \emph{leading} if
it is an initial set $\{0,1,\ldots,k-1\}$.

The next lemma is a standard consequence of Jacobi's identity for
complementary minors.  We include the proof to fix the indexing and sign
convention used throughout the paper; see, for example,
\cite[Chapter~I]{GantmacherKrein} or \cite[Chapter~1]{PinkusTPM}.

\begin{lemma}[Checkerboard inverse of a nonsingular TN matrix]\label{lem:checkerboard-inverse}
Let \(A\) be an \((M+1)\times(M+1)\) nonsingular totally nonnegative matrix.  Then \(\Sigma_MA^{-1}\Sigma_M\) is totally nonnegative.
\end{lemma}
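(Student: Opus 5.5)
The plan is to use Jacobi's complementary-minor identity: a minor of $A^{-1}$ equals, up to a sign and the factor $1/\det A$, the complementary minor of $A$. Concretely, for $I,J\subseteq\Omega=\{0,\dots,M\}$ with $|I|=|J|$, the identity reads
\[
 \det A^{-1}[I\mid J]
 =(-1)^{\sum_{i\in I}i+\sum_{j\in J}j}\,
 \frac{\det A[J^c\mid I^c]}{\det A}.
\]
I would derive this from the cofactor formula $A^{-1}=\operatorname{adj}(A)/\det A$ applied blockwise. Equivalently, I would permute rows and columns so that $I$ and $J$ come first, write $A^{-1}$ in block form, and apply the Schur-complement determinant identity. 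The permutation signs are exactly $(-1)^{\sum I+\sum J}$, up to an even correction $(-1)^{2\binom{|I|}{2}}$ that cancels. This sign bookkeeping is the only delicate point, so it is the step I would check carefully, for instance against the $1\times1$ case $(A^{-1})_{ij}=(-1)^{i+j}\det A[\{j\}^c\mid\{i\}^c]/\det A$.

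Next I would compute the minors of $\Sigma_MA^{-1}\Sigma_M$. Conjugation by the diagonal sign matrix gives
\[
 \det(\Sigma_MA^{-1}\Sigma_M)[I\mid J]
 =(-1)^{\sum I+\sum J}\det A^{-1}[I\mid J],
\]
so the two sign factors cancel. What remains is $\det A[J^c\mid I^c]/\det A$.

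To conclude, I would note that $\det A>0$: it is a minor of a TN matrix, hence $\ge0$, and it is nonzero by nonsingularity. The complementary minor $\det A[J^c\mid I^c]$ is $\ge0$ by total nonnegativity. This covers every choice of index sets, including the empty complement (when $I=J=\Omega$), where that minor is conventionally $1$. Hence every minor of $\Sigma_MA^{-1}\Sigma_M$ is nonnegative, which is the claim.

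The main obstacle is only the sign convention in Jacobi's identity; the positivity itself is immediate once that identity is in place.
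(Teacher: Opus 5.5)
Your proposal is correct and follows the paper's proof essentially verbatim: both use Jacobi's complementary-minor identity with the sign $(-1)^{\sum_{i\in I}i+\sum_{j\in J}j}$, cancel that sign against the one introduced by conjugation with $\Sigma_M$, and conclude from $\det A>0$ and the nonnegativity of $\det A[J^c\mid I^c]$. Your extra care over the sign bookkeeping and the empty-complement convention is sound but not a different route.
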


\begin{proof}
Number rows and columns by \(0,1,\ldots,M\).  For row and column sets \(I,J\)
of the same cardinality, Jacobi's identity for complementary minors gives
\[
 \det\!\bigl(A^{-1}[I\mid J]\bigr)
 =(-1)^{\sum_{i\in I}i+\sum_{j\in J}j}
 \frac{\det\!\bigl(A[J^c\mid I^c]\bigr)}{\det A}.
\]
Multiplication by \(\Sigma_M\) contributes exactly the same checkerboard sign to the
minor indexed by \(I,J\).  Therefore the signs cancel, and we obtain
\[
 \det\!\bigl((\Sigma_MA^{-1}\Sigma_M)[I\mid J]\bigr)
 =
 \frac{\det\!\bigl(A[J^c\mid I^c]\bigr)}{\det A}\ge0.
\]
The last inequality uses total nonnegativity of \(A\) and \(\det A>0\).  This
proves the claim.
\end{proof}

\begin{lemma}[Tridiagonal total nonnegativity criterion]\label{lem:tridiagonal-TN}
Let \(A\) be a finite square matrix, indexed by $0,1,\ldots,N$ for
some $N\ge0$, with $A_{ij}=0$ for $|i-j|>1$ and with nonnegative entries.
If every contiguous principal minor
\[
 \det A[i,i+1,\ldots,j\mid i,i+1,\ldots,j]
\]
is nonnegative, then \(A\) is totally nonnegative.  Thus, for a nonnegative
tridiagonal matrix, total nonnegativity is characterized by the nonnegativity
of its contiguous principal minors.
\end{lemma}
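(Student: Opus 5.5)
The plan is to show that every minor $\det A[I\mid J]$ with $|I|=|J|$ is nonnegative, arguing by induction on the size $k=|I|$. The case $k=1$ holds because the entries are nonnegative. The argument rests on two structural facts about tridiagonal matrices.

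First, a submatrix $A[I\mid J]$ with $I=\{i_1<\dots<i_k\}$ and $J=\{j_1<\dots<j_k\}$ splits when some index $s$ separates it. Suppose there is an $s$ with either $\max(i_s,j_s)<\min(i_{s+1},j_{s+1})-1$, or more generally with the entries coupling the first $s$ rows to the last $k-s$ columns, or the last rows to the first columns, all vanishing. Then the submatrix is block triangular, and its determinant is the product of two smaller minors of the same type; induction then gives nonnegativity.

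Second, for the non-split case I would prove the standard combinatorial fact that a nonzero minor of a band matrix with bandwidth $1$ must satisfy $|i_s-j_s|\le1$ for every $s$. Otherwise, say $i_s\ge j_s+2$, the rows $i_s,\dots,i_k$ can have nonzero entries only in columns $\ge i_s-1>j_s$, so they meet at most $k-s$ of the columns. This gives $k-s+1$ rows supported on $k-s$ columns, and the minor vanishes. Granting this, I would split $I,J$ into maximal runs where the row and column indices stay tied together. Each block is then one of two kinds. In the first kind, $I=J$ is a contiguous interval, and the minor is a contiguous principal minor, nonnegative by hypothesis. In the second kind, the pairing is shifted, for instance $i_s=j_s+1$ throughout a block, or the rows and columns are otherwise offset. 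There the submatrix is triangular, so its determinant is a product of off-diagonal or diagonal entries and hence nonnegative.

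The main obstacle is the bookkeeping in the non-split case. Concretely, I need to check that once every $|i_s-j_s|\le1$ holds and no block separation occurs, the only configurations are genuinely contiguous principal blocks or triangular offset blocks. A mixed block with both $i_s=j_s$ and $i_t=j_t\pm1$ inside it should force a zero row or column, or a triangular factorization. If this case analysis becomes awkward, I would fall back on a perturbation argument. Replace the diagonal by $A+\varepsilon I$; this keeps the contiguous principal minors positive for small $\varepsilon>0$, since each is a polynomial in $\varepsilon$ with positive leading coefficient and nonnegative value at $0$. Then apply the classical criterion that a tridiagonal matrix with nonnegative off-diagonals and positive leading principal minors is TN, via its $LU$ factorization into bidiagonal factors with nonnegative entries. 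This criterion should be applied to each contiguous block separately, using the hypothesis on the contiguous minors to keep the pivots $\det A[i..j]/\det A[i..j-1]$ positive. Letting $\varepsilon\to0$ then finishes the proof by continuity of minors.
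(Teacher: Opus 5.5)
Your main route is correct and is essentially the paper's own argument: a nonzero minor forces $|i_s-j_s|\le1$, and the minor then factors into off-diagonal entries and contiguous principal minors. The bookkeeping you flag closes at once, because $i_s=j_s+1$ makes every entry in rows $i_t$ ($t\ge s$) and columns $j_u$ ($u<s$) vanish, and likewise in rows $i_t$ ($t>s$) and columns $j_u$ ($u\le s$) (transpose for $j_s=i_s+1$); hence $A_{i_sj_s}$ splits off as a $1\times1$ diagonal block, and an unsplit minor of order at least $2$ must have $I=J$ contiguous. The fallback is therefore unnecessary, and its positivity step is unjustified as written, since a polynomial with positive leading coefficient and $p(0)\ge0$ can be negative near $0^+$; the correct reason is that $\det(M+\varepsilon I)=\sum_{S}\varepsilon^{m-|S|}\det M[S\mid S]\ge\varepsilon^{m}$ for an $m\times m$ block $M$, because every principal minor of a tridiagonal block factors into contiguous ones.
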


\begin{proof}
This is the standard tridiagonal criterion; see, for example,
\cite[Chapter~II]{GantmacherKrein}, \cite{GascaPena1992}, or
\cite[Chapter~2]{PinkusTPM}.  For a tridiagonal matrix, the row and column sets of a nonzero minor
decompose into consecutive blocks forced by the band support.  The minor then
factors into off-diagonal entries and contiguous principal minors of those
blocks.  Since the entries are nonnegative, the stated condition is exactly
the nonnegativity condition for all minors.
\end{proof}

\subsection{Bernstein variation and interval-root counts}

The Bernstein basis is adapted to a compact interval.  Here \emph{degree
elevation} means re-expanding the same polynomial in a Bernstein basis of a
larger basis degree.  If the larger basis degree is $M$, the polynomial is
unchanged, while its coefficient vector becomes longer and is indexed by the
finer grid $k/M$, $0\le k\le M$.  The underlying
basis identities and degree-elevation theory are
standard \cite{LorentzBernstein,FaroukiBernstein}.  We give the short argument
below because the proof needs the precise scaling $y=4t$, the convention for
coefficients at an elevated degree, and eventual equality---not merely an
upper bound---between coefficient variation and the number of simple zeros in
$(0,4)$.  Combined with the classical variation-diminishing theorem for TN
matrices \cite[Chapter~4]{FallatJohnsonTN}, this yields the interval-root
comparison used later.

For \(M\ge0\), the degree-\(M\) Bernstein basis on $[0,1]$ is
\[
 \mathcal B_k^M(t)=\binom Mk t^k(1-t)^{M-k}\qquad(0\le k\le M).
\]
For $P\in\R[y]_{\le M}$, let
\[
 c_M(P)=\bigl(c_{M,0}(P),\ldots,c_{M,M}(P)\bigr)^{\mathsf T}
\]
be the column vector determined by the unique expansion
\[
 P(4t)=\sum_{k=0}^M c_{M,k}(P)\mathcal B_k^M(t).
\]
Thus $M$ is the Bernstein basis degree, not necessarily the degree of $P$.
If $P$ has degree $D<M$, then $c_M(P)$ is obtained from its degree-$D$
Bernstein coefficient vector by degree elevation to $M$.

If $\mathcal A$ is a linear operator satisfying
$\mathcal A\bigl(\R[y]_{\le M}\bigr)\subseteq\R[y]_{\le M}$,
let $[\mathcal A]_M$ be the unique matrix satisfying
\[
 c_M(\mathcal A P)=[\mathcal A]_M c_M(P)
 \qquad\bigl(P\in\R[y]_{\le M}\bigr).
\]
We call $\Sigma_M[\mathcal A]_M\Sigma_M$ the \emph{checkerboard Bernstein
matrix} of $\mathcal A$ at Bernstein basis degree $M$.  When
$[\mathcal A]_M$ is invertible, its inverse is the relevant matrix for root
counting: if $Q=\mathcal A P$, then
$c_M(P)=[\mathcal A]_M^{-1}c_M(Q)$.  Lemma~\ref{lem:checkerboard-inverse}
turns nonsingular total nonnegativity of the checkerboard Bernstein matrix
into total nonnegativity of $[\mathcal A]_M^{-1}$, so variation diminution compares the
sign variation of the input coefficient vector with that of the output.
For a real vector \(v\), let \(\operatorname{var}(v)\) be the number of sign
changes after zero entries are deleted, with $\operatorname{var}(0)=0$.  We
use the variation-diminishing theorem in the concrete form
\[
 \operatorname{var}(Av)\le \operatorname{var}(v)
\]
for every totally nonnegative matrix $A$ and every real vector $v$ of the
appropriate size.

\begin{lemma}[Bernstein asymptotics and variation]\label{lem:bernstein-variation}
Let \(P\) be a real polynomial with no zero at \(0\) or \(4\) and only simple
zeros in \((0,4)\).  Then, for all sufficiently large \(M\), the number of sign
changes of \(c_M(P)\), after deleting zero entries, equals \(\mathrm N_{(0,4)}(P)\).
\end{lemma}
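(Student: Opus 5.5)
We argue through the standard convergence property of Bernstein coefficients: after degree elevation, the coefficient sequence samples the polynomial on a fine grid. Write $p(t)=P(4t)$, so that $p$ has degree $D$ and no zero at $t=0$ or $t=1$, and its simple zeros $t_1<\dots<t_s$ in $(0,1)$ correspond to the zeros of $P$ in $(0,4)$. We aim for a uniform approximation
\[
 c_{M,k}(P)=p(k/M)+O(1/M),
 \qquad 0\le k\le M,
\]
uniformly in $k$. To obtain it, expand $p$ in the monomial basis and use the explicit elevation formula $t^j=\sum_k \frac{\binom kj}{\binom Mj}\mathcal B_k^M(t)$. This gives $c_{M,k}(P)=\sum_j a_j\binom kj/\binom Mj$. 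Since $\binom kj/\binom Mj=(k/M)^j+O(1/M)$ uniformly in $0\le k\le M$ for each fixed $j\le D$, the estimate follows. The same argument applied to $p'$, via the difference formula $c_{M,k+1}-c_{M,k}=\frac1M c_{M-1,k}(p')$ for the derivative, gives the derivative analogue.

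We then split $[0,1]$ into two parts. Fix $\eta>0$ small enough that the intervals $J_i=[t_i-\eta,t_i+\eta]$ are disjoint, lie inside $(0,1)$, and satisfy $|p'|\ge m>0$ on each $J_i$. This is possible because the zeros are simple. Off $\bigcup J_i$, we have $|p|\ge\mu>0$; this uses that there is no zero at the endpoints or elsewhere in $[0,1]$, since any zeros of $p$ outside $[0,1]$ are irrelevant. Hence for large $M$ every coefficient $c_{M,k}$ with $k/M\notin\bigcup J_i$ is nonzero and has the sign of $p(k/M)$. Consequently, on each gap between consecutive $J_i$, and on the two end pieces, the sign is constant and no variation occurs. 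Within $J_i$, the difference formula shows that $c_{M,k+1}-c_{M,k}$ has the constant sign of $p'(t_i)$ and is nonzero for large $M$. So the sequence is strictly monotone across the indices with $k/M\in J_i$. Its values at the two ends of $J_i$ have opposite signs, close to $p(t_i\pm\eta)$. Therefore there is exactly one sign change inside each $J_i$, and strict monotonicity keeps the count at one even if some entry happens to vanish.

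Adding these contributions gives exactly $s=\mathrm N_{(0,4)}(P)$ sign changes for all sufficiently large $M$. The step needing most care is the uniform derivative control inside each $J_i$. I would handle it as above, by applying the zeroth-order approximation to $p'$ through the identity $M(c_{M,k+1}-c_{M,k})=c_{M-1,k}(p')$. That identity follows from differentiating the Bernstein expansion, which is standard from \cite{FaroukiBernstein}.
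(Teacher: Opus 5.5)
Your proof is correct and follows essentially the paper's route: both use the elevation formula $c_{M,k}(P)=\sum_j a_j\binom kj/\binom Mj$ to treat the coefficient sequence as a uniformly $C^1$-accurate sampling of $p$ on the grid $k/M$, and then isolate each simple zero in a small window where the sampled sequence changes sign exactly once. The only difference is in how the derivative control is packaged. The paper writes $c_{M,k}=P_M(k/M)$ exactly, for a polynomial $P_M\to p$ in $C^1[0,1]$, and counts zeros of $P_M$. You instead get monotonicity discretely from $M(c_{M,k+1}-c_{M,k})=c_{M-1,k}(p')$. Strictly, this requires $|p'|$ to be bounded below on a slightly enlarged window, to absorb the shift from $k/M$ to $k/(M-1)$.
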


\begin{proof}
Set \(\widetilde P(t)=P(4t)=\sum_{\ell=0}^D a_\ell t^\ell\).  Its
degree-\(M\) Bernstein coefficient at index \(k\) is
\[
 c_{M,k}(P)=\sum_{\ell=0}^D a_\ell\frac{(k)_\ell}{(M)_\ell}
 \qquad(M\ge D),
\]
where $(x)_\ell=x(x-1)\cdots(x-\ell+1)$ is the falling factorial and
$(x)_0=1$.  Hence
\[
 c_{M,k}(P)=P_M(k/M),\qquad
 P_M(t)=\sum_{\ell=0}^D a_\ell\frac{(Mt)_\ell}{(M)_\ell},
\]
and \(P_M\to\widetilde P\) in \(C^1[0,1]\).

Let \(0<\alpha_1<\cdots<\alpha_R<1\) be the simple zeros of
\(\widetilde P\).  Choose pairwise disjoint intervals about these zeros on
which \(\widetilde P'\) has fixed nonzero sign and on whose complement
\(\widetilde P\) is bounded away from zero.  For all large \(M\), the same
properties hold for \(P_M\): it has exactly one simple zero in each chosen
interval and no other zero in \((0,1)\).  Each interval contains grid points $k/M$ on
both sides of that zero, so the ordered sequence \(P_M(k/M)\) has exactly one
sign change there and none elsewhere.  Deleting a possible zero entry does not
alter the count.  Thus the sign variations of \(c_M(P)\) equal \(R\), which is
\(\mathrm N_{(0,4)}(P)\).
\end{proof}

The following approximation lemma handles multiple roots, endpoint roots,
and degree drops in the interval-root comparison.

\begin{lemma}[Closure and simultaneous generic approximation]
\label{lem:generic-approximation}
For every \(N,q\ge0\), the class \(\mathcal E_{N,q}\) is closed in the
coefficient space $\R^{N+1}$.  Let
\(\mathcal A:\R[y]_{\le N}\to\R[y]_{\le N}\) be a linear operator.  Assume that every subspace
$\R[y]_{\le d}$, $0\le d\le N$, is $\mathcal A$-invariant and that the
restriction
\[
 \mathcal A\big|_{\R[y]_{\le d}}
\]
is invertible.  Then every
nonzero
\(P\in\mathcal E_{N,q}\) is a coefficientwise limit, within the set of
polynomials of its actual degree (its \emph{exact-degree stratum}), of
polynomials \(P_j\in\mathcal E_{N,q}\) satisfying
\[
 \mathrm N_{[0,4]}(P_j)=\mathrm N_{[0,4]}(P),
\]
such that both \(P_j\) and \(\mathcal A P_j\) have only simple zeros and
neither vanishes at \(0\) or \(4\).  We call such a polynomial
\(\mathcal A\)-generic.
\end{lemma}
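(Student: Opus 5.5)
Closedness is proved first; the approximation step then rests on a genericity argument inside the exact-degree stratum.

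\emph{Closedness.} Let $P_j\in\mathcal E_{N,q}$ with $P_j\to P\neq0$ coefficientwise. Passing to a subsequence, I may assume every $P_j$ is nonzero with a fixed degree $D_0\ge\deg P=D$. As $j\to\infty$, exactly $D$ zeros of $P_j$ converge to the zeros of $P$ with multiplicity, and the remaining $D_0-D$ zeros escape to infinity. This is Hurwitz on compact sets, applied to the reversed polynomials for the escaping roots. At most $q$ zeros of $P_j$ lie outside $[0,4]$, so at least $D_0-q$ lie in the compact set $[0,4]$. None of these can escape, so they converge to zeros of $P$ in $[0,4]$. Hence $\mathrm N_{[0,4]}(P)\ge D_0-q\ge D-q$.

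\emph{Approximation.} Fix a nonzero $P$ of degree $D$ with $\mathrm N_{[0,4]}(P)=s\ge D-q$. I split the zeros of $P$ into the $s$ zeros in $[0,4]$ and the rest. I then perturb zeros only, keeping the leading coefficient $c$, so that every approximant stays in the degree-$D$ stratum. Real zeros in $[0,4]$ are moved to distinct points of $(0,4)$. Real zeros outside $[0,4]$ are moved to distinct real points outside $[0,4]$. Nonreal zeros are moved to distinct nonreal conjugate pairs. This is parametrized by an open set $U\subset\R^{D}$ of root data, and the map $\Phi:U\to\R[y]_{=D}$ is polynomial in the parameters. Every polynomial in $\Phi(U)$ has exactly $s$ zeros in $[0,4]$, has simple zeros, and does not vanish at $0$ or $4$. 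Such polynomials are therefore in $\mathcal E_{N,q}$ with $\mathrm N_{[0,4]}$ preserved, and $P$ lies in the closure of $\Phi(U)$.

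It remains to impose the conditions on $\mathcal A Q$. By hypothesis, $\mathcal A$ restricts to an invertible map on $\R[y]_{\le D}$ preserving $\R[y]_{\le D-1}$. Hence $\mathcal A Q$ has degree exactly $D$ whenever $\deg Q=D$. The conditions $\operatorname{Disc}(\mathcal A\Phi(\lambda))\ne0$, $(\mathcal A\Phi(\lambda))(0)\ne0$ and $(\mathcal A\Phi(\lambda))(4)\ne0$ are each the nonvanishing of a polynomial function of $\lambda\in U$. Each such function vanishes only on a nowhere dense set unless it vanishes identically on the connected components of $U$ that meet the relevant neighbourhood of the original root data.

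\emph{Main obstacle: ruling out identical vanishing near $P$.} The set $\Phi(U)$ contains a nonempty open subset of the degree-$D$ stratum, because the local root-to-coefficient map is a local diffeomorphism at simple roots. Since $\mathcal A$ is invertible on $\R[y]_{\le D}$, its image $\mathcal A\Phi(U)$ is a nonempty open subset of the degree-$D$ polynomials. Each of the three conditions fails only on a proper algebraic hypersurface of that stratum (the discriminant locus, or a hyperplane), so none can hold identically on an open set. Consequently the bad parameters form a closed nowhere dense set. Choosing good parameters converging to the root data of $P$ gives the required $P_j\to P$.

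The only delicacy is making the open-image claim near the boundary of $U$ where $P$ itself sits. I would handle it by noting that the bad set is a proper algebraic subset of each small box in $U$ adjacent to the root data of $P$, so its complement is dense there.
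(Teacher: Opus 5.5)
Your proposal is correct and follows the paper's argument in essence. Closedness holds because the at least $\deg P_j-q\ge\deg P-q$ zeros of $P_j$ in the compact interval $[0,4]$ cannot escape to infinity, so they converge with multiplicity to zeros of $P$ in $[0,4]$. For approximation, you first move the roots into a configuration with simple roots, no zeros at $0$ or $4$, and the same count in $[0,4]$. You then avoid a proper algebraic set whose defining polynomial is nonzero because $\mathcal A$ is invertible on $\R[y]_{\le D}$ and preserves exact degree. The only packaging difference is that you pull the conditions back to root coordinates, whereas the paper works in coefficient space and exhibits $R=\mathcal A^{-1}Q$.

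There is one small slip. Because you hold the leading coefficient $c$ fixed, $\Phi(U)$ and $\mathcal A\Phi(U)$ are open only in the $D$-dimensional affine slices with leading coefficients $c$ and $\kappa c$, not in the $(D+1)$-dimensional stratum. Here $\kappa\ne0$ is the scalar by which $\mathcal A$ acts on $\R[y]_{\le D}/\R[y]_{\le D-1}$. The argument still goes through, since the discriminant and the evaluations at $0$ and $4$ restrict to nonzero polynomials on each such slice. Alternatively, let the leading coefficient vary as one extra parameter.
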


The proof is deferred to Appendix~\ref{app:generic-approximation}.

\begin{proposition}[Interval-root monotonicity from total nonnegativity]
\label{prop:interval-root-monotonicity}
Let \(\mathcal A:\R[y]\to\R[y]\) be a linear operator.  Assume that every subspace
$\R[y]_{\le d}$, $d\ge0$, is $\mathcal A$-invariant and that the restriction
\[
 \mathcal A\big|_{\R[y]_{\le d}}
\]
is invertible.  Suppose that, for every \(M\ge0\),
\[
 \Sigma_M[\mathcal A]_M\Sigma_M
\]
is a nonsingular totally nonnegative matrix.  Then every nonzero real
polynomial \(P\) satisfies
\begin{equation}\label{eq:interval-root-monotonicity}
 \mathrm N_{[0,4]}(\mathcal A P)\ge\mathrm N_{[0,4]}(P).
\end{equation}
Consequently, for all \(N,q\ge0\),
\[
 \mathcal A(\mathcal E_{N,q})\subseteq\mathcal E_{N,q}.
\]
\end{proposition}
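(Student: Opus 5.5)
The plan is to reduce first to generic polynomials via Lemma~\ref{lem:generic-approximation}, and then to count zeros using Bernstein coefficients and variation diminution.

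\emph{Step 1 (generic case).} Let $P$ be nonzero and $\mathcal A$-generic: $P$ and $Q=\mathcal AP$ have only simple zeros and neither vanishes at $0$ or $4$. Then $\mathrm N_{[0,4]}=\mathrm N_{(0,4)}$ for both. By Lemma~\ref{lem:bernstein-variation}, for all sufficiently large $M$ (a single $M$ working for both $P$ and $Q$, at least $\deg P$),
\[
 \operatorname{var}(c_M(P))=\mathrm N_{(0,4)}(P),\qquad
 \operatorname{var}(c_M(Q))=\mathrm N_{(0,4)}(Q).
\]
Since $c_M(P)=[\mathcal A]_M^{-1}c_M(Q)$, Lemma~\ref{lem:checkerboard-inverse} applied to the nonsingular TN matrix $\Sigma_M[\mathcal A]_M\Sigma_M$ shows that
\[
 \Sigma_M(\Sigma_M[\mathcal A]_M\Sigma_M)^{-1}\Sigma_M=[\mathcal A]_M^{-1}
\]
is totally nonnegative. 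The variation-diminishing inequality then gives $\operatorname{var}(c_M(P))\le\operatorname{var}(c_M(Q))$, which is \eqref{eq:interval-root-monotonicity} in the generic case.

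\emph{Step 2 (general case).} Let $P$ be arbitrary nonzero of degree $D$, with $N=D$ and $q=D-\mathrm N_{[0,4]}(P)$, so $P\in\mathcal E_{D,q}$. Lemma~\ref{lem:generic-approximation}, applied to the restriction of $\mathcal A$ to $\R[y]_{\le D}$, gives $\mathcal A$-generic $P_j\to P$ of exact degree $D$ with $\mathrm N_{[0,4]}(P_j)=\mathrm N_{[0,4]}(P)$. By Step 1, $\mathrm N_{[0,4]}(\mathcal AP_j)\ge \mathrm N_{[0,4]}(P)$, and $\mathcal AP_j\to\mathcal AP\ne0$ by invertibility.

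It remains to pass the count to the limit, that is, to show that $\mathrm N_{[0,4]}$ is upper semicontinuous under coefficientwise convergence with bounded degree and nonzero limit. Zeros of the approximants in the compact set $[0,4]$ accumulate only at zeros of the limit in $[0,4]$. By Hurwitz's theorem on small discs around each zero of $\mathcal AP$ in $[0,4]$, the number of approximant zeros near such a zero is at most its multiplicity. This holds even if the degree drops, because escaping zeros go to infinity and not into $[0,4]$. Hence $\mathrm N_{[0,4]}(\mathcal AP)\ge\limsup_j\mathrm N_{[0,4]}(\mathcal AP_j)\ge\mathrm N_{[0,4]}(P)$.

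\emph{Step 3 (invariance of $\mathcal E_{N,q}$).} For nonzero $P\in\mathcal E_{N,q}$, invariance of $\R[y]_{\le d}$ together with invertibility gives $\deg\mathcal AP=\deg P$. Indeed, $\mathcal AP$ lies in $\R[y]_{\le \deg P}$, and if it lay in $\R[y]_{\le \deg P-1}$, then invertibility on that subspace would produce a preimage of lower degree, contradicting injectivity. Then $\mathrm N_{[0,4]}(\mathcal AP)\ge\deg P-q=\deg\mathcal AP-q$, so $\mathcal AP\in\mathcal E_{N,q}$; zero maps to zero.

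The main obstacle I expect is the limit passage in Step 2. One must make sure the approximation lemma keeps the exact degree and the $[0,4]$ count, which it does as stated. One must also handle the semicontinuity direction carefully, since zeros on the endpoints $0,4$ or multiple zeros of $\mathcal AP$ can only gain, not lose, from limits of zeros lying in the closed interval.
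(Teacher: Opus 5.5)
Your proposal is correct and follows the paper's proof step for step. Both prove the generic case through Lemma~\ref{lem:checkerboard-inverse}, variation diminution for $[\mathcal A]_M^{-1}$, and Lemma~\ref{lem:bernstein-variation}, then pass to the general case using generic approximation and exact-degree preservation. The only minor difference is in the limit step: you give a direct Hurwitz upper-semicontinuity argument for $\mathrm N_{[0,4]}$, whereas the paper cites closedness of $\mathcal E_{d,q}$, whose appendix proof is the same root-continuity argument.
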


\begin{proof}
The hypotheses imply exact-degree preservation, because the induced map on each
one-dimensional quotient \(\R[y]_{\le d}/\R[y]_{\le d-1}\) is invertible.
First let \(P\) be \(\mathcal A\)-generic and put \(Q=\mathcal A P\).  For
every \(M\ge\deg P\), degree elevation gives
\[
 c_M(P)=[\mathcal A]_M^{-1}c_M(Q).
\]
Apply Lemma~\ref{lem:checkerboard-inverse} to
\(\Sigma_M[\mathcal A]_M\Sigma_M\).  Since
\[
 [\mathcal A]_M^{-1}
 =\Sigma_M
   \bigl(\Sigma_M[\mathcal A]_M\Sigma_M\bigr)^{-1}
   \Sigma_M,
\]
the inverse matrix \([\mathcal A]_M^{-1}\) is totally nonnegative.  The variation-diminishing theorem for totally nonnegative matrices
\cite[Chapter~4]{FallatJohnsonTN} therefore yields
\[
 \operatorname{var}(c_M(P))
 \le \operatorname{var}(c_M(Q)).
\]
For all sufficiently large \(M\), Lemma~\ref{lem:bernstein-variation}
identifies the two sides with
\(\mathrm N_{[0,4]}(P)\) and \(\mathrm N_{[0,4]}(Q)\), respectively, because both
polynomials are generic.

Now take an arbitrary nonzero \(P\) of degree \(d\), set
\(q=d-\mathrm N_{[0,4]}(P)\), and use Lemma~\ref{lem:generic-approximation} to
choose generic \(P_j\to P\) of degree \(d\) with
\(\mathrm N_{[0,4]}(P_j)=\mathrm N_{[0,4]}(P)\).  The generic case gives
\(\mathcal A P_j\in\mathcal E_{d,q}\).  Closedness of
\(\mathcal E_{d,q}\) and exact-degree preservation by \(\mathcal A\) imply
\[
 \mathrm N_{[0,4]}(\mathcal A P)\ge d-q=\mathrm N_{[0,4]}(P).
\]
Finally, let $P\in\mathcal E_{N,q}$ have actual degree $d$.  Exact-degree
preservation and \eqref{eq:interval-root-monotonicity} give
\[
 d-\mathrm N_{[0,4]}(\mathcal AP)
 \le d-\mathrm N_{[0,4]}(P)
 \le q.
\]
Hence $\mathcal AP\in\mathcal E_{N,q}$.  The zero polynomial is automatic.
\end{proof}

\subsection{The checkerboard Jacobi matrix}

The Jacobi differential operator itself is classical
\cite[Chapter~IV]{SzegoOrthogonal}, and Jacobi--Bernstein change-of-basis
matrices have been studied in \cite{RababahJacobiBernstein}.  Here we require
the matrix of the translated Jacobi differential operator itself in the fixed
Bernstein normalization used for interval root counts, and compute it
directly.  We say that $A$ is \emph{positively diagonally similar} to $S$ if
$S=D^{-1}AD$ for a diagonal matrix $D$ with positive diagonal entries.  When
$S$ is symmetric, we call it a \emph{positive diagonal symmetrization} of
$A$.  A \emph{symmetric Jacobi matrix} is a real symmetric tridiagonal
matrix with positive off-diagonal entries.  Recall
that $[\mathcal A]_N$ denotes the degree-$N$ Bernstein matrix after the
scaling $y=4t$, and that $\Sigma_N$ is the checkerboard sign matrix.  Because
$\JacOp_\nu$ is second order, the checkerboard matrix of
$\JacOp_\nu+\alpha$ is tridiagonal; the fourth-order factors arising from a
reflected nonreal quartet or from two outer real pairs are therefore
pentadiagonal in Section~\ref{sec:zero-orbit-factors}.  For classical
background connecting
total positivity with zero counts, see
\cite{Coppel,GantmacherKrein,KarlinTP,Karlin1971}.

Under $y=4t$,
\[
 \JacOp_{\nu}
 =-t(1-t)\frac{d^2}{dt^2}+(2t-\nu)\frac{d}{dt}.
\]

\begin{lemma}[Bernstein matrix of a shifted Jacobi factor]
\label{lem:bernstein-jacobi-matrix}
Let \(N\ge0\), \(0<\nu<2\), and \(\alpha\in\R\).  Put
\[
 \mathbf J_{N,\nu}(\alpha)
 =\Sigma_N[\JacOp_{\nu}+\alpha]_N\Sigma_N.
\]
Then
\(\mathbf J_{N,\nu}(\alpha)
 =\mathbf J_{N,\nu}(0)+\alpha I_{N+1}\)
is tridiagonal, where $I_{N+1}$ is the identity matrix.  For \(0\le k\le N\), write
\[
 d_{N,k}^{(\nu)}
 =2k(N-k)+\nu(N-k)+(2-\nu)k.
\]
Then, for \(0\le k\le N\) on the diagonal and
\(0\le k\le N-1\) on the two off-diagonals, its entries are
\[
 (\mathbf J_{N,\nu}(\alpha))_{k,k}
 =\alpha+d_{N,k}^{(\nu)},
\]
\[
 (\mathbf J_{N,\nu}(\alpha))_{k,k+1}
 =(N-k)(k+\nu),
 \qquad
 (\mathbf J_{N,\nu}(\alpha))_{k+1,k}
 =(k+1)(N-k+1-\nu).
\]
The matrix \(\mathbf J_{N,\nu}(0)\) is positively diagonally similar to
a symmetric Jacobi matrix and has spectrum
\[
 0,2,6,\ldots,N(N+1).
\]
Moreover, for \(0\le k\le N-1\),
\begin{equation}\label{eq:jacobi-neighboring-diagonal-bound}
 d_{N,k}^{(\nu)}+d_{N,k+1}^{(\nu)}\ge2.
\end{equation}
\end{lemma}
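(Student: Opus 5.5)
Since $\mathbf J_{N,\nu}(\alpha)=\Sigma_N[\JacOp_\nu]_N\Sigma_N+\alpha I_{N+1}$ by linearity, it suffices to compute the Bernstein matrix of $\JacOp_\nu$ in the variable $t$, where $\JacOp_\nu=-t(1-t)\partial_t^2+(2t-\nu)\partial_t$. I would apply this operator directly to each basis element $\mathcal B_k^N(t)=\binom Nk t^k(1-t)^{N-k}$. Using $\partial_t\bigl(t^k(1-t)^{N-k}\bigr)=t^{k-1}(1-t)^{N-k-1}\bigl(k(1-t)-(N-k)t\bigr)$ and differentiating once more, $\JacOp_\nu\mathcal B_k^N$ becomes $t^{k-1}(1-t)^{N-k-1}$ times a quadratic, which I then rewrite in the basis $t^2$, $t(1-t)$, $(1-t)^2$ (these are multiplied by $t^{k-1}(1-t)^{N-k-1}$). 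This shows that $\JacOp_\nu\mathcal B_k^N$ is a combination of $\mathcal B_{k-1}^N,\mathcal B_k^N,\mathcal B_{k+1}^N$ only. Converting the binomial normalizations, e.g.\ $\binom Nk/\binom N{k+1}=(k+1)/(N-k)$, gives the column entries $[\JacOp_\nu]_{k\pm1,k}$ and $[\JacOp_\nu]_{k,k}$. This yields tridiagonality, and conjugation by $\Sigma_N$ flips the signs of the off-diagonal entries only. I expect the computation to produce diagonal entries $d^{(\nu)}_{N,k}$ and off-diagonal entries of opposite sign to the diagonal; after $\Sigma_N$ they become the stated positive products. The index bookkeeping here is the main place errors can creep in, and I would check it against the cases $N=1$, where the $2\times2$ matrix must have eigenvalues $0$ and $2$, and $k=0$, $k=N$.

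For the symmetrization, the product of the two off-diagonal entries is $(N-k)(k+\nu)(k+1)(N-k+1-\nu)$. This is positive for $0<\nu<2$ because $k+\nu>0$ and $N-k+1-\nu>-1+1\cdot$\ldots; more precisely, $N-k\ge1$ gives $N-k+1-\nu>0$. Hence $D=\diag(\delta_k)$ with $\delta_{k+1}/\delta_k=\sqrt{(k+1)(N-k+1-\nu)/((N-k)(k+\nu))}$ symmetrizes the matrix into a symmetric Jacobi matrix.

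For the spectrum, $[\JacOp_\nu]_N$ represents $\JacOp_\nu$ on $\R[y]_{\le N}$, which has the eigenbasis $\mathsf P_r^{(\nu)}$, $0\le r\le N$, with eigenvalues $r(r+1)$. Similarity (by $\Sigma_N$) preserves the spectrum. Distinctness of these eigenvalues is also consistent with the Jacobi-matrix structure.

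Finally, for \eqref{eq:jacobi-neighboring-diagonal-bound}, I sum
$d_{N,k}+d_{N,k+1}=2k(N-k)+2(k+1)(N-k-1)+\nu(2N-2k-1)+(2-\nu)(2k+1)$.
The last two terms equal $2(2k+1)+\nu(2N-4k-2)$, which is linear in $\nu$, so it suffices to check $\nu\to0$ and $\nu\to2$. At $\nu=0$ the sum is $\ge 2(2k+1)\ge2$. At $\nu=2$ it becomes $2k(N-k)+2(k+1)(N-k-1)+2(2N-2k-1)$, and since $N-k-1\ge0$ we get $2N-2k-1\ge1$, so the sum is $\ge2$. By linearity in $\nu$, the bound holds on $[0,2]$.
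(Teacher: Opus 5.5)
Your proposal is correct and follows the paper's proof essentially step for step. The direct action of $\JacOp_\nu=-t(1-t)\partial_t^2+(2t-\nu)\partial_t$ on $\mathcal B_k^N$ does give column entries $-(N-k+1)(k+\nu-1)$, $d_{N,k}^{(\nu)}$, $-(k+1)(N-k+1-\nu)$ before conjugation by $\Sigma_N$, and the positive-product symmetrization and the spectrum $r(r+1)$ then follow as you describe.

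There are two minor differences from the paper. It reads off the spectrum from the triangular monomial action $\JacOp_\nu y^r=r(r+1)y^r-4r(r-1+\nu)y^{r-1}$, which is also what guarantees the eigenbasis $\mathsf P_r^{(\nu)}$ you invoke. It proves \eqref{eq:jacobi-neighboring-diagonal-bound} by concavity in $k$ with checks at $k=0$ and $k=N-1$; your argument by linearity in $\nu$, checked at $\nu=0,2$, is equally valid.
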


\begin{proof}
With the convention that $\mathcal B_j^N=0$ for
$j\notin\{0,1,\ldots,N\}$, a direct calculation gives
\[
\begin{aligned}
 (\JacOp_\nu+\alpha)\mathcal B_k^N
 ={}&-(N-k+1)(k+\nu-1)\mathcal B_{k-1}^N\\
 &+\bigl(\alpha+d_{N,k}^{(\nu)}\bigr)\mathcal B_k^N\\
 &-(k+1)(N-k+1-\nu)\mathcal B_{k+1}^N.
\end{aligned}
\]
Conjugation by $\Sigma_N$ changes the two off-diagonal signs, which gives the
displayed entries of $\mathbf J_{N,\nu}(\alpha)$.  The monomial action
\[
 \JacOp_{\nu}y^r
 =r(r+1)y^r-4r(r-1+\nu)y^{r-1}
\]
is triangular, so the spectrum on \(\R[y]_{\le N}\) is
\(r(r+1)\), \(0\le r\le N\).  The neighboring off-diagonal entries have positive product for
\(0<\nu<2\), which gives a positive diagonal symmetrization.

For the final bound,
\[
 d_{N,k}^{(\nu)}+d_{N,k+1}^{(\nu)}
 =4Nk+2N(\nu+1)-4k^2-4\nu k-2\nu.
\]
This is concave in \(k\), so its minimum on \(0\le k\le N-1\) occurs at an
endpoint.  The two endpoint values are
\[
 2\bigl(N(\nu+1)-\nu\bigr),
 \qquad
 2\bigl((3-\nu)N+\nu-2\bigr),
\]
and each is at least \(2\) for \(N\ge1\) and \(0<\nu<2\).
\end{proof}

The estimate \eqref{eq:jacobi-neighboring-diagonal-bound} will be used in
Section~\ref{sec:zero-orbit-factors} to prove positivity of the first
off-diagonal entries of the fourth-order orbit factors.

For later use, let $D_{N,\nu}$ be the unique positive diagonal matrix
normalized by $(D_{N,\nu})_{0,0}=1$ and, for $0\le k\le N-1$, by
\[
 \left(
  \frac{(D_{N,\nu})_{k+1,k+1}}{(D_{N,\nu})_{k,k}}
 \right)^2
 =\frac{(k+1)(N-k+1-\nu)}{(N-k)(k+\nu)}.
\]
For $N=0$ this condition is vacuous.  We write
\[
 \mathbf J_{N,\nu}^{\mathrm{sym}}
 =D_{N,\nu}^{-1}\mathbf J_{N,\nu}(0)D_{N,\nu}.
\]
By Lemma~\ref{lem:bernstein-jacobi-matrix}, this is a symmetric Jacobi
matrix.

Proposition~\ref{prop:interval-root-monotonicity} applies to the zero-orbit
configurations whose finite Jacobi matrices are totally nonnegative.  A
possible unpaired outer real pair instead requires direct analysis on the
endpoint pencil.

\section{Zero-orbit factors and total nonnegativity}
\label{sec:zero-orbit-factors}

The zeros of an even polynomial with real coefficients are stable under
$\rho\mapsto-\rho$ and $\rho\mapsto\bar\rho$.  We group the zeros,
counted with multiplicity, into orbits under these two symmetries.  Each
\emph{zero orbit} is a real pair $\{\pm a\}$, a purely imaginary pair
$\{\pm ib\}$, or a reflected nonreal quartet
$\{\pm(a+ib),\pm(a-ib)\}$.  We choose $a\ge0$ for a real pair and
$b>0$ for a purely imaginary pair; for a genuinely nonreal quartet we take
$a,b>0$.  The degenerate real pair $a=0$ represents one factor $u^2$, and
repeated zeros correspond to repeated orbit factors.

Put $\lambda=u^2-1/4$.  The corresponding monic real even \emph{orbit
polynomials} are
\[
 \begin{aligned}
 \{\pm a\}:&\quad
 u^2-a^2
 =\lambda+\left(\frac14-a^2\right),\\
 \{\pm ib\}:&\quad
 u^2+b^2
 =\lambda+\left(\frac14+b^2\right),\\
 \{\pm(a+ib),\pm(a-ib)\}:&\quad
 ((u-a)^2+b^2)((u+a)^2+b^2)
 =\left(\lambda+\frac14-a^2+b^2\right)^2+4a^2b^2.
 \end{aligned}
\]
A real pair is called \emph{outer} when $a>1/2$, equivalently when both
zeros lie outside $S_{1/2}$.  If an orbit polynomial is $h(\lambda)$, its
\emph{Jacobi spectral factor} is obtained by replacing $\lambda$ with
$\JacOp_\nu$, namely $h(\JacOp_\nu)$, because $u=r+1/2$ gives
$\lambda=r(r+1)$ on the Jacobi spectrum.

The four orbit configurations and their finite-matrix treatments are
summarized below.
In the table, $\mathfrak a,\alpha,\beta,\mathfrak s,\mathfrak c$ are local real parameters
(with $\mathfrak c>0$);
the orbit-dependent formulas are introduced in the corresponding
subsections.

\begin{center}
\small
\renewcommand{\arraystretch}{1.18}
\begin{tabular}{@{}>{\raggedright\arraybackslash}p{0.20\textwidth}>{\raggedright\arraybackslash}p{0.31\textwidth}>{\raggedright\arraybackslash}p{0.11\textwidth}>{\raggedright\arraybackslash}p{0.21\textwidth}@{}}
\hline
orbit configuration & spectral factor & bandwidth & main tool\\
\hline
real pair in $S_{1/2}$, or a purely imaginary pair
& $\JacOp_{\nu}+\mathfrak a$
& $1$
& tridiagonal total nonnegativity\\
reflected nonreal quartet
& $(\JacOp_{\nu}+\mathfrak s)^2+\mathfrak c^2$
& $2$
& pentadiagonal total nonnegativity\\
two outer real pairs
& $(\JacOp_{\nu}+\alpha)(\JacOp_{\nu}+\beta)$
& $2$
& divided-difference criterion\\
one unpaired outer real pair
& $\JacOp_{\nu}+\alpha$
& $1$
& direct endpoint-pencil calculation\\
\hline
\end{tabular}
\end{center}

Recall that $\mathcal E_{N,q}$ is the interval-defect class from
Section~\ref{sec:sampling-framework}, and that
Proposition~\ref{prop:interval-root-monotonicity} converts nonsingular
checkerboard TN into preservation of every such class.  We call $S_{1/2}$ the
\emph{central strip}.  The first three configurations preserve every
$\mathcal E_{N,q}$ when, respectively,
\[
 \mathfrak a\ge0,\qquad |a|\le1,\qquad
 (\alpha,\beta)\in[-3/4,0]^2.
\]
The last range is the square required in
Section~\ref{sec:endpoint-sturm}; pairing the outer real orbits leaves at most
one unpaired pair.  Its factor need not preserve the whole one-defect class
and is therefore treated directly on the endpoint pencil.

\subsection{Second-order strip factors}

The checkerboard-conjugated Bernstein matrices of the second-order factors
are totally nonnegative; the inverse theorem then gives the required
variation-diminishing property.  In the notation above, a real pair
$\{\pm a\}\subseteq S_{1/2}$ contributes
\[
 \mathfrak a=\frac14-a^2\ge0,
\]
whereas a purely imaginary pair $\{\pm ib\}$ contributes
\[
 \mathfrak a=\frac14+b^2>0.
\]
Thus both orbit types reduce to the following second-order statement.

\begin{lemma}[Second-order preservation of every defect class]
\label{lem:second-order-all-defects}
Let \(0<\nu<2\) and \(\mathfrak a\ge0\).  Then, for every
\(N,q\ge0\),
\[
 (\JacOp_{\nu}+\mathfrak a)(\mathcal E_{N,q})
 \subseteq\mathcal E_{N,q}.
\]
If \(\mathfrak a>0\), then every nonzero real polynomial \(P\) satisfies
\[
 \mathrm N_{[0,4]}\bigl((\JacOp_{\nu}+\mathfrak a)P\bigr)
 \ge \mathrm N_{[0,4]}(P).
\]
\end{lemma}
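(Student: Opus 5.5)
The plan is to feed Proposition~\ref{prop:interval-root-monotonicity} with $\mathcal A=\JacOp_\nu+\mathfrak a$, first for $\mathfrak a>0$, and then obtain $\mathfrak a=0$ by a limiting argument.

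\emph{Structural hypotheses.} The monomial formula in the proof of Lemma~\ref{lem:bernstein-jacobi-matrix},
\[
 \JacOp_\nu y^r=r(r+1)y^r-4r(r-1+\nu)y^{r-1},
\]
shows that $\JacOp_\nu+\mathfrak a$ leaves every $\R[y]_{\le d}$ invariant. It acts triangularly there, with diagonal entries $r(r+1)+\mathfrak a$. When $\mathfrak a>0$ all of these are positive, so each restriction is invertible.

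\emph{Total nonnegativity for $\mathfrak a>0$.} By Lemma~\ref{lem:bernstein-jacobi-matrix}, $\Sigma_M[\JacOp_\nu+\mathfrak a]_M\Sigma_M=\mathbf J_{M,\nu}(\mathfrak a)$ is tridiagonal, and its entries are nonnegative:
\begin{itemize}
\item the diagonal entries $\mathfrak a+d^{(\nu)}_{M,k}$ are positive, since $d^{(\nu)}_{M,k}\ge0$;
\item the off-diagonal entries $(M-k)(k+\nu)$ and $(k+1)(M-k+1-\nu)$ are positive for $0<\nu<2$ and $0\le k\le M-1$.
\end{itemize}
By Lemma~\ref{lem:tridiagonal-TN}, it then suffices to show that every contiguous principal minor is nonnegative.

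A contiguous principal submatrix of $\mathbf J_{M,\nu}(0)$ keeps the positive off-diagonal products. It is therefore positively diagonally similar to a principal submatrix of the symmetric matrix $\mathbf J^{\mathrm{sym}}_{M,\nu}$, and its determinant is the corresponding principal minor of $\mathbf J^{\mathrm{sym}}_{M,\nu}$. That matrix is similar to $\mathbf J_{M,\nu}(0)$, whose spectrum $0,2,\dots,M(M+1)$ is nonnegative, so it is positive semidefinite. Adding $\mathfrak a I$ makes it positive definite. By Cauchy interlacing (or simply positive definiteness), every principal minor of $\mathbf J^{\mathrm{sym}}_{M,\nu}+\mathfrak aI$ is positive. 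Hence all contiguous principal minors of $\mathbf J_{M,\nu}(\mathfrak a)$ are positive, and its determinant is nonzero.

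So the checkerboard Bernstein matrix is nonsingular and TN for every $M$. Proposition~\ref{prop:interval-root-monotonicity} then gives both the inequality $\mathrm N_{[0,4]}((\JacOp_\nu+\mathfrak a)P)\ge\mathrm N_{[0,4]}(P)$ and the inclusion $(\JacOp_\nu+\mathfrak a)(\mathcal E_{N,q})\subseteq\mathcal E_{N,q}$.

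\emph{The case $\mathfrak a=0$.} Here $\JacOp_\nu$ kills constants, so the invertibility hypothesis fails; this is the main obstacle. I would argue by continuity. For $P\in\mathcal E_{N,q}$, we have $(\JacOp_\nu+\varepsilon)P\in\mathcal E_{N,q}$ for every $\varepsilon>0$, and these polynomials converge coefficientwise to $\JacOp_\nu P$ as $\varepsilon\to0^+$. Since $\mathcal E_{N,q}$ is closed in $\R^{N+1}$ by Lemma~\ref{lem:generic-approximation}, the limit $\JacOp_\nu P$ lies in $\mathcal E_{N,q}$.

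Two points need checking in this limit:
\begin{itemize}
\item \emph{Degree drop.} Membership in $\mathcal E_{N,q}$ is measured relative to the actual degree. If $P$ is constant, then $\JacOp_\nu P=0$, which belongs to the class by definition. Otherwise the degree is preserved, since $r(r+1)>0$ for $r\ge1$.
\item \emph{Closedness across degrees.} Closedness in $\R^{N+1}$ already handles the limit argument. The reason is that a nonzero polynomial of degree $d$ with fewer than $d-q$ zeros in $[0,4]$ remains so under small perturbations, even when the perturbation may increase the degree up to $N$.
\end{itemize}

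The root-count inequality is claimed only for $\mathfrak a>0$, so the argument for $\mathfrak a=0$ needs nothing further.
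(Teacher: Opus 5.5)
Your proposal is correct and follows essentially the same route as the paper: identify the checkerboard Bernstein matrix as $\mathbf J_{M,\nu}(\mathfrak a)$, use the positive diagonal symmetrization and positive definiteness for $\mathfrak a>0$ to get positive contiguous principal minors, then invoke Lemma~\ref{lem:tridiagonal-TN} and Proposition~\ref{prop:interval-root-monotonicity}, and reach $\mathfrak a=0$ through $\JacOp_\nu+\varepsilon$ and the closedness of $\mathcal E_{N,q}$. Your explicit verification, via the triangular monomial action, that the operator preserves each $\R[y]_{\le d}$ and is invertible there is a hypothesis check that the paper leaves implicit.
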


\begin{proof}
By Lemma~\ref{lem:bernstein-jacobi-matrix}, the checkerboard Bernstein
matrix is
\[
 \Sigma_M[\JacOp_{\nu}+\mathfrak a]_M\Sigma_M
 =\mathbf J_{M,\nu}(\mathfrak a),
\]
where \(\mathbf J_{M,\nu}(\mathfrak a)\) is a nonnegative tridiagonal matrix with positive off-diagonal entries.  Its eigenvalues are
\(\mathfrak a+\ell(\ell+1)\), \(0\le\ell\le M\).  If
\(\mathfrak a>0\), a positive diagonal symmetrization is positive definite,
so every contiguous principal minor is positive.  Lemma~\ref{lem:tridiagonal-TN} shows that
\(\mathbf J_{M,\nu}(\mathfrak a)\) is nonsingular and totally
nonnegative.  Proposition~\ref{prop:interval-root-monotonicity} now gives the
root-count inequality and preservation of every \(\mathcal E_{N,q}\).

For \(\mathfrak a=0\), approximate the operator by
\(\JacOp_{\nu}+\varepsilon\), \(\varepsilon\downarrow0\).  The classes
\(\mathcal E_{N,q}\) are closed by Lemma~\ref{lem:generic-approximation}, so
the preservation statement passes to the limit.
\end{proof}

\subsection{Fourth-order orbit factors}

We now treat the two fourth-order configurations: a reflected nonreal
quartet and a product of two outer real-pair factors.  The shifted-minor
formulas below provide the scalar input needed for the banded-matrix
criterion.

If
\(I=\{i,i+1,\ldots,i+\ell-1\}\) and
\(J=\{i+s,i+s+1,\ldots,i+s+\ell-1\}\), we call
\(\det A[I\mid J]\) an \(s\)-shifted solid minor; both index sets are
consecutive and $s$ records their relative displacement.  For the two
fourth-order configurations, the nontrivial shifted-minor signs reduce
respectively to a cosine partial product and to a divided difference.  The
banded-matrix criteria below combine these signs with the principal-minor data
to give total nonnegativity.  The reflected-quartet range obtained below is
sharp.

\begin{lemma}[Shifted cofactor identity]
\label{lem:general-shifted-cofactor}
Let \(n\ge1\), and let \(A\) be an \(n\times n\) symmetric Jacobi matrix with positive
off-diagonal entries \(\kappa_0,\ldots,\kappa_{n-2}\), and let its
eigenvalues be \(\lambda_0\le\cdots\le\lambda_{n-1}\).  For
\(\zeta,\xi\in\C\), put
\[
 D_A(w)=\det(A+wI_n)=\prod_{j=0}^{n-1}(\lambda_j+w).
\]
In the formula below, the quotient
$(D_A(\xi)-D_A(\zeta))/(\xi-\zeta)$ is the first divided difference of
$D_A$; when $\zeta=\xi$, it is interpreted as the diagonal value
$D_A'(\zeta)$.  An empty product and a $0\times0$ determinant both equal $1$.
\begin{align}
 &\det\!\left(
   \bigl((A+\zeta I_n)(A+\xi I_n)\bigr)
   [0,\ldots,n-2\mid1,\ldots,n-1]
  \right) \notag\\
 &\hspace{35mm}=
 \left(\prod_{j=0}^{n-2}\kappa_j\right)
 \frac{D_A(\xi)-D_A(\zeta)}{\xi-\zeta}.
 \label{eq:general-shifted-cofactor}
\end{align}
\end{lemma}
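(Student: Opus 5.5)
We prove \eqref{eq:general-shifted-cofactor} by viewing both sides as polynomials in \(\zeta,\xi\) and reducing to a statement about a single variable. Put \(B=(A+\zeta I_n)(A+\xi I_n)\). The submatrix \(B[0,\ldots,n-2\mid1,\ldots,n-1]\) is, up to sign, the \((n-1,0)\) cofactor of \(B\). Since \(A\) is symmetric, so is \(B\), and the required minor equals \((-1)^{n-1}\) times the \((0,n-1)\) entry of \(\operatorname{adj}(B)\). For generic \(\zeta,\xi\) the matrix \(B\) is invertible, and
\[
 B^{-1}=(A+\zeta I)^{-1}(A+\xi I)^{-1}
 =\frac{(A+\zeta I)^{-1}-(A+\xi I)^{-1}}{\xi-\zeta},
\]
by the resolvent identity. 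Also \(\det B=D_A(\zeta)D_A(\xi)\). Hence
\[
 \operatorname{adj}(B)_{0,n-1}
 =\frac{D_A(\zeta)D_A(\xi)}{\xi-\zeta}
 \Bigl[(A+\zeta I)^{-1}_{0,n-1}-(A+\xi I)^{-1}_{0,n-1}\Bigr].
\]

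Next we compute the corner entry of a single resolvent. For a tridiagonal \(A\), the \((n-1,0)\) minor of \(A+wI\) is the determinant of an upper triangular matrix whose diagonal consists of the off-diagonal entries \(\kappa_0,\ldots,\kappa_{n-2}\). Therefore
\[
 (A+wI)^{-1}_{0,n-1}
 =(-1)^{n-1}\frac{\prod_j\kappa_j}{D_A(w)}.
\]
Substituting this for \(w=\zeta\) and \(w=\xi\) gives
\[
 \operatorname{adj}(B)_{0,n-1}
 =(-1)^{n-1}\Bigl(\prod_j\kappa_j\Bigr)\frac{D_A(\xi)-D_A(\zeta)}{\xi-\zeta}.
\]
The sign \((-1)^{n-1}\) cancels the cofactor sign, which yields \eqref{eq:general-shifted-cofactor} on the dense set where \(B\) is invertible and \(\zeta\ne\xi\).

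To finish, note that both sides are polynomials in \((\zeta,\xi)\). The right side is a polynomial because the divided difference of a polynomial is a polynomial, and on the diagonal it equals \(D_A'(\zeta)\). Polynomial identity on a dense set then extends the formula to all \(\zeta,\xi\in\C\), including the diagonal case. The edge case \(n=1\) is trivial: the minor is empty and equals \(1\), the product is empty, and \((D_A(\xi)-D_A(\zeta))/(\xi-\zeta)=1\) because \(D_A(w)=\lambda_0+w\).

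The only delicate point is the sign bookkeeping in the cofactor/adjugate step, specifically matching the row and column convention \([0,\ldots,n-2\mid1,\ldots,n-1]\) with the \((n-1,0)\) cofactor. We would check this convention directly on \(n=2\). Symmetry of \(A\) guarantees that the convention choice does not affect the value.
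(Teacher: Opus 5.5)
Your proof is correct and follows the paper's argument essentially step for step: the resolvent identity for $\bigl((A+\zeta I_n)(A+\xi I_n)\bigr)^{-1}$, the corner entry $(-1)^{n-1}\kappa_0\cdots\kappa_{n-2}/D_A(w)$ of the Jacobi resolvent, the cofactor description of the shifted minor, and polynomial continuation to singular factors and to $\zeta=\xi$. One cosmetic slip: deleting row $n-1$ and column $0$ of $A+wI_n$ leaves a lower (not upper) triangular matrix with diagonal $\kappa_0,\ldots,\kappa_{n-2}$, which does not change the determinant.
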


\begin{proof}
Assume first that the two factors are invertible and \(\zeta\ne\xi\).  The
resolvent identity gives
\[
 ((A+\zeta I_n)(A+\xi I_n))^{-1}
 =\frac{(A+\zeta I_n)^{-1}-(A+\xi I_n)^{-1}}{\xi-\zeta}.
\]
For a Jacobi matrix,
\[
 ((A+wI_n)^{-1})_{0,n-1}
 =(-1)^{n-1}\frac{\kappa_0\cdots\kappa_{n-2}}{D_A(w)}.
\]
Combining these formulas with the cofactor formula for the
\((0,n-1)\)-entry of the inverse gives
\eqref{eq:general-shifted-cofactor}.  After the divided difference is assigned
its diagonal value, both sides are polynomial in \(\zeta,\xi\).  The identity
therefore extends to singular factors and to \(\zeta=\xi\) by continuity.
For \(n=1\), it reduces to the identity \(1=1\) under the conventions in
the statement.
\end{proof}

\subsubsection{Nonreal quartets}

For \(a\in\R\), \(b>0\), and \(0<\nu<2\), define
\[
 \mathfrak s_{a,b}=\frac14-a^2+b^2,
 \qquad
 \mathfrak c_{a,b}=2ab,
\]
\[
 \mathscr A_{a,b}^{(\nu)}
 =(\JacOp_{\nu}+\mathfrak s_{a,b})^2+\mathfrak c_{a,b}^2,
 \qquad
 \mathbf K_{N,\nu}(a,b)
 =\mathbf J_{N,\nu}(\mathfrak s_{a,b})^2
  +\mathfrak c_{a,b}^2I_{N+1}.
\]
Thus
\begin{equation}\label{eq:sharp-quartet-bernstein}
 \Sigma_N[\mathscr A_{a,b}^{(\nu)}]_N\Sigma_N
 =\mathbf K_{N,\nu}(a,b).
\end{equation}
A single second-order orbit factor has a tridiagonal checkerboard matrix.
Combining the two conjugate factors of a reflected quartet produces the square
of that tridiagonal matrix plus a scalar identity term, and therefore raises
the bandwidth to two.  The pentadiagonal argument below is forced by this
zero-orbit geometry; the standard tridiagonal criterion no longer applies.
The operator is exactly the spectral factor associated with the zero quartet
\(\pm(a+ib),\pm(a-ib)\); the parameter \(\nu\) selects the endpoint
quotient basis but does not change the spectral values \(j(j+1)\).
Here a reflected nonreal quartet means \(a>0\) and \(b>0\), so
\(\mathfrak c_{a,b}>0\).  Allowing
\(a<0\) in the algebraic parametrization only records the symmetry
\(a\mapsto-a\), since the matrices below depend on \(a^2\).  When
\(a=0\), the factor is the square of the imaginary-pair factor; formulas that
include \(a=0\) are interpreted by taking the limit $a\to0$.

The shifted-cofactor identity first reduces the relevant model minors to
partial products in Euler's product for the cosine function.  Eigenvalue
monotonicity transfers their signs from the full Jacobi spectrum to arbitrary
consecutive principal blocks, and the pentadiagonal criterion then promotes
the shifted-minor signs to total nonnegativity.

\begin{lemma}[Cosine partial-product sign]
\label{lem:cosine-partial-product}
Let \(b>0\) and \(a>0\).  For \(n\ge1\), put
\[
 \operatorname{ang}_n(a,b)=
 \sum_{j=0}^{n-1}\arg
 \left(\left(j+\frac12\right)^2-(a-ib)^2\right),
\]
where each argument is the unique value in \((0,\pi)\); the sum is not reduced
modulo \(2\pi\).  Then
\[
 0<\operatorname{ang}_n(a,b)<\pi
 \qquad(0<a\le1).
\]
If \(a>1\), then \(\operatorname{ang}_n(a,b)\in(\pi,2\pi)\) for at least one \(n\).
Consequently,
\begin{equation}\label{eq:cosine-product-positive}
 \frac{1}{2ab}\Im\prod_{j=0}^{n-1}
 \left(\left(j+\frac12\right)^2-(a-ib)^2\right)>0
 \qquad(0<a\le1),
\end{equation}
whereas for \(a>1\) the left-hand side is negative for at least one \(n\).
\end{lemma}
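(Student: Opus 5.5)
The plan is to reduce the sign statements to the angle statements, and to prove the angle statements by factoring each quadratic term into two linear factors and telescoping the arguments.

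\emph{Reduction.} Each factor
\[
 w_j=\left(j+\tfrac12\right)^2-(a-ib)^2=\left(j+\tfrac12\right)^2-a^2+b^2+2iab
\]
has imaginary part $2ab>0$, so $\arg w_j\in(0,\pi)$. Hence $\operatorname{ang}_n(a,b)$ is an honest argument of $\prod_{j<n}w_j$, and
\[
 \Im\prod_{j<n}w_j=\Bigl|\prod_{j<n} w_j\Bigr|\sin\operatorname{ang}_n(a,b).
\]
The inequality \eqref{eq:cosine-product-positive} therefore follows from $\operatorname{ang}_n\in(0,\pi)$, and negativity follows from $\operatorname{ang}_n\in(\pi,2\pi)$. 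The lower bound $\operatorname{ang}_n>0$ is immediate, since every summand is positive.

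\emph{Telescoping.} Factor
\[
 w_j=\bigl(x_j+ib\bigr)\bigl(y_j-ib\bigr),\qquad x_j=j+\tfrac12-a,\quad y_j=j+\tfrac12+a,
\]
and put $g(x)=\arg(x+ib)\in(0,\pi)$. The function $g$ is strictly decreasing in $x$. Since $y_j>0$ and the total lies in $(0,\pi)$, the arguments add without a $2\pi$ correction, and
\[
 \arg w_j=g(x_j)-g(y_j).
\]
Observe that $y_j=x_j+2a$ and $x_{j+2}=x_j+2$.

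\emph{Case $0<a\le1$.} Here $y_j\le x_{j+2}$, so $g(y_j)\ge g(x_{j+2})$. Therefore
\[
 \operatorname{ang}_n\le\sum_{j<n}g(x_j)-\sum_{j<n}g(x_{j+2})=g(x_0)+g(x_1)-g(x_n)-g(x_{n+1})<g(x_0)+g(x_1).
\]
By the reflection $g(-x)=\pi-g(x)$ and monotonicity of $g$, we have $g(x)+g(x')\le\pi$ whenever $x+x'\ge0$. Since $x_0+x_1=2-2a\ge0$, this gives $g(x_0)+g(x_1)\le\pi$, hence $\operatorname{ang}_n<\pi$.

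\emph{Case $a>1$.} Write $2a=2+\delta$ with $\delta>0$. Then $g(y_j)=g(x_{j+2}+\delta)<g(x_{j+2})$, so
\[
 \operatorname{ang}_n>g(x_0)+g(x_1)-g(x_n)-g(x_{n+1}).
\]
As $n\to\infty$, $g(x_n)\to0$ and $g(x_{n+1})\to0$. Since $x_0+x_1=2-2a<0$, the same reflection argument gives the strict inequality $g(x_0)+g(x_1)>\pi$. Consequently $\operatorname{ang}_n>\pi$ for all large $n$.

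To land in $(\pi,2\pi)$, use the increments. We have $\operatorname{ang}_1\in(0,\pi)$, and each step $\operatorname{ang}_{n+1}-\operatorname{ang}_n=\arg w_n$ lies in $(0,\pi)$. Let $n$ be the first index with $\operatorname{ang}_n>\pi$. Then $\operatorname{ang}_{n-1}\le\pi$, so $\operatorname{ang}_n<\operatorname{ang}_{n-1}+\pi\le2\pi$. Thus $\operatorname{ang}_n\in(\pi,2\pi)$, and the corresponding imaginary part is negative.

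\emph{Main obstacle.} The delicate point is the correct pairing in the telescoping. One must compare $g(y_j)$ with $g(x_{j+2})$, that is, shift the index by $2$, which is exactly where the threshold $a=1$ enters. One must also handle the strict versus non-strict boundary at $a=1$ carefully: there $g(x_0)+g(x_1)=\pi$ exactly, and the strict bound $\operatorname{ang}_n<\pi$ comes from the discarded positive terms $g(x_n)+g(x_{n+1})>0$.
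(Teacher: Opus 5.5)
Your proof is correct, and it takes a genuinely different route from the paper's.

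\emph{The paper's route.} It passes to the infinite sum. By Euler's product $\cos(\pi w)=\prod_{j\ge0}\bigl(1-w^2/(j+\tfrac12)^2\bigr)$, the partial sums $\operatorname{ang}_n(a,b)$ increase to a continuous branch $\operatorname{ang}(a,b)$ of $\arg\cos(\pi(a-ib))$ normalized by $\operatorname{ang}(0,b)=0$. The paper then shows $\partial_a\operatorname{ang}(a,b)=\pi\sinh(2\pi b)/(\cosh(2\pi b)+\cos(2\pi a))>0$. Integrating over $[0,1]$ with $t=\tan(\pi a)$ gives exactly $\operatorname{ang}(1,b)=\pi$. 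So the strictly smaller partial sums stay below $\pi$ when $a\le1$, and eventually exceed $\pi$ when $a>1$. Your final step (the first partial sum above $\pi$ is below $2\pi$, since each increment is below $\pi$) is the same as the paper's.

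\emph{Your route.} Instead of evaluating the limiting angle, you factor $w_j=(x_j+ib)(y_j-ib)$ and telescope with an index shift of two. The threshold then appears directly as the pair of conditions $y_j\le x_{j+2}$ and $x_0+x_1=2-2a\ge0$. The reflection $g(-x)=\pi-g(x)$ takes the place of the integral computation.

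\emph{Comparison.} Your argument is entirely finite: no infinite product, no choice of continuous branch, and no normalization at $a=0$, where the factor arguments leave $(0,\pi)$. For $a>1$ it also gives an effective bound on how large $n$ must be: $\operatorname{ang}_n>\pi$ as soon as $g(x_n)+g(x_{n+1})<g(x_0)+g(x_1)-\pi$. The paper's approach yields more global information in return. It identifies the limit angle as $\arg\cos(\pi(a-ib))$ and proves it is strictly increasing in $a$. This explains the critical value $a=1$ through $\cos(\pi(1-ib))=-\cosh(\pi b)$.
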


\begin{proof}
Each factor has imaginary part \(2ab>0\), so its chosen argument lies in
\((0,\pi)\).  Euler's product \cite[\S4.22]{NISTHandbook}
\[
 \cos(\pi w)=\prod_{j=0}^{\infty}
 \left(1-\frac{w^2}{(j+1/2)^2}\right)
\]
converges locally uniformly.  Hence the sum of the chosen factor arguments converges to the continuous
branch \(\operatorname{ang}(a,b)\) characterized by
\[
 e^{i\operatorname{ang}(a,b)}
 =\frac{\cos(\pi(a-ib))}{|\cos(\pi(a-ib))|},
 \qquad \operatorname{ang}(0,b)=0.
\]
It is strictly increasing in \(a\), because
\[
 \frac{\partial}{\partial a}\operatorname{ang}(a,b)
 =\Im\!\left(-\pi\tan\bigl(\pi(a-ib)\bigr)\right)
 =\frac{\pi\sinh(2\pi b)}
        {\cosh(2\pi b)+\cos(2\pi a)}>0.
\]
Moreover, put \(A=\cosh(2\pi b)>1\), split the integral at
\(a=1/2\), and use \(t=\tan(\pi a)\) on each interval.  Since
\[
 \cos(2\pi a)=\frac{1-t^2}{1+t^2},
 \qquad
 da=\frac{dt}{\pi(1+t^2)},
\]
the two intervals map to \([0,\infty)\) and \((-\infty,0]\), respectively.
Hence
\[
\begin{aligned}
 \int_0^1\frac{da}{\cosh(2\pi b)+\cos(2\pi a)}
 &=\frac1\pi\int_{-\infty}^{\infty}
   \frac{dt}{(A+1)+(A-1)t^2}\\
 &=\frac{1}{\sqrt{A^2-1}}
 =\frac{1}{\sinh(2\pi b)}.
\end{aligned}
\]
It follows that
\[
 \operatorname{ang}(1,b)-\operatorname{ang}(0,b)=\pi.
\]
This also fixes the continuous branch at the endpoint, where
\(\cos(\pi(1-ib))=-\cosh(\pi b)\).  The finite partial sums of the positive factor
arguments are strictly smaller than their infinite sum.  This proves
\(0<\operatorname{ang}_n(a,b)<\pi\) for \(0<a\le1\).  If \(a>1\), then
\(\operatorname{ang}(a,b)>\pi\).  The first partial sum which exceeds \(\pi\) is smaller
than \(2\pi\), because every single factor argument is smaller than \(\pi\).
The assertions about the imaginary parts follow immediately.
\end{proof}

\begin{lemma}[Shifted minors for a reflected quartet]
\label{lem:sharp-quartet-shifted-minors}
Let \(0<\nu<2\), \(0<a\le1\), and \(b>0\).  For every \(N\ge1\),
\(1\le\ell\le N\), and \(0\le i\le N-\ell\),
\[
 \det\mathbf K_{N,\nu}(a,b)
 [i,\ldots,i+\ell-1\mid i+1,\ldots,i+\ell]>0.
\]
The analogous \(-1\)-shifted solid minors are also strictly positive.
If \(a>1\), then for every fixed \(b>0\) and every
\(0<\nu<2\), one of these shifted solid minors is negative for some
matrix size \(N\).
\end{lemma}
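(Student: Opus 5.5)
The plan is to reduce each shifted solid minor of $\mathbf K_{N,\nu}(a,b)$ to a principal block and evaluate it with Lemma~\ref{lem:general-shifted-cofactor}. Write $\mathbf J=\mathbf J_{N,\nu}(\mathfrak s_{a,b})$ and conjugate by $D_{N,\nu}$. Then $\mathbf K$ becomes $S^2+\mathfrak c^2 I$, where $S=\mathbf J^{\mathrm{sym}}_{N,\nu}+\mathfrak s I$. Positive diagonal similarity multiplies each minor by a positive factor, so signs are unchanged. Factor
\[
S^2+\mathfrak c^2I=(S+i\mathfrak cI)(S-i\mathfrak cI).
\]
Fix a solid block with rows $i,\dots,i+\ell-1$ and columns $i+1,\dots,i+\ell$. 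Because $S$ is tridiagonal, the entries of $S^2$ inside the window $\{i,\dots,i+\ell\}$ coincide with the entries of the square of the principal block $A=S[i,\dots,i+\ell\mid i,\dots,i+\ell]$, apart from the two corner diagonal entries. Those entries pick up the extra terms $\kappa_{i-1}^2$ and $\kappa_{i+\ell}^2$ from outside the window. The $+1$-shifted minor uses rows $i..i+\ell-1$ and columns $i+1..i+\ell$, so it omits column $i$ and row $i+\ell$, which are exactly where those corner entries sit. Hence the minor equals the $[0,\dots,\ell-1\mid1,\dots,\ell]$ minor of $(A+i\mathfrak c)(A-i\mathfrak c)$ for an $(\ell+1)\times(\ell+1)$ Jacobi matrix $A$.

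Lemma~\ref{lem:general-shifted-cofactor} with $\zeta=i\mathfrak c$ and $\xi=-i\mathfrak c$ then gives the minor as $\prod\kappa_j$ times
\[
\frac{D_A(-i\mathfrak c)-D_A(i\mathfrak c)}{-2i\mathfrak c}
=\frac{1}{\mathfrak c}\Im D_A(i\mathfrak c)\cdot(\text{sign }+1).
\]
Since $\prod\kappa_j>0$, only the sign of $\Im D_A(i\mathfrak c)$ matters. Write the eigenvalues of $A$ as $\mu_0\le\dots\le\mu_\ell$. The eigenvalues of $A_0=A-\mathfrak sI$, a principal block of $\mathbf J^{\mathrm{sym}}_{N,\nu}$, interlace with the full spectrum $\{r(r+1)\}_{r=0}^N$ by Cauchy interlacing, and repeated interlacing gives $\mu_j-\mathfrak s\ge j(j+1)$. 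So $\Im D_A(i\mathfrak c)=\Im\prod_j(\mu_j+i\mathfrak c)$, and with the identity
\[
(j+\tfrac12)^2-(a-ib)^2=j(j+1)+\mathfrak s+i\mathfrak c
\]
this matches the model product of Lemma~\ref{lem:cosine-partial-product} when $\mu_j=j(j+1)+\mathfrak s$. Each factor argument $\arg(\mu_j+\mathfrak s\!-\!\mathfrak s+i\mathfrak c)$, here meaning $\arg(x_j+i\mathfrak c)$ with $x_j=\mu_j$, lies in $(0,\pi)$ and is decreasing in $x_j$. Raising each eigenvalue therefore lowers the total argument, so
\[
0<\sum_j\arg(\mu_j+i\mathfrak c)\le\operatorname{ang}_{\ell+1}(a,b)<\pi .
\]
The imaginary part of the product is then positive, which proves the strict positivity for $0<a\le1$. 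The $-1$-shifted minors follow by the same argument, or directly by symmetry: transposition of the symmetrized matrix maps them to $+1$-shifted minors.

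For the case $a>1$, I take the full matrix. Here $i=0$ and $\ell=N$, the block has size $N+1$ with no corner corrections, and the eigenvalues are exactly $r(r+1)+\mathfrak s$. The minor's sign is then the sign of the cosine partial product with $n=N+1$. Lemma~\ref{lem:cosine-partial-product} supplies an $n$ with argument in $(\pi,2\pi)$, which gives a negative minor for $N=n-1$.

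The main obstacle is the corner bookkeeping: checking that the omitted row and column exactly absorb the exterior contributions, and that eigenvalue monotonicity runs in the correct direction. That direction is not via the full $N+1$ spectrum, but requires $j$-th block eigenvalue $\ge j(j+1)$, which holds by Cauchy interlacing because the block's $j$-th eigenvalue dominates the $j$-th eigenvalue of the full matrix.
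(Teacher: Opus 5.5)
Your proof is correct. Its architecture matches the paper's:
\begin{itemize}
\item symmetrize by a positive diagonal matrix;
\item localize the $+1$-shifted minor to the window $B=\{i,\ldots,i+\ell\}$ (your corner bookkeeping is the paper's observation that no index outside $B$ is adjacent to both a row of $I$ and a column of $J$);
\item apply Lemma~\ref{lem:general-shifted-cofactor} at a conjugate pair of shifts;
\item use Cauchy interlacing to get $\lambda_j\ge j(j+1)$ and compare with the base spectrum via Lemma~\ref{lem:cosine-partial-product}.
\end{itemize}
The $-1$ shift (by transposition) and the $a>1$ sharpness (via the full block) are handled the same way.

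You genuinely differ in the comparison step.

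\emph{The paper's route.} It proves $\mathcal I_n(\lambda;a,b)=\frac{1}{2ab}\Im\prod_j(\lambda_j+\mathfrak s_{a,b}+i\mathfrak c_{a,b})>0$ on the region $\lambda_j\ge j(j+1)$ by induction on $n$. It uses $\partial\mathcal I_n/\partial\lambda_j=\mathcal I_{n-1}(\ldots\widehat{\lambda_j}\ldots)$, the ordered-deletion bound $\mu_k\ge\lambda_k$, and monotonicity along a segment from the base spectrum.

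\emph{Your route.} Each factor $\mu_j+i\mathfrak c$ has argument in $(0,\pi)$, and that argument strictly decreases as $\mu_j$ grows. Hence the total phase lies in $(0,\operatorname{ang}_{\ell+1}(a,b)]\subset(0,\pi)$, and the imaginary part is positive immediately.

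\emph{Comparison.} Your argument is shorter, needs no induction, and shows directly why $a\le1$ is exactly a condition on the base spectrum: raising eigenvalues only lowers the phase. It does, however, exploit the complex-conjugate structure specific to the reflected quartet. The paper's derivative induction is the template it reuses for the divided difference $\Psi_\ell$ in Lemma~\ref{lem:square-outer-real-pairing}. There the two shifts $\alpha,\beta$ are real, so no phase interpretation is available. The paper's route therefore buys a single method covering both fourth-order configurations.
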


\begin{proof}
The two shifts are equivalent by transposition after the positive diagonal
symmetrization of \(\mathbf J_{N,\nu}(0)\), so consider the
\(+1\)-shift.  Put
\[
 B=\{i,i+1,\ldots,i+\ell\},\qquad n=\ell+1,
\]
and let
\[
 I=B\setminus\{i+\ell\},\qquad J=B\setminus\{i\}.
\]
An intermediate index outside \(B\) cannot be adjacent, in the tridiagonal
support, to one index in each of \(I\) and \(J\).  Hence the shifted
submatrix receives no two-step contribution through an index outside \(B\),
and therefore
\[
 \mathbf K_{N,\nu}(a,b)[I\mid J]
 =\left(
   \bigl(\mathbf J_{N,\nu}(0)[B,B]+\mathfrak s_{a,b}I_B\bigr)^2
   +\mathfrak c_{a,b}^{\,2}I_B
  \right)[I\mid J],
\]
where \(I_B\) is the identity on the coordinates indexed by \(B\).
After passing to positive diagonal symmetrization,
\(\mathbf J_{N,\nu}(0)[B,B]\) becomes a symmetric Jacobi matrix
\(A_B\) with positive off-diagonal entries.  Write its eigenvalues as
\[
 \lambda_0\le\cdots\le\lambda_{n-1}.
\]
Cauchy interlacing---the eigenvalues of a principal submatrix interlace
those of the full symmetric matrix---applied to the full symmetric form of
\(\mathbf J_{N,\nu}(0)\), whose spectrum is \(j(j+1)\), gives
\begin{equation}\label{eq:sharp-lambda-lower-bound}
 \lambda_j\ge j(j+1),\qquad 0\le j\le n-1.
\end{equation}
Lemma~\ref{lem:general-shifted-cofactor}, with
\(\zeta=\mathfrak s_{a,b}-i\mathfrak c_{a,b}\) and
\(\xi=\mathfrak s_{a,b}+i\mathfrak c_{a,b}\), therefore reduces its sign, up
to a positive symmetrizing factor, to
\[
 \mathcal I_n(\lambda;a,b)=
 \frac{1}{2ab}\Im\prod_{j=0}^{n-1}
 \left(\lambda_j+\frac14-a^2+b^2+2iab\right).
\]
We prove \(\mathcal I_n>0\) by induction on \(n\).  The case \(n=1\) gives
\(\mathcal I_1=1\).  For \(n\ge2\),
\[
 \frac{\partial\mathcal I_n}{\partial\lambda_j}
 =\mathcal I_{n-1}(\lambda_0,\ldots,\widehat{\lambda_j},\ldots,
             \lambda_{n-1};a,b).
\]
After deleting \(\lambda_j\), write the remaining ordered eigenvalues as
\(\mu_0\le\cdots\le\mu_{n-2}\).  Then \(\mu_k\ge\lambda_k\ge k(k+1)\),
so every displayed partial derivative is positive by induction.  The line segment
\[
 \lambda_j(u)=(1-u)j(j+1)+u\lambda_j,
 \qquad 0\le u\le1,
\]
preserves both the ordering of the $\lambda_j(u)$ and the lower bounds
$\lambda_j(u)\ge j(j+1)$.  Along this segment, \(\mathcal I_n(\lambda;a,b)\) is bounded below by its value at
\(\lambda_j=j(j+1)\).  At that point,
\[
 j(j+1)+\frac14-a^2+b^2+2iab
 =\left(j+\frac12\right)^2-(a-ib)^2,
\]
and positivity follows from Lemma~\ref{lem:cosine-partial-product}.

For sharpness, choose \(n\) as in
Lemma~\ref{lem:cosine-partial-product} and set \(N=n-1\).  For the full block
\(B=\{0,\ldots,N\}\), the eigenvalues are exactly \(j(j+1)\), independently
of \(\nu\).  The shifted cofactor identity then gives a negative shifted
solid minor.
\end{proof}

\subsubsection{A pentadiagonal TN criterion}

General determinantal criteria for totally nonnegative pentadiagonal matrices
are available; see, for example, \cite{AdmGarloffPentadiagonal}.  The criterion
below is tailored to the positively symmetrizable Jacobi blocks arising here
and reduces the verification to principal and $\pm1$-shifted solid minors.

The next elementary propagation lemma is based on the three-term Pl\"ucker
relation and is in the spirit of the classical minor-reduction criteria of
Fekete and Whitney; compare
\cite{KarlinTP,Whitney1952,FominZelevinskyTP}.  For an $m\times k$ matrix with
$m\ge k$, a \emph{maximal minor} is a $k\times k$ minor, hence it uses all $k$
columns.

\begin{lemma}[Pl\"ucker propagation for maximal minors]
\label{lem:whitney-fekete-propagation}
Let \(M\) be an \(m\times k\) real matrix with \(m\ge k\), with rows
indexed by $0,1,\ldots,m-1$ and columns by $0,1,\ldots,k-1$.  Assume that
\begin{enumerate}[label=\textup{(\roman*)}]
\item every maximal minor of \(M\) using \(k\) consecutive rows is positive, and
\item every \((k-1)\)-minor of the submatrix formed by the last \(k-1\) columns
of \(M\), with arbitrary \((k-1)\) rows, is positive.
\end{enumerate}
Then every maximal minor of \(M\) is positive.
\end{lemma}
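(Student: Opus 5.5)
The plan is to prove the lemma by induction on the number $m-k$ of surplus rows, with the three-term Plücker relation as the engine. For row sets of size $k$, write $\Delta(R)=\det M[R\mid 0,\ldots,k-1]$. If $m=k$, the only maximal minor uses consecutive rows, so (i) gives the claim. In general, I would induct on the \emph{gap} of a row set $R=\{r_1<\cdots<r_k\}$, defined by $g(R)=r_k-r_1-(k-1)$. Sets with $g(R)=0$ are covered by (i).

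For $g(R)>0$, choose a missing index $p$ with $r_1<p<r_k$ and $p\notin R$. Put $S=R\setminus\{r_1,r_k\}$, which has $k-2$ elements. Consider the four rows $r_1<p<r_k$ together with $S$. I would use the three-term Plücker (short Grassmann--Plücker) relation for $k\times k$ minors of an $m\times k$ matrix:
\[
 \Delta(S\cup\{p,r_k\})\,\Delta(S\cup\{r_1,x\})
 \;-\;\dots
\]
This form is not quite right for $k$-sets, since it needs $k-1$ rows plus a choice of two. The correct tool is the $k\times k$ Plücker relation built on a common $(k-1)$-set. So the plan switches to Sylvester's/Desnanot--Jacobi identity, applied to the $(k+1)\times k$ matrix formed by the rows $R\cup\{p\}$ and bordered suitably.

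Concretely, the approach is to use the identity for a $(k+1)$-row submatrix with rows $a<b<c$ plus $S$, where each such pair differs only in which rows are kept:
\[
 \Delta(S\cup\{a,c\})\,\Delta'(S\cup\{b\})
 =\Delta(S\cup\{a,b\})\,\Delta'(S\cup\{c\})
 +\Delta(S\cup\{b,c\})\,\Delta'(S\cup\{a\}).
\]
Here $\Delta'(T)$ denotes the $(k-1)$-minor using rows $T$ and the last $k-1$ columns. This is the classical identity obtained by expanding along the first column. Sign bookkeeping is needed: the signs depend on the relative positions of $a,b,c$ within the ordered set. With $a<b<c$, the alternating-sign expansion produces exactly this positive form, which I would verify by Laplace expansion of the $(k+1)\times(k+1)$ determinant $[\,M[\cdot\mid 0]\ \ M[\cdot\mid 0]\ \ M[\cdot\mid 1..k-1]\,]$. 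That determinant vanishes because it has two equal columns, and expanding it gives the relation.

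I would take $a=r_1$, $b=p$ and $c=r_k$. Both $S\cup\{r_1,p\}$ and $S\cup\{p,r_k\}$ have strictly smaller gap than $R$, so their $\Delta$'s are positive by induction. All the $\Delta'$'s are positive by (ii). Hence $\Delta(R)=\bigl(\text{positive}\bigr)/\Delta'(S\cup\{p\})>0$.

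The main obstacle is getting the sign convention in the three-term identity exactly right; I would check it on $k=1$ (where it reads $x_a\cdot1=\dots$ trivially) and $k=2$ directly. A secondary point is confirming that the gap really decreases: replacing $r_k$ by $p$, or $r_1$ by $p$, shrinks the span of the row set, and I would verify this in each case.
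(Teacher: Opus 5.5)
Your argument is the paper's proof: induct on the span (your gap) of the row set, insert a missing index $p$ with $r_1<p<r_k$, apply the three-term Pl\"ucker relation with $S=R\setminus\{r_1,r_k\}$, and conclude because both $k$-minors on the right have smaller span while every $\Delta'$-factor is positive by (ii). The only slip is your suggested verification of that relation. Expanding $\det\bigl[\,M[T\mid 0]\ \ M[T\mid 0]\ \ M[T\mid 1,\ldots,k-1]\,\bigr]$, with $T=S\cup\{a,b,c\}$, yields only the $(k+1)$-term relation $\sum_{t\in T}\pm M_{t,0}\,\Delta(T\setminus\{t\})=0$, which has no $\Delta'$ factors. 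Instead, replace the first column by the vector $w$ with $w_t=(-1)^{\sigma(t)}\Delta'(S\cup\{t\})$ for $t\in\{a,b,c\}$, where $\sigma(t)=\lvert\{s\in S:s<t\}\rvert$, and $w_t=0$ for $t\in S$. Expanding each $\Delta'$ along row $t$ shows that $w$ lies in the span of the last $k-1$ columns, so $\det\bigl[\,w\mid M[T\mid 0,\ldots,k-1]\,\bigr]=0$, and expanding along $w$ gives exactly your identity with the signs you wrote.
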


\begin{proof}
The case \(k=1\) is immediate.  For \(k\ge2\), write
\[
 \Delta_0(i_1,\ldots,i_k)
 =\det M[\{i_1<\cdots<i_k\}\mid0,1,\ldots,k-1]
\]
and
\[
 \Delta_1(j_1,\ldots,j_{k-1})
 =\det M[\{j_1<\cdots<j_{k-1}\}\mid1,\ldots,k-1].
\]
Let \(I_0\) be a \((k-2)\)-row set and let
\(i_-<i_0<i_+\) lie outside \(I_0\).  The three-term Pl\"ucker relation gives
\begin{align}
 &\Delta_0(I_0\cup\{i_-,i_+\})\,
  \Delta_1(I_0\cup\{i_0\}) \notag\\
 &\quad=
 \Delta_0(I_0\cup\{i_-,i_0\})\,
  \Delta_1(I_0\cup\{i_+\})
 +\Delta_0(I_0\cup\{i_0,i_+\})\,
  \Delta_1(I_0\cup\{i_-\}).
 \label{eq:sharp-whitney-propagation}
\end{align}
This is the standard three-term Pl\"ucker relation for the first column and
the last \(k-1\) columns of \(M\), restricted to the displayed rows.

Induct on the span of a \(k\)-row set \(I=\{i_1<\cdots<i_k\}\).  Consecutive
sets are covered by hypothesis~\textup{(i)}.  Otherwise choose
\(i_0\in(i_1,i_k)\setminus I\) and apply
\eqref{eq:sharp-whitney-propagation} with
\(I_0=I\setminus\{i_1,i_k\}\).  The two \(\Delta_0\)-minors on the right
have smaller span, and every \(\Delta_1\)-factor is positive by
hypothesis~\textup{(ii)}.  Hence \(\Delta_0(I)>0\).
\end{proof}

A \emph{column-quasi-initial minor} of order $k$ uses the initial $k$
columns and arbitrary $k$ rows; a \emph{row-quasi-initial minor} is defined
with rows and columns interchanged.  The following standard criterion explains
why these two families, together with leading principal minors, suffice.

\begin{lemma}[Gasca--Pe\~na quasi-initial-minor criterion]
\label{lem:gasca-pena-quasi-initial}
Let \(A\) be a nonsingular \(n\times n\) real matrix.  Suppose that, for every
\(1\le k\le n\),
\begin{enumerate}[label=\textup{(\roman*)}]
\item \(\det A[I\mid0,1,\ldots,k-1]\ge0\) for every \(k\)-element row set
\(I\);
\item \(\det A[0,1,\ldots,k-1\mid J]\ge0\) for every \(k\)-element column
set \(J\);
\item \(\det A[0,1,\ldots,k-1\mid0,1,\ldots,k-1]>0\).
\end{enumerate}
Then \(A\) is totally nonnegative.
\end{lemma}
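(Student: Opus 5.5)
The plan is to reduce the criterion to a statement about triangular factors by means of an $LDU$ factorization, and then to prove that statement by a bidiagonal (Neville) factorization.

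First, hypothesis (iii) says that every leading principal minor of $A$ is positive. Hence Gaussian elimination without pivoting gives
\[
 A=LDU,
\]
where $L$ is unit lower triangular, $U$ is unit upper triangular, and $D=\diag(\delta_0,\ldots,\delta_{n-1})$ has $\delta_k=\det A[0..k\mid0..k]/\det A[0..k-1\mid0..k-1]>0$.

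Cauchy--Binet, together with the triangularity of $DU$, gives for every $k$-element row set $I$
\[
 \det A[I\mid0,\ldots,k-1]
 =\det L[I\mid0,\ldots,k-1]\cdot\det A[0,\ldots,k-1\mid0,\ldots,k-1].
\]
Hypotheses (i) and (iii) therefore show that every column-quasi-initial minor of $L$ is nonnegative. In the same way, (ii) shows that every row-quasi-initial minor of $U$ is nonnegative. Products of TN matrices are TN, and $D$ is TN. By transposition it therefore suffices to prove one claim.

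Claim: a unit lower triangular matrix $L$ all of whose column-quasi-initial minors are nonnegative is totally nonnegative.

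To prove the claim, I would perform Neville elimination on $L$ column by column. In column $j$ one annihilates entries from the bottom up, subtracting from row $i$ a multiple $m_{ij}$ of row $i-1$. Each step is left multiplication by an elementary bidiagonal factor $I-m_{ij}E_{i,i-1}$. Inverting, one obtains
\[
 L=\prod(I+m_{ij}E_{i,i-1}),
\]
a product of bidiagonal matrices, in the standard order. Each factor $I+mE_{i,i-1}$ with $m\ge0$ is TN, so the product is TN. This reduces the claim to showing that every multiplier $m_{ij}$ is nonnegative.

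The multipliers are determined by minors of $L$. Before column $j$ is processed, the current entry in row $i$ equals the ratio
\[
 \det L[i-j,\ldots,i\mid0,\ldots,j]\big/\det L[i-j,\ldots,i-1\mid0,\ldots,j-1].
\]
The multiplier is then $m_{ij}=p_{i}/p_{i-1}$, where $p_i$ denotes this current entry. Both the numerator and the denominator of these ratios are column-quasi-initial minors with consecutive rows, hence nonnegative. This gives $m_{ij}\ge0$ whenever the denominators are nonzero.

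The main obstacle is the degenerate case, in which some consecutive-row column-initial minor vanishes, so that $p_{i-1}=0$ while one must still annihilate $p_i$. In that case I would show the zero-propagation property: if $p_{i-1}=0$, then $p_{i'}=0$ for all $i'\ge i$ in that column, so that one may set $m_{ij}=0$. This should follow from a Sylvester-identity or Plücker argument of the type in Lemma~\ref{lem:whitney-fekete-propagation}, using nonnegativity of the non-consecutive quasi-initial minors. Concretely, a three-term relation expresses $\det L[\ldots,i-1,i+1\mid0..j]\cdot(\text{shorter minor})$ as a difference of products. With a zero factor, this relation forces the remaining nonnegative minor to have the wrong sign unless $p_{i'}$ also vanishes.

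As a fallback, if this bookkeeping becomes unwieldy, I would perturb instead. Replace $A$ by $A_\varepsilon=T_\varepsilon A T_\varepsilon$, where $T_\varepsilon$ is a nonsingular totally positive matrix converging to $I$. Then check that the strict versions of (i)--(iii) hold for $A_\varepsilon$, by Cauchy--Binet and the nonsingularity of $A$. In the strict case all denominators are positive, the argument above applies verbatim, and closedness of the TN cone gives the conclusion as $\varepsilon\to0$. I expect verifying strictness of (i) and (ii) for $A_\varepsilon$ to be the delicate point of this route: it requires showing that some term in the Cauchy--Binet expansion is strictly positive, using $\det A\ne0$.
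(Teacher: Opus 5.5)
Your reduction is correct. The $LDU$ factorization from (iii), the Cauchy--Binet identity $\det A[I\mid 0,\ldots,k-1]=\det L[I\mid 0,\ldots,k-1]\,\det A[0,\ldots,k-1\mid 0,\ldots,k-1]$ with its row analogue for $U$, and the transposition all check out. So does the Neville computation when every consecutive-row column-initial minor is positive.

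The gap is the degenerate case, and it is more than zero-propagation within one column. Once some consecutive-row column-initial minor vanishes, the pivots in \emph{later} columns are no longer your displayed ratios, even when no pivot is zero. Take $L=\begin{pmatrix}1&0&0\\0&1&0\\0&c&1\end{pmatrix}$:
\begin{itemize}
\item every consecutive-row column-initial minor is $0$ or $1$, whatever $c$ is;
\item column~$0$ has zero multipliers;
\item in column~$1$ the multiplier is $m_{2,1}=c$, sitting below the pivot $1$, while your formula gives $\det L[1,2\mid0,1]/\det L[1\mid0]=0/0$.
\end{itemize}
The sign of $c$ is controlled only by the non-consecutive minor $\det L[\{0,2\}\mid 0,1]=c$. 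For $c<0$ this $L$ satisfies (ii), (iii), and (i) restricted to consecutive rows, yet it is not TN. So the part of hypothesis (i) beyond consecutive rows is used exactly in the step you leave as ``should follow''. A proof along this route must express such degenerate pivots through non-consecutive quasi-initial minors; showing that zeros propagate down a column is not enough.

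Your fallback is circular as stated. Cauchy--Binet gives $\det(T_\varepsilon AT_\varepsilon)[I\mid 0,\ldots,k-1]=\sum_{K,K'}\det T_\varepsilon[I\mid K]\,\det A[K\mid K']\,\det T_\varepsilon[K'\mid 0,\ldots,k-1]$. This involves $\det A[K\mid K']$ for arbitrary $K'$, which are exactly the signs you are trying to establish, so not even nonnegativity follows.

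The perturbation does work if you apply it on one side only, after your reduction:
\begin{itemize}
\item For $L_\varepsilon=T_\varepsilon L$, the expansion $\det L_\varepsilon[I\mid 0,\ldots,k-1]=\sum_K\det T_\varepsilon[I\mid K]\,\det L[K\mid 0,\ldots,k-1]$ uses only column-quasi-initial minors of $L$, all nonnegative.
\item The term $K=\{0,\ldots,k-1\}$ equals $\det T_\varepsilon[I\mid 0,\ldots,k-1]>0$, so every column-quasi-initial minor of $L_\varepsilon$ is positive.
\item The unit lower LU factor $\Lambda_\varepsilon$ of $L_\varepsilon$ has these same minors divided by the positive leading principal minors of $L_\varepsilon$, so they are positive too.
\item Your strict Neville argument then makes $\Lambda_\varepsilon$ TN.
\item $\Lambda_\varepsilon\to L$, because LU factors depend continuously on the matrix near $L$, whose leading principal minors all equal $1$.
\end{itemize}
Closedness of the TN cone then gives $L$ TN. With this repair you would have a self-contained proof of the criterion, whereas the paper simply quotes it as Theorem~29 of Fomin--Zelevinsky.
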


\begin{proof}
Theorem~29 of \cite{FominZelevinskyTP}, attributed there to
Gasca--Pe\~na, is a criterion for a nonsingular square matrix in terms of
three families of quasi-initial minors.  After changing their indices
\(1,\ldots,n\) to \(0,\ldots,n-1\), those families are precisely:
minors with the initial \(k\) columns and an arbitrary \(k\)-row set,
minors with the initial \(k\) rows and an arbitrary \(k\)-column set, and
the leading principal minors.  These are hypotheses~\textup{(i)},
\textup{(ii)}, and~\textup{(iii)}, respectively.  No further minor family is
required; nonsingularity is assumed in the statement.  Hence the cited
criterion applies and \(A\) is totally nonnegative.  See also
\cite{GascaPenaQR}.

\end{proof}

\begin{lemma}[Positive pentadiagonal criterion]
\label{lem:positive-pentadiagonal-TN}
Let \(N\ge0\), and let \(A\) be a nonsingular \((N+1)\times(N+1)\) real matrix such that
\(A_{ij}=0\) for \(\lvert i-j\rvert>2\).  Assume that
\begin{enumerate}[label=\textup{(\roman*)}]
\item every entry permitted by the pentadiagonal support is positive;
\item \(A\) is positively diagonally similar to a symmetric positive-definite
matrix;
\item every nonempty \(+1\)- and \(-1\)-shifted solid minor is positive.
\end{enumerate}
Then \(A\) is totally nonnegative.
\end{lemma}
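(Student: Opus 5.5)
We will verify the three hypotheses of the Gasca--Pe\~na criterion (Lemma~\ref{lem:gasca-pena-quasi-initial}). Nonsingularity is assumed.

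\emph{Leading principal minors.} Write $A=DSD^{-1}$, where $D$ is positive diagonal and $S$ is symmetric positive definite by hypothesis~(ii). Every principal minor of $A$ equals the corresponding principal minor of $S$, so it is positive. This gives condition~(iii) of Lemma~\ref{lem:gasca-pena-quasi-initial}, and in fact positivity of every contiguous principal minor.

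\emph{Column-quasi-initial minors.} By the symmetry noted below, it suffices to treat these. We show, by induction on $k$, that every minor $\det A[I\mid 0,\ldots,k-1]$ with an arbitrary $k$-row set $I$ is nonnegative. Since $A$ is banded, the submatrix $A[\,\cdot\mid 0,\ldots,k-1]$ has nonzero rows only for indices $0\le i\le k+1$. Any $I$ that contains a row beyond $k+1$ therefore gives a zero minor. This leaves very few row sets: $I$ is $\{0,\ldots,k+1\}$ with two rows deleted.

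To organize these remaining sets, we apply the Pl\"ucker propagation lemma (Lemma~\ref{lem:whitney-fekete-propagation}) to the $(k+2)\times k$ matrix $M=A[0,\ldots,k+1\mid 0,\ldots,k-1]$. The consecutive-row maximal minors of $M$ come from three row windows:
\begin{itemize}
\item rows $\{0,\ldots,k-1\}$ give a leading principal minor, which is positive;
\item rows $\{1,\ldots,k\}$ give a $-1$-shifted solid minor, positive by hypothesis~(iii);
\item rows $\{2,\ldots,k+1\}$ give a $-2$-shifted solid minor. This matrix is lower triangular with the positive second subdiagonal entries on its diagonal, so its determinant is positive by hypothesis~(i).
\end{itemize}
Hypothesis~(ii) of Lemma~\ref{lem:whitney-fekete-propagation} requires positivity of all $(k-1)$-minors of columns $1,\ldots,k-1$ with arbitrary rows. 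After shifting the indices these are again quasi-initial-type minors of a pentadiagonal block. Some of the row sets, however, now begin above the diagonal, so we also need minors with column set $\{1,\ldots,k-1\}$ and rows starting at $0$. We handle this with a nested induction on the size of the minor, covering all minors whose column set is a consecutive interval. In each case the consecutive-row minors are principal, $\pm1$-shifted, or $\pm2$-shifted solid minors:
\begin{itemize}
\item a $+2$-shift gives an upper triangular matrix with a positive diagonal;
\item a $-2$-shift gives a lower triangular matrix with a positive diagonal.
\end{itemize}
All of these are positive, so the induction closes.

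\emph{Row-quasi-initial minors.} The same argument applied to $A^{\mathsf T}$ covers them. The transpose satisfies the same hypotheses: it is positively diagonally similar to $S$ via $D^{-1}$, and transposition interchanges the $+1$- and $-1$-shifted minors.

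\emph{Main obstacle.} The delicate part is the bookkeeping in this nested induction. The Pl\"ucker propagation needs \emph{strict} positivity of the auxiliary $(k-1)$-minors, and for row sets with gaps these are not solid minors. We will therefore prove the stronger statement that every minor with a consecutive column set and arbitrary rows is positive whenever it is not forced to vanish by the band structure. Minors forced to vanish are nonnegative anyway. Before applying Lemma~\ref{lem:whitney-fekete-propagation}, we first discard the rows that make the minor vanish, so that its strict hypothesis is only invoked on the surviving nonvanishing configurations. Once this is arranged, Lemma~\ref{lem:gasca-pena-quasi-initial} gives total nonnegativity of $A$.
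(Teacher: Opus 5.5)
Your outer structure matches the paper's proof. You use the Gasca--Pe\~na criterion, get the leading minors from the symmetrization, handle row-quasi-initial minors by transposition, and use Pl\"ucker propagation over consecutive column blocks. Your consecutive-window minors, including the triangular $\pm2$-shifted ones, are handled correctly (the $-2$-shifted block $A[\{2,\ldots,k+1\}\mid\{0,\ldots,k-1\}]$ is upper, not lower, triangular, but that is harmless).

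The gap is in the nested induction. For a block $J=\{j,\ldots,j+k-1\}$, the only permitted entry of row $j-2$ in $J$ is $A_{j-2,j}$. That row therefore vanishes identically on the last $k-1$ columns, and hypothesis~(ii) of Lemma~\ref{lem:whitney-fekete-propagation} fails for every auxiliary row set containing it.

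Your recursion cannot avoid this row. Starting from the initial block with rows in $\{0,\ldots,k+1\}$, you are led to the block $\{1,\ldots,k-1\}$ and then to $\{2,\ldots,k-1\}$, still with arbitrary rows from $\{0,\ldots,k+1\}$. For $k=4$, the Pl\"ucker step for $\{1,2,3\}$ needs $\det A[\{0,r\}\mid\{2,3\}]>0$ for $1\le r\le5$. Proving these by the same lemma would require $A_{0,3}>0$, which is false.

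Your remedy of discarding the offending rows confuses two kinds of minors. The auxiliary minors containing row $j-2$ vanish. The target minors containing it do not: for example, $\det A[\{0,5\}\mid\{2,3\}]=A_{0,2}A_{5,3}>0$, and the previous level of your induction needs exactly such minors. Once row $j-2$ is discarded, the propagation lemma says nothing about them. Reversing the roles of the first and last columns does not help either, since a row set that is not structurally zero may contain both $j-2$ and $j+k+1$.

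The missing step is the one the paper places first (Step~1). If row $j-2$ occurs, it is the first selected row, and its unique permitted entry sits in the $(1,1)$ corner of the submatrix. Laplace expansion then gives $+A_{j-2,j}$ times a minor on the shorter consecutive block $\{j+1,\ldots,j+k-1\}$. Symmetrically, row $j+k+1$ is expanded at the $(k,k)$ corner.

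After these expansions, only row sets inside the window $[j-1,j+k]$ remain. There every $(k-1)$-row subset is support-admissible (not structurally zero) for the shorter block. Hence the strict hypothesis~(ii) of the propagation lemma follows from the induction on block length. If you build this expansion into that induction and prove positivity of every support-admissible minor with consecutive columns, your argument closes, and the $\pm2$-shifted windows become optional.
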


\begin{proof}
\noindent\emph{Step 1: consecutive-column minors.}
We first prove that every minor with a consecutive column block is
nonnegative.  We call such a minor \emph{structurally zero} if every term in
its determinant expansion contains an entry outside the pentadiagonal support;
all other such minors will be positive.  Induct on the length \(k\) of the
column block
\(J=\{j,j+1,\ldots,j+k-1\}\).  For \(k=1\), entries permitted by the support are positive by
hypothesis~\textup{(i)}, while all remaining minors are structurally zero.
For a row set \(I=\{i_1<\cdots<i_k\}\), a nonzero term in the
minor expansion requires
\[
 j+r-3\le i_r\le j+r+1,
 \qquad r=1,\ldots,k.
\]
Indeed, if the lower inequality fails for some $r$, the first $r$ selected
rows can meet fewer than $r$ columns of $J$ within distance $2$; the upper
inequality is the analogous obstruction for the last rows and columns.
Conversely, when both inequalities hold, the order-preserving pairing
\(i_r\leftrightarrow j+r-1\) lies inside the bandwidth.  We call such a row
set \emph{support-admissible}.  If the extreme row \(j-2\) occurs, then it
is the first selected row and,
among the columns in \(J\), its only permitted entry is
\(A_{j-2,j}>0\).  Expansion in the first row and first column therefore has
cofactor sign \((-1)^{1+1}=1\), removes the first column, and leaves a
shorter consecutive block whose minor is positive by induction.  If the
extreme row \(j+k+1\) occurs, then it is the last selected row and its only
permitted entry in \(J\) is \(A_{j+k+1,j+k-1}>0\).  Expansion in the last
row and last column has sign \((-1)^{k+k}=1\), and the same induction
applies.

\medskip
\noindent\emph{Step 2: propagation inside the support window.}
It remains to consider row sets in
\[
 R=[j-1,j+k]\cap\{0,\ldots,N\}.
\]
Set \(M=A[R\mid J]\).  Its maximal minors from consecutive rows have
relative shifts \(-1,0,1\).  The shift-zero minors are principal minors of the
positive-definite symmetrization, and the other two are positive by
hypothesis~\textup{(iii)}.  The last \(k-1\) columns of \(M\) form the shorter
block \(\{j+1,\ldots,j+k-1\}\).  If
\(r_1<\cdots<r_{k-1}\) are selected from \(R\), then
\[
 j+s-2\le r_s\le j+s+1\qquad(1\le s\le k-1),
\]
so every such row set is support-admissible for the shorter block.  Its minor is
therefore positive by the induction hypothesis.  Lemma~\ref{lem:whitney-fekete-propagation} now gives
positivity of every support-admissible maximal minor of \(M\).

\medskip
\noindent\emph{Step 3: quasi-initial and leading minors.}
Taking the consecutive column block to be the initial block gives all
column-quasi-initial minors.  To pass to row-quasi-initial minors, choose a
positive diagonal matrix \(D\), with diagonal entries
\(d_0,\ldots,d_N\), such that \(S=D^{-1}AD\) is symmetric positive
definite.  For row and column sets
\(I,J\) of the same cardinality,
\[
 \det A[I\mid J]
 =\frac{\prod_{i\in I}d_i}{\prod_{j\in J}d_j}\,
  \det S[I\mid J].
\]
The prefactor is positive, while symmetry gives
\(\det S[I\mid J]=\det S[J\mid I]\).  Thus
\(\det A[I\mid J]\) and \(\det A[J\mid I]\) have the same sign, so the
column-quasi-initial positivity already proved implies the corresponding
row-quasi-initial positivity.  For \(I=J\) the diagonal factors cancel, and
positive definiteness of \(S\) gives positive leading principal minors.

\medskip
\noindent\emph{Step 4: conclusion.}
Lemma~\ref{lem:gasca-pena-quasi-initial} now implies that \(A\) is totally
nonnegative.

\end{proof}

\begin{theorem}[Sharp total nonnegativity for a reflected quartet]
\label{thm:sharp-quartet-TN}
Let \(0<\nu<2\) and \(b>0\).  If \(\lvert a\rvert\le1\), then
\[
 \mathbf K_{N,\nu}(a,b)=
 \mathbf J_{N,\nu}\!\left(\frac14-a^2+b^2\right)^2
 +4a^2b^2I_{N+1}
\]
is nonsingular and totally nonnegative for every \(N\ge0\).  Conversely, if
\(\lvert a\rvert>1\), then for every fixed \(b>0\) and every
\(0<\nu<2\), the matrix \(\mathbf K_{N,\nu}(a,b)\) fails to be
totally nonnegative for at least one \(N\).  Thus
\(\lvert a\rvert\le1\) is the sharp range uniform in both \(N\) and
\(\nu\).
\end{theorem}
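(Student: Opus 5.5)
The plan is to verify the three hypotheses of Lemma~\ref{lem:positive-pentadiagonal-TN} for $A=\mathbf K_{N,\nu}(a,b)$ when $0<|a|\le1$, and to read the converse directly from Lemma~\ref{lem:sharp-quartet-shifted-minors}. Since $\mathbf K_{N,\nu}(a,b)$ depends only on $a^2$, we may take $a\ge0$; the case $a=0$ will be handled at the end by a limit. For $N=0$ the matrix is the positive scalar $\mathfrak s_{a,b}^2+\mathfrak c_{a,b}^2$, so assume $N\ge1$.

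\emph{Nonsingularity and hypothesis (ii).} Write $D=D_{N,\nu}$ and $S=\mathbf J^{\mathrm{sym}}_{N,\nu}$. Then
\[
 D^{-1}\mathbf K_{N,\nu}(a,b)D=(S+\mathfrak s I)^2+\mathfrak c^2 I.
\]
This matrix is symmetric, and its eigenvalues are $(j(j+1)+\mathfrak s)^2+\mathfrak c^2$. These are strictly positive when $a,b>0$, because then $\mathfrak c>0$. So hypothesis (ii) holds, and $\mathbf K_{N,\nu}(a,b)$ is nonsingular.

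\emph{Hypothesis (iii).} The nonempty $\pm1$-shifted solid minors are exactly the minors treated in Lemma~\ref{lem:sharp-quartet-shifted-minors}, which shows they are all positive for $0<a\le1$.

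\emph{Hypothesis (i): the diagonal and first off-diagonal.} Diagonal entries are positive by (ii). The second off-diagonal entries are products of two consecutive off-diagonal entries of $\mathbf J_{N,\nu}$, hence positive for $0<\nu<2$. A first off-diagonal entry has the form
\[
 (\mathbf J_{k,k+1})\bigl(d_{N,k}+d_{N,k+1}+2\mathfrak s\bigr),
\]
so we need $d_{N,k}+d_{N,k+1}+2\mathfrak s>0$. By \eqref{eq:jacobi-neighboring-diagonal-bound}, $d_{N,k}+d_{N,k+1}\ge 2$. Also $2\mathfrak s=\tfrac12-2a^2+2b^2>-\tfrac32$ when $a\le1$ and $b>0$. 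Hence the sum exceeds $\tfrac12>0$.

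This $2\times2$ shifted minor is also one of the $\ell=1$ shifted minors, so Lemma~\ref{lem:sharp-quartet-shifted-minors} already gives its positivity; the direct estimate is a consistency check. Lemma~\ref{lem:positive-pentadiagonal-TN} now yields total nonnegativity.

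\emph{The case $a=0$.} Here $\mathbf K=\mathbf J_{N,\nu}(\tfrac14+b^2)^2$. This is the square of a nonsingular TN matrix by Lemma~\ref{lem:second-order-all-defects}'s proof, so it is TN because products of TN matrices are TN (Cauchy--Binet). It is nonsingular since its eigenvalues are positive. Alternatively, one can let $a\downarrow0$, using that the TN cone is closed and that nonsingularity persists.

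\emph{Converse.} For $|a|>1$, Lemma~\ref{lem:sharp-quartet-shifted-minors} gives, for each $b>0$ and $\nu$, some $N$ and a negative shifted solid minor, so $\mathbf K_{N,\nu}(a,b)$ is not TN.

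\emph{Main obstacle.} I expect the delicate point to be that hypothesis (i) of Lemma~\ref{lem:positive-pentadiagonal-TN} needs strict positivity of the first off-diagonal entries uniformly in $N$ up to the boundary $a=1$. The neighboring-diagonal bound supplies exactly this margin. The uniformity in $b$ and $\nu$ is inherited from the earlier lemmas.
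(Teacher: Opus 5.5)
Your proposal is correct and follows the paper's proof essentially step for step. It handles the trivial $N=0$ case, reduces to $a\ge0$, and treats $a=0$ as the square of the nonsingular TN matrix $\mathbf J_{N,\nu}(1/4+b^2)$. For $0<a\le1$ it verifies the hypotheses of Lemma~\ref{lem:positive-pentadiagonal-TN} through the positive definite symmetrization and the bound $d_{N,k}^{(\nu)}+d_{N,k+1}^{(\nu)}+2\mathfrak s_{a,b}\ge 1/2+2b^2$. It takes the shifted minors from Lemma~\ref{lem:sharp-quartet-shifted-minors} and reads the converse from that lemma's final assertion, exactly as the paper does.

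One small wording slip: the entry $(\mathbf K_{N,\nu}(a,b))_{k,k+1}$ is the order-one ($\ell=1$) shifted solid minor, supported on a $2\times2$ Jacobi block; it is not a $2\times2$ minor. Your remark that the shifted-minor lemma already gives its positivity is nevertheless right.
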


\begin{proof}
For \(N=0\), the matrix is the positive scalar
\[
 \mathbf K_{0,\nu}(a,b)
 =\left(\frac14-a^2+b^2\right)^2+4a^2b^2,
\]
so the assertion is immediate.  Hence assume \(N\ge1\).  The matrix depends
only on \(a^2\), so assume \(a\ge0\).  When \(a=0\), it is
the square of the totally nonnegative tridiagonal matrix
\(\mathbf J_{N,\nu}(1/4+b^2)\), and hence is totally nonnegative.  Let
\(0<a\le1\).  A positive diagonal symmetrization is
\[
 \left(\mathbf J_{N,\nu}^{\mathrm{sym}}+
       \left(\frac14-a^2+b^2\right)I_{N+1}\right)^2+4a^2b^2I_{N+1},
\]
which is positive definite.  The second off-diagonal entries are positive.
The first off-diagonal entries are positive multiples of
\[
 d_{N,k}^{(\nu)}+d_{N,k+1}^{(\nu)}
 +2\left(\frac14-a^2+b^2\right).
\]
By \eqref{eq:jacobi-neighboring-diagonal-bound}, this is at least
\(1/2+2b^2>0\).  The diagonal entries are positive because the symmetrized
matrix is positive definite.  Lemma~\ref{lem:sharp-quartet-shifted-minors} gives the shifted solid minors, so
Lemma~\ref{lem:positive-pentadiagonal-TN} gives total nonnegativity.  The
converse is the final assertion of Lemma~\ref{lem:sharp-quartet-shifted-minors}.
\end{proof}

\begin{lemma}[Jacobi-uniform quartet preservation of every defect class]
\label{lem:sharp-quartet-all-defects}
Let \(0<\nu<2\), \(b>0\), and \(\lvert a\rvert\le1\).  Then every
nonzero real polynomial \(P\) satisfies
\[
 \mathrm N_{[0,4]}\bigl(\mathscr A_{a,b}^{(\nu)}P\bigr)
 \ge\mathrm N_{[0,4]}(P).
\]
Consequently, for all \(N,q\ge0\),
\[
 \mathscr A_{a,b}^{(\nu)}(\mathcal E_{N,q})
 \subseteq\mathcal E_{N,q}.
\]
\end{lemma}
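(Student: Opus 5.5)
The plan is to apply Proposition~\ref{prop:interval-root-monotonicity} with \(\mathcal A=\mathscr A_{a,b}^{(\nu)}\). Its hypotheses will be checked in the following order.

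First, the invariance and invertibility hypotheses. The operator \(\JacOp_\nu\) is triangular on monomials, since
\[
 \JacOp_\nu y^r=r(r+1)y^r-4r(r-1+\nu)y^{r-1}.
\]
Hence every subspace \(\R[y]_{\le d}\) is invariant under \(\mathscr A_{a,b}^{(\nu)}=(\JacOp_\nu+\mathfrak s_{a,b})^2+\mathfrak c_{a,b}^2\). On \(\R[y]_{\le d}\) the eigenvalues of this restriction are
\[
 \bigl(r(r+1)+\tfrac14-a^2+b^2\bigr)^2+4a^2b^2,\qquad 0\le r\le d.
\]
Each of these equals \(|(r+\frac12)^2-(a-ib)^2|^2\). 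This quantity is positive because \(b>0\) and \(a\ne0\) make the imaginary part \(2ab\) nonzero. When \(a=0\) the value is \(((r+\frac12)^2+b^2)^2>0\). So every restriction is invertible.

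Next, total nonnegativity of the checkerboard Bernstein matrices. By \eqref{eq:sharp-quartet-bernstein}, the checkerboard Bernstein matrix \(\Sigma_M[\mathscr A_{a,b}^{(\nu)}]_M\Sigma_M\) at basis degree \(M\) is exactly \(\mathbf K_{M,\nu}(a,b)\). One point needs care: \([\cdot]_M\) is multiplicative on operators that preserve \(\R[y]_{\le M}\), and conjugation by \(\Sigma_M\) respects products. This is what turns the operator square into the square of \(\mathbf J_{M,\nu}(\mathfrak s_{a,b})\), and the identity term passes through unchanged. Theorem~\ref{thm:sharp-quartet-TN} then says that \(\mathbf K_{M,\nu}(a,b)\) is nonsingular and totally nonnegative for every \(M\ge0\) whenever \(|a|\le1\). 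This covers \(a=0\) and uses only \(a^2\), so negative values of \(a\) are included.

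With both hypotheses in place, Proposition~\ref{prop:interval-root-monotonicity} gives directly the inequality
\[
 \mathrm N_{[0,4]}\bigl(\mathscr A_{a,b}^{(\nu)}P\bigr)\ge\mathrm N_{[0,4]}(P)
\]
for every nonzero real \(P\), together with the invariance \(\mathscr A_{a,b}^{(\nu)}(\mathcal E_{N,q})\subseteq\mathcal E_{N,q}\).

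I expect no serious obstacle, because the substantive work has already been done in Theorem~\ref{thm:sharp-quartet-TN}. The only points needing care are the identification of the Bernstein matrix of a composite operator with the corresponding matrix polynomial, and the nonvanishing of the spectral values that gives invertibility.
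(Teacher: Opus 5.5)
Your proposal is correct and follows the paper's proof. You identify the checkerboard Bernstein matrix with \(\mathbf K_{M,\nu}(a,b)\) via \eqref{eq:sharp-quartet-bernstein}, invoke Theorem~\ref{thm:sharp-quartet-TN} for nonsingular total nonnegativity (including \(a=0\)), and apply Proposition~\ref{prop:interval-root-monotonicity}; your explicit check of the invariance and invertibility hypotheses, via the triangular monomial action and the spectral values \(\lvert(r+\tfrac12)^2-(a-ib)^2\rvert^2>0\), is a correct detail that the paper leaves implicit.
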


\begin{proof}
For every \(M\), Theorem~\ref{thm:sharp-quartet-TN} and
\eqref{eq:sharp-quartet-bernstein} show that the checkerboard Bernstein matrix
is nonsingular and totally nonnegative.  Proposition~\ref{prop:interval-root-monotonicity} applies directly.  This includes
\(a=0\), where the operator is the composition of two positive second-order
factors.
\end{proof}

\subsubsection{A square pairing rule for outer real factors}

For a genuine outer real pair with $1/2<a\le1$, the parameter
$\alpha=1/4-a^2$ lies in $[-3/4,0)$.  The boundary pair $a=1/2$
corresponds to $\alpha=0$ and is included by closure.  Thus the square
parameter range required in Section~\ref{sec:endpoint-sturm} is
$[-3/4,0]^2$.  We prove the pairing rule directly.
For $0<\nu<2$ and $\alpha,\beta\in\R$, put
\[
 \mathbf R_{N,\nu}(\alpha,\beta)
 =\mathbf J_{N,\nu}(\alpha)\mathbf J_{N,\nu}(\beta)
 =\Sigma_N[(\JacOp_{\nu}+\alpha)(\JacOp_{\nu}+\beta)]_N\Sigma_N.
\]

For $\ell\ge1$, set
\[
 f_\ell(\tau)=\prod_{j=0}^{\ell-1}\bigl(j(j+1)+\tau\bigr).
\]
For an ordered $\ell$-tuple
$\lambda=(\lambda_0,\ldots,\lambda_{\ell-1})$, define
\begin{equation}\label{eq:outer-real-divided-difference-certificate}
 \Psi_\ell(\lambda;\alpha,\beta)
 =\begin{cases}
 \displaystyle
 \frac{\prod_{j=0}^{\ell-1}(\lambda_j+\beta)
       -\prod_{j=0}^{\ell-1}(\lambda_j+\alpha)}{\beta-\alpha},
 &\alpha\ne\beta,\\[3mm]
 \displaystyle
 \left.\frac{d}{d\tau}\prod_{j=0}^{\ell-1}(\lambda_j+\tau)
 \right|_{\tau=\alpha},
 &\alpha=\beta.
 \end{cases}
\end{equation}

\begin{lemma}[Square pairing rule for two outer real factors]
\label{lem:square-outer-real-pairing}
Let $0<\nu<2$ and $\alpha,\beta\in[-3/4,0]$.  Then
$\mathbf R_{N,\nu}(\alpha,\beta)$ is totally nonnegative for every $N\ge0$.
Moreover, for every $N,q\ge0$,
\[
 (\JacOp_{\nu}+\alpha)(\JacOp_{\nu}+\beta)
 (\mathcal E_{N,q})\subseteq\mathcal E_{N,q}.
\]
\end{lemma}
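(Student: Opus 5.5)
We follow the template of the reflected-quartet case and apply Lemma~\ref{lem:positive-pentadiagonal-TN} to $\mathbf R_{N,\nu}(\alpha,\beta)$ when $\alpha,\beta\in(-3/4,0)$. We then handle the boundary of the square by closure.

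\emph{Step 1: positive definiteness.} A positive diagonal symmetrization of $\mathbf R_{N,\nu}(\alpha,\beta)$ is $(\mathbf J^{\mathrm{sym}}_{N,\nu}+\alpha)(\mathbf J^{\mathrm{sym}}_{N,\nu}+\beta)$. This is a commuting product, with spectrum $(j(j+1)+\alpha)(j(j+1)+\beta)$. For $j=0$ this equals $\alpha\beta>0$ when both parameters are strictly negative. For $j\ge1$ we have $j(j+1)\ge2>3/4$, so both factors are positive. Hence the symmetrization is positive definite and the matrix is nonsingular.

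\emph{Step 2: entries.} The second off-diagonal entries are products of two positive first off-diagonal entries of $\mathbf J_{N,\nu}(0)$, so they are positive. The first off-diagonal entries are positive multiples of $d^{(\nu)}_{N,k}+d^{(\nu)}_{N,k+1}+\alpha+\beta$. By \eqref{eq:jacobi-neighboring-diagonal-bound} this is at least $2-3/2>0$. The diagonal entries are positive by positive definiteness.

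\emph{Step 3: shifted solid minors.} We reuse the localization argument of Lemma~\ref{lem:sharp-quartet-shifted-minors}. A $+1$-shifted solid minor on the block $B=\{i,\dots,i+\ell\}$ equals the corresponding minor of $(A_B+\alpha)(A_B+\beta)$, where $A_B$ is the symmetrized principal block. Lemma~\ref{lem:general-shifted-cofactor} with $\zeta=\alpha$, $\xi=\beta$ expresses this minor as a positive constant times $\Psi_n(\lambda;\alpha,\beta)$, where $\lambda$ is the spectrum of $A_B$.

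By Cauchy interlacing, $\lambda_j\ge j(j+1)$. As in the quartet proof, $\partial\Psi_n/\partial\lambda_j=\Psi_{n-1}$ evaluated on the remaining eigenvalues, which again satisfy the same lower bounds. So by induction and the same line-segment argument, it suffices to show $\Psi_n$ is positive at $\lambda_j=j(j+1)$. There it equals the divided difference $(f_n(\beta)-f_n(\alpha))/(\beta-\alpha)$, which is an average of $f_n'$ over $[\alpha,\beta]\subseteq[-3/4,0]$.

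The main obstacle is showing $f_n'(\tau)>0$ on $[-3/4,0]$. Write $f_n'/f_n=\sum_j 1/(j(j+1)+\tau)$. For $\tau\in(-3/4,0)$ we have $f_n(\tau)<0$, because only the $j=0$ factor $\tau$ is negative. So we need the sum to be negative, that is, $\tfrac{1}{|\tau|}>\sum_{j\ge1}1/(j(j+1)+\tau)$. Since $j(j+1)-3/4=(j-\tfrac12)(j+\tfrac32)$, the right side is bounded by the telescoping sum $\sum_{j\ge1}\tfrac12\bigl(\tfrac1{j-1/2}-\tfrac1{j+3/2}\bigr)=\tfrac12(2+\tfrac23)=\tfrac43$. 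This is strictly less than $1/|\tau|$ for $|\tau|<3/4$. An equivalent viewpoint is to compare with $\cos$, as in Lemma~\ref{lem:cosine-partial-product}, via $\prod(1-w^2/(j+1/2)^2)$ with $\tau=1/4-w^2$. At the endpoints $\tau=0$ and $\tau=-3/4$ the quantity $f_n'$ is still positive: it equals $\prod_{j\ge1}j(j+1)>0$ at $\tau=0$, and direct inspection handles $\tau=-3/4$.

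The $-1$ shift follows by transposition of the symmetrization. Lemma~\ref{lem:positive-pentadiagonal-TN} then gives total nonnegativity on the open square. Total nonnegativity on the closed square $[-3/4,0]^2$ follows by continuity of minors.

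\emph{Step 4: defect classes.} For $\alpha,\beta\in(-3/4,0)$, the matrix is nonsingular totally nonnegative for every $M$, so Proposition~\ref{prop:interval-root-monotonicity} gives $\mathcal E_{N,q}$-preservation. For boundary parameters, we approximate by interior ones and use the closedness of $\mathcal E_{N,q}$ from Lemma~\ref{lem:generic-approximation}, exactly as in the case $\mathfrak a=0$ of Lemma~\ref{lem:second-order-all-defects}.
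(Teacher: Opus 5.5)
Your proposal is correct and follows the paper's proof essentially step for step. The steps match: positive-definite diagonal symmetrization, the first off-diagonal bound $d^{(\nu)}_{N,k}+d^{(\nu)}_{N,k+1}+\alpha+\beta\ge 2-3/2$, localization plus the shifted cofactor identity reducing the $\pm1$-shifted minors to $\Psi_\ell$, the eigenvalue-monotonicity induction along the line segment, the pentadiagonal criterion, and closure for the boundary of the square.

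The only cosmetic difference is how you show $f_\ell'>0$ on the open interval $(-3/4,0)$. You use the logarithmic derivative and the infinite telescoping bound $\sum_{j\ge1}1/(j(j+1)-3/4)=4/3$. The paper instead factors $f_\ell'(-t)$ and evaluates the finite telescoped sum at $t=3/4$. Both amount to the same estimate.
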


\begin{proof}
We first assume $\alpha,\beta\in(-3/4,0)$.  For $0\le t\le3/4$ and
$\ell\ge2$,
\[
 f_\ell'(-t)
 =\prod_{j=1}^{\ell-1}(j(j+1)-t)
 \left(1-t\sum_{j=1}^{\ell-1}\frac1{j(j+1)-t}\right).
\]
For each $j\ge1$, the function
\[
 t\longmapsto\frac{t}{j(j+1)-t}
\]
is increasing on $[0,3/4]$, and at $t=3/4$ their sum is
\[
 \frac34\sum_{j=1}^{\ell-1}\frac1{j(j+1)-3/4}
 =1-\frac38\left(\frac1{\ell-1/2}+\frac1{\ell+1/2}\right)<1.
\]
Thus $f_\ell$ is strictly increasing on $[-3/4,0]$; the case $\ell=1$ is
immediate.  Hence every divided difference of $f_\ell$ at $\alpha,\beta$ is
positive.

Here a \emph{consecutive Jacobi block} means a principal submatrix of
$\mathbf J_{N,\nu}^{\mathrm{sym}}$ indexed by consecutive integers.
Consider a nonempty $+1$-shifted solid minor of order $k$, and write
\[
 I=\{i,\ldots,i+k-1\},\qquad
 J=\{i+1,\ldots,i+k\},
\]
\[
 B=I\cup J=\{i,\ldots,i+k\},\qquad \ell=k+1.
\]
Thus $B$ indexes the supporting consecutive Jacobi block, which has size $\ell$.
As in the quartet case, the tridiagonal support prevents an intermediate
index outside this block from contributing to the shifted submatrix.  After
positive diagonal symmetrization, let
$\lambda_0\le\cdots\le\lambda_{\ell-1}$ be the eigenvalues of that block.
They satisfy $\lambda_j\ge j(j+1)$.  Lemma~\ref{lem:general-shifted-cofactor}
reduces the sign of the minor, up to a positive factor, to
$\Psi_\ell(\lambda;\alpha,\beta)$ from
\eqref{eq:outer-real-divided-difference-certificate}.

We prove by induction on $\ell$ that this quantity is positive for every
ordered tuple satisfying $\lambda_j\ge j(j+1)$.  The induction starts from
$\Psi_1(\lambda_0;\alpha,\beta)=1$.  At the base spectrum
$\lambda_j=j(j+1)$,
\[
 \Psi_\ell(\lambda;\alpha,\beta)
 =\begin{cases}
 \displaystyle
 \frac{f_\ell(\beta)-f_\ell(\alpha)}{\beta-\alpha},
 &\alpha\ne\beta,\\[2mm]
 f_\ell'(\alpha),&\alpha=\beta,
 \end{cases}
\]
and this quantity is positive by the preceding paragraph.  For $\ell\ge2$,
\[
 \frac{\partial\Psi_\ell}{\partial\lambda_j}
 =\Psi_{\ell-1}(\lambda_0,\ldots,\widehat{\lambda_j},\ldots,
 \lambda_{\ell-1};\alpha,\beta).
\]
After deleting $\lambda_j$, write the remaining ordered tuple as
$\mu_0\le\cdots\le\mu_{\ell-2}$.  Then
$\mu_r\ge\lambda_r\ge r(r+1)$, so the induction hypothesis makes every
partial derivative positive.  The simultaneous segment
\[
 \lambda_j(u)=(1-u)j(j+1)+u\lambda_j,
 \qquad 0\le u\le1,
\]
preserves both the ordering and the lower bounds.  Along this segment all
coordinate increments are nonnegative and all partial derivatives are
positive, so
\[
 \Psi_\ell(\lambda;\alpha,\beta)
 \ge
 \Psi_\ell\bigl((j(j+1))_{j=0}^{\ell-1};\alpha,\beta\bigr)>0.
\]
Hence the shifted minor is positive.  The $-1$-shift is identical.

The positively symmetrized product has eigenvalues
\[
 (j(j+1)+\alpha)(j(j+1)+\beta)>0,
 \qquad 0\le j\le N.
\]
Its second off-diagonals are positive, and its first off-diagonals are
positive multiples of
\[
 d_{N,k}^{(\nu)}+d_{N,k+1}^{(\nu)}+\alpha+\beta
 \ge2-\frac32>0.
\]
Positive definiteness gives positive diagonal entries.  Hence
Lemma~\ref{lem:positive-pentadiagonal-TN} yields total nonnegativity.
The boundary of the parameter square $[-3/4,0]^2$ follows by closure.  For
interior parameters,
Proposition~\ref{prop:interval-root-monotonicity} gives preservation of every
$\mathcal E_{N,q}$; approximation from the interior and closedness give the
boundary cases.
\end{proof}

\section{Sharp endpoint-pencil preservation}
\label{sec:endpoint-sturm}

All zero-orbit configurations controlled by total nonnegativity have now
been shown to preserve the interval-defect classes.  The only remaining
factor is a possible unpaired outer real pair.  It need not preserve the full
one-defect class, but after
removing the common power of $y$ its action on $y^n(y+t)$ is quadratic.  Since all zero-orbit factors are polynomials in $\JacOp_\nu$, they commute,
and the possible unpaired outer real factor can be isolated and treated first.
This gives the general Jacobi threshold.  We then specialize to the even and
odd endpoints and thereby complete Theorem~\ref{thm:main-sturm}.  Finally, we
separate the resulting sampling-degree theorem from fixed-degree unit-circle
support.

\subsection{The unpaired real factor and the general theorem}
\begin{lemma}[Single factor on a Jacobi endpoint pencil]
\label{lem:single-outer-endpoint-pencil}
Let \(0<\nu<2\), \(n\ge1\), and put
\[
 \mathfrak e_{n,\nu}
 =\frac{n(n+1)(2-\nu)}{2n+\nu}.
\]
If \(-\mathfrak e_{n,\nu}\le\alpha\le0\), then, for every \(t\in\R\),
\[
 (\JacOp_{\nu}+\alpha)\bigl(y^n(y+t)\bigr)
 \in\mathcal E_{n+1}.
\]
The lower endpoint \(-\mathfrak e_{n,\nu}\) is sharp for this fixed
\((n,\nu)\).  The sequence \(\mathfrak e_{n,\nu}\) is strictly increasing
in \(n\), so the uniform threshold is
\[
 \mathfrak e_{1,\nu}=\frac{2(2-\nu)}{2+\nu}.
\]
For \(n=0\), the image has degree at most one, and the conclusion holds
for every \(\alpha\).
\end{lemma}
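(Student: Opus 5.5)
The plan is to compute the image of the pencil explicitly, using the triangular monomial action from the proof of Lemma~\ref{lem:bernstein-jacobi-matrix}:
\[
 \JacOp_\nu y^r=r(r+1)y^r-4r(r-1+\nu)y^{r-1}.
\]
For $n\ge1$ this gives
\[
 (\JacOp_\nu+\alpha)\bigl(y^n(y+t)\bigr)=y^{n-1}Q_t(y),
\]
where
\[
 Q_t(y)=A\,y^2+\bigl(t\,c_n-4(n+1)(n+\nu)\bigr)y-4t\,n(n-1+\nu),
\]
with $A=(n+1)(n+2)+\alpha$ and $c_n=n(n+1)+\alpha$. Since $\alpha\le0$ and $\alpha\ge-\mathfrak e_{n,\nu}$, and $\mathfrak e_{n,\nu}<n(n+1)$ for $0<\nu<2$, both $A$ and $c_n$ are positive. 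The factor $y^{n-1}$ only contributes zeros at $0\in[0,4]$. So membership in $\mathcal E_{n+1}$ is equivalent to a statement about $Q_t$ alone: either $Q_t$ is a degree-drop or zero case (handled by the conventions), or it is real-rooted with at least one zero in $[0,4]$. The case $n=0$ is trivial, since the image then has degree at most one.

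The key scalar data are the two endpoint values:
\[
 Q_t(0)=-4t\,n(n-1+\nu),\qquad
 Q_t(4)=16\bigl((n+1)(2-\nu)+\alpha\bigr)+4t\bigl(n(2-\nu)+\alpha\bigr).
\]
I will split into three cases by the sign of $t$.
\begin{itemize}
\item $t=0$. The zeros are $0$ and $4(n+1)(n+\nu)/A>0$, and the second lies in $[0,4]$ when $\alpha\ge-(n+1)(2-\nu)$. That inequality is weaker than $\alpha\ge-\mathfrak e_{n,\nu}$.
\item $t>0$. Here $Q_t(0)<0<A$, so $Q_t$ has one negative and one positive zero. The positive zero lies in $(0,4]$ iff $Q_t(4)\ge0$. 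Since $Q_t(4)$ is affine in $t$, it suffices to check nonnegativity of its constant term and its slope. Both $(n+1)(2-\nu)+\alpha$ and $n(2-\nu)+\alpha$ are nonnegative, because $\mathfrak e_{n,\nu}\le n(2-\nu)$ (this follows from $n+1\le 2n+\nu$).
\item $t<0$. Now $Q_t(0)>0$ and the product of the zeros is positive, so the zeros have the same sign. If $Q_t(4)\le0$ we are done by the intermediate value theorem. Otherwise both endpoint values are positive, and we must exclude two cases: a complex pair, and a pair of real zeros both outside $[0,4]$ (either both negative or both beyond $4$).
\end{itemize}

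The case $t<0$ is the main obstacle, and it is where $\mathfrak e_{n,\nu}$ should appear. I would first use that, for $t<0$, the linear coefficient $t c_n-4(n+1)(n+\nu)$ is negative, so the vertex of $Q_t$ lies at $y>0$ and negative double zeros cannot occur. It then remains to control the range of $t<0$ with $Q_t(4)>0$, namely $t>-4\bigl((n+1)(2-\nu)+\alpha\bigr)/\bigl(n(2-\nu)+\alpha\bigr)$. On this range I would require the discriminant
\[
 \Delta(t)=\bigl(tc_n-4(n+1)(n+\nu)\bigr)^2+16\,t\,A\,n(n-1+\nu)
\]
to be nonnegative, together with the vertex condition $y_{\mathrm v}\le4$. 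The worst configuration should be a tangency in which a double zero of $Q_t$ approaches the endpoint $4$, i.e.\ the simultaneous conditions $Q_t(4)=Q_t'(4)=0$. Eliminating $t$ from these two linear-in-$t$ equations yields one condition on $\alpha$, and I expect it to simplify to $\alpha=-\mathfrak e_{n,\nu}$ with $\mathfrak e_{n,\nu}=n(n+1)(2-\nu)/(2n+\nu)$. Explicitly,
\[
 Q_t'(4)=8A+t c_n-4(n+1)(n+\nu),
\]
and combining this with $Q_t(4)=0$ gives the equation
\[
 (2n+\nu)\,\alpha+n(n+1)(2-\nu)=0
\]
after expanding. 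Monotonicity in $\alpha$ then shows that for $\alpha\ge-\mathfrak e_{n,\nu}$ the quadratic never has two zeros outside $[0,4]$.

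Sharpness comes from the same tangency. For $\alpha<-\mathfrak e_{n,\nu}$, I would take $t<0$ near the tangency parameter, which makes $Q_t$ either have a complex pair or a pair of zeros slightly beyond $4$; in both cases there are two defects. Finally, strict monotonicity of $\mathfrak e_{n,\nu}$ in $n$ follows by writing $\mathfrak e_{n,\nu}/(2-\nu)=n(n+1)/(2n+\nu)$ and differentiating in $n$. The derivative has numerator $2n^2+2n\nu+\nu>0$.
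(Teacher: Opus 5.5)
There is a genuine gap in the case $t<0$, which is the heart of the lemma. Your central computation is wrong. Eliminating $t$ from $Q_t(4)=0$ and $Q_t'(4)=0$, using $\alpha+n(2-\nu)>0$, does not give $(2n+\nu)\alpha+n(n+1)(2-\nu)=0$. It gives the quadratic condition
\[
 \alpha^2+2\bigl((3-\nu)n+1\bigr)\alpha+(2-\nu)(3-\nu)n(n+1)=0,
\]
and the left side is strictly positive on $[-\mathfrak e_{n,\nu},0]$. For example, at $n=1$, $\nu=3/2$ the condition reads $\alpha^2+5\alpha+\tfrac32=0$, whose larger root is $(-5+\sqrt{19})/2\approx-0.320$, whereas $-\mathfrak e_{1,3/2}=-2/7\approx-0.286$. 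So a double zero at the endpoint $4$ is never the binding configuration. The actual threshold is loss of real-rootedness at an interior point. View your $\Delta(t)$ as a quadratic in $t$ with leading coefficient $c_n^2>0$. Its own discriminant is
\[
 -256\,n(n+\nu-1)\,A\,\bigl((2n+\nu)\alpha+n(n+1)(2-\nu)\bigr),
\]
which is $\le0$ exactly when $\alpha\ge-\mathfrak e_{n,\nu}$, and this is what gives $\Delta(t)\ge0$ for every $t$. At $\alpha=-\mathfrak e_{n,\nu}$ the degenerate member occurs at $t_*=-4(n+1)(n+\nu)/c_n$, with its double zero at $y_*=2(2n+\nu)/(n+1)\in(0,4)$. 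Your plan never actually proves $\Delta\ge0$ on the relevant range of $t$, and the appeal to ``monotonicity in $\alpha$'' has no argument behind it. As written, real-rootedness is unproved.

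The same misidentification breaks your sharpness argument. For $\alpha$ slightly below $-\mathfrak e_{n,\nu}$ there is no nearby tangency at $4$, and no pencil member has two zeros beyond $4$. The defect is a nonreal pair near $y_*$, coming from $\Delta(t)<0$ for $t$ near $t_*$.

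The repair, which is how the paper proceeds, is to swap the roles of your two conditions. The discriminant in $t$ gives the sharp threshold. The endpoint-$4$ condition is only an auxiliary, non-sharp inequality: it is exactly the quadratic displayed above, and it is positive on $[-\mathfrak e_{n,\nu},0]$ because it is convex and is positive with positive derivative at $-\mathfrak e_{n,\nu}$. Combined with the bound on $-t$ coming from $Q_t(4)>0$, this positivity forces the product of the two positive zeros below $16$, so both zeros cannot exceed $4$. Your factorization, the cases $t\ge0$, and the monotonicity in $n$ are fine.
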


\begin{proof}
\noindent\emph{Real-rootedness.}
For \(j\ge0\), put
\[
 \mathsf a_j=j(j+1)+\alpha,
 \qquad
 \mathsf b_j=4j(j-1+\nu).
\]
Then
\[
 (\JacOp_{\nu}+\alpha)1=\alpha,
\]
while, for \(j\ge1\),
\[
 (\JacOp_{\nu}+\alpha)y^j
 =\mathsf a_j\,y^j-\mathsf b_j\,y^{j-1}.
\]
For \(n=0\),
\[
 (\JacOp_{\nu}+\alpha)(y+t)
 =(2+\alpha)y+\alpha t-4\nu,
\]
so the image has degree at most one and belongs to \(\mathcal E_1\).  For
\(n\ge1\),
\[
 (\JacOp_{\nu}+\alpha)\bigl(y^n(y+t)\bigr)
 =y^{n-1}q_{n,t}^{(\nu)}(y),
\]
where
\[
 q_{n,t}^{(\nu)}(y)
 =\mathsf a_{n+1}y^2
 +(t\mathsf a_n-\mathsf b_{n+1})y-t\mathsf b_n.
\]
Its discriminant is
\[
 \Delta_{n,\alpha}^{(\nu)}(t)
 =(t\mathsf a_n-\mathsf b_{n+1})^2
 +4\mathsf a_{n+1}\mathsf b_n\,t.
\]
As a quadratic in \(t\), this polynomial has discriminant
\begin{align*}
 &-256n(n+\nu-1)\mathsf a_{n+1}\notag\\
 &\hspace{18mm}\times
 \bigl((2n+\nu)\alpha+n(n+1)(2-\nu)\bigr).
\end{align*}
Since \(0<\nu<2\), one has
\(\mathfrak e_{n,\nu}<n(n+1)\), so
\(\mathsf a_n,\mathsf a_{n+1}>0\) on the stated range.  The prefactor after
the displayed minus sign is positive, while the final parenthesis is
nonnegative.  Thus the discriminant of
\(\Delta_{n,\alpha}^{(\nu)}\), viewed as a quadratic in \(t\), is
nonpositive.  Since its leading coefficient is \(\mathsf a_n^2>0\), it follows
that \(\Delta_{n,\alpha}^{(\nu)}(t)\ge0\) for every real \(t\).

\medskip
\noindent\emph{A zero in the Jacobi interval.}
It remains to place one of the two quadratic roots in \([0,4]\).  Write
\(c_{\nu}=2-\nu\).  Directly,
\[
 q_{n,t}^{(\nu)}(0)=-t\mathsf b_n,
\]
\[
 q_{n,t}^{(\nu)}(4)
 =4\left(
  (\alpha+nc_{\nu})t
  +4(\alpha+(n+1)c_{\nu})
 \right).
\]
Indeed,
\begin{align*}
 \alpha+nc_{\nu}
 &\ge -\mathfrak e_{n,\nu}+n(2-\nu)
 =\frac{n(2-\nu)(n+\nu-1)}{2n+\nu}>0,\\
 \alpha+(n+1)c_{\nu}
 &\ge -\mathfrak e_{n,\nu}+(n+1)(2-\nu)
 =\frac{(n+1)(2-\nu)(n+\nu)}{2n+\nu}>0.
\end{align*}
If \(t\ge0\), the endpoint values have opposite weak signs.  Suppose \(t<0\).
If \(r_1,r_2\) are the quadratic roots, then Vieta's formulas give
\[
 r_1r_2=\frac{-t\mathsf b_n}{\mathsf a_{n+1}}>0,
 \qquad
 r_1+r_2=\frac{\mathsf b_{n+1}-t\mathsf a_n}{\mathsf a_{n+1}}>0,
\]
so both roots are positive.  If
\(q_{n,t}^{(\nu)}(4)\le0\), one root lies in \((0,4]\).  If
\(q_{n,t}^{(\nu)}(4)>0\), then
\[
 -t<\frac{4(\alpha+(n+1)c_{\nu})}
          {\alpha+nc_{\nu}}.
\]
Using the displayed bound for \(-t\), the product of the roots satisfies
\[
 \frac{-t\,\mathsf b_n}{\mathsf a_{n+1}}
 <
 \frac{4(\alpha+(n+1)c_{\nu})\mathsf b_n}
      {(\alpha+nc_{\nu})\mathsf a_{n+1}}
 \le16,
\]
where the last inequality is equivalent to
\[
 \mathcal D_{n,\nu}(\alpha)\ge0,
\]
and
\[
 \mathcal D_{n,\nu}(\alpha)
 =\alpha^2+2\bigl((3-\nu)n+1\bigr)\alpha
 +(2-\nu)(3-\nu)n(n+1).
\]
In fact
\[
 \mathcal D_{n,\nu}(-\mathfrak e_{n,\nu})
 =\frac{(2-\nu)^2n(n+1)(n+\nu-1)(n+\nu)}
 {(2n+\nu)^2}>0,
\]
and
\[
 \frac12\mathcal D_{n,\nu}'(-\mathfrak e_{n,\nu})
 =\frac{(4-\nu)n^2+\nu(4-\nu)n+\nu}
 {2n+\nu}>0.
\]
Since
\[
 \mathcal D_{n,\nu}''(\alpha)=2,
\]
the derivative is increasing.  Hence
\(\mathcal D_{n,\nu}'(\alpha)>0\) throughout
\([-\mathfrak e_{n,\nu},0]\), and therefore
\(\mathcal D_{n,\nu}(\alpha)>0\) on this interval.  Two roots both exceeding
\(4\) are therefore impossible, and at least one root lies in \([0,4]\).

\medskip
\noindent\emph{Sharpness and monotonicity.}
If \(\alpha<-\mathfrak e_{n,\nu}\) is sufficiently close to the threshold,
then \(\mathsf a_n,\mathsf a_{n+1}>0\) still hold, while the displayed
discriminant of \(\Delta_{n,\alpha}^{(\nu)}(t)\), viewed as a quadratic in
\(t\), is positive.  Since its leading coefficient is
\(\mathsf a_n^2>0\), the polynomial is negative between its two real roots.
Thus \(\Delta_{n,\alpha}^{(\nu)}(t)<0\) for some real \(t\), and the image
has a nonreal conjugate pair.  This proves sharpness for this
fixed $n$.
Finally,
\[
 \mathfrak e_{n+1,\nu}-\mathfrak e_{n,\nu}
 =\frac{2(n+1)(n+\nu)(2-\nu)}
 {(2n+\nu)(2n+\nu+2)}>0.
\]
\end{proof}

\begin{proof}[Proof of Theorem~\ref{thm:jacobi-family-endpoint-pencil}]
\noindent\emph{Polynomial sources.}
We first take \(F\) to be a polynomial and factor it into even real zero-orbit
polynomials.  A real pair \(\pm a\) gives the spectral factor
\[
 \JacOp_{\nu}+\alpha_a,
 \qquad
 \alpha_a=\frac14-a^2.
\]
Nonnegative shifts preserve \(\mathcal E_{n+1}\) by Lemma~\ref{lem:second-order-all-defects}.  Pair all negative shifts except possibly
one.  Since \(\mathfrak h_{n,\nu}\le1\), every negative shift lies in
\([-3/4,0)\), and each pair preserves every defect class by
Lemma~\ref{lem:square-outer-real-pairing}.  A possible unpaired shift satisfies
\[
 \alpha_a
 \ge\frac14-\mathfrak h_{n,\nu}^2
 =-\min\left\{\mathfrak e_{n,\nu},\frac34\right\}
 \ge-\mathfrak e_{n,\nu}.
\]
Commute it to the front and apply
Lemma~\ref{lem:single-outer-endpoint-pencil} to the initial pencil
\(y^n(y+t)\).

A purely imaginary pair gives a positive second-order shift.  A reflected
nonreal quartet with representative zero \(a+ib\), \(b>0\), gives
\(\mathscr A_{a,b}^{(\nu)}\).  Here \(\lvert a\rvert\le1\), so Lemma~\ref{lem:sharp-quartet-all-defects} applies.  All spectral factors are
polynomials in \(\JacOp_{\nu}\), hence commute.  Up to the nonzero real
scalar in the factorization of \(F\), their composition is
\(\JacMult_{F,\nu}\).  This scalar is harmless because the class
\(\mathcal E_{n+1}\) is invariant under nonzero real scaling.  Thus
endpoint-pencil preservation follows for polynomial \(F\).

\medskip
\noindent\emph{Entire sources.}
For an entire $F$, take the symmetric polynomial truncations from
Lemma~\ref{lem:derivative-strip}.  They have zeros in the same strip and
converge locally uniformly to $F$.  On \(\mathbb R[y]_{\le n+1}\), the
multiplier is diagonal in the fixed Jacobi basis and depends only on the
finite list
\[
 F(1/2),F(3/2),\ldots,F(n+3/2).
\]
Local uniform convergence therefore gives convergence of all diagonal
entries, hence operator-norm and coefficientwise convergence on this block.
In particular, the endpoint pencils converge coefficientwise.
Closedness of \(\mathcal E_{n+1}\), which contains the zero polynomial by
convention, gives the asserted preservation even if a limiting pencil member
loses degree or vanishes.  Every nonzero member of \(\mathcal E_{n+1}\) is
real-rooted, so every nonzero polynomial in the image pencil
\[
 \JacMult_{F,\nu}(y^{n+1})+t\JacMult_{F,\nu}(y^n),\qquad t\in\R,
\]
is real-rooted.  By definition, this is exactly
\(\JacMult_{F,\nu}(y^n)\preceq\JacMult_{F,\nu}(y^{n+1})\).

\medskip
\noindent\emph{Sharpness and uniformity.}
Suppose that \(\mathfrak e_{n,\nu}\le3/4\), so
\(\mathfrak h_{n,\nu}=\sqrt{1/4+\mathfrak e_{n,\nu}}\le1\).  Given any
\(H>\mathfrak h_{n,\nu}\), choose
\(a\in(\mathfrak h_{n,\nu},H)\) sufficiently close to
\(\mathfrak h_{n,\nu}\), and put \(F_a(u)=u^2-a^2\).  Then
\(\Zset(F_a)\subseteq S_H\) and
\(\JacMult_{F_a,\nu}=\JacOp_{\nu}+1/4-a^2\), with
\(1/4-a^2<-\mathfrak e_{n,\nu}\) sufficiently close to the threshold.
The sharpness part of Lemma~\ref{lem:single-outer-endpoint-pencil} therefore
gives a pencil member with a nonreal conjugate pair.  This proves sharpness
for this fixed $n$, including the boundary case
\(\mathfrak e_{n,\nu}=3/4\).
Since \(\mathfrak e_{n,\nu}\) increases with \(n\), its first value gives the
uniform constant.  The inequality
\(\mathfrak e_{1,\nu}\le3/4\) is equivalent to
\(\nu\ge10/11\), and sharpness for \(n=1\) then gives uniform sharpness.
The case \(n=0\) compares a constant with a polynomial of degree at most one
and is automatic.
\end{proof}

\subsection{The even endpoint \texorpdfstring{$\nu=3/2$}{nu=3/2}: sharp uniform interlacing}
\Needspace{14\baselineskip}
\begin{theorem}[Sharp weak interlacing at the even endpoint]
\label{thm:endpoint-sturm}
Let $F\not\equiv0$ be an even real entire function of order at most one whose
zeros lie in
\[
 \left\{u:|\Re u|\le\sqrt{\frac{15}{28}}\right\}.
\]
Then
\[
 C_{F,n}^{-}\preceq C_{F,n+1}^{-}\qquad(n\ge0).
\]
The strip constant $\sqrt{15/28}$ is optimal uniformly in $n$.
\end{theorem}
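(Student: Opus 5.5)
This is Theorem~\ref{thm:jacobi-family-endpoint-pencil}(ii) specialized to $\nu=3/2$, transported back to the quotient variable by the identity $C_{F,n}^{-}(y-2)=\JacMult_{F,3/2}(y^n)$ from Section~\ref{sec:sampling-framework}.

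First compute the threshold. For $\nu=3/2$,
\[
 \mathfrak e_{1,3/2}=\frac{2(2-3/2)}{2+3/2}=\frac{1}{7/2}=\frac27 .
\]
Hence
\[
 \mathfrak h_{1,3/2}=\min\Bigl\{1,\sqrt{\tfrac14+\tfrac27}\Bigr\}=\sqrt{\tfrac{15}{28}} .
\]
The hypothesis $\Zset(F)\subseteq S_{\sqrt{15/28}}$ is therefore exactly the hypothesis of part (ii). Since $F$ is even, real, and entire of order at most one, part (ii) gives
\[
 \JacMult_{F,3/2}(y^n)\preceq\JacMult_{F,3/2}(y^{n+1})\qquad (n\ge0).
\]
The weak Obreschkoff relation is invariant under the real affine change of variable $x=y-2$. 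Real-rootedness of each pencil member is unaffected, as are the zero-polynomial conventions. So this is $C_{F,n}^-\preceq C_{F,n+1}^-$.

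For uniform optimality, note that $\nu=3/2\ge 10/11$, so the sharpness clause of part (ii) applies. To make it concrete, fix $\varepsilon>0$, choose $a\in(\sqrt{15/28},\sqrt{15/28}+\varepsilon)$ close to the threshold, and set $F_a(u)=u^2-a^2$. Then $\JacMult_{F_a,3/2}=\JacOp_{3/2}+\tfrac14-a^2$, with $\tfrac14-a^2<-\tfrac27$ slightly.

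The sharpness part of Lemma~\ref{lem:single-outer-endpoint-pencil} at $n=1$ then gives some $t$ for which $(\JacOp_{3/2}+\tfrac14-a^2)(y(y+t))$ has a nonreal conjugate pair. That lemma requires $\mathsf a_1,\mathsf a_2>0$, which holds because $\tfrac14-a^2>-2$ near the threshold. This image is a member of the pencil generated by $C^-_{F_a,1}$ and $C^-_{F_a,2}$, after the shift $y=x+2$, so $C^-_{F_a,1}\not\preceq C^-_{F_a,2}$. Both polynomials are nonzero, since their leading coefficients $\mathsf a_2$ and $\mathsf a_3$ are positive, so no degenerate convention interferes. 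The failure is at sampling degrees $3$ and $5$, as stated in Theorem~\ref{thm:main-sturm}(i).

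I do not expect a real obstacle; the proof is a specialization. The only points needing care are the arithmetic that $\mathfrak h_{1,3/2}=\sqrt{15/28}$ lies below the cap $1$, and checking that the counterexample's pencil member is genuinely nonzero and nonreal-rooted rather than degenerate.
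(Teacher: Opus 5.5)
Your proposal is correct and follows essentially the same route as the paper. The paper also computes $\mathfrak h_{1,3/2}=\sqrt{15/28}$, applies Theorem~\ref{thm:jacobi-family-endpoint-pencil} (for $n\ge1$ via the monotonicity of $\mathfrak e_{n,3/2}$), translates by $y=x+2$, and takes sharpness from the $n=1$, $\nu=3/2$ case. There is one harmless index slip: the leading coefficients of $C^-_{F_a,1}(y-2)$ and $C^-_{F_a,2}(y-2)$ are $\mathsf a_1=F_a(3/2)$ and $\mathsf a_2=F_a(5/2)$, not $\mathsf a_2$ and $\mathsf a_3$.
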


\begin{proof}[Proof of the even-endpoint theorem]
For $n=0$ the assertion is automatic.  For $n\ge1$,
\[
 \mathfrak h_{1,3/2}
 =\sqrt{\frac14+\frac27}
 =\sqrt{\frac{15}{28}}.
\]
The monotonicity of $\mathfrak e_{n,3/2}$ from
Lemma~\ref{lem:single-outer-endpoint-pencil} gives
$\mathfrak h_{n,3/2}\ge\mathfrak h_{1,3/2}$, so
Theorem~\ref{thm:jacobi-family-endpoint-pencil} places the image pencil
\[
 \JacMult_{F,3/2}(y^{n+1})+t\JacMult_{F,3/2}(y^n),
 \qquad t\in\R,
\]
in \(\mathcal E_{n+1}\).  Every nonzero member is therefore real-rooted, and
\(\JacMult_{F,3/2}(y^n)\preceq\JacMult_{F,3/2}(y^{n+1})\) by definition.
Since
\[
 \JacMult_{F,3/2}(y^n)=C_{F,n}^{-}(y-2),
\]
translation by \(2\) gives \(C_{F,n}^{-}\preceq C_{F,n+1}^{-}\).
Sharpness is the \(n=1\), \(\nu=3/2\) case of
Theorem~\ref{thm:jacobi-family-endpoint-pencil}.
\end{proof}

\subsubsection{Support in $[-2,2]$ and fixed-index consequences}

\begin{lemma}[Endpoint support from circular sampling]\label{lem:endpoint-location}
Let \(F\not\equiv0\) be an even real entire function of order at most one whose zeros lie in
\[
 \left\{u:|\Re u|\le\sqrt{\frac{15}{28}}\right\}.
\]
Then, for every \(n\ge0\), every nonzero \(C_{F,n}^{-}\) has all zeros in \([-2,2]\).
\end{lemma}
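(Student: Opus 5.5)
The plan is to rerun the zero-orbit factorization from the proof of Theorem~\ref{thm:jacobi-family-endpoint-pencil}, but with the pencil input $y^n(y+t)$ replaced by the single polynomial $y^n$. The goal is to show $\JacMult_{F,3/2}(y^n)\in\mathcal E_{n,0}$, that is, all zeros lie in $[0,4]$. Translation by $2$ then gives $C_{F,n}^{-}$ all zeros in $[-2,2]$, because $\JacMult_{F,3/2}(y^n)=C_{F,n}^{-}(y-2)$. As a cross-check, the same conclusion should also follow from the fixed-degree support theorem \cite[Theorem~1.1(2)]{JinSharpSampling}, as indicated in the Introduction. If $B_{2n+1}[F]$ is unit-circle-rooted, then each zero $x$ of $C_{F,n}^{-}$ has preimages $z,z^{-1}$ that are zeros of $B_{2n+1}[F]$ on $\T$, so $x=2\cos\theta\in[-2,2]$. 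I would present the self-contained route below.

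\emph{Polynomial sources.} The case $n=0$ is trivial, since the quotient is constant. Fix $n\ge1$ and factor $F$ into zero-orbit polynomials. The factor types are handled as follows.
\begin{enumerate}[label=\textup{(\alph*)}]
\item Real pairs in $S_{1/2}$ and purely imaginary pairs give shifts $\mathfrak a\ge0$. These preserve $\mathcal E_{N,0}$ by Lemma~\ref{lem:second-order-all-defects}.
\item Reflected quartets have $|a|\le\sqrt{15/28}<1$, so they preserve $\mathcal E_{N,0}$ by Lemma~\ref{lem:sharp-quartet-all-defects}.
\item Outer real pairs give shifts $\alpha=1/4-a^2\in[-2/7,0)\subset[-3/4,0]$. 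Pair them, and each pair preserves $\mathcal E_{N,0}$ by Lemma~\ref{lem:square-outer-real-pairing}.
\end{enumerate}
At most one unpaired factor $\JacOp_{3/2}+\alpha$ with $-2/7\le\alpha<0$ then remains. Since all factors commute, apply it first, directly to $y^n$.

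\emph{The unpaired factor.} By the monomial action in the proof of Lemma~\ref{lem:single-outer-endpoint-pencil},
\[
 (\JacOp_{\nu}+\alpha)y^n=y^{n-1}\bigl(\mathsf a_n y-\mathsf b_n\bigr),
\]
where $\mathsf a_n=n(n+1)+\alpha>0$ and $\mathsf b_n=4n(n-1+\nu)\ge0$. The only possibly new zero is $\mathsf b_n/\mathsf a_n\ge0$. This zero is at most $4$ exactly when
\[
 n(n-1+\nu)\le n(n+1)+\alpha,
\]
that is, when $\alpha\ge-n(2-\nu)$. At $\nu=3/2$ this reads $\alpha\ge-n/2$, which holds because $\alpha\ge-2/7$. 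Hence the image lies in $\mathcal E_{n,0}$, and the remaining factors keep it there. The overall nonzero real scalar is harmless.

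\emph{Entire sources.} Use the symmetric truncations $F_L$ from Lemma~\ref{lem:derivative-strip}. On $\R[y]_{\le n}$ the multiplier depends only on $F(1/2),\dots,F(n+1/2)$, so $\JacMult_{F_L,3/2}(y^n)\to\JacMult_{F,3/2}(y^n)$ coefficientwise. Closedness of $\mathcal E_{n,0}$ (Lemma~\ref{lem:generic-approximation}) then gives the conclusion for every nonzero limit, including limits that drop degree. I expect no serious obstacle. The only point needing care is confirming that the outer-pair lemmas were stated for every $\mathcal E_{N,q}$, including $q=0$, which they are.
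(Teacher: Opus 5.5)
Your argument is correct, but it is not the paper's route. The paper's proof uses the circular identity $B_{2n+1}[F](z)=z^n(z+1)C_{F,n}^{-}(z+z^{-1})$ together with the sharp fixed-degree unit-circle theorem \cite[Theorem~1.1(2)]{JinSharpSampling}, which applies since $\sqrt{15/28}<\sqrt3/2\le\sqrt{2n+1}/2$. It then notes that a quotient zero outside $[-2,2]$ would give a zero of the sample off $\T$; this is exactly your ``cross-check''.

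Your main route instead reruns the orbit-by-orbit proof of Theorem~\ref{thm:jacobi-family-endpoint-pencil} with $q=0$ and the single input $y^n$. The cited orbit lemmas are indeed stated for every $\mathcal E_{N,q}$, and the outer shifts $\alpha\in[-2/7,0)$ lie in the pairing square. The unpaired factor reduces to the linear condition $\alpha\ge-n(2-\nu)$, which at $\nu=3/2$ reads $\alpha\ge-n/2$ and holds comfortably.

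Your route buys independence from the companion preprint: it uses only the total-nonnegativity machinery proved in this paper. At $\nu=1/2$ the same computation gives the condition $\alpha\ge-3n/2$. That would even handle the exceptional degree-three odd quotient, which Lemma~\ref{lem:odd-endpoint-support} treats separately by a ratio bound.

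The paper's route buys a stronger conclusion, namely that the full sample $B_{2n+1}[F]$ is unit-circle-rooted. It also covers a wider range, up to half-width $\sqrt{2n+1}/2$. Your argument is capped at width $1$ by the quartet and pairing ranges. Below that cap, your unpaired condition at $\nu=3/2$, $1/4-a^2\ge-n/2$, reproduces exactly the threshold $a\le\sqrt{2n+1}/2$.
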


\begin{proof}
For \(x=z+z^{-1}\), the elementary identity
\[
 H_r^-(z+z^{-1})=\frac{z^{2r+1}+1}{z^r(z+1)}
\]
gives, after inserting the expansion of \((x+2)^n\) in the \(H_r^-\)-basis,
\begin{equation}\label{eq:endpoint-circular-identity}
 B_{2n+1}[F](z)
 =\sum_{j=0}^{2n+1}\binom{2n+1}{j}
 F\left(j-n-\frac12\right)z^j
 =z^n(z+1)C_{F,n}^{-}(z+z^{-1}).
\end{equation}
Indeed, the terms with indices \(j=n-r\) and \(j=n+r+1\) combine because \(F\) is
even and \(\binom{2n+1}{n-r}=\binom{2n+1}{n+r+1}\).

If \(n=0\), then \(C_{F,0}^{-}=F(1/2)\) is constant and there is nothing to
prove.  Assume \(n\ge1\).  The sharp fixed-degree unit-circle theorem
\cite[Theorem~1.1(2)]{JinSharpSampling} applies with degree
\(2n+1\) and scale \(1\), because
\[
 \sqrt{\frac{15}{28}}<\frac{\sqrt3}{2}
 \le\frac{\sqrt{2n+1}}{2}.
\]
Thus the right-hand side of
\eqref{eq:endpoint-circular-identity} is either zero or has all zeros on \(\T\).
If \(C_{F,n}^{-}\not\equiv0\) and \(x_0\) is a zero of \(C_{F,n}^{-}\) outside \([-2,2]\),
then the two solutions of \(z+z^{-1}=x_0\) are neither on \(\T\) nor equal to the
endpoint \(-1\).  By \eqref{eq:endpoint-circular-identity} they would be zeros of
\(B_{2n+1}[F]\), contradicting unit-circle-rootedness.  Hence all zeros of nonzero
\(C_{F,n}^{-}\) lie in \([-2,2]\).
\end{proof}

\Needspace{20\baselineskip}
\begin{corollary}[Fixed-$n$ strip range]
\label{cor:fixed-index-horizontal}
Let \(n\ge1\), and let \(F\not\equiv0\) be an even real entire function of
order at most one whose zeros lie in
\[
 \lvert\Re u\rvert\le \mathfrak h_{n,3/2}
 =\min\left\{1,\sqrt{\frac14+\frac{n(n+1)}{4n+3}}\right\}.
\]  Then the quotients with indices \(n\) and \(n+1\) satisfy
\[
 C_{F,n}^{-}\preceq C_{F,n+1}^{-},
\]
and both nonzero quotients have all zeros in \([-2,2]\).  Explicitly,
\[
 \mathfrak h_{1,3/2}=\sqrt{\frac{15}{28}},\qquad
 \mathfrak h_{2,3/2}=\sqrt{\frac{35}{44}},\qquad
 \mathfrak h_{n,3/2}=1\quad(n\ge3).
\]
The constants \(\mathfrak h_{1,3/2}\) and
\(\mathfrak h_{2,3/2}\) are optimal for $n=1$ and $n=2$, respectively.  More generally, if the zeros lie in \(\lvert\Re u\rvert\le h\), then at
sampling scale \(\delta>0\) the corresponding sufficient condition is
\(h\le\delta \mathfrak h_{n,3/2}\).
\end{corollary}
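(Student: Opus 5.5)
The corollary is the specialization $\nu=3/2$ of Theorem~\ref{thm:jacobi-family-endpoint-pencil}(i), together with Lemma~\ref{lem:endpoint-location}-type support and a rescaling remark.

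\emph{Step 1: the formula for the strip width.} Substituting $\nu=3/2$ into the definition of $\mathfrak e_{n,\nu}$ gives
\[
 \mathfrak e_{n,3/2}=\frac{n(n+1)\cdot\frac12}{2n+\frac32}=\frac{n(n+1)}{4n+3},
\]
which yields the displayed formula for $\mathfrak h_{n,3/2}$. The special values are then computed directly:
\begin{itemize}
\item for $n=1$, $\mathfrak e_{1,3/2}=2/7$, so $\mathfrak h_{1,3/2}=\sqrt{15/28}$;
\item for $n=2$, $\mathfrak e_{2,3/2}=6/11$, so $\mathfrak h_{2,3/2}=\sqrt{35/44}$;
\item for $n=3$, $\mathfrak e_{3,3/2}=12/15\ge3/4$, so the cap gives $\mathfrak h_{3,3/2}=1$.
\end{itemize}
Since $\mathfrak e_{n,\nu}$ is increasing in $n$ by Lemma~\ref{lem:single-outer-endpoint-pencil}, $\mathfrak h_{n,3/2}=1$ for all $n\ge3$.

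\emph{Step 2: the relation $C_{F,n}^{-}\preceq C_{F,n+1}^{-}$.} Theorem~\ref{thm:jacobi-family-endpoint-pencil}(i) with $\nu=3/2$ gives $\JacMult_{F,3/2}(y^n)\preceq\JacMult_{F,3/2}(y^{n+1})$. The identity $\JacMult_{F,3/2}(y^n)=C_{F,n}^{-}(y-2)$ converts this to the claimed relation, since translation preserves $\preceq$.

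\emph{Step 3: optimality for $n=1,2$.} In both cases $\mathfrak e_{n,3/2}\le 6/11<3/4$, so the fixed-$n$ sharpness clause of Theorem~\ref{thm:jacobi-family-endpoint-pencil}(i) applies directly.

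\emph{Step 4: support in $[-2,2]$.} I would repeat the argument of Lemma~\ref{lem:endpoint-location}. Identity \eqref{eq:endpoint-circular-identity} reduces the question to unit-circle-rootedness of $B_{2n+1}[F]$. The fixed-degree theorem \cite[Theorem~1.1(2)]{JinSharpSampling} needs zeros in the strip of half-width $\sqrt{2n+1}/2$, and we have
\[
 \mathfrak h_{n,3/2}\le 1\le\frac{\sqrt{2n+1}}{2}\qquad(n\ge2).
\]
For $n=1$ one has $\sqrt{15/28}<\sqrt3/2$. The same support argument also covers index $n+1$, since $\mathfrak h_{n,3/2}\le\sqrt{2n+3}/2$. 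If some zero $x_0$ of a nonzero quotient lay outside $[-2,2]$, the two solutions of $z+z^{-1}=x_0$ would lie off $\T$ and be zeros of $B_{2n+1}[F]$, a contradiction.

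\emph{Step 5: general spacing $\delta$.} Set $\widetilde F(u)=F(\delta u)$. Then $B_{d,\delta}[F]=B_d[\widetilde F]$, and $\Zset(\widetilde F)=\delta^{-1}\Zset(F)$. The condition $h\le\delta\,\mathfrak h_{n,3/2}$ therefore places the zeros of $\widetilde F$ in $S_{\mathfrak h_{n,3/2}}$. The function $\widetilde F$ is still even, real and of order at most one, so Steps 2 and 4 apply to $\widetilde F$.

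\emph{Main obstacle.} The only delicate point is checking the hypothesis of the cited fixed-degree theorem at index $n+1$, and confirming that its scale convention matches $\delta$. Both should reduce to the simple inequality $1\le\sqrt{2n+1}/2$ used in Step 4.
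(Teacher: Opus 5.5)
Your proposal is correct and follows the paper's proof essentially step for step. You specialize Theorem~\ref{thm:jacobi-family-endpoint-pencil} to $\nu=3/2$ to get the weak relation, and you take fixed-$n$ sharpness from the same theorem, which applies since $\mathfrak e_{1,3/2}=2/7$ and $\mathfrak e_{2,3/2}=6/11$ lie below $3/4$. You obtain support in $[-2,2]$ at both sampling degrees $2n+1$ and $2n+3$ from the fixed-degree unit-circle theorem via $\mathfrak h_{n,3/2}\le\sqrt{2n+1}/2$, and you handle general spacing by rescaling to $F(\delta u)$; your computed values of $\mathfrak e_{n,3/2}$ and of the three explicit widths are all correct.
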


\begin{proof}
At \(\nu=3/2\), Theorem~\ref{thm:jacobi-family-endpoint-pencil}
gives the displayed value of \(\mathfrak h_{n,3/2}\) and hence the weak
interlacing relation.  The fixed-degree unit-circle theorem \cite[Theorem~1.1(2)]{JinSharpSampling}
gives endpoint support because
\[
 \mathfrak h_{n,3/2}\le\frac{\sqrt{2n+1}}2;
\]
for \(n=1,2\) this follows from the two displayed exact values, and for
\(n\ge3\) from \(\mathfrak h_{n,3/2}=1<\sqrt{2n+1}/2\).  The same condition
applies a fortiori at degree \(2n+3\), since
\(\sqrt{2n+1}/2<\sqrt{2n+3}/2\), so the fixed-degree theorem gives support
for both quotients.  Since \(\mathfrak h_{n,3/2}<1\) for \(n=1,2\), the
sharpness statement for the specified $n$ in
Theorem~\ref{thm:jacobi-family-endpoint-pencil} proves optimality in those
two cases.
Scaling \(F_\delta(v)=F(\delta v)\) gives the final assertion.
\end{proof}

\begin{remark}[Bounds for the unresolved constants at fixed $n$]
\label{rem:fixed-index-optimal-bounds}
For fixed $n\ge1$, call a strip half-width universally admissible if the
relation $C_{F,n}^{-}\preceq C_{F,n+1}^{-}$ holds for every nonzero even real
entire function $F$ of order at most one whose zeros lie in that strip.
Corollary~\ref{cor:fixed-index-horizontal} proves universal admissibility up to
$\mathfrak h_{n,3/2}$, while the quadratic polynomial $F_a(u)=u^2-a^2$ rules out
any half-width larger than
\[
 \sqrt{\frac14+\frac{n(n+1)}{4n+3}}.
\]
The two bounds agree for $n=1,2$.  For $n\ge3$, the proved lower bound for
the optimal half-width is $1$, and the displayed quantity is the corresponding
upper bound.  In particular, at $n=3$ these bounds are $1$ and
$\sqrt{21/20}$.  Determining the exact constants for $n\ge3$ requires
information beyond the uniform reflected-quartet range and the square TN range
for paired outer real factors proved here.
\end{remark}

\begin{corollary}[Scaled weak interlacing at the even endpoint]
\label{cor:scaled-endpoint-sturm}
Let \(h\ge0\), and let \(F\not\equiv0\) be an even real entire function of
order at most one whose zeros lie in
\(\{u:\lvert\Re u\rvert\le h\}\).  Let \(\delta>0\), and define
\begin{equation}\label{eq:scaled-C}
 C_{F,n}^{-,(\delta)}(x)
 =\sum_{r=0}^{n}\binom{2n+1}{n-r}
 F\!\left(\delta\left(r+\frac12\right)\right)H_r^-(x).
\end{equation}
If \(h\le\delta\sqrt{15/28}\), then
\[
 C_{F,n}^{-,(\delta)}\preceq C_{F,n+1}^{-,(\delta)}
 \qquad(n\ge0),
\]
and every nonzero \(C_{F,n}^{-,(\delta)}\) has all zeros in \([-2,2]\).
\end{corollary}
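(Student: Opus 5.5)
The plan is to reduce to the unscaled case by rescaling the source.  Put \(F_\delta(v)=F(\delta v)\).  Then \(F_\delta\) is again a nonzero even real entire function of order at most one, and its zero set is \(\delta^{-1}\Zset(F)\).  In particular, if \(\Zset(F)\subseteq S_h\), then \(\Zset(F_\delta)\subseteq S_{h/\delta}\).  The hypothesis \(h\le\delta\sqrt{15/28}\) therefore places \(\Zset(F_\delta)\) in \(S_{\sqrt{15/28}}\).

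Next we compare \eqref{eq:scaled-C} with the quotient formula of Section~\ref{sec:sampling-framework}.  That formula gives
\[
 C_{F_\delta,n}^{-}(x)=\sum_{r=0}^n\binom{2n+1}{n-r}F_\delta(r+1/2)H_r^-(x),
\]
and this is literally \(C_{F,n}^{-,(\delta)}(x)\), since \(F_\delta(r+1/2)=F(\delta(r+1/2))\).  Equivalently, \(C_{F,n}^{-,(\delta)}(y-2)=\JacMult_{F_\delta,3/2}(y^n)\).  The scaled family is thus the unscaled quotient family of the rescaled source.

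With this identification, Theorem~\ref{thm:endpoint-sturm} applied to \(F_\delta\) gives
\[
 C_{F_\delta,n}^{-}\preceq C_{F_\delta,n+1}^{-}\qquad(n\ge0),
\]
which is the weak interlacing claim.  Lemma~\ref{lem:endpoint-location} applied to \(F_\delta\) shows that every nonzero \(C_{F_\delta,n}^{-}\) has all zeros in \([-2,2]\), which is the support claim.  The zero-polynomial conventions carry over unchanged, because the two families coincide as polynomials.

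The only step that needs care is checking that the rescaling preserves every hypothesis: parity, reality on \(\R\), order at most one, and nonvanishing.  Each of these is immediate for \(v\mapsto F(\delta v)\) with \(\delta>0\), so no step should be a real obstacle.  The final sentence of Corollary~\ref{cor:fixed-index-horizontal} already records this same scaling device.
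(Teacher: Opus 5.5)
Your proposal is correct and is essentially the paper's own proof. Both rescale to $F_\delta(v)=F(\delta v)$, note that its zeros lie in $S_{h/\delta}\subseteq S_{\sqrt{15/28}}$, identify $C_{F,n}^{-,(\delta)}=C_{F_\delta,n}^{-}$, and then apply Theorem~\ref{thm:endpoint-sturm} and Lemma~\ref{lem:endpoint-location} to $F_\delta$.
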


\begin{proof}
Put \(F_\delta(v)=F(\delta v)\).  Then \(F_\delta\) is real and even, has
order at most one, and its zeros lie in
\(\lvert\Re v\rvert\le h/\delta\le\sqrt{15/28}\).  Moreover
\(C_{F,n}^{-,(\delta)}=C_{F_\delta,n}^{-}\).  The conclusions therefore
follow from Theorem~\ref{thm:endpoint-sturm} and
Lemma~\ref{lem:endpoint-location} applied to \(F_\delta\).
\end{proof}

\subsection{The odd endpoint \texorpdfstring{$\nu=1/2$}{nu=1/2}: uniform interlacing for strip width one}
\label{subsec:odd-endpoint}

The same Jacobi analysis applies to the anti-reciprocal quotient.  Let
\(G\not\equiv0\) be an odd entire function that is real-valued on
$\mathbb R$, and write
\[
 G(u)=uF(u),
\]
where \(F\) is real, even, and entire.  The superscript in \(C_{G,n}^{+}\) records
the basis \(H_r^+=U_r+U_{r-1}\), not a sign condition on the
coefficients.

The representation established in Lemma~\ref{lem:endpoint-raising} reduces
the odd quotient to the Jacobi parameter \(\nu=1/2\).

\begin{theorem}[Uniform weak interlacing at the odd endpoint]
\label{thm:odd-endpoint-sturm}
Let \(G\not\equiv0\) be an odd real entire function of order at most one whose
zeros lie in
\[
 \{u:\lvert\Re u\rvert\le1\}.
\]
Then
\[
 C_{G,n}^{+}\preceq C_{G,n+1}^{+}\qquad(n\ge0),
\]
under the same zero-polynomial convention as before.
\end{theorem}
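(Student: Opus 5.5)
Write \(G(u)=uF(u)\) with \(F\) real, even and entire; \(F\) has order at most one, and \(\Zset(F)\subseteq\Zset(G)\cup\{0\}\subseteq S_1\). By Lemma~\ref{lem:endpoint-raising},
\[
 C_{G,n}^{+}(y-2)=\left(n+\tfrac12\right)\JacMult_{F,1/2}(y^n).
\]
The positive factors \(n+\tfrac12\) and \(n+\tfrac32\) do not affect the weak Obreschkoff relation, since \(\preceq\) is invariant under positive scaling of either argument. Translation by \(2\) also preserves real-rootedness of every pencil member. It therefore suffices to prove
\[
 \JacMult_{F,1/2}(y^n)\preceq\JacMult_{F,1/2}(y^{n+1})\qquad(n\ge0).
\]

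This follows from Theorem~\ref{thm:jacobi-family-endpoint-pencil}(ii) at \(\nu=1/2\), provided \(\mathfrak h_{1,1/2}=1\). We compute
\[
 \mathfrak e_{1,1/2}=\frac{2(2-1/2)}{2+1/2}=\frac{3}{2.5}=\frac65\ge\frac34,
\]
so \(\sqrt{1/4+\mathfrak e_{1,1/2}}=\sqrt{29/20}>1\). Hence \(\mathfrak h_{1,1/2}=\min\{1,\sqrt{29/20}\}=1\). The hypothesis \(\Zset(F)\subseteq S_1\) is then exactly the uniform hypothesis of part (ii), and that part gives the relation for all \(n\ge0\). The case \(n=0\), a constant against a polynomial of degree at most one, is automatic in any event.

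We also need \(F\not\equiv0\), which holds because \(G\not\equiv0\). If \(\JacMult_{F,1/2}(y^n)\) vanishes identically, the zero-polynomial convention covers it. This is the same convention used in the proof of Theorem~\ref{thm:jacobi-family-endpoint-pencil}: closedness of \(\mathcal E_{n+1}\), which contains \(0\), handles vanishing limiting members.

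The only step needing care is the passage from \(G\) to \(F\). The zero of \(G\) at \(u=0\) is removed, and any further zero of \(F\) at the origin is the degenerate real pair \(a=0\). That pair gives the nonnegative shift \(\mathfrak a=1/4\), which Lemma~\ref{lem:second-order-all-defects} already covers. Every other zero of \(F\) is a zero of \(G\) and so lies in \(S_1\).

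So the odd theorem reduces entirely to the general Jacobi theorem. The cap at \(1\) is the binding constraint here, coming from the reflected-quartet range of Theorem~\ref{thm:sharp-quartet-TN}, not from the unpaired-pair threshold \(\mathfrak e_{n,1/2}\). I do not expect a genuine obstacle beyond verifying this arithmetic and the scaling bookkeeping.
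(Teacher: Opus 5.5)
Your proposal is correct and takes essentially the same route as the paper. You factor \(G=uF\), pass to \(\JacMult_{F,1/2}\) via the endpoint-raising identity, and invoke the Jacobi endpoint-pencil theorem with \(\mathfrak h_{1,1/2}=\min\{1,\sqrt{29/20}\}=1\). The only differences are cosmetic: you cite part \textup{(ii)} directly and absorb the factors \(n+\tfrac12\), \(n+\tfrac32\) through the scale invariance of \(\preceq\). The paper instead applies part \textup{(i)} together with the monotonicity of \(\mathfrak e_{n,1/2}\), and writes out the rescaled pencil parameter \(\frac{n+1/2}{n+3/2}\,t\) explicitly.
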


\begin{proof}
Write \(G=uF\), where \(F\) is real and even and has the same nonzero zeros as
\(G\).  For $n=0$ endpoint-pencil preservation is automatic.  For $n\ge1$,
\[
 \mathfrak h_{1,1/2}
 =\min\left\{1,\sqrt{\frac14+\frac65}\right\}=1,
\]
and the monotonicity of $\mathfrak e_{n,1/2}$ from
Lemma~\ref{lem:single-outer-endpoint-pencil} gives
$\mathfrak h_{n,1/2}=1$.  Thus
Theorem~\ref{thm:jacobi-family-endpoint-pencil} preserves every required
endpoint pencil.  For every \(t\in\R\), the raising identity
\eqref{eq:odd-multiplier-representation} gives
\[
 \bigl(C_{G,n+1}^{+}+tC_{G,n}^{+}\bigr)(y-2)
 =\left(n+\frac32\right)\JacMult_{F,1/2}
 \left(
  y^n\left(y+\frac{n+1/2}{n+3/2}t\right)
 \right).
\]
The preceding endpoint-pencil preservation places this polynomial in
\(\mathcal E_{n+1}\), so every nonzero member is real-rooted.  Hence
\(C_{G,n}^{+}\preceq C_{G,n+1}^{+}\) by definition.
\end{proof}

The even and odd endpoint theorems together prove
Theorem~\ref{thm:main-sturm}.

\begin{lemma}[Support for odd endpoint quotients]
\label{lem:odd-endpoint-support}
Let \(G\not\equiv0\) be an odd real entire function of order at most one whose
zeros lie in \(\lvert\Re u\rvert\le1\).  Then, for every \(n\ge0\), every nonzero
\(C_{G,n}^{+}\) has all zeros in \([-2,2]\).
\end{lemma}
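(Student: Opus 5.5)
Split by $n$ as the introduction suggests. For $n=0$, $C_{G,0}^{+}$ is the constant $G(1/2)$, so there is nothing to prove. For $n\ge2$, the fixed-degree unit-circle theorem \cite[Theorem~1.1(2)]{JinSharpSampling} applies at sampling degree $2n+1$ and scale $1$: its admissible strip half-width for odd sources is $\sqrt{2n+1}/2$, at least as for even sources, and $1<\sqrt5/2\le\sqrt{2n+1}/2$. Thus $B_{2n+1}[G]$ is either zero or unit-circle-rooted. We then argue exactly as in Lemma~\ref{lem:endpoint-location}, using the anti-reciprocal factorization \eqref{eq:intro-odd-quotient}. Suppose $x_0\notin[-2,2]$ were a zero of a nonzero $C_{G,n}^{+}$. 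The two solutions of $z+z^{-1}=x_0$ lie off $\T$ and differ from $z=1$, so they would be zeros of $B_{2n+1}[G]$, which is a contradiction.

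The exceptional case $n=1$ (sampling degree $3$) is where the fixed-degree theorem does not reach width $1$, since $\sqrt3/2<1$. Here we compute directly. By \eqref{eq:odd-multiplier-representation}, $C_{G,1}^{+}(y-2)=\tfrac32\JacMult_{F,1/2}(y)$ with $G=uF$. The relation $y=\sum_{r\le1}\binom{3}{1-r}\widehat H_r^{-}$, raised as in Lemma~\ref{lem:endpoint-raising}, gives
\[
 C_{G,1}^{+}(x)=3G(1/2)H_0^+(x)+G(3/2)H_1^+(x)=G(3/2)\,x+G(3/2)+3G(1/2).
\]
This is linear, with root $x_0=-1-3G(1/2)/G(3/2)$ when $G(3/2)\ne0$. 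So the claim reduces to the ratio inequality
\[
 -\tfrac13\le \frac{G(1/2)}{G(3/2)}\le 1,
\]
equivalently $G(3/2)\,\bigl(G(3/2)-G(1/2)\bigr)\ge0$ and $G(3/2)\,\bigl(G(3/2)+3G(1/2)\bigr)\ge0$. If $G(3/2)=0$, then $C_{G,1}^+$ is a constant or zero, and nothing needs proving.

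To prove the ratio inequality, we first reduce to polynomials. Lemma~\ref{lem:derivative-strip} supplies odd polynomial truncations with zeros in $S_1$. The limit is harmless because the two inequalities are closed conditions on $(G(1/2),G(3/2))$. Alternatively, we can view the $n=1$ case as the endpoint-pencil statement at $n=0$ and degree one. The preferred route, however, is factorwise: $G(u)=cu\prod h_k(u)$ over zero-orbit polynomials, and the quantity to control is $\rho=G(1/2)/G(3/2)=\tfrac13\prod_k h_k(1/2)/h_k(3/2)$.

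For each orbit, the factor ratio $h(1/2)/h(3/2)$ lies in $[-1,1]$, and it lies in $[0,1]$ except for real pairs with $1/2<a<3/2$.
\begin{itemize}
\item For a real pair with $a\le1$, $h(1/2)/h(3/2)=(1/4-a^2)/(9/4-a^2)\in[-3/5,1]$.
\item For an imaginary pair, the ratio lies in $(0,1)$.
\item For a quartet with $|a|\le1$, the ratio is $|(1/2)^2-\rho_0^2|^2/|(3/2)^2-\rho_0^2|^2$. This is positive, and we expect it to be at most $1$ by comparing distances, since $\Re\rho_0^2\le \tfrac54+\ldots$; this is a short check.
\end{itemize}
Multiplying, $|\rho|\le\tfrac13\cdot\prod|\cdot|\le\tfrac13$, which gives $\rho\in[-\tfrac13,\tfrac13]\subset[-\tfrac13,1]$.

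The main obstacle is verifying the quartet bound $|1/4-w^2|\le|9/4-w^2|$ for $w=a+ib$ with $|a|\le1$. Writing $w^2=s$, this says $\Re s\le 5/4$, i.e.\ $a^2-b^2\le 5/4$, which holds since $a^2\le1$. It also remains to make sure that degenerate cases (zero orbits at the origin, extra factors of $u$) keep the sign bookkeeping correct. The $u^{2k+1}$ factor contributes $1/3^{2k+1}$ in magnitude, which is again $\le 1/3$.
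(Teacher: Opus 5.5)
This is essentially the paper's proof: the same split into $n=0$, $n\ge2$ (the fixed-degree theorem plus pullback under $x=z+z^{-1}$), and $n=1$ (the linear quotient plus an orbitwise bound on $F(1/2)/F(3/2)$, passed to entire $G$ through the symmetric truncations), except that you use multiplicative closure of $[-1,1]$ where the paper uses $[-3/5,1]$; either suffices. Fix one slip in the reduction: $x_0=-1-3\rho$ lies in $[-2,2]$ if and only if $-1\le\rho\le\tfrac13$, not $-\tfrac13\le\rho\le1$, so the two sign conditions you list are also the wrong ones; since you actually prove $|\rho|\le\tfrac13$, you still get $x_0\in[-2,0]$ and the conclusion stands.
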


\begin{proof}
The anti-reciprocal pairing identity is
\begin{equation}\label{eq:odd-endpoint-circular-identity}
 B_{2n+1}[G](z)
 =z^n(z-1)C_{G,n}^{+}(z+z^{-1}).
\end{equation}
For \(n\ge2\), the circular-sampling theorem applies because
\(1<\sqrt{2n+1}/2\), and \eqref{eq:odd-endpoint-circular-identity} gives the
support assertion exactly as in Lemma~\ref{lem:endpoint-location}.  The case
\(n=0\) is constant.

It remains to treat \(n=1\), where the strip \(\lvert\Re u\rvert\le1\) is
wider than the degree-three circular-sampling range.  Write \(G(u)=uF(u)\).
Since \(F(3/2)\ne0\), direct expansion gives
\[
 \begin{aligned}
 C_{G,1}^{+}(y-2)
 &=3G(1/2)+G(3/2)(y-1)\\
 &=\frac32F(3/2)\left(
 y-1+\frac{F(1/2)}{F(3/2)}
 \right).
 \end{aligned}
\]
Put \(R_F=F(1/2)/F(3/2)\).  We claim that
\begin{equation}\label{eq:odd-ratio-bound}
 -\frac35\le R_F\le1.
\end{equation}
By the factorization in the proof of
Lemma~\ref{lem:derivative-strip}, up to a nonzero real constant, \(F\) is the
locally uniform limit of finite products of its real even zero-orbit factors.
Each factor \(u^2\) arising from the zero at the origin has ratio \(1/9\).
For a real pair \(\pm a\), \(0<a\le1\), the corresponding monic factor has ratio
\[
 \frac{1/4-a^2}{9/4-a^2}\in\left[-\frac35,\frac19\right].
\]
For a purely imaginary pair the ratio lies in \((0,1)\).  For a reflected
nonreal quartet \(\pm(a+ib),\pm(a-ib)\), where \(0<a\le1\) and \(b>0\), put
\[
 E_{a,b}(u)=((u-a)^2+b^2)((u+a)^2+b^2),\qquad
 E_{a,b}(3/2)-E_{a,b}(1/2)=5+4b^2-4a^2>0.
\]
Thus its ratio also lies in \((0,1)\).  Since \([-3/5,1]\) is closed under
multiplication, every finite zero-orbit product has ratio in this interval.
For the symmetric truncations \(F_L\) from
Lemma~\ref{lem:derivative-strip},
\[
 \frac{F_L(1/2)}{F_L(3/2)}\longrightarrow
 \frac{F(1/2)}{F(3/2)}=R_F,
\]
which proves \eqref{eq:odd-ratio-bound}.  Thus, in the translated and original
coordinates, the root satisfies
\[
 y_0=1-R_F\in\left[0,\frac85\right]\subset[0,4],\qquad
 x_0=y_0-2\in\left[-2,-\frac25\right]\subset[-2,2].
\]
\end{proof}

\begin{corollary}[Scaled weak interlacing at the odd endpoint]
\label{cor:scaled-odd-endpoint-sturm}
Let $h\ge0$, and let $G\not\equiv0$ be an odd real entire function of order
at most one whose zeros lie in $\lvert\Re u\rvert\le h$.  Let $\delta>0$,
and define
\[
 C_{G,n}^{+,(\delta)}(x)
 =\sum_{r=0}^{n}\binom{2n+1}{n-r}
 G\!\left(\delta\left(r+\frac12\right)\right)H_r^+(x).
\]
If $h\le\delta$, then
\[
 C_{G,n}^{+,(\delta)}\preceq C_{G,n+1}^{+,(\delta)}
 \qquad(n\ge0),
\]
and every nonzero quotient has all zeros in $[-2,2]$.
\end{corollary}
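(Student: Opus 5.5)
The plan is to reduce to the unscaled odd-endpoint statements by rescaling, exactly as Corollary~\ref{cor:scaled-endpoint-sturm} was obtained from Theorem~\ref{thm:endpoint-sturm}. Put $G_\delta(v)=G(\delta v)$. This function is real, odd, and entire of order at most one, and it is not identically zero. Its zeros are the points $\rho/\delta$ with $\rho\in\Zset(G)$, so they lie in the strip $\lvert\Re v\rvert\le h/\delta\le1$.

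Next, compare the defining sums. The sum defining $C_{G,n}^{+,(\delta)}$ uses the sample values $G(\delta(r+1/2))=G_\delta(r+1/2)$. By the quotient formula for $C_{G,n}^{+}$ from Section~\ref{sec:sampling-framework}, this gives the identity
\[
 C_{G,n}^{+,(\delta)}=C_{G_\delta,n}^{+}
\]
for every $n\ge0$, coefficient by coefficient.

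Now apply Theorem~\ref{thm:odd-endpoint-sturm} to $G_\delta$. This yields $C_{G_\delta,n}^{+}\preceq C_{G_\delta,n+1}^{+}$ for all $n\ge0$, with the same zero-polynomial convention. Lemma~\ref{lem:odd-endpoint-support}, applied to $G_\delta$, gives support in $[-2,2]$ for every nonzero quotient. Transporting both conclusions through the identity above proves the corollary.

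There is no real obstacle here. The only care needed is to check that the hypotheses of Theorem~\ref{thm:odd-endpoint-sturm} and Lemma~\ref{lem:odd-endpoint-support} are invariant under the dilation $v\mapsto\delta v$: reality, oddness, order at most one, and the strip condition after division by $\delta$. Oddness is preserved because the dilation is linear, and order is unchanged under composition with a linear map.
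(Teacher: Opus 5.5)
Your proposal is correct and matches the paper's proof: the paper also sets $G_\delta(v)=G(\delta v)$, notes that $C_{G,n}^{+,(\delta)}=C_{G_\delta,n}^{+}$, and applies Theorem~\ref{thm:odd-endpoint-sturm} and Lemma~\ref{lem:odd-endpoint-support} to $G_\delta$. Your check that reality, oddness, order at most one, and the strip condition $\lvert\Re v\rvert\le h/\delta\le1$ survive the dilation is exactly the verification needed.
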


\begin{proof}
Apply Theorem~\ref{thm:odd-endpoint-sturm} and
Lemma~\ref{lem:odd-endpoint-support} to $G_\delta(v)=G(\delta v)$.
\end{proof}

\subsection{Separation from fixed-degree unit-circle support}
\begin{proposition}[Unit-circle-rooted samples need not have weakly interlacing quotients]
\label{prop:circle-vs-interlacing}
For \(a>0\), put \(F_a(u)=u^2-a^2\) and
\[
 B_d[F_a](z)=\sum_{j=0}^{d}\binom dj
 F_a\!\left(j-\frac d2\right)z^j.
\]
If
\[
 \sqrt{\frac{15}{28}}<a\le\frac{\sqrt3}{2},
\]
then both \(B_3[F_a]\) and \(B_5[F_a]\) have all zeros on \(\T\), but
\[
 C_{F_a,1}^{-}\not\preceq C_{F_a,2}^{-}.
\]
More precisely, for some \(t\in\R\), the pencil member
\(C_{F_a,2}^{-}+tC_{F_a,1}^{-}\) has a nonreal conjugate pair.  Hence individual
unit-circle-rootedness of two adjacent samples does not determine whether their
quotients weakly interlace.
\end{proposition}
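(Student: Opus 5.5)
The proof has two independent halves: unit-circle-rootedness of the two samples, and failure of the pencil relation for $n=1$.

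\emph{Unit-circle-rootedness.} I would first try to invoke the sharp fixed-degree theorem \cite[Theorem~1.1(2)]{JinSharpSampling}, exactly as in Lemma~\ref{lem:endpoint-location}. The zeros $\pm a$ of $F_a$ lie in $S_a$ with $a\le\sqrt3/2=\sqrt{3}/2\le\sqrt{2n+1}/2$ for $n=1,2$. Hence $B_3[F_a]$ and $B_5[F_a]$ are unit-circle-rooted.

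As a self-contained check, I would also compute these directly. By \eqref{eq:endpoint-circular-identity}, it suffices that $C_{F_a,1}^{-}$ and $C_{F_a,2}^{-}$ have all zeros in $[-2,2]$. In the $y$-coordinate, $C_{F_a,n}^{-}(y-2)=\JacMult_{F_a,3/2}(y^n)=(\JacOp_{3/2}+\alpha)y^n$ with $\alpha=1/4-a^2\in[-1/2,-2/7)$. From the monomial action in Lemma~\ref{lem:single-outer-endpoint-pencil}:
\begin{itemize}
\item for $n=1$, $(2+\alpha)y-6$ has its root $6/(2+\alpha)\in(0,4]$, because $\alpha\ge-1/2$;
\item for $n=2$, $(6+\alpha)y^2-20y$ has roots $0$ and $20/(6+\alpha)$, and the latter is $\le4$ iff $\alpha\ge-1$.
\end{itemize}
So both quotients have zeros in $[0,4]$ in $y$, hence in $[-2,2]$ in $x$. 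The forced factor $z+1$ and the substitution $x=z+z^{-1}$ then put every zero of the samples on $\T$. Together with the $n=0$ constant, this is a direct verification that does not depend on the cited theorem.

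\emph{Failure of interlacing.} I would apply Lemma~\ref{lem:single-outer-endpoint-pencil} with $n=1$ and $\nu=3/2$. Here $\mathfrak e_{1,3/2}=2/7$, and the hypothesis $a>\sqrt{15/28}$ is exactly $\alpha<-2/7$. The pencil member $(\JacOp_{3/2}+\alpha)(y(y+t))$ is the quadratic $q_{1,t}^{(3/2)}$, with discriminant $\Delta(t)$ quadratic in $t$. Its own discriminant is
\[
-256\cdot\tfrac32\,\mathsf a_2\bigl(\tfrac72\alpha+1\bigr),
\]
and this is positive precisely when $\alpha<-2/7$, provided $\mathsf a_2=6+\alpha>0$ and $\mathsf a_1=2+\alpha>0$; both hold for $\alpha\ge-1/2$. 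Since the leading coefficient $\mathsf a_1^2$ is positive, $\Delta(t)<0$ for $t$ between the two real roots of $\Delta$. For such $t$, $q_{1,t}$ has a nonreal conjugate pair.

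One point needs a little care. The sharpness clause of the lemma is stated only for $\alpha$ near the threshold, but the explicit discriminant sign argument above works on the whole range $\alpha\in[-1/2,-2/7)$. Translating $y=x+2$, the member $C_{F_a,2}^{-}+tC_{F_a,1}^{-}$ has a nonreal pair, so $C_{F_a,1}^{-}\not\preceq C_{F_a,2}^{-}$.

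The main obstacle is only this bookkeeping: checking that the lemma's discriminant computation applies on the full interval rather than just near the threshold. No hard step is expected.
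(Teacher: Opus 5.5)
Your proposal is correct and follows the paper's proof. The paper likewise cites the sharp fixed-degree circular-sampling theorem for degrees $3$ and $5$. It then shows that $\Delta_a(t)$, the discriminant of $q_{1,t}^{(3/2)}$ viewed as a quadratic in $t$, has positive discriminant $12(25-4a^2)(28a^2-15)$ on the whole interval; this is exactly your $-384(6+\alpha)(\tfrac72\alpha+1)$ rewritten in terms of $a$, and you correctly checked $\mathsf a_1,\mathsf a_2>0$ there. Your extra direct check, that $(2+\alpha)y-6$ and $(6+\alpha)y^2-20y$ have nonzero leading coefficients and are rooted in $[0,4]$, is a valid self-contained substitute for the citation, and it also shows why $\sqrt3/2$ is the natural upper endpoint.
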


\begin{proof}
The zeros of \(F_a\) are \(\pm a\).  Since
\(a\le\sqrt3/2<\sqrt5/2\), the sharp circular-sampling theorem applied in
degrees \(3\) and \(5\) puts both displayed samples on \(\T\).

For the first nontrivial even endpoint pencil take \(n=1\), \(\nu=3/2\), and
\(\alpha_a=1/4-a^2\).  In the notation of
Lemma~\ref{lem:single-outer-endpoint-pencil},
\[
 (\JacOp_{3/2}+\alpha_a)\bigl(y(y+t)\bigr)
 =q_{1,t}^{(3/2)}(y),
\]
whose discriminant as a quadratic in \(y\) is
\[
 \Delta_a(t)
 =\left(\frac94-a^2\right)^2t^2+(16a^2+60)t+400.
\]
The discriminant of \(\Delta_a\), now viewed as a quadratic in \(t\), is
\[
 12(25-4a^2)(28a^2-15)>0
\]
throughout the stated interval.  Its leading coefficient is positive, so
\(\Delta_a(t)<0\) for some real \(t\).  Thus the corresponding endpoint-pencil
member has a nonreal conjugate pair.  Since
\[
 (\JacOp_{3/2}+\alpha_a)\bigl(y(y+t)\bigr)
 =C_{F_a,2}^{-}(y-2)+tC_{F_a,1}^{-}(y-2),
\]
translation proves \(C_{F_a,1}^{-}\not\preceq C_{F_a,2}^{-}\).
\end{proof}

\section{Exact thresholds for the first nontrivial endpoint pencil}
\label{sec:exact-first-pencil}

When $F(3/2)F(5/2)\ne0$, the two Jacobi images at $n=1$ have degrees
one and two, so endpoint-pencil preservation reduces to three scalar
inequalities.  For general $n$, the corresponding pencil has higher degree,
and the present argument does not reduce it to a comparable three-inequality
criterion.  The
sufficient uniform strip in
Theorem~\ref{thm:jacobi-family-endpoint-pencil} is exact for
$\nu\ge10/11$.  For $0<\nu<10/11$, the uniform argument is capped at $1$,
but the first pencil admits a larger exact strip.  Two boundary mechanisms
compete: a repeated outer real pair and a single unpaired outer real pair.  The
transition value of $\nu$ is the point at which these two extremal
configurations exchange dominance.

Define
\begin{equation}\label{eq:first-pair-critical-polynomial}
 \mathfrak p_\nu(\alpha)
 =(\nu+3)\alpha^4+28\alpha^3+(88-34\nu)\alpha^2
 +(112-56\nu)\alpha+48-24\nu.
\end{equation}
Let $\alpha_{\mathrm{rep}}(\nu)$ be its unique zero in $(-1,-3/4)$;
existence and uniqueness are proved in Step~1 below.  Put
\[
 \alpha_{\mathrm{sing}}(\nu)=\frac{2(\nu-2)}{\nu+2},
 \qquad
 \alpha_\sharp(\nu)
 =\max\{\alpha_{\mathrm{rep}}(\nu),\alpha_{\mathrm{sing}}(\nu)\},
\]
and
\begin{equation}\label{eq:first-pair-optimal-width}
 \mathfrak h_{1,\nu}^{\mathrm{opt}}
 =\sqrt{\frac14-\alpha_\sharp(\nu)}.
\end{equation}
The factor-by-factor width $\mathfrak h_{1,\nu}$ is uniform in the quotient
index, whereas $\mathfrak h_{1,\nu}^{\mathrm{opt}}$ is exact for the single
pencil $n=1$.  One has
\[
 \mathfrak h_{1,\nu}^{\mathrm{opt}}\ge\mathfrak h_{1,\nu},
 \qquad
 \mathfrak h_{1,\nu}^{\mathrm{opt}}>\mathfrak h_{1,\nu}
 \quad\Longleftrightarrow\quad 0<\nu<\frac{10}{11}.
\]

\Needspace{8\baselineskip}
\begin{theorem}[Exact threshold for the first nontrivial endpoint pencil]
\label{thm:exact-first-jacobi-pair}
Let $0<\nu<2$, and let $F\not\equiv0$ be real and even.  Assume that $F$ is
either a polynomial or an entire function of order at most one, and that
\[
 \Zset(F)\subseteq S_{\mathfrak h_{1,\nu}^{\mathrm{opt}}}.
\]
Then
\begin{equation}\label{eq:exact-first-jacobi-pencil}
 \JacMult_{F,\nu}\bigl(y(y+t)\bigr)
 \in\mathcal E_2
 \qquad(t\in\mathbb R).
\end{equation}
Hence
\[
 \JacMult_{F,\nu}(y)
 \preceq
 \JacMult_{F,\nu}(y^2).
\]
The strip constant is optimal.  The two sharpness branches meet at
\[
 \nu_0=\frac{\sqrt{17}-1}{4},
\]
and
\begin{equation}\label{eq:first-pair-piecewise-extremal}
 \mathfrak h_{1,\nu}^{\mathrm{opt}}
 =\begin{cases}
 \displaystyle
 \sqrt{\frac14-\alpha_{\mathrm{rep}}(\nu)},
 &0<\nu\le\nu_0,\\[2mm]
 \displaystyle
 \sqrt{\frac14-\frac{2(\nu-2)}{\nu+2}},
 &\nu_0<\nu<2.
 \end{cases}
\end{equation}
In particular,
\[
 \mathfrak h_{1,1/2}^{\mathrm{opt}}=1.065615\ldots,
 \qquad
 \mathfrak h_{1,3/2}^{\mathrm{opt}}=\sqrt{\frac{15}{28}}.
\]
\end{theorem}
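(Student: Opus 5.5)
The plan is to reduce \eqref{eq:exact-first-jacobi-pencil} to a condition on two spectral ratios, and then to control the multiplicative monoid of ratios generated by zero-orbit factors. On $\R[y]_{\le2}$ the multiplier $\JacMult_{F,\nu}$ depends only on $F(1/2),F(3/2),F(5/2)$, that is, on $h(0),h(2),h(6)$ when $F(u)=h(u^2-1/4)$. First dispose of the case $F(5/2)=0$: the images then have degree at most one, and membership in $\mathcal E_2$ is automatic or follows by closure. Otherwise normalize $F(5/2)=1$ and put
\[
 \rho_1=\frac{F(3/2)}{F(5/2)},\qquad \rho_0=\frac{F(1/2)}{F(5/2)}.
\]
Expand $y$ and $y^2$ in the basis $\mathsf P_0^{(\nu)},\mathsf P_1^{(\nu)},\mathsf P_2^{(\nu)}$ using the triangular monomial action from Lemma~\ref{lem:bernstein-jacobi-matrix}. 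This writes $\JacMult_{F,\nu}(y(y+t))$ as an explicit quadratic $q_t(y)$ whose coefficients are affine in $t$ and in $(\rho_0,\rho_1)$.

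A nonzero polynomial of degree at most two lies in $\mathcal E_2$ exactly when it is real-rooted with at least one root in $[0,4]$ (degree at most one is automatic). Following the proof of Lemma~\ref{lem:single-outer-endpoint-pencil}, I would impose three scalar conditions.
\begin{enumerate}[label=\textup{(\alph*)}]
\item The discriminant of $q_t$, a quadratic in $t$, is nonnegative for all $t$. This is one inequality $\Phi_\nu(\rho_0,\rho_1)\ge0$, namely that the discriminant of that quadratic in $t$ is nonpositive, or else its leading coefficient degenerates.
\item Comparing the endpoint values $q_t(0)$ and $q_t(4)$ excludes two roots on the same side of $[0,4]$. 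I expect this to reduce, via Vieta's formulas as in Lemma~\ref{lem:single-outer-endpoint-pencil}, to two sign or linear conditions on $(\rho_0,\rho_1)$.
\end{enumerate}
Call the resulting closed set $\mathcal R_\nu\subset\R^2$. For the single factor $h(\lambda)=\lambda+\alpha$ these must reproduce the threshold $\alpha\ge\alpha_{\mathrm{sing}}(\nu)=-\mathfrak e_{1,\nu}$. For the square $(\lambda+\alpha)^2$, substituting the ratios $\bigl(\alpha^2/(6+\alpha)^2,(2+\alpha)^2/(6+\alpha)^2\bigr)$ into $\Phi_\nu$ should, after clearing denominators, produce $\mathfrak p_\nu(\alpha)$. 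I would verify this factorization symbolically, and then check existence and uniqueness of the root $\alpha_{\mathrm{rep}}(\nu)\in(-1,-3/4)$ by sign evaluation at $-1$ and $-3/4$ together with monotonicity or convexity of $\mathfrak p_\nu$ on that interval.

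The main step is to show that every $F$ with zeros in $S_{\mathfrak h^{\mathrm{opt}}_{1,\nu}}$ has $(\rho_0,\rho_1)\in\mathcal R_\nu$. By Lemma~\ref{lem:derivative-strip} and closedness of $\mathcal E_2$, it suffices to treat finite products of orbit factors. Each factor contributes a ratio pair, and products multiply coordinatewise. The pairs are $\bigl(\alpha/(6+\alpha),(2+\alpha)/(6+\alpha)\bigr)$ for a real or imaginary pair with shift $\alpha\ge\alpha_\sharp(\nu)$, and the corresponding quartet ratios with $|a|\le\mathfrak h^{\mathrm{opt}}_{1,\nu}$. Nonnegative shifts, imaginary pairs and quartets give ratios in $[0,1]^2$ with $\rho_0\le\rho_1$, so they only contract toward the origin. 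I would check that $\mathcal R_\nu$ is star-shaped toward such contractions, or more precisely stable under coordinatewise multiplication by these pairs. Negative shifts give $\rho_0<0$. An even number of them gives $\rho_0>0$, and the worst case should be all factors equal to the boundary shift. I would argue this by a log-coordinate monotonicity in each $\alpha_j$: $\Phi_\nu$ evaluated on the product is monotone in each shift, so it reduces to the square $(\lambda+\alpha_\sharp)^2$, which is controlled by $\mathfrak p_\nu(\alpha)\ge0$ for $\alpha\ge\alpha_{\mathrm{rep}}$. An odd number reduces in the same way to one outer factor times such a square; this should be no worse than the single factor or the square, by the Lemma~\ref{lem:single-outer-endpoint-pencil} computation together with (b). I expect this reduction of arbitrary products to the two extremal configurations to be the main obstacle, and the most delicate part will be the mixed odd products.

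Sharpness then follows by exhibiting, for any strip slightly wider than $S_{\mathfrak h^{\mathrm{opt}}_{1,\nu}}$, a source that fails. If $\alpha_\sharp=\alpha_{\mathrm{sing}}$, take $F=u^2-a^2$, exactly as in the proof of Theorem~\ref{thm:jacobi-family-endpoint-pencil}. If $\alpha_\sharp=\alpha_{\mathrm{rep}}$, take $F=(u^2-a^2)^2$, for which $\mathfrak p_\nu<0$ forces a negative discriminant at some $t$. The crossover $\nu_0$ is the value where $\mathfrak p_\nu(\alpha_{\mathrm{sing}}(\nu))=0$. Substituting $\alpha=2(\nu-2)/(\nu+2)$ should factor out $2\nu^2+\nu-2$, whose positive root is $(\sqrt{17}-1)/4$; a sign check on each side then gives the piecewise formula \eqref{eq:first-pair-piecewise-extremal}. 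The numerical values follow by solving $\mathfrak p_{1/2}$ and evaluating $\alpha_{\mathrm{sing}}(3/2)=-2/7$.
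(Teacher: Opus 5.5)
Your overall architecture matches the paper's: reduce to a linear--quadratic pencil governed by the three spectral values, use multiplicativity of the ratios over zero orbits, and prove sharpness with $u^2-a^2$ and $(u^2-a^2)^2$, with the crossover coming from the factor $2\nu^2+\nu-2$ of $\mathfrak p_\nu(\alpha_{\mathrm{sing}}(\nu))$. However, the step you flag as the main obstacle is where the proof actually lives, and the mechanisms you propose for it fail.

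\textbf{Contraction does not preserve admissibility.} In the coordinates $X=\rho_0/\rho_1$, $Q=\rho_1$, real-rootedness of the whole pencil is $\Phi_\nu(X,Q)\le0$, where $\Phi_\nu(X,Q)=3\nu X^2+3(1-\nu)X+\nu-2-(\nu+1)QX$. This region is not stable under contraction toward the origin. Indeed $\Phi_\nu(1,Q)=(\nu+1)(1-Q)$, so the constant source $(X,Q)=(1,1)$ is admissible. Multiplying it by the pair $(\rho_0,\rho_1)=(1/2,1/2)$, which lies in $[0,1]^2$ with $\rho_0\le\rho_1$, gives $\Phi_\nu=(\nu+1)/2>0$.

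So each orbit factor must itself satisfy $\Phi_\nu\le0$, and you then need a genuine closure statement: if $0<X_i,Q_i\le1$ and $\Phi_\nu(X_i,Q_i)\le0$ for $i=1,2$, then $\Phi_\nu(X_1X_2,Q_1Q_2)\le0$. The paper proves this (Lemma~\ref{lem:general-first-pair-multiplicative}\textup{(i)}) from an explicit factorization of $\psi_\nu(x)\psi_\nu(y)-\psi_\nu(xy)$, where $Q=\psi_\nu(X)$ is the boundary curve. Nothing in your sketch supplies this.

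Quartets in particular are not harmless contractions. As $b\to0$, a quartet with real part $a$ tends to $(u^2-a^2)^2$, which is exactly your repeated-pair extremal. That is why the paper needs the certificate $\mathcal C_{\nu,\alpha}(B)\ge0$ of Lemma~\ref{lem:first-pair-quartet-certificate}, whose constant term is $\mathfrak p_\nu(\alpha)$.

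\textbf{Outer real pairs are not handled by monotonicity.} The proposed argument fails for three reasons.
\begin{enumerate}
\item $\Phi_\nu$ is not monotone in the shift. For a single pair, $\frac{d}{d\alpha}\Phi_\nu(X,Q)=(8-10\nu)/6$ at $\alpha=0$. So for $\nu<4/5$, $\Phi_\nu$ first decreases as the zero moves outward, and only later rises to $0$ at $\alpha_{\mathrm{sing}}$.
\item Convexity in a shift depends on the cofactor. With $p=-\alpha/(\alpha+2)$, the second derivative in $p$ of $\Phi_\nu\bigl(-pc,\,d/(3+2p)\bigr)$ is $6\nu c^2-12(\nu+1)cd/(3+2p)^3$. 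Its sign therefore depends on the sign of the cofactor's first ratio $c$, so you cannot push all shifts to $\alpha_\sharp$ simultaneously.
\item Even if you could, $2k$ boundary pairs give $(\lambda+\alpha_\sharp)^{2k}$, not the square. Passing from $k$ to $1$ is again the missing closure lemma.
\end{enumerate}

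The paper instead groups the negative shifts two at a time. Within a group, each shift's cofactor has negative first ratio, and the group itself has positive first ratio. On $[0,p_\sharp]^2$ the function $\Phi_\nu\bigl(p_1p_2,((3+2p_1)(3+2p_2))^{-1}\bigr)$ is separately convex, hence maximized at a corner. There it equals either $\nu-2$ or $-24\mathfrak p_\nu(\alpha_\sharp)/((\alpha_\sharp+2)^4(\alpha_\sharp+6)^2)\le0$.

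The single leftover pair is not covered by Lemma~\ref{lem:single-outer-endpoint-pencil}, which concerns that factor acting alone. The paper instead uses two facts about $\Phi_\nu(-pX,Q/(3+2p))$:
\begin{itemize}
\item it is increasing in $Q$, so one may set $Q=1$;
\item after that, it is convex in $X$, equal to $\nu-2$ at $X=0$ and to $3(p+1)^2(2\nu p+\nu-2)/(2p+3)\le0$ at $X=1$ whenever $p\le(2-\nu)/(2\nu)$.
\end{itemize}

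Finally, your conditions (b) must be checked for the full product, not only for the extremal sources. This is routine once $X_F\le1$, $0<Q_F\le1$ and $X_F\ge-(2-\nu)/(2\nu)$ are known, but it is needed.
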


\subsection{Scalar reduction and multiplicative closure}

We first separate the elementary geometry of a linear--quadratic pencil from
the Jacobi evaluation formulas.

\begin{lemma}[A quadratic endpoint-pencil criterion]
\label{lem:quadratic-endpoint-pencil-criterion}
Let \(A(y)=y-r\), and let \(B\) be a monic real quadratic.  Then
\[
 B+sA\in\mathcal E_2\qquad(s\in\mathbb R)
\]
if and only if
\begin{equation}\label{eq:quadratic-endpoint-pencil-criterion}
 0\le r\le4,\qquad B(r)\le0,\qquad B(r)+r(4-r)\ge0.
\end{equation}
\end{lemma}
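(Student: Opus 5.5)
The plan is to reduce everything to elementary facts about monic quadratics, using the variable $w=y-r$. Since $A$ has degree one and $B$ is monic of degree two, every pencil member $Q_s=B+sA$ is a monic quadratic. Hence $Q_s\in\mathcal E_2$ holds exactly when $Q_s$ is real-rooted with at least one root in $[0,4]$: a nonreal pair would give two defects, and two real roots outside $[0,4]$ would also give two. Taylor-expanding $B$ at $r$ gives
\[
 Q_s(y)=w^2+c\,w+\beta,\qquad w=y-r,\quad \beta=B(r),\quad c=B'(r)+s.
\]
As $s$ ranges over $\mathbb R$, so does $c$, and $\beta$ does not depend on $s$. So the whole pencil is the family $w^2+cw+\beta$ with $c\in\mathbb R$ arbitrary and $\beta$ fixed.

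\emph{Necessity.} I first take $c=0$, i.e. $s=-B'(r)$. Then $Q_s=(y-r)^2+\beta$, which has a nonreal pair if $\beta>0$; hence $B(r)\le0$. Next I let $c\to+\infty$. The roots are $w=0$ and $w\to-\infty$ asymptotically, so one root of $Q_s$ tends to $r$ and the other leaves every compact set. Membership in $\mathcal E_2$ forces the bounded root to lie in $[0,4]$, so $r\in[0,4]$ by closedness of the interval. For the third inequality I evaluate at the endpoints:
\[
 Q(0)=r^2-cr+\beta,\qquad Q(4)=(4-r)^2+c(4-r)+\beta,
\]
and use the $c$-free combination
\[
 (4-r)\,Q(0)+r\,Q(4)=4\bigl(r(4-r)+\beta\bigr).
\]
Suppose $\beta+r(4-r)<0$. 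If $0<r<4$, I choose $c$ so that $Q(0)<0$; the combination then forces $Q(4)<0$ as well. If $r\in\{0,4\}$, then $\beta<0$ and I take $c\to\mp\infty$ to make the remaining endpoint value negative. In each case the monic $Q$ is negative at both $0$ and $4$, so both of its roots lie outside $[0,4]$, which is a contradiction.

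\emph{Sufficiency.} Since $\beta\le0$, the product of the $w$-roots is nonpositive, so $Q$ is real-rooted with roots $w_1\le0\le w_2$. If neither root lay in $[0,4]$, then $w_1<-r$ and $w_2>4-r$, so $Q(0)<0$ and $Q(4)<0$. But $r\in[0,4]$ makes the weights $4-r$ and $r$ nonnegative and not both zero, so the combination above would be negative. This contradicts $\beta+r(4-r)\ge0$.

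The only delicate step is finding the $c$-independent endpoint combination and handling the boundary cases $r\in\{0,4\}$ in the necessity direction. Everything else is direct verification.
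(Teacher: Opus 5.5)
Your proof follows the paper's own route: the expansion $B(y)=(y-r)^2+c(y-r)+B(r)$, the limit $s\to\pm\infty$ to force $r\in[0,4]$, and the $c$-free identity $(4-r)Q(0)+rQ(4)=4\bigl(B(r)+r(4-r)\bigr)$. One step is misstated, however.

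For $0<r<4$, an arbitrary $c$ with $Q(0)<0$ does not force $Q(4)<0$. Letting $c\to+\infty$ makes $Q(0)\to-\infty$ while $Q(4)\to+\infty$. The identity only gives $rQ(4)=4(\beta+r(4-r))-(4-r)Q(0)$, so $Q(4)<0$ requires $Q(0)$ to be negative but small. The clean formulation treats $Q(0)=r^2+\beta-cr<0$ and $Q(4)=(4-r)^2+\beta+c(4-r)<0$ as two affine conditions in $c$ with opposite slopes. Together they define the interval $(r^2+\beta)/r<c<-\bigl((4-r)^2+\beta\bigr)/(4-r)$, which is nonempty exactly when $\beta+r(4-r)<0$. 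Concretely, take $c$ with $Q(0)=0$, so that $Q(4)=4(\beta+r(4-r))/r<0$, and then increase $c$ slightly. This is how the paper argues, as existence of an $s$ satisfying two linear inequalities.

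With that repair the proof is complete. Your uniform sufficiency argument via the nonnegative weights $4-r$ and $r$ is sound. So is your explicit treatment of $r\in\{0,4\}$ with $B(r)<0$ in the necessity direction. The paper leaves that case implicit, remarking only that at $r\in\{0,4\}$ the conditions force $B(r)=0$.
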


\begin{proof}
Write
\[
 B(y)=(y-r)^2+c(y-r)+B(r).
\]
Then
\[
 B(y)+sA(y)=(y-r)^2+(c+s)(y-r)+B(r),
\]
whose discriminant is
\[
 (c+s)^2-4B(r).
\]
Thus every member of the pencil is real-rooted if and only if \(B(r)\le0\).
When this holds, the two roots of every member bracket \(r\).

The condition \(r\in[0,4]\) is also necessary.  Indeed, as
\(s\to+\infty\), one root tends to \(r\) and the other tends to
\(-\infty\); as \(s\to-\infty\), one root tends to \(r\) and the other
tends to \(+\infty\).  If \(r<0\) or \(r>4\), one of these two limits
produces a pencil member with no zero in \([0,4]\).

Assume now that \(0<r<4\) and \(B(r)\le0\).  Since the roots bracket \(r\),
a real-rooted pencil member has no zero in \([0,4]\) exactly when one root
is below \(0\) and the other is above \(4\).  For a monic quadratic this is
equivalent to simultaneous inequalities
\[
 B(0)-sr<0,\qquad B(4)+s(4-r)<0.
\]
Such an \(s\) exists if and only if
\[
 (4-r)B(0)+rB(4)<0.
\]
A direct expansion of the monic quadratic gives
\[
 (4-r)B(0)+rB(4)=4\bigl(B(r)+r(4-r)\bigr).
\]
Hence every pencil member has a zero in \([0,4]\) if and only if the last
inequality in \eqref{eq:quadratic-endpoint-pencil-criterion} holds.  At
\(r=0\) or \(r=4\), the last two conditions force \(B(r)=0\), so the
corresponding endpoint is a common zero of the pencil.  This proves both
directions.
\end{proof}

The three elementary conditions can be expressed through the first three
spectral values of $F$.  Their behavior under products is then captured by a
multiplicative closure lemma.

\begin{lemma}[Exact scalar criterion for the first nontrivial endpoint pencil]
\label{lem:exact-first-pencil-scalar-criterion}
Let $0<\nu<2$, let $F$ be real and even, and assume that
$F(3/2)$ and $F(5/2)$ are nonzero and have the same sign.  After multiplying
$F$ by $-1$ if necessary, put
\[
 X_F=\frac{F(1/2)}{F(3/2)},
 \qquad
 Q_F=\frac{F(3/2)}{F(5/2)}>0,
\]
and define
\begin{align}
 \Phi_\nu(X,Q)
 &:=3\nu X^2+3(1-\nu)X+\nu-2-(\nu+1)QX,
 \label{eq:general-first-pair-Phi}\\
 \Omega_\nu(X,Q)
 &:=4-2\nu+3(\nu-1)X-(\nu+1)QX.
 \label{eq:general-first-pair-Omega}
\end{align}
Then
\[
 \JacMult_{F,\nu}\bigl(y(y+t)\bigr)\in\mathcal E_2
 \qquad(t\in\R)
\]
if and only if
\begin{equation}\label{eq:exact-first-pair-three-conditions}
 1-\frac2\nu\le X_F\le1,
 \qquad
 \Phi_\nu(X_F,Q_F)\le0,
 \qquad
 \Omega_\nu(X_F,Q_F)\ge0.
\end{equation}
In particular, $\Phi_\nu(X_F,Q_F)\le0$ is exactly the weak Obreschkoff
condition for the two adjacent images.
\end{lemma}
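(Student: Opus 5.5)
The plan is to compute the two Jacobi images $\JacMult_{F,\nu}(y)$ and $\JacMult_{F,\nu}(y^2)$ explicitly in terms of $F(1/2),F(3/2),F(5/2)$. After normalization they should have exactly the shape $A(y)=y-r$ and $B$ monic quadratic required by Lemma~\ref{lem:quadratic-endpoint-pencil-criterion}; the three scalar conditions of that lemma are then translated into \eqref{eq:exact-first-pair-three-conditions}. Replacing $F$ by $-F$ changes neither $\mathcal E_2$ nor $X_F,Q_F$, so we may assume $F(3/2),F(5/2)>0$.

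\emph{Step 1: the first two eigenpolynomials.} From the monomial action $\JacOp_\nu y^r=r(r+1)y^r-4r(r-1+\nu)y^{r-1}$ in Lemma~\ref{lem:bernstein-jacobi-matrix} we get $\JacOp_\nu y=2y-4\nu$, hence $\mathsf P_1^{(\nu)}=y-2\nu$ and $y=\mathsf P_1^{(\nu)}+2\nu$. This gives
\[
 \JacMult_{F,\nu}(y)=F(3/2)\bigl(y-r\bigr),\qquad r=2\nu(1-X_F).
\]
Similarly, $\JacOp_\nu y^2=6y^2-8(1+\nu)y$ determines $\mathsf P_2^{(\nu)}=y^2+\beta y+\gamma$ by solving a triangular system. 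We then write $y^2=\mathsf P_2^{(\nu)}+b_1\mathsf P_1^{(\nu)}+b_0$ with explicit rational $b_1,b_0$ in $\nu$, so that
\[
 \JacMult_{F,\nu}(y^2)=F(5/2)\,B(y),\qquad B=\mathsf P_2^{(\nu)}+Qb_1\mathsf P_1^{(\nu)}+QXb_0,
\]
using $F(3/2)/F(5/2)=Q$ and $F(1/2)/F(5/2)=QX$.

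\emph{Step 2: reduction to the lemma.} A pencil member can be written as
\[
 \JacMult_{F,\nu}(y(y+t))=F(5/2)\bigl(B+sA\bigr),\qquad s=tQ .
\]
Since $Q>0$ and $F(5/2)\neq0$, $s$ ranges over all of $\R$ as $t$ does, and scaling preserves $\mathcal E_2$. Lemma~\ref{lem:quadratic-endpoint-pencil-criterion} therefore applies verbatim. Its first condition, $0\le 2\nu(1-X)\le4$, is exactly $1-2/\nu\le X\le1$. Its proof also shows that the condition $B(r)\le0$ alone is equivalent to real-rootedness of every pencil member, i.e.\ to the weak Obreschkoff relation; this will give the final sentence of the lemma.

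\emph{Step 3: the algebra, which is the main obstacle.} It remains to evaluate $B(r)$ and $B(r)+r(4-r)$ at $r=2\nu(1-X)$ and to show that they equal positive multiples (depending only on $\nu$, e.g.\ powers of $\nu$ and $\nu+1$) of $\Phi_\nu(X,Q)$ and $\Omega_\nu(X,Q)$ respectively. My first attempt is to use $\mathsf P_1(r)=r-2\nu=-2\nu X$ and $\mathsf P_2(r)$ expressed as a polynomial in $X$. This makes $B(r)$ quadratic in $X$ and linear in $QX$, matching the shape of $\Phi_\nu$. Adding $r(4-r)=8\nu(1-X)(1-\nu+\nu X)$ should cancel the $X^2$ term, yielding the linear-in-$X$ form of $\Omega_\nu$. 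The risk is that the prefactor vanishes or changes sign, or that an extra factor such as $(1-X)$ appears. If so, I would verify the identity directly by matching coefficients at the monomials $1,X,X^2,QX$. I would also check the boundary cases $r=0,4$ separately against the lemma's convention that $B(r)=0$ there.
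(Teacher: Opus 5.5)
Your proposal is the paper's proof: you expand $y$ and $y^2$ in the Jacobi basis, normalize to $A=y-r$ with $r=2\nu(1-X_F)$ and a monic $B$, and apply Lemma~\ref{lem:quadratic-endpoint-pencil-criterion}. Here $\mathsf P_2^{(\nu)}=y^2-2(\nu+1)y+\tfrac{4\nu(\nu+1)}{3}$, so $b_1=2(\nu+1)$ and $b_0=\tfrac{8\nu(\nu+1)}{3}$, and one obtains exactly $B(r)=\tfrac{4\nu}{3}\Phi_\nu(X,Q)$.

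One correction is needed in Step~3. Since $4-r=2(2-\nu+\nu X)$, you have $r(4-r)=4\nu(1-X)(2-\nu+\nu X)$, not $8\nu(1-X)(1-\nu+\nu X)$. With the correct value the $X^2$ terms cancel, and $B(r)+r(4-r)=\tfrac{4\nu}{3}\Omega_\nu(X,Q)$; this is the paper's identity $\Phi_\nu+3(1-X)(2-\nu+\nu X)=\Omega_\nu$. No separate check at $r\in\{0,4\}$ is needed, because the quadratic lemma already covers those cases.
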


The three inequalities have distinct geometric roles.  The bounds on $X_F$
place the zero $r_F$ of the linear image in $[0,4]$; the inequality
$\Phi_\nu\le0$ makes every member of the linear--quadratic pencil
real-rooted; and $\Omega_\nu\ge0$ prevents a real-rooted pencil member from
placing one root below $0$ and the other above $4$.

\begin{proof}
Write $\lambda_j=F(j+1/2)$.  In the monic Jacobi basis,
\[
 y=2\nu\mathsf P_0^{(\nu)}+\mathsf P_1^{(\nu)},
\]
\[
 y^2=\frac{8\nu(\nu+1)}3\mathsf P_0^{(\nu)}
      +2(\nu+1)\mathsf P_1^{(\nu)}+\mathsf P_2^{(\nu)}.
\]
Consequently, after positive normalization,
\begin{equation}\label{eq:first-pair-linear-image}
 A_F(y):=\frac{\JacMult_{F,\nu}(y)}{\lambda_1}
 =y-r_F,
 \qquad
 r_F=2\nu(1-X_F),
\end{equation}
and
\begin{equation}\label{eq:first-pair-quadratic-image}
 \begin{split}
 B_F(y):=\frac{\JacMult_{F,\nu}(y^2)}{\lambda_2}
 ={}&y^2+2(\nu+1)(Q_F-1)y\\
 &+\frac{4\nu(\nu+1)}3(2Q_FX_F-3Q_F+1).
 \end{split}
\end{equation}
Because $\lambda_1/\lambda_2>0$, varying $t\in\R$ in the original
pencil is equivalent to varying $s\in\R$ in $B_F+sA_F$.  A direct
substitution gives
\begin{equation}\label{eq:first-pair-value-at-linear-root}
 B_F(r_F)=\frac{4\nu}{3}\Phi_\nu(X_F,Q_F).
\end{equation}

Applying Lemma~\ref{lem:quadratic-endpoint-pencil-criterion} to
\eqref{eq:first-pair-linear-image} and
\eqref{eq:first-pair-quadratic-image}, the condition $0\le r_F\le4$ becomes
the first inequality in \eqref{eq:exact-first-pair-three-conditions}, while
\eqref{eq:first-pair-value-at-linear-root} gives the second.  Finally,
\[
 \Phi_\nu(X,Q)+3(1-X)(2-\nu+\nu X)=\Omega_\nu(X,Q),
\]
so the lower bound
$B_F(r_F)\ge-r_F(4-r_F)$ is exactly the third inequality.
\end{proof}

\Needspace{12\baselineskip}
\begin{lemma}[Multiplicative closure at the first nontrivial endpoint pencil]
\label{lem:general-first-pair-multiplicative}
Let $0<\nu<2$.
\begin{enumerate}[label=\textup{(\roman*)}]
\item Suppose that $0<X_i\le1$, $0<Q_i\le1$, and
$\Phi_\nu(X_i,Q_i)\le0$ for $i=1,2$.  Then
\[
 \Phi_\nu(X_1X_2,Q_1Q_2)\le0.
\]
\item Let
\[
 0<p\le\frac{2-\nu}{2\nu},
 \qquad
 0<X_0,Q_0\le1,
 \qquad
 \Phi_\nu(X_0,Q_0)\le0.
\]
Then
\[
 \Phi_\nu\left(-pX_0,\frac{Q_0}{3+2p}\right)\le0.
\]
\end{enumerate}
\end{lemma}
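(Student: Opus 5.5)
The plan for (i) is to rewrite the condition $\Phi_\nu\le0$, for $X>0$, as a lower bound on $Q$, and then prove that this bound is submultiplicative. For (ii) the plan is to use only the crude bound $Q_0\le1$ and check a one-variable polynomial inequality.

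\emph{Part (i).} Divide $\Phi_\nu(X,Q)\le0$ by $(\nu+1)X>0$. It becomes $Q\ge g(X)$, where
\[
 g(X)=\frac{N(X)}{X},\qquad N(X)=\frac{3\nu X^2+3(1-\nu)X+\nu-2}{\nu+1}.
\]
The quadratic $N$ has roots of product $(\nu-2)/(3\nu)<0$. Hence there is exactly one positive root $x_*$, and we may write $N(X)=\tfrac{3\nu}{\nu+1}(X-x_*)(X+w)$ with $w>0$. Moreover $g'(X)=\bigl(3\nu+(2-\nu)X^{-2}\bigr)/(\nu+1)>0$, so $g$ is increasing, and $g(1)=1$.

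If $g(X_1X_2)\le0$, then $\Phi_\nu(X_1X_2,Q_1Q_2)\le0$ is immediate because $Q_1Q_2>0$. Otherwise $X_1X_2>x_*$, so both $X_i\in(x_*,1]$ and $g(X_i)>0$. It then suffices to prove
\[
 g(xy)\le g(x)g(y)\qquad(x,y\in(x_*,1],\ xy>x_*),
\]
because then $Q_1Q_2\ge g(X_1)g(X_2)\ge g(X_1X_2)$.

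Put $\varphi(s)=\log g(e^s)$ for $s\in(\log x_*,0]$, so that $\varphi(0)=0$. If $\varphi$ is concave, then for $a,b\le0$ the points $a$ and $b$ lie between $a+b$ and $0$. Concavity therefore gives $\varphi(a)+\varphi(b)\ge\varphi(a+b)+\varphi(0)$, which is exactly the required submultiplicativity.

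Concavity of $\varphi$ means that
\[
 \varphi'(s)=\frac{xg'(x)}{g(x)}=\frac{x}{x-x_*}+\frac{x}{x+w}-1,\qquad x=e^s,
\]
is nonincreasing in $x$. Differentiating, this is the inequality
\[
 w(x-x_*)^2\le x_*(x+w)^2\qquad(x_*<x\le1).
\]
This is the main obstacle. The inequality holds near $x=x_*$. I would treat it as a quadratic inequality in $x$ and verify it at $x=1$ and in between, using the explicit relations $x_*w=(2-\nu)/(3\nu)$ and $x_*-w=-(1-\nu)/\nu$.

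If that verification fails for part of the $\nu$-range, the fallback is a direct argument. Expand $\Phi_\nu(X_1X_2,Q_1Q_2)$ and substitute the worst case $Q_i=\max\{g(X_i),0\}$. What remains is a polynomial inequality on $(x_*,1]^2$, which is symmetric in the two variables. I would reduce it by monotonicity in one variable and then check it on the boundary $X_2=1$, where it is an identity because $g(1)=1$.

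\emph{Part (ii).} Expand:
\[
 \Phi_\nu\!\left(-pX_0,\tfrac{Q_0}{3+2p}\right)
 =3\nu p^2X_0^2-3(1-\nu)pX_0+\nu-2+\frac{(\nu+1)pX_0Q_0}{3+2p}.
\]
The last term is increasing in $Q_0$, so $Q_0\le1$ bounds it above. What remains is a convex quadratic in $X_0\in(0,1]$, so it suffices to check the endpoints. As $X_0\to0$ the value tends to $\nu-2<0$. At $X_0=1$ one must show
\[
 \psi(p)=(3+2p)\bigl(3\nu p^2-3(1-\nu)p+\nu-2\bigr)+(\nu+1)p\le0\qquad\Bigl(0<p\le\tfrac{2-\nu}{2\nu}\Bigr).
\]
Here $\psi(0)=3(\nu-2)<0$. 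The sanity check $\nu=1$, $p=1/2$ gives equality, which suggests that the endpoint $p=(2-\nu)/(2\nu)$ is a root of $\psi$.

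I would therefore factor $\psi(p)$ by $\bigl(p-\tfrac{2-\nu}{2\nu}\bigr)$ and show that the quadratic cofactor has the right sign on $[0,\tfrac{2-\nu}{2\nu}]$. If needed, one can alternatively use convexity of $\psi$ on $p\ge0$, since $\psi$ has positive leading coefficient $6\nu$ and, I expect, positive second derivative there. Convexity together with $\psi(0)<0$ and $\psi\bigl(\tfrac{2-\nu}{2\nu}\bigr)\le0$ gives the claim. The hypothesis $\Phi_\nu(X_0,Q_0)\le0$ is not even needed in this route.
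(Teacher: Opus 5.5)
Your proposal is correct. Part (ii) is the paper's argument, while part (i) takes a genuinely different route.

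\textbf{Part (ii).} You use monotonicity in $Q_0$, reduce to $Q_0=1$, use convexity in $X_0$, and check the two endpoint values, exactly as the paper does. The factorization you anticipate is exact: $\psi(p)=3(p+1)^2(2\nu p+\nu-2)$, which is $(3+2p)$ times the paper's $f_{\nu,p}(1)$. So the cofactor is simply $3(p+1)^2$ and no further sign analysis is needed. Drop the convexity fallback, however: $\psi''(0)=30\nu-12$ is negative for $\nu<2/5$.

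\textbf{Part (i).} Your $g$ is the paper's $\psi_\nu$. The paper obtains $\psi_\nu(x)\psi_\nu(y)\ge\psi_\nu(xy)$ on all of $(0,1]^2$ from the exact identity
\[
 (\nu+1)^2xy\bigl(\psi_\nu(x)\psi_\nu(y)-\psi_\nu(xy)\bigr)
 =3(x-1)(y-1)\bigl((2-\nu)+\nu(2-\nu)(x+y)+\nu(2\nu-1)xy\bigr).
\]
Its last factor is bilinear and positive at the four vertices of the square. You instead get the inequality on the region $g>0$ from concavity of $s\mapsto\log g(e^s)$ and the two-point majorization $\varphi(a)+\varphi(b)\ge\varphi(a+b)+\varphi(0)$.

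The step you flag as the main obstacle does close, so your fallback is unnecessary. Multiply $x_*(x+w)^2-w(x-x_*)^2$ by $3\nu^2$ and use your relations for $x_*w$ and $x_*-w$. This gives
\[
 h(x)=3\nu(\nu-1)x^2+4\nu(2-\nu)x+(1-\nu)(2-\nu),
\]
with $h(x_*)=3\nu^2x_*(x_*+w)^2>0$ and $h(1)=2(1+\nu)>0$. For $\nu<1$ the function $h$ is concave, and for $\nu\ge1$ one has $h'>0$ on $x>0$. Either way $h>0$ on $[x_*,1]$. Equivalently, after taking square roots the inequality is trivial when $w\le x_*$; when $w>x_*$ it is linear in $x$, so only $x=1$ needs checking.

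\textbf{Comparison.} Your route explains the closure structurally, as log-concavity in the multiplicative variable, and reduces everything to a one-variable check. The paper's identity is a single two-variable algebraic certificate. It also holds on the whole square, including where $\psi_\nu<0$ and logarithms are unavailable.
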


\begin{proof}
For $X>0$, put
\[
 \psi_\nu(X)
 =\frac{3\nu X^2+3(1-\nu)X+\nu-2}{(\nu+1)X}.
\]
Then $\Phi_\nu(X,Q)\le0$ is equivalent to $Q\ge\psi_\nu(X)$.  Exact
factorization gives
\begin{equation}\label{eq:general-first-pair-closure-identity}
 \begin{split}
 &(\nu+1)^2xy
 \bigl(\psi_\nu(x)\psi_\nu(y)-\psi_\nu(xy)\bigr)\\
 &\quad=3(x-1)(y-1)
 \bigl((2-\nu)+\nu(2-\nu)(x+y)+\nu(2\nu-1)xy\bigr).
 \end{split}
\end{equation}
The last factor is bilinear on $[0,1]^2$ and is positive at all four
vertices, hence throughout the square.  If $\psi_\nu(xy)\le0$, then
\(Q_1Q_2>0\ge\psi_\nu(xy)\), so assertion~\textup{(i)} follows
immediately.  Otherwise the numerator of $\psi_\nu$ has a
unique positive zero, and $xy\le x,y\le1$ implies
$\psi_\nu(x),\psi_\nu(y)>0$.  The hypotheses and
\eqref{eq:general-first-pair-closure-identity} then give
$Q_1Q_2\ge\psi_\nu(xy)$.

For \textup{(ii)}, $\Phi_\nu(-pX,Q)$ is increasing in $Q$, so it is enough
to put $Q_0=1$.  The function
\[
 f_{\nu,p}(X)
 =\Phi_\nu\left(-pX,\frac1{3+2p}\right)
\]
is convex, with
\[
 f_{\nu,p}(0)=\nu-2<0,
 \qquad
 f_{\nu,p}(1)
 =\frac{3(p+1)^2(2\nu p+\nu-2)}{2p+3}\le0.
\]
Thus $f_{\nu,p}(X)\le0$ on $[0,1]$.
\end{proof}

\begin{lemma}[Coefficient certificate for reflected quartets]
\label{lem:first-pair-quartet-certificate}
Let $0<\nu<2$, let
$\alpha_\sharp(\nu)\le\alpha\le1/4$, and let $B\ge0$.  Define
\[
 \mathfrak q_\nu(\alpha)
 =2(\nu+1)\alpha^3+(19-16\nu)\alpha^2
 +(78-42\nu)\alpha+60-25\nu
\]
and
\begin{equation}\label{eq:general-first-pair-quartet-polynomial}
 \begin{split}
 \mathcal C_{\nu,\alpha}(B)
 ={}&(\nu+3)B^4+2(2(\nu+1)\alpha+15)B^3\\
 &+\bigl(99-27\nu+(48-24\nu)\alpha
          -(14+10\nu)\alpha^2\bigr)B^2\\
 &+2\mathfrak q_\nu(\alpha)B+\mathfrak p_\nu(\alpha).
 \end{split}
\end{equation}
Then $\mathcal C_{\nu,\alpha}(B)\ge0$.
\end{lemma}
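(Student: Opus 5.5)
The plan is to prove $\mathcal C_{\nu,\alpha}(B)\ge0$ coefficientwise in $B$, falling back on a quadratic-form bound only where a coefficient can change sign. First I will locate the parameter window. Since $\alpha_{\mathrm{rep}}(\nu)\in(-1,-3/4)$ (Step~1 of the proof of Theorem~\ref{thm:exact-first-jacobi-pair}) and $\alpha_{\mathrm{sing}}(\nu)=2(\nu-2)/(\nu+2)\in(-1,0)$ for $0<\nu<2$, the range $\alpha_\sharp(\nu)\le\alpha\le1/4$ lies inside $(-1,1/4]$. The argument therefore only needs $\alpha\in(-1,1/4]$, with $\alpha\ge\alpha_{\mathrm{rep}}(\nu)$.

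The top three coefficients are easy. The leading coefficient $\nu+3$ is positive. The cubic coefficient satisfies $2(2(\nu+1)\alpha+15)\ge2(15-2(\nu+1))>0$ for $\alpha>-1$. The quadratic coefficient $99-27\nu+(48-24\nu)\alpha-(14+10\nu)\alpha^2$ is concave in $\alpha$, so its minimum on $[-1,1/4]$ is attained at an endpoint. At $\alpha=-1$ it equals $37-13\nu>0$, and at $\alpha=1/4$ it is visibly positive. For the constant term, $\mathfrak p_\nu(\alpha)$ vanishes at $\alpha_{\mathrm{rep}}$ and is positive at $\alpha=0$. I will show it has no zero in $(\alpha_{\mathrm{rep}},1/4]$ by checking $\mathfrak p_\nu>0$ on $[-3/4,1/4]$ and reusing the uniqueness from Step~1 on $(-1,-3/4)$. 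For the check I would substitute $\alpha=-3/4+s$ and expand in $s\ge0$ and $\nu$, expecting nonnegative coefficients apart from terms dominated by the constant. This gives $\mathfrak p_\nu(\alpha)\ge0$ on the whole range.

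The delicate coefficient is the linear one, $\mathfrak q_\nu$. A direct evaluation gives $\mathfrak q_\nu(-1)=\mathfrak p_\nu(-1)=-(1+\nu)<0$, while $\mathfrak q_\nu(-3/4)\approx11.3-3.3\nu>0$. So $\mathfrak q_\nu$ has a zero $\alpha_q(\nu)\in(-1,-3/4)$. For $\alpha\ge-3/4$ I will show $\mathfrak q_\nu>0$ by the same shifted expansion used for $\mathfrak p_\nu$, which finishes that part of the range. For $\alpha_\sharp\le\alpha<-3/4$ I will try two routes in order. The first is to prove $\alpha_q(\nu)\le\alpha_{\mathrm{rep}}(\nu)$, i.e.\ $\mathfrak p_\nu(\alpha_q)\le0$. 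One way to certify this is an identity $\mathfrak q_\nu=\mathfrak p_\nu\,u_\nu+(\alpha+1)w_\nu$ with $u_\nu,w_\nu>0$ on $[-1,-3/4]$, which is plausible since both polynomials take the value $-(1+\nu)$ at $-1$. If this fails, the fallback is to drop the cubic and quartic terms and show that $c_2(\alpha)B^2+2\mathfrak q_\nu(\alpha)B+\mathfrak p_\nu(\alpha)\ge0$. Because $c_2$ and $\mathfrak p_\nu$ are nonnegative there, this reduces to $\mathfrak q_\nu^2\le c_2\,\mathfrak p_\nu$ wherever $\mathfrak q_\nu<0$. This is again a one-variable polynomial inequality on a short interval.

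I expect the main obstacle to be this sign comparison between $\mathfrak q_\nu$ and $\mathfrak p_\nu$ near $\alpha_{\mathrm{rep}}(\nu)$, uniformly in $\nu\in(0,2)$. At $\alpha=\alpha_{\mathrm{rep}}$ the constant term vanishes, so positivity of $\mathcal C_{\nu,\alpha}$ for small $B$ forces $\mathfrak q_\nu(\alpha_{\mathrm{rep}})\ge0$; the first route is therefore the necessary one. I would try to obtain it from a resultant-type computation of $\mathfrak p_\nu$ at the root of $\mathfrak q_\nu$, or from the explicit Euclidean-division certificate described above. The fallback covers any remaining subinterval where $\alpha_{\mathrm{sing}}$ is the binding constraint.
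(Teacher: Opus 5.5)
Your coefficientwise plan matches the paper's. The $B^4$ and $B^3$ coefficients, the concavity-plus-endpoints check for the $B^2$ coefficient, and the Step~1 monotonicity of $\mathfrak p_\nu$ on $[-1,-3/4]$ are handled as in the paper. The real difference is how you get $\mathfrak q_\nu\ge0$ to the left of $-3/4$.

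The paper uses a rational separator. It checks $\mathfrak p_\nu(-91/100)<0$, so $\alpha_\sharp(\nu)\ge\alpha_{\mathrm{rep}}(\nu)>-91/100$. Since $\mathfrak q_\nu$ is affine in $\nu$, it then writes $\mathfrak q_\nu=(1-\nu/2)(2\alpha^3+19\alpha^2+78\alpha+60)+(\nu/2)(6\alpha^3-13\alpha^2-6\alpha+10)$ and shows it positive on all of $[-91/100,1/4]$, with the thin margin $86637/500000$ at $-91/100$.

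Your first route also works, with the simplest choice $u_\nu\equiv1$ suggested by $\mathfrak q_\nu(-1)=\mathfrak p_\nu(-1)$:
\[
 \mathfrak q_\nu(\alpha)-\mathfrak p_\nu(\alpha)=(\alpha+1)\,w_\nu(\alpha),\qquad
 w_\nu(\alpha)=-(\nu+3)\alpha^3+(3\nu-23)\alpha^2+(15\nu-46)\alpha+12-\nu .
\]
Because $w_\nu$ is affine in $\nu$, it suffices to check $\nu=0$ and $\nu=2$. On $[-1,-3/4]$, the cubic $-3\alpha^3-23\alpha^2-46\alpha+12$ decreases to $2229/64$, and $-5\alpha^3-17\alpha^2-16\alpha+10$ increases from $14$. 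Hence $\mathfrak q_\nu>\mathfrak p_\nu\ge0$ on $[\alpha_{\mathrm{rep}}(\nu),-3/4]$, and the quadratic-form fallback is never needed.

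Your identity explains why the zero of $\mathfrak q_\nu$ lies left of $\alpha_{\mathrm{rep}}(\nu)$, without localizing $\alpha_{\mathrm{rep}}$ numerically. The paper's separator instead gives positivity of $\mathfrak q_\nu$ on a fixed interval independently of $\mathfrak p_\nu$, using one device (affine dependence on $\nu$) throughout.

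Two details need repair.

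\emph{The range of $\alpha_{\mathrm{sing}}$.} The claim $\alpha_{\mathrm{sing}}(\nu)\in(-1,0)$ is false for $0<\nu<2/3$, where $\alpha_{\mathrm{sing}}(\nu)<-1$. Your window is still correct, because $\alpha_\sharp\ge\alpha_{\mathrm{rep}}>-1$.

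\emph{The shifted expansion of $\mathfrak p_\nu$.} On $[-3/4,1/4]$ this will not close in the form you describe. The limit $\mathfrak p_2(\alpha)=\alpha^2(5\alpha^2+28\alpha+20)$ has a double zero at $\alpha=0$, i.e.\ $s=3/4$. Meanwhile the constant term $\mathfrak p_\nu(-3/4)=9(75-23\nu)/256$ stays at least $261/256$. At $\nu=2$ the expansion is $5s^4+13s^3-\frac{209}{8}s^2+\frac{141}{16}s+\frac{261}{256}$, so the negative term cannot be absorbed into the constant uniformly as $\nu\to2$. Use the affine split instead:
\[
 \mathfrak p_\nu(\alpha)=\Bigl(1-\frac\nu2\Bigr)(\alpha+2)^2(3\alpha^2+16\alpha+12)+\frac\nu2\,\alpha^2(5\alpha^2+28\alpha+20).
\]
Both quadratic factors are positive on $[-3/4,1/4]$; this is how the paper concludes. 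The same split handles $\mathfrak q_\nu$ on that interval. With these changes your argument is a complete proof.
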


\begin{proof}
See Appendix~\ref{app:first-pair-quartet-certificate}.
\end{proof}

\subsection{Orbitwise proof and sharpness}

\begin{proof}[Proof of Theorem~\ref{thm:exact-first-jacobi-pair}]
\medskip
\noindent\emph{Step 1: threshold geometry.}
We first record the elementary facts behind the definition of the threshold.
At the endpoints of $(-1,-3/4)$,
\[
 \mathfrak p_\nu(-1)=-(\nu+1)<0,
 \qquad
 \mathfrak p_\nu(-3/4)=\frac{9(75-23\nu)}{256}>0.
\]
Writing $s=\alpha+1\in[0,1/4]$, one has
\[
 \frac14\mathfrak p_\nu'(\alpha)
 =(3s^3+12s^2+11s+2)
 +\nu(s^3-3s^2-14s+2).
\]
This expression is affine in $\nu$.  At $\nu=0$ it is positive.  At
$\nu=2$ it is
\[
 g(s)=5s^3+6s^2-17s+6,
\]
and, on $[0,1/4]$,
\[
 g'(s)=15s^2+12s-17
 \le \frac{15}{16}+3-17
 =-\frac{209}{16}<0.
\]
Hence $g$ decreases there and
$g(s)\ge g(1/4)=141/64>0$.  Thus $\mathfrak p_\nu$ is strictly increasing
on $[-1,-3/4]$, and $\alpha_{\mathrm{rep}}(\nu)$ is well-defined and
unique.  Moreover,
\begin{equation}\label{eq:first-pair-crossover-evaluation}
 \mathfrak p_\nu(\alpha_{\mathrm{sing}}(\nu))
 =-\frac{128\nu(\nu-2)(\nu+1)(2\nu^2+\nu-2)}{(\nu+2)^4}.
\end{equation}
The function $\alpha_{\mathrm{sing}}(\nu)$ is strictly increasing, with
\[
 \alpha_{\mathrm{sing}}(2/3)=-1,
 \qquad
 \alpha_{\mathrm{sing}}(10/11)=-3/4.
\]
For $0<\nu\le2/3$ it lies to the left of
$\alpha_{\mathrm{rep}}(\nu)$.  For
$2/3<\nu<10/11$, both quantities lie in $(-1,-3/4)$, where
$\mathfrak p_\nu$ is strictly increasing.  In this interval,
\eqref{eq:first-pair-crossover-evaluation} shows that their order changes
exactly when
$2\nu^2+\nu-2=0$, namely at
$\nu_0=(\sqrt{17}-1)/4$.  Finally, for
$10/11\le\nu<2$, one has
$\alpha_{\mathrm{sing}}(\nu)\ge-3/4>
\alpha_{\mathrm{rep}}(\nu)$.  This proves the branch formula
\eqref{eq:first-pair-piecewise-extremal}.

\medskip
\noindent\emph{Step 2: individual zero-orbit factors.}
Assume first that $F$ is a polynomial.  The threshold satisfies
$\alpha_\sharp(\nu)>-1$, so
$\mathfrak h_{1,\nu}^{\mathrm{opt}}<\sqrt5/2<3/2$.  Thus the real points
\(3/2\) and \(5/2\) lie strictly to the right of the permitted zero strip.
The real-valued function \(F\) has no real zero on \([3/2,\infty)\), so
$F(3/2)$ and $F(5/2)$ have the same nonzero sign.  Normalize them to be
positive and use the parameters $(X_F,Q_F)$ from
Lemma~\ref{lem:exact-first-pencil-scalar-criterion}.  If $F$ is constant,
then $(X_F,Q_F)=(1,1)$ and all three conditions in
\eqref{eq:exact-first-pair-three-conditions} hold with equality where
appropriate, so the assertion is immediate.  We may therefore assume that
$F$ has at least one zero orbit.  The parameters multiply under products of
even real zero-orbit polynomials.  Every such factor has $|X|\le1$ and
$0<Q\le1$ in the present strip.  For the degree-two factors this is immediate
from the formulas below; for a nonreal quartet it follows from the differences
displayed when that factor is treated.

We now verify
\[
 0\le X\le1,\qquad 0<Q\le1,\qquad \Phi_\nu(X,Q)\le0
\]
for each zero-orbit factor that can be treated separately.  For a real pair
$u^2-a^2$, put $\alpha=1/4-a^2$.  Then
\begin{equation}\label{eq:general-first-pair-real-atom}
 X=\frac{\alpha}{\alpha+2},
 \qquad
 Q=\frac{\alpha+2}{\alpha+6},
\end{equation}
and
\begin{equation}\label{eq:general-first-pair-real-Phi}
 \Phi_\nu(X,Q)
 =-\frac{12((\nu+2)\alpha+4-2\nu)}
 {\,(\alpha+2)^2(\alpha+6)}.
\end{equation}
If $\alpha>0$, these formulas give $0<X<1$, $0<Q<1$, and
$\Phi_\nu(X,Q)<0$; the boundary case $\alpha=0$ has $X=0$ and will be handled
separately below.  A purely imaginary pair $u^2+b^2$, with $B=b^2$, satisfies
\begin{equation}\label{eq:general-first-pair-imaginary-Phi}
 \Phi_\nu(X,Q)
 =-\frac{192(4(\nu+2)B+18-7\nu)}
 {(4B+9)^2(4B+25)}<0.
\end{equation}

Consider a reflected nonreal quartet
\[
 E_{a,b}(u)=((u-a)^2+b^2)((u+a)^2+b^2),
 \qquad b>0,
\]
and write $\alpha=1/4-a^2$, $B=b^2$, and
\[
 D_j=\left|\left(j+\frac12\right)^2-(a+ib)^2\right|^2
 \qquad(j=0,1,2).
\]
Lemma~\ref{lem:first-pair-quartet-certificate} defines
$\mathcal C_{\nu,\alpha}(B)$ and shows that it is nonnegative throughout the
present parameter range.  Since $X=D_0/D_1$ and $Q=D_1/D_2$, a direct
calculation gives
\begin{equation}\label{eq:general-first-pair-quartet-Phi}
 \Phi_\nu(X,Q)
 =-\frac{24\mathcal C_{\nu,\alpha}(B)}{D_1^2D_2}\le0.
\end{equation}
Moreover,
\[
 D_1-D_0=4(1+\alpha+B)>0,
 \qquad
 D_2-D_1=8(4+\alpha+B)>0,
\]
so $0<X\le1$ and $0<Q\le1$.

\medskip
\noindent\emph{Step 3: paired outer real factors and composition.}
It remains to group the real pairs with $\alpha<0$ two at a time.  For such a
pair put
\[
 p=\frac{a^2-1/4}{9/4-a^2}=-\frac\alpha{\alpha+2}.
\]
Thus $0<p\le p_\sharp$, where
$p_\sharp=-\alpha_\sharp/(\alpha_\sharp+2)$.  For two such pairs with
parameters $p_1,p_2$, the product has
\[
 X=p_1p_2,
 \qquad
 Q=\frac1{(3+2p_1)(3+2p_2)}.
\]
The function
\[
 (p_1,p_2)\longmapsto
 \Phi_\nu\left(p_1p_2,\frac1{(3+2p_1)(3+2p_2)}\right)
\]
is separately convex on the square $[0,p_\sharp]^2$, since
\[
 \frac{\partial^2}{\partial p_1^2}
 \Phi_\nu\left(p_1p_2,\frac1{(3+2p_1)(3+2p_2)}\right)
 =6\nu p_2^2+\frac{12(\nu+1)p_2}{(3+2p_2)(3+2p_1)^3}\ge0,
\]
and similarly in $p_2$.  Maximizing successively in $p_1$ and $p_2$, a separately
convex function on a rectangle attains its maximum at a corner.  At three
corners its value is $\nu-2$, while at the remaining corner it is
\[
 \Phi_\nu\left(p_\sharp^2,\frac1{(3+2p_\sharp)^2}\right)
 =-\frac{24\mathfrak p_\nu(\alpha_\sharp)}
 {(\alpha_\sharp+2)^4(\alpha_\sharp+6)^2}\le0
\]
by the definition of $\alpha_\sharp$ and
\eqref{eq:first-pair-crossover-evaluation}.  Hence every product of two real
pairs with negative shifts satisfies
\[
 0<X\le1,\qquad 0<Q\le1,\qquad \Phi_\nu(X,Q)\le0.
\]

Use each purely imaginary pair and each reflected quartet separately, use each
real pair with $\alpha>0$ separately, and group the real pairs with
$\alpha<0$ in pairs.  At most one real pair with negative shift remains
unpaired.  If a factor with $\alpha=0$ occurs, then the total parameter has
$X_F=0$, $\Phi_\nu(X_F,Q_F)=\nu-2<0$, and
$\Omega_\nu(X_F,Q_F)=4-2\nu>0$; all three conditions in
Lemma~\ref{lem:exact-first-pencil-scalar-criterion} hold.  Otherwise every
selected factor or paired product has $0<X\le1$, $0<Q\le1$, and
$\Phi_\nu(X,Q)\le0$.  Let $(X_+,Q_+)$ be the parameters of their product,
with $(X_+,Q_+)=(1,1)$ for the empty product.  Repeated application of
Lemma~\ref{lem:general-first-pair-multiplicative}\textup{(i)}, or the empty-product
convention, gives
\[
 0<X_+\le1,\qquad 0<Q_+\le1,\qquad \Phi_\nu(X_+,Q_+)\le0.
\]
If no outer real pair remains, then $(X_F,Q_F)=(X_+,Q_+)$ and
$\Phi_\nu(X_F,Q_F)\le0$.  Otherwise let $\alpha<0$ be the shift of the
remaining pair and let $p=-\alpha/(\alpha+2)$.  Its parameters are
$X=-p$ and $Q=1/(3+2p)$, so
\[
 (X_F,Q_F)=\left(-pX_+,\frac{Q_+}{3+2p}\right).
\]
Since $\alpha\ge\alpha_\sharp\ge\alpha_{\mathrm{sing}}$, one has
\[
 p\le-\frac{\alpha_{\mathrm{sing}}}{\alpha_{\mathrm{sing}}+2}
 =\frac{2-\nu}{2\nu}.
\]
Lemma~\ref{lem:general-first-pair-multiplicative}\textup{(ii)}, applied with
$(X_0,Q_0)=(X_+,Q_+)$, gives $\Phi_\nu(X_F,Q_F)\le0$.

\medskip
\noindent\emph{Step 4: completion for polynomial and entire sources.}
It remains to verify the other two inequalities in
Lemma~\ref{lem:exact-first-pencil-scalar-criterion}.  We have
\[
 X_F\le1,\qquad 0<Q_F\le1.
\]
If $X_F\ge0$, then
\[
 \Omega_\nu(X_F,Q_F)-(4-2\nu)(1-X_F)
 =(\nu+1)X_F(1-Q_F)\ge0,
\]
and hence
\[
 \Omega_\nu(X_F,Q_F)
 \ge(4-2\nu)(1-X_F)\ge0.
\]
If $X_F<0$, then the preceding factorization gives
$X_F\ge-(2-\nu)/(2\nu)$.  This implies
$X_F\ge1-2/\nu$.  For $\nu\le1$, both $X_F$-dependent terms in
\eqref{eq:general-first-pair-Omega} are nonnegative.  For $\nu>1$,
\[
 \Omega_\nu(X_F,Q_F)
 \ge4-2\nu-\frac{3(\nu-1)(2-\nu)}{2\nu}
 =\frac{(2-\nu)(\nu+3)}{2\nu}>0.
\]
Thus all three conditions in
\eqref{eq:exact-first-pair-three-conditions} hold, proving
\eqref{eq:exact-first-jacobi-pencil} for polynomials.

For an entire function, apply the symmetric polynomial truncations from
Lemma~\ref{lem:derivative-strip}.  The images of \(y\) and \(y^2\) depend
only on the three values at \(1/2,3/2,5/2\); these values converge under
local uniform convergence.  Thus the two Jacobi images converge
coefficientwise, and closedness of $\mathcal E_2$ gives the result.

\medskip
\noindent\emph{Step 5: sharpness and endpoint values.}
For sharpness, first suppose $0<\nu<\nu_0$.  Given
$h>\mathfrak h_{1,\nu}^{\mathrm{opt}}$, choose $a$ slightly larger than the
optimal width but still smaller than $\min\{h,\sqrt5/2\}$, and take the
repeated-pair polynomial $F(u)=(u^2-a^2)^2$.  With $\alpha=1/4-a^2$, one has
$\alpha<\alpha_{\mathrm{rep}}(\nu)$ and hence
\[
 \Phi_\nu(X_F,Q_F)
 =-\frac{24\mathfrak p_\nu(\alpha)}
 {(\alpha+2)^4(\alpha+6)^2}>0.
\]
Thus even weak interlacing fails.  If $\nu>\nu_0$, take instead the
single-pair polynomial $F(u)=u^2-a^2$; then
$\alpha<\alpha_{\mathrm{sing}}(\nu)$ and
\eqref{eq:general-first-pair-real-Phi} is positive.  At $\nu=\nu_0$, either
family gives the same boundary.  This proves optimality.

At $\nu=1/2$, the repeated-pair polynomial
$2\mathfrak p_{1/2}$ is
$7\alpha^4+56\alpha^3+142\alpha^2+168\alpha+72$, giving $\mathfrak h_{1,1/2}^{\mathrm{opt}}=1.065615\ldots$.  At
$\nu=3/2$, the single-pair branch gives
$\alpha_\sharp=-2/7$ and hence
$\mathfrak h_{1,3/2}^{\mathrm{opt}}=\sqrt{15/28}$.
\end{proof}

\Needspace{24\baselineskip}
\noindent\textit{Proved ranges and optimality.}
Here ``uniform'' means uniform in the quotient index $n$.

\begin{center}
\small
\renewcommand{\arraystretch}{1.16}
\begin{tabular}{
 >{\raggedright\arraybackslash}p{0.23\textwidth}
 >{\raggedright\arraybackslash}p{0.27\textwidth}
 >{\raggedright\arraybackslash}p{0.39\textwidth}}
\hline
Regime & Established strip & Exactness status \\
\hline
Endpoint specializations
&
even source: $S_{\sqrt{15/28}}$;
odd source: $S_1$
&
Even source: the uniform constant is exact.  Odd source: uniform width $1$
is proved; the first nontrivial pencil has exact width $1.065615\ldots$, but
the uniform optimum is not determined.
\\
First nontrivial endpoint pencil $n=1$
&
$S_{\mathfrak h_{1,\nu}^{\mathrm{opt}}}$
&
Exact for every $0<\nu<2$; the extremal family changes at
$\nu_0=(\sqrt{17}-1)/4$.
\\
Fixed $n\ge2$
&
$S_{\mathfrak h_{n,\nu}}$
&
Optimal for $\mathfrak e_{n,\nu}\le3/4$, including the cap boundary; the
capped regime $\mathfrak e_{n,\nu}>3/4$ is not determined.
\\
Uniform in $n$
&
$S_{\mathfrak h_{1,\nu}}$
&
Optimal for $\nu\ge10/11$; not determined for $0<\nu<10/11$.
\\
\hline
\end{tabular}
\end{center}

The uniform endpoint theorems establish the main sampling-degree comparison
independently of the exact $n=1$ refinement.  The table also isolates the
remaining quantitative questions: the capped fixed-$n$ range, the uniform
range for $0<\nu<10/11$, and the uniform odd-endpoint optimum.

\section{Strict interlacing from remote zero orbits}
\label{sec:strict-consecutive-degrees}

The endpoint-pencil theorem gives a closed interlacing relation, but it does
not exclude a common interior zero.  This boundary phenomenon cannot be
removed merely by strengthening the relevant finite Jacobi block from total
nonnegativity to total positivity.  We therefore seek a deformation that
crosses the common-zero boundary transversely.

Differential deformations inside the hyperbolic locus go back to Nuij
\cite{NuijHyperbolic}; see also \cite{KurdykaPaunescuNuij} for Nuij-type
derivative pencils.  The deformation required here is more constrained: it
must be realized by moving a zero orbit of the entire source, and its
infinitesimal action on the quotient side is the Jacobi operator rather than
ordinary differentiation.

An \emph{interior collision} is a common zero in $(0,4)$, and a
\emph{collapsed gap} is the zero-length adjacent interlacing gap created by
such a zero.  The Jacobi flow is the first-order deformation
$p\mapsto p+t\JacOp_\nu p$.

In the strict theorem, the source zeros lie in the fixed-$n$ admissible strip
$S_{\mathfrak h_{n,\nu}}$, where $\mathfrak h_{n,\nu}\le1$.  A finite
counterexample first shows that total positivity does not exclude a collision.
The Jacobi flow nevertheless opens every simple interior collision in the
required direction, and a sufficiently remote zero orbit of the source
realizes this infinitesimal motion on each fixed polynomial space.

\subsection{Why finite total positivity is not enough}

A finite matrix is \emph{totally positive} (TP) if every minor is strictly
positive.  For an even real-valued function \(F\), write
\[
 \mathbf M_{N,\nu}[F]
 =\Sigma_N[\JacMult_{F,\nu}]_N\Sigma_N
\]
for the degree-$N$ checkerboard Bernstein matrix of its Jacobi spectral
multiplier.

\begin{example}[A totally positive Jacobi block need not force strict interlacing]
\label{ex:finite-TP-not-horizontal-strict}
Put
\[
 p(\lambda)=\frac{23\lambda^2-24\lambda+66}{110},
 \qquad
 F(u)=p(u^2-1/4).
\]
Then
\[
 F(1/2)=\frac35,\qquad F(3/2)=1,\qquad F(5/2)=\frac{75}{11}.
\]
For the even endpoint \(\nu=3/2\), the degree-two checkerboard Bernstein
matrix of the Jacobi spectral multiplier is
\[
 \mathbf M_{2,3/2}[F]=p(\mathbf J_{2,3/2}(0))
 =\begin{pmatrix}
  609/220&411/110&69/44\\
  411/220&499/110&91/44\\
  69/220&91/110&49/44
 \end{pmatrix}.
\]
All entries are positive.  A direct exact calculation shows that the
smallest of the nine $2\times2$ minors is
\[
 \frac{27}{220}>0,
\]
and
\[
 \det\mathbf M_{2,3/2}[F]=\frac{45}{11}>0.
\]
Thus the relevant Jacobi spectral block is totally positive.  Nevertheless,
with \(y=x+2\),
\[
 C_{F,1}^{-}(y-2)=\frac{5y-6}{5},
 \qquad
 C_{F,2}^{-}(y-2)=\frac{(5y-6)(15y-46)}{11},
\]
so the adjacent quotients share the interior root \(y=6/5\).  If
\(\rho\in\Zset(F)\), then
\[
 (\Re\rho)^2
 =\frac{71+5\sqrt{1081}}{184}
 >1
 >\mathfrak h_{1,3/2}^{\,2}
 =\frac{15}{28}.
\]
Thus \(F\) does not satisfy the admissible-strip hypothesis of
Theorem~\ref{thm:remote-orbit-strictification}.  Total positivity of one
finite block does not force strict interlacing, even when that block is
exactly a polynomial in the checkerboard Jacobi matrix.  The strict theorem
instead uses the global zero-orbit restrictions and a source-compatible
deformation that crosses the common-zero boundary transversely.
\end{example}

\begin{remark}[A finite source on the common-zero boundary]
\label{rem:strictness-needs-extra-hypotheses}
Strip containment by itself does not yield strictness.  Fix $n\ge2$, choose
\[
 0<a<\mathfrak h_{n,3/2},
\]
and put $F_a(u)=u^2-a^2$ and $\alpha=1/4-a^2$.  Then the source zeros lie
strictly inside the admissible strip, while
\[
 C_{F_a,n}^{-}(y-2)
 =y^{n-1}\left((n(n+1)+\alpha)y-2n(2n+1)\right).
\]
Hence $C_{F_a,n}^{-}$ and $C_{F_a,n+1}^{-}$ share the factor $y^{n-1}$.
This example does not isolate nonpolynomiality from the simplicity and
interiority assumptions: the common zero is the endpoint $y=0$, and the two
images are not both simple there.  Rather, it shows that a finite source may
remain on the common-zero boundary and does not supply the remote orbits used
below.
\end{remark}

\subsection{Inward Jacobi transversality}
\label{subsec:remote-orbit-strictification}

For the deformation arguments below, if
\[
 P(y)=\sum_{j=0}^{M}p_jy^j\in\R[y]_{\le M},
\]
its monomial coefficient vector is $(p_0,\ldots,p_M)^{\mathsf T}$, padded
with trailing zeros.  On a fixed space $\R[y]_{\le M}$, polynomial-valued
paths are identified with these vectors; convergence and differentiation are
understood in this finite-dimensional space.  The notation $R_t=o(t)$ means
that every coefficient of $R_t/t$ tends to $0$ as $t\to0$.

The weak relation identifies a closed interlacing region but does not specify
how to leave its common-zero boundary.  The first ingredient is local: at a
simple interior collision, the Jacobi vector field separates the two root
branches in the correct direction.

\begin{lemma}[Local root branches and collapsed gaps]
\label{lem:local-root-gap-branches}
Let $n\ge0$, and let
\[
 p_s\in\R[y]_{\le n},\qquad q_s\in\R[y]_{\le n+1}
\]
be polynomial paths for real $s$ near $s_0$ whose coefficients are $C^1$
functions of $s$.  Suppose that \(p_{s_0}\) and \(q_{s_0}\) have exact
degrees \(n\) and \(n+1\), respectively, and that all their zeros are simple
and lie in \((0,4)\).  After shrinking the parameter interval, there are unique
\(C^1\) root branches
\[
 \alpha_1(s)<\cdots<\alpha_n(s),\qquad
 \beta_0(s)<\cdots<\beta_n(s),
\]
which enumerate the zeros of \(p_s\) and \(q_s\).  We call the derivatives of these branches their \emph{root velocities}.  At
\(s_0\) they are
\[
 \alpha_j'(s_0)
 =-\frac{\dot p_{s_0}(\alpha_j(s_0))}
          {p_{s_0}'(\alpha_j(s_0))},
 \qquad
 \beta_j'(s_0)
 =-\frac{\dot q_{s_0}(\beta_j(s_0))}
          {q_{s_0}'(\beta_j(s_0))}.
\]
If the two zero sets weakly interlace for \(s<s_0\) sufficiently close to
\(s_0\), then every right gap
\(\beta_j(s)-\alpha_j(s)\) and left gap
\(\alpha_j(s)-\beta_{j-1}(s)\) that vanishes at \(s_0\) is nonnegative
for \(s<s_0\) and has left derivative at \(s_0\) at most zero.
\end{lemma}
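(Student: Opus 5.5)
The root branches come from the implicit function theorem, and the gap assertion follows from continuity together with differentiability at $s_0$.

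\emph{Existence and velocities.} Consider $\Phi(s,y)=p_s(y)$ near each simple zero $\alpha_j(s_0)\in(0,4)$. Since $p_{s_0}'(\alpha_j(s_0))\ne0$ and $\Phi$ is $C^1$ in $(s,y)$, the implicit function theorem gives a unique $C^1$ branch $\alpha_j(s)$ near $s_0$. Implicit differentiation of $p_s(\alpha_j(s))=0$ then yields $\dot p_{s_0}(\alpha_j)+p_{s_0}'(\alpha_j)\alpha_j'(s_0)=0$, which is the stated velocity formula. The same argument applies to $q_s$.

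\emph{Exhaustion and ordering.} Exact degree is an open condition, since the leading coefficient stays nonzero for $s$ near $s_0$. Hence $p_s$ has exactly $n$ zeros, and the $n$ branches account for all of them. The roots $\alpha_j(s_0)$ are distinct, so for $s$ close to $s_0$ the branches remain distinct, keep their strict order, and stay inside $(0,4)$. Shrinking the interval makes all of this hold simultaneously for $p_s$ and $q_s$, and uniqueness of the enumeration follows. No root can come in from infinity or split off, because the degree is fixed and every root is accounted for by a branch.

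\emph{Gap sign.} Let $g(s)$ denote a right gap $\beta_j-\alpha_j$ or a left gap $\alpha_j-\beta_{j-1}$ with $g(s_0)=0$. Weak interlacing for $s<s_0$ near $s_0$ means the labelled alternation $\beta_0\le\alpha_1\le\beta_1\le\cdots$ holds. The degrees are $n$ and $n+1$, so the counts force this pattern, and it gives $g(s)\ge0$ for those $s$. Because $g$ is $C^1$, its derivative at $s_0$ satisfies
\[
 g'(s_0)=\lim_{s\uparrow s_0}\frac{g(s)-g(s_0)}{s-s_0}=\lim_{s\uparrow s_0}\frac{g(s)}{s-s_0}.
\]
The numerator is $\ge0$ and the denominator is $<0$, so $g'(s_0)\le0$.

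\emph{Main obstacle.} The only delicate point is the labelling. One has to check that the alternation pattern in weak interlacing must be $\beta_0\le\alpha_1\le\cdots\le\alpha_n\le\beta_n$, given that the branches are distinct, ordered, and continuous. Since $\#\beta=\#\alpha+1$, this is the only alternating pattern available, and so the gaps named in the statement are exactly the ones constrained by weak interlacing.
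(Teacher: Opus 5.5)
Your proposal is correct and takes essentially the same route as the paper's proof. Both arguments use the simple-root implicit function theorem for the $C^1$ branches and velocities, persistence of exact degree and distinctness for exhaustion and ordering, and the sign of the left difference quotient $g(s)/(s-s_0)$ at a collapsed gap. Your labelling remark, that the cardinalities $n$ and $n+1$ force the alternation $\beta_0\le\alpha_1\le\cdots\le\alpha_n\le\beta_n$, just spells out the interlacing convention the paper fixes beforehand.
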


\begin{proof}
The simple-root implicit-function theorem gives one \(C^1\) root branch
through each zero at \(s_0\), with the displayed derivative formula.
Distinct zeros of the same polynomial have disjoint neighborhoods, so after
shrinking the parameter interval the branches remain real, retain their
order, and exhaust all roots; exact degree is preserved because the leading
coefficients remain nonzero.  Weak interlacing for \(s<s_0\) makes each
relevant gap nonnegative there.  If such a gap \(\gamma\) satisfies
\(\gamma(s_0)=0\), then
\[
 \gamma'_-(s_0)
 =\lim_{s\uparrow s_0}\frac{\gamma(s)-\gamma(s_0)}{s-s_0}\le0,
\]
because the numerator is nonnegative and the denominator is negative.
\end{proof}

\begin{lemma}[Strict inward Jacobi transversality]
\label{lem:jacobi-inward-transversality}
Fix \(0<\nu<2\).  Let \(p,q\in\R[y]\) have degrees \(n\) and
\(n+1\), respectively.  Suppose that all their zeros are simple and lie in
\((0,4)\), and that the two zero sets weakly interlace.  Let \(\xi\) be a
common zero.  Consider coefficientwise \(C^1\) polynomial paths
\[
 p_t\in\R[y]_{\le n},\qquad q_t\in\R[y]_{\le n+1}
\]
for real $t$ near $0$ that satisfy
\[
 p_t=p+t\JacOp_\nu p+o(t),\qquad
 q_t=q+t\JacOp_\nu q+o(t),
\]
where the remainders are $o(t)$ in these finite-dimensional coefficient
spaces.  Let $\alpha_j(t)$ and $\beta_k(t)$ denote the ordered root branches
of $p_t$ and $q_t$ supplied by
Lemma~\ref{lem:local-root-gap-branches}.  At a common zero, define the oriented
gap to be
\[
 \beta_j(t)-\alpha_j(t)
 \quad\text{if }\xi=\alpha_j(0)=\beta_j(0),
 \qquad
 \alpha_j(t)-\beta_{j-1}(t)
 \quad\text{if }\xi=\alpha_j(0)=\beta_{j-1}(0).
\]
Its derivative at \(t=0\) is strictly positive.
\end{lemma}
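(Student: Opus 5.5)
The plan is to compute the derivative of the oriented gap explicitly from the root-velocity formula of Lemma~\ref{lem:local-root-gap-branches}, and then to read off its sign from the weak interlacing pattern.

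\emph{Step 1: root velocities.} Since the paths are $C^1$ with $\dot p_0=\JacOp_\nu p$ and $\dot q_0=\JacOp_\nu q$, the $o(t)$ remainders do not affect first derivatives. At a simple zero $\xi$ of $p$ we have $p(\xi)=0$, so
\[
 (\JacOp_\nu p)(\xi)=\xi(\xi-4)p''(\xi)+(2\xi-4\nu)p'(\xi).
\]
The velocity of the $p$-branch through $\xi$ is therefore
\[
 -\xi(\xi-4)\frac{p''(\xi)}{p'(\xi)}-(2\xi-4\nu),
\]
and the $q$-branch through $\xi$ has the same formula with $q$ in place of $p$. The drift term $2\xi-4\nu$ is identical for both branches, so it cancels in any gap. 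In the case $\xi=\alpha_j(0)=\beta_j(0)$ the derivative of the oriented gap is
\[
 \xi(4-\xi)\left(\frac{q''(\xi)}{q'(\xi)}-\frac{p''(\xi)}{p'(\xi)}\right),
\]
and in the other case it is the negative of this expression. The prefactor $\xi(4-\xi)$ is strictly positive because $\xi\in(0,4)$.

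\emph{Step 2: logarithmic derivatives.} Write $p=(y-\xi)r$ with $r(\xi)\ne0$. Then $p'(\xi)=r(\xi)$ and $p''(\xi)=2r'(\xi)$, so
\[
 \frac{p''(\xi)}{p'(\xi)}=2\sum_{\alpha_i\ne\xi}\frac{1}{\xi-\alpha_i},
\]
and analogously for $q$ with the roots $\beta_k\ne\xi$. The sign question thus reduces to comparing the two sums $\sum 1/(\xi-\beta_k)$ and $\sum 1/(\xi-\alpha_i)$ over the remaining roots.

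\emph{Step 3: the interlacing comparison.} The function $f(x)=1/(\xi-x)$ is increasing on each of $(-\infty,\xi)$ and $(\xi,\infty)$. It is positive on the first interval and negative on the second.

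Consider first the case $\xi=\alpha_j=\beta_j$, where weak interlacing reads $\beta_0\le\alpha_1\le\beta_1\le\cdots$. For each $k\ne j$ with $1\le k\le n$, pair $\alpha_k$ with $\beta_k$. Since $\alpha_k\le\beta_k$ and both lie on the same side of $\xi$, monotonicity gives $f(\beta_k)\ge f(\alpha_k)$. The only unpaired root is $\beta_0$, which is strictly less than $\xi$ because $q$ has simple zeros. Hence $f(\beta_0)>0$, the bracket is strictly positive, and the gap derivative is strictly positive.

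In the case $\xi=\alpha_j=\beta_{j-1}$, pair $\alpha_k$ with $\beta_{k-1}$. Now $\beta_{k-1}\le\alpha_k$, so $f(\beta_{k-1})\le f(\alpha_k)$. The unpaired root is $\beta_n>\xi$, which contributes $f(\beta_n)<0$. The bracket is therefore strictly negative, and after the sign reversal of the oriented gap the derivative is again strictly positive.

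\emph{Main obstacle.} I expect the only delicate point to be the bookkeeping in Step 3: choosing the correct pairing in each orientation, and checking that common zeros at indices other than $\xi$ merely contribute zero differences. The strict sign comes solely from the extra root of $q$ lying on the correct side of $\xi$, and that root differs from $\xi$ because the zeros of $q$ are simple.
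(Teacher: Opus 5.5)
Your proof is correct, but it reaches the sign by a different route than the paper. The two arguments share the first step: the root-velocity formula, cancellation of the drift $2\xi-4\nu$, and the reduction of the oriented-gap velocity to $\xi(4-\xi)$ times $\pm\bigl(q''(\xi)/q'(\xi)-p''(\xi)/p'(\xi)\bigr)$.

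\textbf{The paper's route.} It removes the monic $\gcd$ $g$, writing $p=gp_0$ and $q=gq_0$, and observes that the reduced coprime pair strictly interlaces. It proves $W(p_0,q_0)=p_0'q_0-p_0q_0'<0$ on $\R$ via positivity of the residues of $p_0/q_0$. It then uses $q''/q'-p''/p'=2(q_0'/q_0-p_0'/p_0)$ at $\xi$. Finally it reads the orientation from the sign of $p'(\xi)q'(\xi)=g'(\xi)^2p_0(\xi)q_0(\xi)$, determined by the ordered root lists.

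\textbf{Your route.} You expand both logarithmic derivatives as sums $\sum 1/(\xi-\cdot)$ and compare them termwise. You pair $\alpha_k$ with $\beta_k$ (respectively $\beta_{k-1}$) and use monotonicity of $x\mapsto 1/(\xi-x)$ on each side of $\xi$. The ``same side'' claim follows from strict monotonicity of the simple root lists, e.g.\ $\alpha_k<\alpha_j=\xi$ and $\beta_k<\beta_j=\xi$ for $k<j$. Other common zeros simply contribute zero, so no gcd factorization, Wronskian, or residue argument is needed. After the common roots cancel, your bracket is exactly $q_0'(\xi)/q_0(\xi)-p_0'(\xi)/p_0(\xi)$, so both proofs evaluate the same quantity and differ only in how its sign is certified.

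\textbf{What each buys.} Your argument is more elementary and quantitative. The gap velocity is at least $2\xi(4-\xi)/(\xi-\beta_0)$ for a collapsed right gap and at least $2\xi(4-\xi)/(\beta_n-\xi)$ for a collapsed left gap. This makes visible that strictness comes from the extreme root of $q$. The paper's argument gives one formula for both collision types, $\xi(4-\xi)\bigl(-2W(p_0,q_0)(\xi)\bigr)/\lvert p_0(\xi)q_0(\xi)\rvert$. It also ties transversality to the Wronskian characterization of strict interlacing that the paper reuses elsewhere, for instance in the descent proposition.
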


\begin{proof}
\noindent\emph{Removing the common factor.}
Lemma~\ref{lem:local-root-gap-branches} gives the differentiable root
branches and their velocity formula.  Multiplication by a nonzero scalar does
not change a root velocity, so normalize $p$ and $q$ to be monic.  List their
roots at $t=0$ as
\[
 \beta_0\le\alpha_1\le\beta_1\le\cdots\le\alpha_n\le\beta_n,
\]
where the \(\alpha_j\) are the roots of \(p\) and the \(\beta_j\) those of
\(q\).  Let \(g=\gcd(p,q)\), chosen monic, and write
\[
 p=gp_0,\qquad q=gq_0.
\]
Because \(p\) and \(q\) are simple-rooted and weakly interlace, \(g\) is
square-free, meaning that it has no repeated zero.  After deleting the common
roots, no equality remains in the weak interlacing chain; hence the coprime
monic pair \(p_0,q_0\) strictly interlaces, with
\(\deg q_0=\deg p_0+1\).  Its Wronskian
$W(p_0,q_0)=p_0'q_0-p_0q_0'$ therefore satisfies
\begin{equation}\label{eq:remote-strict-wronskian-sign}
 W(p_0,q_0)(y)<0
 \qquad(y\in\R).
\end{equation}
Away from the zeros of \(q_0\), every residue in the partial-fraction
expansion of \(p_0/q_0\) is positive, and hence
\[
 W(p_0,q_0)
 =q_0^2\left(\frac{p_0}{q_0}\right)'<0.
\]
If \(\beta\) is a zero of \(q_0\), positivity of the residue gives
\[
 \frac{p_0(\beta)}{q_0'(\beta)}>0,
\]
and therefore
\[
 W(p_0,q_0)(\beta)
 =-p_0(\beta)q_0'(\beta)
 =-\frac{p_0(\beta)}{q_0'(\beta)}q_0'(\beta)^2<0.
\]
Thus \eqref{eq:remote-strict-wronskian-sign} holds on all of \(\R\).

At a common zero \(\xi\), the factor \(g\) has a simple zero while
\(p_0(\xi)q_0(\xi)\ne0\).  Direct differentiation gives
\[
 \frac{q''(\xi)}{q'(\xi)}-\frac{p''(\xi)}{p'(\xi)}
 =2\left(\frac{q_0'(\xi)}{q_0(\xi)}-
          \frac{p_0'(\xi)}{p_0(\xi)}\right),
\]
and
\[
 p'(\xi)q'(\xi)=g'(\xi)^2p_0(\xi)q_0(\xi).
\]
By \eqref{eq:remote-strict-wronskian-sign}, the first displayed difference has
the sign of \(p_0(\xi)q_0(\xi)\), hence the sign of
\(p'(\xi)q'(\xi)\).  From the ordered root lists,
\[
 \operatorname{sgn}p'(\alpha_j)=(-1)^{n-j},\qquad
 \operatorname{sgn}q'(\beta_j)=(-1)^{n-j},\qquad
 \operatorname{sgn}q'(\beta_{j-1})=(-1)^{n-j+1}.
\]
Thus the difference is positive when \(\xi=\alpha_j=\beta_j\), and negative
when \(\xi=\alpha_j=\beta_{j-1}\).

\medskip
\noindent\emph{The oriented gap velocity.}
A simple root \(\eta_t\) of a differentiable polynomial path \(r_t\) satisfies
\[
 \dot\eta_0=-\frac{\dot r_0(\eta_0)}{r_0'(\eta_0)}.
\]
Writing
\[
 \JacOp_\nu=a(y)D_y^2+b_\nu(y)D_y,
 \qquad a(y)=y(y-4),\quad b_\nu(y)=2y-4\nu,
\]
the Jacobi deformation gives
\[
 \dot\eta_0=-a(\eta_0)\frac{r''(\eta_0)}{r'(\eta_0)}-b_\nu(\eta_0).
\]
The first-order drift terms cancel when the two root velocities are
subtracted.  This cancellation removes the $\nu$-dependent first-order term
$b_\nu$; the sign is governed only by $a(\xi)<0$ in the interior and by the
Wronskian of the reduced pair.  In either collision type, the derivative of
the corresponding oriented gap is the same positive quantity
\[
 (-a(\xi))\,
 \frac{-2W(p_0,q_0)(\xi)}
      {|p_0(\xi)q_0(\xi)|}>0,
\]
because \(a(\xi)<0\) for \(\xi\in(0,4)\) and the Wronskian in
\eqref{eq:remote-strict-wronskian-sign} is negative.  Equivalently, this is
the derivative of the right gap \(\beta_j-\alpha_j\) in the first case and
of the left gap \(\alpha_j-\beta_{j-1}\) in the second.
\end{proof}

\begin{lemma}[Perturbative stability of the gap velocity]
\label{lem:remote-operator-continuity}
Fix \(0<\nu<2\).  Let \(p,q\in\R[y]\) have degrees \(n\) and
\(n+1\), respectively.  Suppose that all their zeros are simple, lie in
\((0,4)\), and weakly interlace.  List them as
\[
 \beta_0\le\alpha_1\le\beta_1\le\cdots\le\alpha_n\le\beta_n,
\]
where the \(\alpha_j\) are the zeros of \(p\) and the \(\beta_j\) are the
zeros of \(q\), and fix a common zero \(\xi\).  Equip the operator space on
\(\R[y]_{\le n+1}\) with any operator norm induced by a fixed norm on
monomial coefficient vectors; all such choices are equivalent.  For a linear
operator \(A\) on \(\R[y]_{\le n+1}\) satisfying
\[
 A\bigl(\R[y]_{\le n}\bigr)\subseteq\R[y]_{\le n},
\]
the derivative of the corresponding collapsed gap along polynomial paths
\[
 p_t\in\R[y]_{\le n},\qquad q_t\in\R[y]_{\le n+1},
\]
with
\[
 p_t=p+tAp+o(t),\qquad q_t=q+tAq+o(t),
\]
where the remainders are $o(t)$ in the corresponding coefficient spaces, is the
continuous linear functional
\[
 \mathcal L_\xi(A)=
 \begin{cases}
  \displaystyle \frac{(Ap)(\xi)}{p'(\xi)}-
  \frac{(Aq)(\xi)}{q'(\xi)},
  &\xi=\alpha_j=\beta_j,\\[3mm]
  \displaystyle -\frac{(Ap)(\xi)}{p'(\xi)}+
  \frac{(Aq)(\xi)}{q'(\xi)},
  &\xi=\alpha_j=\beta_{j-1}.
 \end{cases}
\]
If \(A_\ell\) is a sequence of such operators and \(c_\ell>0\) satisfies
\[
 c_\ell^{-1}A_\ell\longrightarrow\JacOp_\nu
\]
in operator norm, then \(\mathcal L_\xi(A_\ell)>0\) for all sufficiently
large \(\ell\).  A single threshold works simultaneously for all common zeros
of \(p\) and \(q\).
\end{lemma}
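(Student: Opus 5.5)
The argument runs through Lemma~\ref{lem:local-root-gap-branches} and Lemma~\ref{lem:jacobi-inward-transversality}. We first identify the derivative of the collapsed gap along a general first-order deformation, then pass to the limit.

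\emph{The gap derivative.} Let $p_t=p+tAp+o(t)$ and $q_t=q+tAq+o(t)$, with $Ap\in\R[y]_{\le n}$ by invariance and $Aq\in\R[y]_{\le n+1}$. Since $p,q$ have exact degrees and simple zeros in $(0,4)$, Lemma~\ref{lem:local-root-gap-branches} (with $s_0=0$) gives $C^1$ root branches with velocities
\[
 \alpha_j'(0)=-\frac{(Ap)(\alpha_j)}{p'(\alpha_j)},\qquad
 \beta_k'(0)=-\frac{(Aq)(\beta_k)}{q'(\beta_k)}.
\]
The velocity formula only needs differentiability at $t=0$, which the $o(t)$ expansion supplies together with the simple-root implicit function argument. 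In the case $\xi=\alpha_j=\beta_j$ the oriented gap is $\beta_j-\alpha_j$, and subtracting the two velocities gives exactly the first displayed expression for $\mathcal L_\xi(A)$. The case $\xi=\alpha_j=\beta_{j-1}$ gives the negated expression. Since $p'(\xi)$ and $q'(\xi)$ are nonzero, $\mathcal L_\xi$ is a finite combination of point evaluations of $Ap$ and $Aq$ at the fixed point $\xi$. It is therefore linear in $A$. It is also bounded by a constant times $\|A\|$, because $\|Ap\|\le\|A\|\,\|p\|$, $\|Aq\|\le\|A\|\,\|q\|$, and evaluation at $\xi$ is a bounded functional on the finite-dimensional coefficient space.

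\emph{Positivity in the limit.} The operator $\JacOp_\nu$ preserves $\R[y]_{\le n}$, so it is admissible. Lemma~\ref{lem:jacobi-inward-transversality}, applied to the exact paths $p+t\JacOp_\nu p$ and $q+t\JacOp_\nu q$, shows that $\mathcal L_\xi(\JacOp_\nu)>0$. By positive homogeneity, $\mathcal L_\xi(A_\ell)=c_\ell\,\mathcal L_\xi(c_\ell^{-1}A_\ell)$. Continuity of $\mathcal L_\xi$ and the operator-norm convergence $c_\ell^{-1}A_\ell\to\JacOp_\nu$ give $\mathcal L_\xi(c_\ell^{-1}A_\ell)\to\mathcal L_\xi(\JacOp_\nu)>0$. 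Since $c_\ell>0$, this yields $\mathcal L_\xi(A_\ell)>0$ for all large $\ell$.

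\emph{Uniformity over common zeros.} There are only finitely many common zeros, at most $n$. Taking the maximum of the finitely many thresholds therefore gives a single threshold that works for all of them.

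The only delicate point is confirming that the collision-type bookkeeping (right gap versus left gap) matches the sign convention of Lemma~\ref{lem:jacobi-inward-transversality}, so that $\mathcal L_\xi(\JacOp_\nu)$ equals that lemma's positive oriented-gap derivative. This is immediate from the definitions.
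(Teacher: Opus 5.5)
Your proposal is correct and follows essentially the same route as the paper: identify $\mathcal L_\xi$ as the collapsed-gap derivative via the root-velocity formula, use linearity and continuity on the finite-dimensional operator space, get $\mathcal L_\xi(\JacOp_\nu)>0$ from Lemma~\ref{lem:jacobi-inward-transversality}, pass to the limit through $c_\ell^{-1}\mathcal L_\xi(A_\ell)$, and take the maximum of finitely many thresholds. Your remark that only differentiability at $t=0$ is needed is a small point of care that the paper leaves implicit. Lemma~\ref{lem:local-root-gap-branches} is stated for $C^1$ paths, whereas the statement only assumes $p_t=p+tAp+o(t)$.
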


\begin{proof}
The root-velocity formula gives the displayed functional, which is continuous
on the finite-dimensional operator space.  By
Lemma~\ref{lem:jacobi-inward-transversality},
\(\mathcal L_\xi(\JacOp_\nu)>0\).  Hence
\[
 c_\ell^{-1}\mathcal L_\xi(A_\ell)
 \longrightarrow\mathcal L_\xi(\JacOp_\nu)>0.
\]
There are only finitely many common zeros, so the thresholds may be maximized.
\end{proof}

\subsection{Remote zero orbits as Jacobi deformations}
\label{subsec:remote-zero-orbit-deformations}

Under the bounded-strip hypothesis of
Theorem~\ref{thm:remote-orbit-strictification}, nonpolynomiality forces the
source to have purely imaginary pairs or reflected nonreal quartets with
arbitrarily large imaginary part.  Indeed, the symmetric Hadamard
factorization in Lemma~\ref{lem:derivative-strip} writes an even real entire
function of order at most one as a locally uniform product of zero-orbit
polynomials, up to a nonzero real constant.  If there were only finitely many
zero orbits, the remaining zero-free factor would be $e^{au+b}$; evenness
forces $a=0$, so the source would be a polynomial times a nonzero constant.
Moreover, every compact set contains only finitely many zeros.  Since the
real parts are bounded, an infinite zero set whose imaginary parts were also
bounded would lie in a compact rectangle, a contradiction.

We call a sequence of purely imaginary pairs or reflected nonreal quartets
\emph{remote} if the absolute values of their imaginary parts tend to
infinity.  An individual orbit is sufficiently remote when it lies far enough
along such a sequence for the fixed finite-dimensional estimate under
consideration.  We parametrize each deformation so that $s=1$ is the original
orbit, $s\downarrow0$ sends it to infinity, and the normalized factor at
$s=0$ is $1$.  Its logarithmic derivative at $s=1$, taken as $s$ increases
toward $1$ along the path, is
\[
 \left.\partial_s h_s(\JacOp_\nu)\right|_{s=1}
 h_1(\JacOp_\nu)^{-1}.
\]
After multiplication by a positive scalar, this operator converges to the
Jacobi operator on each fixed polynomial space $\R[y]_{\le M}$.  For a
sufficiently remote orbit,
Lemma~\ref{lem:remote-operator-continuity} then gives the same positive sign
for every collapsed-gap velocity.  The explicit paths below also preserve
the vertical strip containing the original orbit.

Put \(\lambda=u^2-1/4\), so that at the Jacobi images
\(u=r+1/2\) one has \(\lambda=r(r+1)\).  The orbit factors below are
normalized to equal \(1\) at \(\lambda=0\).

\paragraph{Imaginary pairs.}
A purely imaginary pair
\(\{\pm ib\}\), \(b>0\), contributes
\[
 h_b(\lambda)=1+\frac{\lambda}{L},\qquad L=b^2+\frac14.
\]
For \(0\le s\le1\), put \(h_{b,s}(\lambda)=1+s\lambda/L\).  For \(s>0\)
this is the normalized factor of the pair
\[
 \left\{\pm i\sqrt{\frac{L}{s}-\frac14}\right\},
\]
and \(h_{b,0}=1\).  For each fixed $M$, let
$\operatorname{Id}_M$ denote the identity operator on $\R[y]_{\le M}$.  Then,
on this space,
\begin{equation}\label{eq:remote-imaginary-generator}
 \begin{aligned}
 A_b
 &:=\left.\partial_s h_{b,s}(\JacOp_\nu)\right|_{s=1}
 h_b(\JacOp_\nu)^{-1}\\
 &=\frac1L\JacOp_\nu
  \left(\operatorname{Id}_M+\frac{\JacOp_\nu}{L}\right)^{-1},
 \qquad
 c_b:=\frac1L,
 \qquad
 c_b^{-1}A_b=LA_b\longrightarrow\JacOp_\nu
 \quad(b\to\infty).
 \end{aligned}
\end{equation}
The convergence is in operator norm on $\R[y]_{\le M}$ for every fixed $M$.

\paragraph{Reflected nonreal quartets.}
For a reflected nonreal quartet
\[
 \pm(a+ib),\qquad\pm(a-ib),
 \qquad 0<a\le1,\quad b>0,
\]
put \(B=b^2\) and \(\mathfrak d_a=1/4-a^2\).  Its normalized spectral factor
is
\[
 h_{a,B}(\lambda)
 =\frac{(\lambda+\mathfrak d_a+B)^2+4a^2B}
        {(\mathfrak d_a+B)^2+4a^2B}.
\]
For \(0\le s\le1\), define
\[
 h_{a,B,s}(\lambda)
 =\frac{\bigl(B+s(\lambda+\mathfrak d_a)\bigr)^2+4a^2Bs}
        {\bigl(B+s\mathfrak d_a\bigr)^2+4a^2Bs}.
\]
The normalizing denominator is strictly positive for $0\le s\le1$.
Moreover \(h_{a,B,0}=1\), while for
\(s>0\) this is exactly the normalized orbit factor with zeros
\[
 \pm\left(a+i\sqrt{B/s}\right),\qquad
 \pm\left(a-i\sqrt{B/s}\right).
\]
Thus the path keeps the real parts $\pm a$ fixed.  For $\lambda\ge0$ the
factor $h_{a,B,s}(\lambda)$ is positive, so its logarithmic derivative with
respect to the deformation parameter is
\[
 g_{a,B}(\lambda)
 =\left.
   \frac{\partial_s h_{a,B,s}(\lambda)}{h_{a,B,s}(\lambda)}
  \right|_{s=1},
 \qquad \lambda\ge0.
\]
Writing \(X=\lambda+\mathfrak d_a\), one has the exact identity
\[
 g_{a,B}(\lambda)
 =\frac{2(B+X)X+4a^2B}{(B+X)^2+4a^2B}
 -\frac{2(B+\mathfrak d_a)\mathfrak d_a+4a^2B}
        {(B+\mathfrak d_a)^2+4a^2B}.
\]
Uniformly for $|a|\le1$ and $\lambda$ in bounded subsets of
$[0,\infty)$,
\[
 g_{a,B}(\lambda)
 =\frac{2\lambda}{B}
 -\frac{\lambda(12a^2+2\lambda+1)}{B^2}
 +O(B^{-3}).
\]
Consequently, for each fixed $M$, in operator norm on
$\R[y]_{\le M}$ and uniformly for $|a|\le1$,
\begin{equation}\label{eq:remote-quartet-generator}
 \begin{aligned}
 A_{a,B}
 &:=\left.\partial_s h_{a,B,s}(\JacOp_\nu)\right|_{s=1}
 h_{a,B}(\JacOp_\nu)^{-1}\\
 &=\frac2B\JacOp_\nu+O(B^{-2}),
 \qquad
 c_{a,B}:=\frac2B,
 \qquad
 c_{a,B}^{-1}A_{a,B}=\frac{B}{2}A_{a,B}
 \longrightarrow\JacOp_\nu
 \quad(B=b^2\to\infty).
 \end{aligned}
\end{equation}
When \(a=0\), the quartet degenerates into a double imaginary pair and is
handled by the preceding deformation one copy at a time.

Both deformation formulas extend real-analytically to an open interval
containing \(s=1\).  For \(s\) sufficiently close to \(1\), the corresponding
zero orbit remains in the same vertical strip.  We use this local extension
when applying Lemma~\ref{lem:local-root-gap-branches}; the global
strip-preserving part of each path is \(0\le s\le1\).

\paragraph{Finite-dimensional convergence.}
The operator limits above are finite-dimensional statements.  On
\(\mathbb R[y]_{\le M}\), the Jacobi basis
\(\{\mathsf P_j^{(\nu)}:0\le j\le M\}\) diagonalizes \(\JacOp_\nu\), with
finite spectrum \(\{j(j+1):0\le j\le M\}\).  Hence convergence of the
scalar generators at these finitely many spectral points is equivalent to
operator-norm convergence on $\R[y]_{\le M}$.  No assertion uniform in the
unbounded Jacobi spectrum is used.

For both orbit paths, the scalar factor is a real polynomial in \(u^2\), is
normalized to equal \(1\) at \(\lambda=0\), and has strictly positive
normalizing denominators for \(0\le s\le1\).  At \(s=0\) the selected orbit
is removed.  For \(s>0\),
the imaginary pair stays on the imaginary axis, while a quartet keeps the
same real parts \(\pm a\).  Thus replacing one orbit by its path preserves
reality, evenness, order at most one, and containment in every vertical strip
that contains the original orbit.  For a selected sufficiently remote orbit $R$, let
$(h_R,h_{R,s},c_R)$ denote $(h_b,h_{b,s},c_b)$ in the imaginary-pair case or
$(h_{a,B},h_{a,B,s},c_{a,B})$ in the quartet case.

\subsection{Proof of the strict interlacing theorem}

\begin{proof}[Proof of Theorem~\ref{thm:remote-orbit-strictification}]
\noindent\emph{Exact degrees and the weak relation.}
Since $\mathfrak h_{n,\nu}\le1<n+1/2$ and
$\Zset(E)\subseteq S_{\mathfrak h_{n,\nu}}$,
\[
 E(n+1/2)E(n+3/2)\ne0.
\]
Since \(y^k=\mathsf P_k^{(\nu)}\) plus lower-degree terms, the
leading coefficient of \(\JacMult_{E,\nu}(y^k)\) is
\(E(k+1/2)\).  Thus the two polynomials have exact degrees \(n\) and \(n+1\),
respectively.  Theorem~\ref{thm:jacobi-family-endpoint-pencil}\textup{(i)} gives weak
interlacing.
Suppose, for contradiction, that the
two polynomials have a common zero.

\medskip
\noindent\emph{Choice of a remote orbit.}
By the existence argument in
Subsection~\ref{subsec:remote-zero-orbit-deformations}, choose a remote
sequence $(R_\ell)$ of purely imaginary pairs or reflected nonreal quartets
of $E$.  Let $R$ denote one member of this sequence, with its index to be
chosen in the final step.

\medskip
\noindent\emph{A strip-preserving deformation.}
Its orbit polynomial is, up to a nonzero real constant, one of the normalized
factors \(h_R(u^2-1/4)\) above.  Removing one copy gives
\[
 E(u)=c_0H_R(u)h_R\!\left(u^2-\frac14\right),
 \qquad c_0\in\R^\times,
\]
where \(H_R\) is again an even real entire function of order at most one
whose zeros lie in $S_{\mathfrak h_{n,\nu}}$.  Replace \(h_R\) by its path
\(h_{R,s}\) and put
\[
 E_s(u)=c_0H_R(u)h_{R,s}\!\left(u^2-\frac14\right),
 \qquad 0\le s\le1.
\]
The same formula extends real-analytically to \(s\) in an open neighborhood
of \(1\); on \(0\le s\le1\) it has the strip-preserving properties stated
above.  Every \(E_s\) is a nonzero even real entire function of order at most one
with all zeros in $S_{\mathfrak h_{n,\nu}}$.  On
\(\R[y]_{\le n+1}\), diagonal spectral
calculus gives
\[
 \JacMult_{E_s,\nu}
 =c_0\,\JacMult_{H_R,\nu}\,h_{R,s}(\JacOp_\nu).
\]
Therefore
\[
 p_s=\JacMult_{E_s,\nu}(y^n),\qquad
 q_s=\JacMult_{E_s,\nu}(y^{n+1})
\]
weakly interlace for every \(s\) by Theorem~\ref{thm:jacobi-family-endpoint-pencil}.

\medskip
\noindent\emph{Opening the collapsed gaps.}
At $s=1$, the polynomials $p_1,q_1$ are precisely the original two fixed
polynomials, independently of which orbit $R$ was removed.  Their roots are
simple and interior by hypothesis.  Lemma~\ref{lem:local-root-gap-branches}
therefore gives ordered differentiable root branches for $s$ near $1$.  Since
$p_s,q_s$ weakly interlace for $s<1$, every collapsed right or left gap
$\gamma$ satisfies
\[
 \gamma(s)\ge0\quad(s<1\text{ near }1),\qquad \gamma(1)=0,
 \qquad \gamma'_-(1)\le0.
\]
Here the derivative is taken in the direction of increasing $s$ toward
$1$.

\medskip
\noindent\emph{Contradiction.}
For both types of orbit, \(h_R(\lambda)>0\) for every \(\lambda\ge0\).  Hence
\(h_R(\JacOp_\nu)\) is invertible on
\(\R[y]_{\le n+1}\).  Since all displayed spectral multipliers commute,
\[
 \dot p_1=A_R p_1,\qquad \dot q_1=A_R q_1,
\]
where
\[
 A_R=\left.\partial_s h_{R,s}(\JacOp_\nu)\right|_{s=1}
 h_R(\JacOp_\nu)^{-1}.
\]
The finitely many collapsed-gap functionals are attached to the fixed pair
$p_1,q_1$.  Each $A_{R_\ell}$ is diagonal in the Jacobi basis and therefore
preserves the degree filtration.  For the remote sequence $(R_\ell)$ chosen above,
\eqref{eq:remote-imaginary-generator},
\eqref{eq:remote-quartet-generator}, and the preceding finite-dimensional
limits give
\[
 c_{R_\ell}^{-1}A_{R_\ell}\longrightarrow\JacOp_\nu
\]
in operator norm on $\R[y]_{\le n+1}$.  Apply
Lemma~\ref{lem:remote-operator-continuity} with
$A_\ell=A_{R_\ell}$ and $c_\ell=c_{R_\ell}$.  Because $A_{R_\ell}$ is the
operator derivative in the direction of increasing $s$ at $s=1$,
\(\mathcal L_\xi(A_{R_\ell})\) is the corresponding left derivative
\(\gamma'_-(1)\) of the collapsed gap.  One threshold in $\ell$ works for all
collapsed gaps.  Choose $\ell$ beyond it and take $R=R_\ell$ in the preceding
deformation.  Then $\gamma'_-(1)>0$ at every collapsed gap, contradicting
$\gamma'_-(1)\le0$.  Thus the adjacent polynomials
have no common zero; weak interlacing, exact adjacent degrees, and simple
interior roots now give strict interlacing.
\end{proof}

\begin{remark}[Role of the additional hypotheses]
Nonpolynomiality, together with the bounded-strip condition, supplies a
sequence of zero orbits escaping to infinity.  Simplicity gives
differentiable local root branches and well-defined first-order gap
velocities.  Interiority ensures that $y(y-4)<0$ at every collision; at $y=0$
or $y=4$, the leading coefficient of the Jacobi operator vanishes and the
transversality calculation need not be strict.  Thus the three hypotheses
enter at distinct points of the proof.  The argument does not show that any
of them is optimal.
\end{remark}

\section{Arithmetic applications}
\label{sec:arithmetic-applications}

The arithmetic quotients admit two independent comparisons.  Fixing the
derivative order $m$ and varying the quotient index $n$ compares sampling
degrees $2n+1$ and $2n+3$; this is the horizontal direction proved by the
endpoint-pencil and remote-orbit theorems of the present paper.  Fixing $n$ and varying $m$ compares consecutive derivative orders.  This
vertical direction is obtained by combining the fixed-degree theorem in
\cite{JinSharpSampling} with the nondegeneracy verification and parity-correct
descent proved below.  The descent places both relations on the same real
interval.

Recall that $B_{d,\delta}[F]$ is the centered binomial sample from the
Introduction, while $C_{F,n}^{-}$ and $C_{G,n}^{+}$ are the reduced endpoint
quotients from Section~\ref{sec:sampling-framework}.  For $\zeta\in\T$ and
$\delta>0$, define the centered two-point operator
\begin{equation}\label{eq:centered-difference-operator}
 \mathscr T_{\zeta,\delta}\varphi(u)
 =\varphi(u-\delta/2)+\zeta\,\varphi(u+\delta/2).
\end{equation}
The superscript in $\mathscr T_{\zeta,\delta}^{\,\ell}$ denotes $\ell$-fold
composition, with the zeroth iterate equal to the identity.  This is the
nondegeneracy operator in the fixed-degree derivative theorem.
Throughout this section, $\Xi_K^{(m)}(s)$ and $\Lambda^{(m)}(f,s)$ denote
$m$-th derivatives with respect to $s$, whereas $F^{(m)}(u)$ denotes
differentiation with respect to $u$.  On a quotient family, the superscript
$(m)$ records the derivative order of its source.

\subsection{Parity-correct descent from the unit circle}
\label{sec:derivative-sturm}

We first compare consecutive derivative orders.  The sampled polynomial
attached to an even derivative is reciprocal and has a forced factor
\(z+1\); the one attached to an odd derivative is anti-reciprocal and has a
forced factor \(z-1\).
Removing the appropriate endpoint factor before passing to
\(x=z+z^{-1}\) gives the strict endpoint-shifted relation below.  The
construction works directly with the full sampled polynomial and its
parity-forced endpoint factorization.

Let \(F\) be a real entire function of fixed parity
\[
 F(-u)=\epsilon F(u),\qquad \epsilon\in\{\pm1\},
\]
and write \(F_m=F^{(m)}\).  Thus
\[
 F_m(-u)=\epsilon(-1)^mF_m(u).
\]
For \(n\ge0\), define
\[
 H_r^{[\epsilon,m]}(x)
 =U_r(x/2)-\epsilon(-1)^mU_{r-1}(x/2),\qquad U_{-1}=0,
\]
and
\begin{equation}
\label{eq:parity-correct-Q}
 \mathscr Q_{F,n}^{(m)}(x)=
 \sum_{r=0}^{n}\binom{2n+1}{n-r}
 F_m\!\left(r+\frac12\right)H_r^{[\epsilon,m]}(x).
\end{equation}
Rows for which \(\epsilon(-1)^m=1\) use \(U_r-U_{r-1}\), while rows for
which \(\epsilon(-1)^m=-1\) use \(U_r+U_{r-1}\).  We reserve
\(\mathscr Q\) for this parity-correct derivative quotient family.  The
symbols \(C_{F,n}^{-,(\delta)}\) and \(C_{G,n}^{+,(\delta)}\) retain their
meanings for the scaled quotients of fixed even and odd sources, respectively.
In particular,
\begin{equation}\label{eq:two-Q-families}
 \mathscr Q_{F,n}^{(m)}
 =
 \begin{cases}
 C_{F^{(m)},n}^{-},&\epsilon(-1)^m=1,\\[1mm]
 C_{F^{(m)},n}^{+},&\epsilon(-1)^m=-1.
 \end{cases}
\end{equation}
Hence every derivative row uses the endpoint dictated by its actual parity.

\begin{lemma}[Parity-correct endpoint factorization]
\label{lem:parity-correct-factorization}
Let \(m,n\in\mathbb Z_{\ge0}\), and put
\[
 P_{m,n}(z)=B_{2n+1}[F_m](z)
 =\sum_{j=0}^{2n+1}\binom{2n+1}{j}
 F_m\!\left(j-n-\frac12\right)z^j.
\]
Then, as a Laurent identity in \(z\),
\begin{equation}
\label{eq:parity-correct-factorization}
 P_{m,n}(z)=z^n\left(z+\epsilon(-1)^m\right)
 \mathscr Q_{F,n}^{(m)}(z+z^{-1}).
\end{equation}
\end{lemma}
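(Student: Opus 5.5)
The identity is a direct pairing computation, modelled on the one already used in Section~\ref{sec:sampling-framework} for \(C_{F,n}^{-}\) and \(C_{G,n}^{+}\). Put \(\eta=\epsilon(-1)^m\in\{\pm1\}\), so that \(F_m(-u)=\eta F_m(u)\). In the sum defining \(P_{m,n}\) we group the index \(j=n-r\) with its mirror \(j=n+1+r\), for \(0\le r\le n\); these pairs exhaust \(\{0,\ldots,2n+1\}\) exactly once. The sample sites are \(j-n-\tfrac12=-(r+\tfrac12)\) and \(r+\tfrac12\), and the binomial weights agree, \(\binom{2n+1}{n-r}=\binom{2n+1}{n+1+r}\). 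Parity then gives
\[
 P_{m,n}(z)=\sum_{r=0}^n\binom{2n+1}{n-r}F_m\!\left(r+\tfrac12\right)\bigl(\eta z^{n-r}+z^{n+1+r}\bigr).
\]

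Next we factor out \(z^n\). Each bracket becomes \(z^{n}\cdot z^{-r}(z^{2r+1}+\eta)\), so it remains to prove the Laurent identity
\[
 \frac{z^{-r}(z^{2r+1}+\eta)}{z+\eta}=U_r\!\left(\tfrac{x}{2}\right)-\eta\,U_{r-1}\!\left(\tfrac{x}{2}\right),\qquad x=z+z^{-1}.
\]
For \(\eta=1\) this is the displayed identity for \(H_r^-\). For \(\eta=-1\) it is the identity \(z^{-r}(z^{2r+1}-1)/(z-1)=H_r^+(z+z^{-1})\). Both were stated in Section~\ref{sec:sampling-framework}.

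If one wants these checked directly, use \(U_r(x/2)=(z^{r+1}-z^{-r-1})/(z-z^{-1})\). Then
\[
 (z+\eta)\bigl(U_r-\eta U_{r-1}\bigr)=z^{-r}(z^{2r+1}+\eta)
\]
follows by expanding; the computation is a telescoping geometric sum \(\sum_{k=-r}^{r}(-\eta)^{r-k}z^{k}\) times the denominator. The case \(r=0\) uses \(U_{-1}=0\) and is trivial.

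Multiplying back by \(z^n(z+\eta)\) and summing over \(r\) against the coefficients \(\binom{2n+1}{n-r}F_m(r+\tfrac12)\) reproduces exactly the definition \eqref{eq:parity-correct-Q} of \(\mathscr Q^{(m)}_{F,n}\). This proves \eqref{eq:parity-correct-factorization} as a Laurent identity; indeed both sides are polynomials in \(z\).

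The main point of care is the sign bookkeeping: the forced factor must be \(z+\eta\) and the basis \(U_r-\eta U_{r-1}\), consistent with the two cases in \eqref{eq:two-Q-families}. The rest is routine.
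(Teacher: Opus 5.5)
Your proposal is correct and follows the paper's own proof: the same pairing of $j=n-r$ with $j=n+r+1$ via parity and binomial symmetry, and the same unified identity $H_r^{[\epsilon,m]}(z+z^{-1})=\bigl(z^{2r+1}+\epsilon(-1)^m\bigr)/\bigl(z^r(z+\epsilon(-1)^m)\bigr)$, derived from $U_r(x/2)=(z^{r+1}-z^{-r-1})/(z-z^{-1})$. Your telescoping check $\sum_{k=-r}^{r}(-\eta)^{r-k}z^k=U_r-\eta U_{r-1}$ is a valid alternative way to verify that identity.
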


\begin{proof}
Put \(x=z+z^{-1}\).  The standard identity
\[
 U_r(x/2)=\frac{z^{r+1}-z^{-r-1}}{z-z^{-1}}
\]
gives
\[
 U_r(x/2)-U_{r-1}(x/2)=\frac{z^{2r+1}+1}{z^r(z+1)},
 \qquad
 U_r(x/2)+U_{r-1}(x/2)=\frac{z^{2r+1}-1}{z^r(z-1)}.
\]
Equivalently,
\begin{equation}
\label{eq:H-parity-unified}
 H_r^{[\epsilon,m]}(z+z^{-1})=
 \frac{z^{2r+1}+\epsilon(-1)^m}{z^r(z+\epsilon(-1)^m)}.
\end{equation}
Multiplying by \(z^n(z+\epsilon(-1)^m)\) gives
\[
 z^n(z+\epsilon(-1)^m)H_r^{[\epsilon,m]}(z+z^{-1})
 =z^{n+r+1}+\epsilon(-1)^mz^{n-r}.
\]
Now pair the terms \(j=n-r\) and \(j=n+r+1\) in \(P_{m,n}\).  Since
\[
 F_m\!\left(-r-\frac12\right)=\epsilon(-1)^m
 F_m\!\left(r+\frac12\right),
 \qquad
 \binom{2n+1}{n-r}=\binom{2n+1}{n+r+1},
\]
the paired contribution is
\[
 \binom{2n+1}{n-r}F_m\!\left(r+\frac12\right)
 \left(z^{n+r+1}+\epsilon(-1)^mz^{n-r}\right).
\]
Summing over \(0\le r\le n\) and using \eqref{eq:H-parity-unified} gives
\eqref{eq:parity-correct-factorization}.
\end{proof}

Two degree-$d$ multisets on $\T$ \emph{strictly cyclically interlace} if
each is simple, the two multisets are disjoint, and their points alternate in
cyclic order.  Two coprime nonzero real polynomials of degrees differing by at
most one form a \emph{strict Obreschkoff pair} if their zeros strictly
interlace and every nonzero real linear combination has only simple real
zeros in its actual degree.

\begin{proposition}[Descent of strict cyclic interlacing]
\label{prop:shifted-descent}
Retain the preceding $F$, $\epsilon$, and $P_{m,n}$.  Let \(m\ge0\) and
\(n\ge1\).  Assume that \(P_{m,n}\) and \(P_{m+1,n}\) have \(2n+1\) simple zeros on
\(\T\), and that their zeros strictly cyclically interlace.  Let \(m_{\mathrm E},m_{\mathrm O}\in\{m,m+1\}\) be determined by \(\epsilon(-1)^{m_{\mathrm E}}=+1\) and \(\epsilon(-1)^{m_{\mathrm O}}=-1\), respectively.  Thus $F_{m_{\mathrm E}}$ is even and $F_{m_{\mathrm O}}$ is odd; the
subscripts refer to the parity of the derivative function, not necessarily to
the parity of the integers when $\epsilon=-1$.  Then
\(\mathscr Q_{F,n}^{(m_{\mathrm E})}\) and \(\mathscr Q_{F,n}^{(m_{\mathrm O})}\) have degree
\(n\) and have \(n\) simple zeros in \((-2,2)\).  List these zeros in decreasing
order as
\[
 x_1^{\mathrm E}>\cdots>x_n^{\mathrm E},\qquad
 x_1^{\mathrm O}>\cdots>x_n^{\mathrm O}.
\]
Then
\begin{equation}
\label{eq:shifted-interlacing-order}
 2>x_1^{\mathrm E}>x_1^{\mathrm O}>
 x_2^{\mathrm E}>x_2^{\mathrm O}>
 \cdots>
 x_n^{\mathrm E}>x_n^{\mathrm O}>-2.
\end{equation}
Equivalently, the augmented sets
\begin{equation}
\label{eq:augmented-interlacing}
 \{2\}\cup \Zset(\mathscr Q_{F,n}^{(m_{\mathrm O})})
 \quad\text{and}\quad
 \Zset(\mathscr Q_{F,n}^{(m_{\mathrm E})})\cup\{-2\}
\end{equation}
strictly interlace on \([-2,2]\).  In particular,
\(\mathscr Q_{F,n}^{(m_{\mathrm E})}\) and
\(\mathscr Q_{F,n}^{(m_{\mathrm O})}\) form a strict Obreschkoff pair in
the actual-degree convention: every nonzero real linear combination has only
real zeros, simple in its actual degree.
\end{proposition}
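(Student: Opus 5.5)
The plan is to pull the cyclic order on $\T$ back through the factorization of Lemma~\ref{lem:parity-correct-factorization}, using the angle parametrization $z=e^{i\theta}$, $x=2\cos\theta$. Write $P_{\mathrm E}=P_{m_{\mathrm E},n}$ and $P_{\mathrm O}=P_{m_{\mathrm O},n}$. By \eqref{eq:parity-correct-factorization}, $P_{\mathrm E}(z)=z^n(z+1)\mathscr Q^{(m_{\mathrm E})}_{F,n}(z+z^{-1})$ and $P_{\mathrm O}(z)=z^n(z-1)\mathscr Q^{(m_{\mathrm O})}_{F,n}(z+z^{-1})$.

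\emph{Step 1: simplicity and support.} A real reciprocal (resp.\ anti-reciprocal) polynomial of degree $2n+1$ with $2n+1$ simple zeros on $\T$ has its zeros in conjugate pairs. There is exactly one self-conjugate zero, namely $-1$ for $P_{\mathrm E}$ and $+1$ for $P_{\mathrm O}$. It cannot also vanish at the other point $\pm1$: the forced factor is already present, so a second real zero would leave an even number of zeros, of which an odd number would have to be real, and a real zero other than $\pm1$ is impossible on $\T$. Hence the remaining $2n$ zeros are $n$ conjugate pairs $e^{\pm i\theta}$ with $\theta\in(0,\pi)$, distinct by simplicity. Each pair gives one zero $2\cos\theta\in(-2,2)$ of the corresponding $\mathscr Q$. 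Since $\deg\mathscr Q\le n$, this gives exactly degree $n$ with $n$ simple zeros in $(-2,2)$. Strict cyclic interlacing makes the two zero sets disjoint, so the self-conjugate zero $-1$ of $P_{\mathrm E}$ is not a zero of $P_{\mathrm O}$, and vice versa.

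\emph{Step 2: ordering.} On the closed upper arc $\theta\in[0,\pi]$ the zeros are $\theta=0$ for $P_{\mathrm O}$, $\theta=\pi$ for $P_{\mathrm E}$, and the angles $\theta_k^{\mathrm E},\theta_k^{\mathrm O}\in(0,\pi)$. Strict cyclic interlacing on the full circle, together with conjugation symmetry, forces strict alternation along this arc: reflection maps the lower arc onto the upper one and preserves alternation, and the two endpoints $0,\pi$ belong to different polynomials. Starting at $\theta=0$, which is an O-zero, alternation gives
\[
0<\theta_1^{\mathrm E}<\theta_1^{\mathrm O}<\cdots<\theta_n^{\mathrm E}<\theta_n^{\mathrm O}<\pi .
\]
This is consistent with the count, since the arc holds $n+1$ zeros of each polynomial including the endpoints. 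Applying the decreasing map $\theta\mapsto2\cos\theta$ yields \eqref{eq:shifted-interlacing-order}, and \eqref{eq:augmented-interlacing} is a restatement. The main point needing care is this alternation-on-the-half-circle argument. If it is too terse, I would argue by counting: between two consecutive O-zeros on $\T$ there is exactly one E-zero, and conjugation symmetry pins the E-zero lying between $e^{-i\theta_1^{\mathrm O}}$ and $e^{i\theta_1^{\mathrm O}}$, which passes through $\theta=0$, to lie in $(0,\theta_1^{\mathrm O})$ rather than in $(-\theta_1^{\mathrm O},0)$.

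\emph{Step 3: Obreschkoff pair.} The two polynomials have equal degree $n$ and strictly interlacing simple zeros, so they are coprime. By the classical Hermite--Kakeya--Obreschkoff theorem, every nonzero real combination $a\mathscr Q^{(m_{\mathrm E})}+b\mathscr Q^{(m_{\mathrm O})}$ is real-rooted with simple zeros. For simplicity: away from the zeros the ratio $\mathscr Q^{(m_{\mathrm E})}/\mathscr Q^{(m_{\mathrm O})}$ is strictly monotone between its poles, by partial fractions with residues of one sign. A multiple zero would therefore force a common zero of both polynomials or a zero derivative of the ratio, and both are excluded. When $a+b$ makes the leading coefficient cancel, the same reasoning applies in the actual lower degree.
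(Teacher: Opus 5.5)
Your proposal is correct and follows essentially the paper's route: the paper also restricts the cyclic alternation to the closed upper semicircle, pushes it through the decreasing map $\theta\mapsto2\cos\theta$, and gets simplicity of pencil members from Obreschkoff's theorem plus a fixed-sign Wronskian, which is equivalent to your monotone ratio; the only difference is how the quotient zeros are located, where the paper excludes zeros outside $[-2,2]$ and at $x=\pm2$ (the latter would force a multiple zero of $P_{m,n}$ at $\pm1$) instead of your parity count of conjugate pairs. Three cosmetic slips: conjugation symmetry is unnecessary in Step~2, since cyclic alternation restricts automatically to any contiguous arc; your fallback sentence misidentifies consecutive O-zeros, because $z=1$ lies between $e^{-i\theta_1^{\mathrm O}}$ and $e^{i\theta_1^{\mathrm O}}$, so the relevant consecutive pair is $1$ and $e^{i\theta_1^{\mathrm O}}$; and in Step~1, removing both $\pm1$ would leave $2n-1$ zeros, an odd number, not an even one.
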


\begin{proof}
\medskip\noindent\emph{Location, degree, and simplicity.}
By Lemma~\ref{lem:parity-correct-factorization}, the even-parity member
\(m_{\mathrm E}\) has the forced root \(z=-1\), while the odd-parity member
\(m_{\mathrm O}\) has the forced root \(z=1\).  The factorization also shows
that, away from these forced endpoints, zeros of the quotient correspond under
\(x=z+z^{-1}\) to pairs of reciprocal zeros of the sampled polynomial.

We first locate the quotient zeros.  If \(x_0\notin[-2,2]\) is a zero of one of the quotients, then the two solutions of \(z+z^{-1}=x_0\) are not on \(\T\).  They would give non-unit zeros of the corresponding \(P_{m,n}\), a contradiction.  If a quotient vanished at \(x=2\) or \(x=-2\), then, because
\[
 z+z^{-1}-2=\frac{(z-1)^2}{z},\qquad
 z+z^{-1}+2=\frac{(z+1)^2}{z},
\]
the corresponding endpoint zero of \(P_{m,n}\) would have multiplicity at least
\(2\), and at least \(3\) at the forced endpoint.  This contradicts simplicity.

Hence all quotient zeros lie in \((-2,2)\).  Since \(P_{m,n}\) has \(2n+1\)
simple unit-circle zeros and the quotient degree is at most \(n\), the quotient
must have degree exactly \(n\) and exactly \(n\) zeros.  These zeros are simple,
because the map \(z\mapsto z+z^{-1}\) has nonzero derivative away from
\(z=\pm1\).

\medskip\noindent\emph{Order and pencil simplicity.}
On the upper semicircle, write the roots of the even-parity row as
\[
 e^{i\theta_1},\ldots,e^{i\theta_n},e^{i\pi},
 \qquad 0<\theta_1<\cdots<\theta_n<\pi,
\]
and the roots of the odd-parity row as
\[
 e^{i0},e^{i\phi_1},\ldots,e^{i\phi_n},
 \qquad 0<\phi_1<\cdots<\phi_n<\pi.
\]
Strict cyclic interlacing forces
\[
 0=\phi_0<\theta_1<\phi_1<\theta_2<\phi_2<\cdots<\theta_n<\phi_n<\pi=\theta_{n+1}.
\]
The map \(\theta\mapsto2\cos\theta\) is strictly decreasing on \([0,\pi]\), so this angular order is exactly \eqref{eq:shifted-interlacing-order}.  The augmented form \eqref{eq:augmented-interlacing} is the same statement with the forced endpoints included.  By Obreschkoff's theorem \cite{Obreschkoff1963}, strict interlacing of two real degree-\(n\) polynomials implies that every nonzero real pencil member has only real zeros.  To verify simplicity, write
\[
 A=\mathscr Q_{F,n}^{(m_{\mathrm E})},
 \qquad
 B=\mathscr Q_{F,n}^{(m_{\mathrm O})}.
\]
The polynomials \(A\) and \(B\) are coprime and strictly interlace, so their
Wronskian \(A'B-AB'\) has a fixed nonzero sign on \(\mathbb R\).  If a
nonzero pencil member \(\alpha A+\beta B\) had a multiple finite zero \(\xi\),
then
\[
 \alpha A(\xi)+\beta B(\xi)=0,
 \qquad
 \alpha A'(\xi)+\beta B'(\xi)=0,
\]
which would force \((A'B-AB')(\xi)=0\), a contradiction.  A cancellation of
leading coefficients merely lowers the actual degree.  Hence every nonzero
pencil member has only simple real zeros in its actual degree, proving the
final assertion.
\end{proof}

\medskip
\noindent\emph{Fixed-degree input.}
We use the following consequence of \cite[Theorem~1.2]{JinSharpSampling}.  Let
\(d\ge2\), \(\delta>0\), and let \(H\not\equiv0\) be an entire function of order at
most one satisfying
\[
 H(u)=\omega\,\overline{H(-\bar u)},\qquad |\omega|=1,
 \qquad
 \Zset(H)\subseteq S_h,
 \qquad
 0\le h<\frac{\delta\sqrt d}{2}.
\]
Fix \(m\in\mathbb Z_{\ge0}\) with \(H^{(m)}\not\equiv0\).  If
\[
 \mathscr T_{\zeta,\delta}^{d}H^{(m)}\not\equiv0
 \qquad\text{for every }\zeta\in\T,
\]
then \(B_{d,\delta}[H^{(m)}]\) and
\(B_{d,\delta}[H^{(m+1)}]\) have degree \(d\), all their zeros are simple and
lie on \(\T\), and the two zero sets strictly cyclically interlace.  This is
the only fixed-degree derivative input used in the remainder of the main
argument.

\subsection{Dedekind zeta derivatives}

We now apply the two comparisons to completed Dedekind zeta derivatives.  Let
$K$ be a number field of absolute discriminant $D_K$ and signature
$(r_1,r_2)$, where $r_1$ is the number of real embeddings and $r_2$ the
number of conjugate pairs of complex embeddings, and set
\[
 \Gamma_{\R}(s)=\pi^{-s/2}\Gamma(s/2),\qquad
 \Gamma_{\C}(s)=2(2\pi)^{-s}\Gamma(s),
\]
\begin{equation}\label{eq:DedekindLambda}
 \Lambda_K(s)=D_K^{s/2}\Gamma_{\R}(s)^{r_1}
 \Gamma_{\C}(s)^{r_2}\zeta_K(s),
\end{equation}
\begin{equation}\label{eq:XiK}
 \Xi_K(s)=s(s-1)\Lambda_K(s),\qquad
 F_K(u)=\Xi_K\!\left(\frac12+u\right).
\end{equation}
We use the standard analytic continuation, functional equation, Euler
product, and growth properties of the Dedekind zeta function; see
\cite{NeukirchANT}.  In particular, \(F_K\) is a real and even entire function of order one,
and \(\Zset(F_K)\subseteq S_{1/2}\).
For $m,n\ge0$, define $\mathscr Q_{K,n}^{(m)}$ by
\begin{equation}\label{eq:intro-dedekind-quotient}
 B_{2n+1}[F_K^{(m)}](z)
 =z^n\bigl(z+(-1)^m\bigr)
 \mathscr Q_{K,n}^{(m)}(z+z^{-1}).
\end{equation}
\begin{theorem}[Horizontal and vertical strict interlacing for Dedekind zeta derivatives]
\label{thm:main-dedekind-array}
Let $K$ be a number field, $m\ge0$, and $n\ge1$.  Then:
\begin{enumerate}[label=\textup{(\roman*)}]
\item $\mathscr Q_{K,n}^{(m)}$ has $n$ simple zeros in $(-2,2)$;
\item $\mathscr Q_{K,n}^{(m)}$ and $\mathscr Q_{K,n+1}^{(m)}$
strictly interlace;
\item if $m_{\mathrm E}$ and $m_{\mathrm O}$ are the even and odd
members of $\{m,m+1\}$, and
\[
 x_1^{\mathrm E}>\cdots>x_n^{\mathrm E},\qquad
 x_1^{\mathrm O}>\cdots>x_n^{\mathrm O}
\]
are the zeros of \(\mathscr Q_{K,n}^{(m_{\mathrm E})}\) and
\(\mathscr Q_{K,n}^{(m_{\mathrm O})}\), respectively, then
\[
 2>x_1^{\mathrm E}>x_1^{\mathrm O}>x_2^{\mathrm E}>
 x_2^{\mathrm O}>\cdots>x_n^{\mathrm E}>x_n^{\mathrm O}>-2.
\]
\end{enumerate}
\end{theorem}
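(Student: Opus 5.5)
The plan is to derive all three assertions from the fixed-degree derivative input combined with Proposition~\ref{prop:shifted-descent}, and then to obtain (ii) from Theorem~\ref{thm:remote-orbit-strictification}.

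\emph{Fixed-degree input.} I apply the fixed-degree consequence of \cite[Theorem~1.2]{JinSharpSampling} to $H=F_K$ with $\delta=1$, $d=2n+1$ and $\omega=1$. Since $F_K$ is real and even, $H(u)=\overline{H(-\bar u)}$. Also $h=1/2<\sqrt{2n+1}/2$ for $n\ge1$. Because $F_K$ is nonpolynomial of order one, $F_K^{(m)}\not\equiv0$ for every $m$.

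\emph{Nondegeneracy (the main obstacle).} The real work is the condition $\mathscr T_{\zeta,1}^{2n+1}F_K^{(m)}\not\equiv0$ for every $\zeta\in\T$. I would attack it by a rightmost-shift growth argument, as adapted in Appendix~\ref{app:newform-nondegeneracy}. Expand
\[
 \mathscr T_{\zeta,1}^{d}\varphi(u)=\sum_j\binom dj\zeta^j\varphi\bigl(u+j-d/2\bigr).
\]
For large real $u$, the Euler product gives $\zeta_K(1/2+u)\to1$, so $F_K^{(m)}(u)$ is governed by the Gamma factors and $D_K^{s/2}$, together with the polynomial factor from differentiation. Stirling's formula then shows that $|F_K^{(m)}(u+d/2)|$ dominates every shifted term $F_K^{(m)}(u+j-d/2)$ with $j<d$ by a factor tending to infinity, uniformly in $\zeta$. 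Hence the sum is nonzero for large $u$. Granting this, the fixed-degree input makes $P_{m,n}$ and $P_{m+1,n}$ simple, unit-circle-rooted and strictly cyclically interlacing for all $m\ge0$.

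\emph{Assertions (i) and (iii).} Proposition~\ref{prop:shifted-descent}, with $\epsilon=1$ and using \eqref{eq:two-Q-families}, gives (i) for both members of $\{m,m+1\}$, hence for every $m$. It also gives the ordered chain in (iii).

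\emph{Assertion (ii).} Set $E=F_K^{(m)}$. By Lemma~\ref{lem:derivative-strip}, $E$ has zeros in $S_{1/2}\subseteq S_{\mathfrak h_{n,\nu}}$, because $\mathfrak h_{n,\nu}\ge\sqrt{1/4+\mathfrak e_{n,\nu}}\wedge1>1/2$. The function $E$ is even when $m$ is even, and $E=uF$ with $F$ even when $m$ is odd. In the even case, $\mathscr Q_{K,n}^{(m)}(y-2)=\JacMult_{E,3/2}(y^n)$. In the odd case I use Lemma~\ref{lem:endpoint-raising}, which gives $\mathscr Q_{K,n}^{(m)}(y-2)=(n+\tfrac12)\JacMult_{F,1/2}(y^n)$. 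There $F=E/u$ is still even, nonpolynomial of order at most one, with zeros in $S_{1/2}$.

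By (i), applied at both $n$ and $n+1$, the two images have simple zeros in $(0,4)$. Theorem~\ref{thm:remote-orbit-strictification} therefore applies and yields strict interlacing. The positive scalar factors $n+\tfrac12$ and $n+\tfrac32$ do not affect zero sets.
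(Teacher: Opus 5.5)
Your proposal is correct and follows essentially the paper's route: (i) and (iii) come from rightmost-shift nondegeneracy (Lemma~\ref{lem:dedekind-strict-nondeg}), the fixed-degree theorem and Proposition~\ref{prop:shifted-descent}, and (ii) from the parity-reduced even source, Lemma~\ref{lem:endpoint-raising} and Theorem~\ref{thm:remote-orbit-strictification}. The only place the paper is more careful is the derivative asymptotics in the nondegeneracy step, where your ``polynomial factor from differentiation'' is really a logarithmic factor, controlled in the paper via Fa\`a di Bruno and the exponential decay of $(\log\zeta_K)^{(j)}$; this does not affect your dominance argument.
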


Part~\textup{(ii)} is the new sampling-degree comparison proved here.
Parts~\textup{(i)} and~\textup{(iii)} follow by combining the fixed-degree
derivative theorem in \cite{JinSharpSampling} with
Lemma~\ref{lem:dedekind-strict-nondeg} and the parity-correct descent of
Proposition~\ref{prop:shifted-descent}.  We establish the vertical result first
and then apply the remote-orbit theorem to the horizontal direction.

\paragraph{Derivative-order direction (vertical).}
To apply the fixed-degree theorem, we need a nondegeneracy statement for
centered finite differences.  We state it first, defer its proof to
Appendix~\ref{app:dedekind-nondegeneracy}, and then descend strict cyclic
interlacing from the unit circle to the endpoint quotients.

\begin{lemma}[Dedekind finite-difference nondegeneracy]
\label{lem:dedekind-strict-nondeg}
Let \(K\) be a number field and put
\[
 F_K(u)=\Xi_K\left(\frac12+u\right),
 \qquad
 \Xi_K(s)=s(s-1)\Lambda_K(s).
\]
For every finite-difference order \(\ell\ge2\), every \(m\ge0\), and every
\(\zeta\in\T\),
\begin{equation}
\label{eq:dedekind-nondeg}
 \mathscr T_{\zeta,1}^{\ell}F_K^{(m)}\not\equiv0.
\end{equation}
\end{lemma}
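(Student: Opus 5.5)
Argue by contradiction: suppose $\mathscr T_{\zeta,1}^{\ell}F_K^{(m)}\equiv0$ for some $\ell\ge2$, $m\ge0$, $\zeta\in\T$. The operator $\mathscr T_{\zeta,1}$ is a constant-coefficient difference operator, $\varphi\mapsto \varphi(u-\tfrac12)+\zeta\varphi(u+\tfrac12)$. Its symbol on exponentials $e^{wu}$ is $e^{-w/2}+\zeta e^{w/2}$, which vanishes exactly on the arithmetic progression $w\in w_0+2\pi i\mathbb Z$ with $e^{w_0}=-\bar\zeta$. Then $\mathscr T_{\zeta,1}^{\ell}F_K^{(m)}\equiv0$ says that $\psi=F_K^{(m)}$ satisfies the difference equation $(E^{1/2}+\zeta E^{-1/2})^{\ell}\psi=0$ with shift $E$. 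The plan is to classify the entire solutions of this equation and show that none of them has the growth and shape of $F_K^{(m)}$.

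\textbf{Step 1: reduce to a shift by one.} Put $\psi(u)=e^{w_0u}\phi(u)$. The equation becomes $\Delta^{\ell}\phi=0$ in an appropriate normalization, where $\Delta\phi(u)=\phi(u+1)-\phi(u)$ up to a unimodular constant. Entire solutions of $\Delta^{\ell}\phi=0$ have the form $\phi(u)=\sum_{k<\ell}u^k\pi_k(u)$ with each $\pi_k$ entire and $1$-periodic. So $F_K^{(m)}(u)=e^{w_0u}\sum_{k<\ell}u^k\pi_k(u)$.

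\textbf{Step 2: use growth on the real axis.} The function $F_K^{(m)}$ has order one. Along the real direction it grows like $\Gamma$-factors, roughly $\exp(c\,u\log u)$, because $\Lambda_K(\tfrac12+u)$ contains $\Gamma(\tfrac{1/2+u}{2})^{r_1}\Gamma(\tfrac12+u)^{r_2}$ and $\zeta_K(\tfrac12+u)\to1$. The same holds for its derivatives: the $m$-th derivative of $\log\Xi_K$ behaves regularly, and $F_K^{(m)}/F_K\sim(\text{const}\cdot\log u)^m$ for real $u\to+\infty$. On the other hand, the right-hand side of Step 1 at $u=u_0+N$, with $N\to\infty$ through integers, equals $e^{w_0(u_0+N)}$ times a polynomial in $N$ of degree $<\ell$ with coefficients $\pi_k(u_0)$. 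Its modulus therefore grows at most like $e^{CN}N^{\ell}$, which contradicts the superexponential growth $e^{cN\log N}$, with $c>0$ since $r_1+2r_2=[K:\mathbb Q]\ge1$. The only escape is that all $\pi_k(u_0)$ vanish; if that happens for every $u_0$, then $F_K^{(m)}\equiv0$, which is impossible since $F_K$ is not a polynomial of degree $<m$.

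\textbf{Step 3: the main obstacle.} The contradiction must be made rigorous along the sequence $u_0+N$ for a single suitable $u_0$. Concretely, choose $u_0$ real with $(\pi_k(u_0))_k\neq0$; such a point exists by the identity theorem. This gives a lower bound of the form $|F_K^{(m)}(u_0+N)|\ge\exp(cN\log N)$ for large $N$. Deriving that bound requires the asymptotics $\zeta_K(\tfrac12+u)\to1$ and Stirling's formula applied to derivatives, for instance through Cauchy estimates on small circles, or through the fact that $(\log\Xi_K)'$ is eventually positive and increasing on the real axis, so that the derivatives stay positive and large. I expect this lower bound on $F_K^{(m)}$, uniform for every $m\ge0$ and every field $K$, to be the main technical point. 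The nondegenerate index $\ell\ge2$ plays no special role beyond fixing the polynomial degree bound, so the argument works for all $\ell\ge1$ and all $\zeta\in\T$.
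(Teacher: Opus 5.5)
Your proposal is correct in outline, but it takes a genuinely different route from the paper's.

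\emph{The paper's route.} The paper argues directly. From Stirling's formula, the Euler product and Fa\`a di Bruno's formula it derives
$\Xi_K^{(m)}(s)=\Xi_K(s)\bigl(((\log\Xi_K)'(s))^m+O((\log s)^{m-1})\bigr)$.
Hence $F_K^{(m)}(x+a)/F_K^{(m)}(x+b)\to0$ for $a<b$. The top-shift term $j=\ell$ therefore dominates the expansion of $\mathscr T_{\zeta,1}^{\ell}F_K^{(m)}(x)$, and the iterate is nonzero for all large real $x$, uniformly in $\zeta\in\T$.

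\emph{Your route.} You exploit the structure of the difference equation instead. Conjugation by $e^{w_0u}$ turns it into $\Delta^{\ell}\phi=0$, whose entire solutions are quasi-polynomials with $1$-periodic entire coefficients. You then need only an absolute growth bound along $u_0+\mathbb N$, not ratio asymptotics between shifts. The paper's route buys a quantitative, $\zeta$-uniform dominance statement, which is the template reused for the newform lemma at arbitrary spacing. Yours is qualitative but structurally transparent.

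\emph{The step you flag as the main obstacle.} Two of your suggested routes to a lower bound on $F_K^{(m)}$ do not work as stated. Cauchy estimates give only upper bounds. Monotonicity of $(\log\Xi_K)'$ alone does not make the Bell polynomial positive for $m\ge3$, because $(\log\Xi_K)'''$ is eventually negative. What is actually needed is the paper's observation that the monomial $((\log\Xi_K)')^m$ dominates.

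\emph{How to bypass it.} Your framework lets you avoid that step altogether.
\begin{enumerate}
\item Because $|\zeta|=1$, you have $\Re w_0=0$, so your bound $e^{CN}N^{\ell}$ holds with $C=0$.
\item Your representation then gives $|F_K^{(m)}(u_0+N)|\le C'(1+N)^{\ell-1}$, uniformly for $u_0\in[0,1]$, since the $\pi_k$ and $e^{w_0u}$ are continuous there. Thus $F_K^{(m)}$ is polynomially bounded on $[0,\infty)$.
\item Taylor's formula with integral remainder then makes $F_K$ itself polynomially bounded on $[0,\infty)$.
\item This contradicts the growth of $\Xi_K(\tfrac12+x)$, for which Stirling's formula for $\Gamma$ together with $\zeta_K(x)\ge1$ for real $x>1$ suffices.
\end{enumerate}
With this modification your argument is more elementary than the paper's and needs no derivative asymptotics at all.
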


Stirling's formula and the Euler product show that, for each fixed
derivative order, the rightmost translate of $F_K^{(m)}$ dominates every
earlier translate on the positive real axis.  Expanding
$\mathscr T_{\zeta,1}^{\ell}$ then makes its last term dominant uniformly in
$\zeta\in\T$, so the finite difference is nonzero for all sufficiently large
arguments.

\begin{theorem}[Endpoint-shifted strict interlacing across derivative orders]
\label{thm:dedekind-derivative-sturm}
Let \(K\) be a number field and put \(F_K(u)=\Xi_K(1/2+u)\).  For \(n\ge0\) and \(m\ge0\), define
\begin{equation}
\label{eq:dedekind-derivative-quotient}
 \mathscr Q_{K,n}^{(m)}(x)
 =\sum_{r=0}^{n}\binom{2n+1}{n-r}
 \Xi_K^{(m)}(r+1)
 \left(U_r(x/2)-(-1)^mU_{r-1}(x/2)\right),
 \qquad U_{-1}=0.
\end{equation}
For every \(n\ge1\), the polynomial \(\mathscr Q_{K,n}^{(m)}\) has \(n\) simple zeros in \((-2,2)\).
Moreover, for every \(n\ge1\) and every consecutive pair \(m,m+1\), let \(m_{\mathrm E}\) and
\(m_{\mathrm O}\) be its even and odd members.  If
\[
 x_1^{\mathrm E}>\cdots>x_n^{\mathrm E},
 \qquad
 x_1^{\mathrm O}>\cdots>x_n^{\mathrm O}
\]
are the zeros of \(\mathscr Q_{K,n}^{(m_{\mathrm E})}\) and
\(\mathscr Q_{K,n}^{(m_{\mathrm O})}\), respectively, then
\[
 2>x_1^{\mathrm E}>x_1^{\mathrm O}>x_2^{\mathrm E}>
 x_2^{\mathrm O}>\cdots>x_n^{\mathrm E}>x_n^{\mathrm O}>-2.
\]
Equivalently,
\[
 \{2\}\cup \Zset(\mathscr Q_{K,n}^{(m_{\mathrm O})})
 \quad\text{and}\quad
 \Zset(\mathscr Q_{K,n}^{(m_{\mathrm E})})\cup\{-2\}
\]
strictly interlace on \([-2,2]\).  In particular, every consecutive pair
\(\mathscr Q_{K,n}^{(m)}\) and \(\mathscr Q_{K,n}^{(m+1)}\) is a strict
Obreschkoff pair in the actual-degree convention.
\end{theorem}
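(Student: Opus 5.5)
The plan is to feed the fixed-degree derivative input into Proposition~\ref{prop:shifted-descent}. The source is $H=F_K$ with $\omega=1$: $F_K$ is real and even, so $F_K(u)=\overline{F_K(-\bar u)}$. We take $\delta=1$, $d=2n+1\ge3$, and strip half-width $h=1/2$, which is allowed because $\Zset(F_K)\subseteq S_{1/2}$.

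\emph{Step 1: the hypotheses of the fixed-degree theorem.} The strip condition $1/2<\sqrt{2n+1}/2$ holds for every $n\ge1$. We also need $F_K^{(m)}\not\equiv0$. This is clear because $F_K$ has order exactly one and is not a polynomial, and it also follows from Lemma~\ref{lem:dedekind-strict-nondeg} with $\ell=0$ trivial. The nondegeneracy condition $\mathscr T_{\zeta,1}^{2n+1}F_K^{(m)}\not\equiv0$ for all $\zeta\in\T$ is exactly Lemma~\ref{lem:dedekind-strict-nondeg} with $\ell=2n+1\ge3$. The fixed-degree input then gives the following for $P_{m,n}=B_{2n+1}[F_K^{(m)}]$ and $P_{m+1,n}$: both have degree $2n+1$, their zeros are simple and lie on $\T$, and the two zero sets strictly cyclically interlace.

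\emph{Step 2: identification and descent.} With $\epsilon=+1$, the definition \eqref{eq:dedekind-derivative-quotient} agrees with the parity-correct family \eqref{eq:parity-correct-Q}. Indeed, $F_K^{(m)}(r+1/2)=\Xi_K^{(m)}(r+1)$ because $d/du=d/ds$ under $s=1/2+u$, and the basis $H_r^{[1,m]}=U_r-(-1)^mU_{r-1}$ is the one appearing there. Lemma~\ref{lem:parity-correct-factorization} then gives the factorization $P_{m,n}(z)=z^n(z+(-1)^m)\mathscr Q_{K,n}^{(m)}(z+z^{-1})$, which is consistent with \eqref{eq:intro-dedekind-quotient}. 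Since $\epsilon=1$, the even-parity member $m_{\mathrm E}$ is the even integer in $\{m,m+1\}$, matching the statement. Proposition~\ref{prop:shifted-descent} applies to the pair $(m,m+1)$ and yields the following conclusions in order:
\begin{itemize}
\item both quotients have degree $n$ and $n$ simple zeros in $(-2,2)$;
\item the chain \eqref{eq:shifted-interlacing-order} holds, equivalently the augmented sets interlace strictly;
\item the pair is a strict Obreschkoff pair.
\end{itemize}
The claim that every $\mathscr Q_{K,n}^{(m)}$ has $n$ simple interior zeros follows by applying this to the pair $(m,m+1)$, since $m$ belongs to it.

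\emph{Main obstacle.} The only substantive input is the nondegeneracy Lemma~\ref{lem:dedekind-strict-nondeg}, which is deferred to the appendix. Granting it, the remaining care points are bookkeeping. First, the identification of $m_{\mathrm E},m_{\mathrm O}$ with integer parity holds because $\epsilon=1$. Second, the degree-$n$ requirement in the Obreschkoff statement is supplied by the descent proposition rather than assumed.
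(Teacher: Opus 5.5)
Your proposal is correct and matches the paper's proof: the fixed-degree theorem with $\delta=1$, $d=2n+1$, $h=1/2$; nondegeneracy from Lemma~\ref{lem:dedekind-strict-nondeg} with $\ell=2n+1$; then Lemma~\ref{lem:parity-correct-factorization} and Proposition~\ref{prop:shifted-descent} applied to the pair $(m,m+1)$. One small citation fix: that lemma is stated only for $\ell\ge2$, so to get $F_K^{(m)}\not\equiv0$ use its $\ell=2n+1$ case or your nonpolynomiality argument, not ``$\ell=0$''.
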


\begin{proof}[Proof of the derivative-order strict interlacing theorem]
The function \(F_K\) is real and even and has all zeros in \(|\Re u|\le1/2\).  Fix \(n\ge1\) and set \(d=2n+1\).  Since \(d\ge3\), the strict range \(1/2<\sqrt d/2\) holds.  Lemma~\ref{lem:dedekind-strict-nondeg} verifies
\[
 \mathscr T_{\zeta,1}^{d}F_K^{(m)}\not\equiv0
 \qquad(\zeta\in\T).
\]
Therefore the strict fixed-degree theorem
\cite[Theorem~1.2]{JinSharpSampling} gives simple unit-circle zeros and strict
cyclic interlacing for
\[
 P_{m,n}(z)=B_{2n+1}[F_K^{(m)}](z)
 \quad\text{and}\quad
 P_{m+1,n}(z)=B_{2n+1}[F_K^{(m+1)}](z).
\]
By Lemma~\ref{lem:parity-correct-factorization}, and because \(F_K^{(m)}(r+1/2)=\Xi_K^{(m)}(r+1)\),
\[
 P_{m,n}(z)=z^n(z+(-1)^m)\mathscr Q_{K,n}^{(m)}(z+z^{-1}).
\]
Proposition~\ref{prop:shifted-descent} now gives the asserted simple zeros, the endpoint-shifted strict interlacing, and the actual-degree strict Obreschkoff property.
\end{proof}

\paragraph{Sampling-degree direction (horizontal).}
We now fix the derivative order and compare the quotients obtained from
sampling degrees $2n+1$ and $2n+3$.  The endpoint-pencil theorem first gives weak interlacing.  Nonpolynomiality and simple interior zeros then allow Theorem~\ref{thm:remote-orbit-strictification} to rule out common zeros.

\begin{corollary}[Strict interlacing for sampling degrees $2n+1$ and $2n+3$]
\label{cor:dedekind-horizontal-strict}
For every number field \(K\), every \(m\ge0\), and every \(n\ge1\), the
adjacent parity-correct quotients
\[
 \mathscr Q_{K,n}^{(m)}
 \quad\text{and}\quad
 \mathscr Q_{K,n+1}^{(m)}
\]
strictly interlace.
\end{corollary}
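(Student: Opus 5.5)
The plan is to apply Theorem~\ref{thm:remote-orbit-strictification} to the source $E=F_K^{(m)}$, at the Jacobi parameter selected by its parity. Write $\epsilon_m=(-1)^m$. By \eqref{eq:two-Q-families}, the shape of the quotients depends on this sign.

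\emph{Even case} ($m$ even). Here $F_K^{(m)}$ is even and $\mathscr Q_{K,n}^{(m)}=C_{F_K^{(m)},n}^{-}$. After the shift $y=x+2$ this equals $\JacMult_{E,3/2}(y^n)$ with $E=F_K^{(m)}$, and we take $\nu=3/2$.

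\emph{Odd case} ($m$ odd). Here $G=F_K^{(m)}$ is odd, and we write $G=uE$ with $E$ even. By Lemma~\ref{lem:endpoint-raising}, $\mathscr Q_{K,n}^{(m)}(y-2)=(n+\tfrac12)\JacMult_{E,1/2}(y^n)$. The positive scalars $(n+\tfrac12)$ and $(n+\tfrac32)$ do not affect zero sets, so we take $\nu=1/2$.

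In both cases we verify the hypotheses of Theorem~\ref{thm:remote-orbit-strictification} for $E$ and both indices $n,n+1$.

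\emph{Strip condition.} $F_K$ is real, even, of order one, with zeros in $S_{1/2}$. Lemma~\ref{lem:derivative-strip} puts the zeros of $F_K^{(m)}$ in $S_{1/2}$; $F_K^{(m)}\not\equiv0$ because $F_K$ is transcendental. In the odd case, $E=G/u$ has the same nonzero zeros as $G$, plus possibly $0$, so its zeros also lie in $S_{1/2}$. Since $\mathfrak h_{n,\nu}\ge\min\{1,\tfrac12\}=\tfrac12$ for all $n,\nu$, we get $\Zset(E)\subseteq S_{\mathfrak h_{n,\nu}}$. Order at most one is preserved by differentiation and by division by $u$.

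\emph{Nonpolynomiality.} $F_K$ has order exactly one and infinitely many zeros, so it is not a polynomial. Hence neither is any derivative of it, nor $E$.

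\emph{Simple zeros in $(0,4)$.} This is the one hypothesis not automatic from strip containment, and the main point. It is supplied by Theorem~\ref{thm:dedekind-derivative-sturm}, the vertical result proved just above. For every index $k\ge1$, $\mathscr Q_{K,k}^{(m)}$ has $k$ simple zeros in $(-2,2)$. Apply this with $k=n$ and $k=n+1$; both are $\ge1$. After $y=x+2$, the zeros of $\JacMult_{E,\nu}(y^n)$ and $\JacMult_{E,\nu}(y^{n+1})$ are simple and lie in $(0,4)$.

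Theorem~\ref{thm:remote-orbit-strictification} then says the two zero sets strictly interlace in $y$. Translating back by $2$ gives strict interlacing of $\mathscr Q_{K,n}^{(m)}$ and $\mathscr Q_{K,n+1}^{(m)}$.

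The main obstacle is hidden in the simplicity input, which depends on the nondegeneracy Lemma~\ref{lem:dedekind-strict-nondeg} at finite-difference orders $2n+1$ and $2n+3$. With that taken as given, the only care needed here is bookkeeping: using the correct endpoint parameter ($\nu=3/2$ or $\nu=1/2$) for each parity of $m$, and noting that the factorization $G=uE$ in the odd case does not enlarge the zero strip.
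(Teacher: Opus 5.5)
Your proposal is correct and follows essentially the same route as the paper's proof: you reduce to the parity-adjusted even source ($F_K^{(m)}$ or $F_K^{(m)}/u$), check the strip and nonpolynomiality hypotheses, import simple interior zeros at both indices $n$ and $n+1$ from Theorem~\ref{thm:dedekind-derivative-sturm}, and then apply Theorem~\ref{thm:remote-orbit-strictification} with $\nu=3/2$ or $\nu=1/2$. The only cosmetic difference is that you bound $\mathfrak h_{n,\nu}\ge\tfrac12$ directly, whereas the paper uses $\tfrac12<\mathfrak h_{1,3/2}\le\mathfrak h_{n,3/2}$ and $\mathfrak h_{n,1/2}=1$; either suffices.
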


\begin{proof}
Put \(F_K(u)=\Xi_K(1/2+u)\) and define the parity-reduced even function
\[
 E_{K,m}(u)=
 \begin{cases}
  F_K^{(m)}(u),&m\text{ even},\\[1mm]
  F_K^{(m)}(u)/u,&m\text{ odd}.
 \end{cases}
\]
In the odd case the quotient is entire and even because \(F_K^{(m)}\) is odd.
Lemma~\ref{lem:derivative-strip} places all zeros of \(E_{K,m}\) in
$S_{1/2}$.  The function is nonpolynomial: if \(E_{K,m}\) were a polynomial, then
\(F_K^{(m)}\), and hence \(F_K\), would be a polynomial.  Moreover,
\[
 \frac12<\mathfrak h_{1,3/2}\le\mathfrak h_{n,3/2},
 \qquad
 \mathfrak h_{n,1/2}=1.
\]
Hence the zero strip \(S_{1/2}\) is contained in the fixed-\(n\) admissible
strip required by Theorem~\ref{thm:remote-orbit-strictification} in both parity
cases.

For even \(m\),
\[
 \mathscr Q_{K,n}^{(m)}(y-2)
 =\JacMult_{E_{K,m},3/2}(y^n).
\]
For odd \(m\), Lemma~\ref{lem:endpoint-raising} gives
\[
 \mathscr Q_{K,n}^{(m)}(y-2)
 =\left(n+\frac12\right)
  \JacMult_{E_{K,m},1/2}(y^n).
\]
Theorem~\ref{thm:dedekind-derivative-sturm} says that every quotient with
\(n\ge1\) has only simple zeros in \((-2,2)\).  The corresponding translated
Jacobi images therefore have only simple zeros in \((0,4)\), and
Theorem~\ref{thm:remote-orbit-strictification} applies.
\end{proof}

\begin{proof}[Proof of Theorem~\ref{thm:main-dedekind-array}]
Parts~\textup{(i)} and~\textup{(iii)} follow from
Theorem~\ref{thm:dedekind-derivative-sturm}, and part~\textup{(ii)} is
Corollary~\ref{cor:dedekind-horizontal-strict}.
\end{proof}

\subsection{A signed resultant consequence}
\label{sec:number-fields}

The first nontrivial horizontal sampling-degree comparison already yields
an explicit inequality among values of completed Dedekind zeta derivatives.
Retain the normalization \eqref{eq:DedekindLambda}--\eqref{eq:XiK}, and set
\begin{equation}\label{eq:completed-residue-notation}
 \mathcal R_K=\Xi_K(1)=\Residue_{s=1}\Lambda_K(s).
\end{equation}
For $m\ge0$, put
\[
 R_m=\Xi_K^{(m)}(1),\qquad
 X_m=\Xi_K^{(m)}(2),\qquad
 Y_m=\Xi_K^{(m)}(3),\qquad
 \epsilon_m=(-1)^m.
\]
Thus $R_0=\mathcal R_K$.
The quotients for $n=1$ and $n=2$ (sampling degrees $3$ and $5$) are
\begin{align}
 \mathscr Q_{K,1}^{(m)}(x)
 &=X_mx+3R_m-\epsilon_mX_m,\label{eq:dedekind-degree-three-Q}\\
 \mathscr Q_{K,2}^{(m)}(x)
 &=Y_mx^2+(5X_m-\epsilon_mY_m)x
   +10R_m-5\epsilon_mX_m-Y_m.\label{eq:dedekind-degree-five-Q}
\end{align}
These formulas turn the strict interlacing relation for the first two
nontrivial quotients into a signed resultant inequality.

\Needspace{8\baselineskip}
\paragraph{Resultant convention.}
For nonzero polynomials
$A(x)=a_A\prod_{j=1}^{r}(x-\alpha_j)$ and $B(x)$, where $r=\deg A$ and the
product is empty when $r=0$, we use the convention
\[
 \Resultant_x(A,B)
 =a_A^{\deg B}\prod_{j=1}^{r}B(\alpha_j),
\]
where the roots are counted with multiplicity.  When the variable is clear we
omit the subscript $x$.  This resultant vanishes exactly when $A$ and $B$ have
a common complex zero.

\begin{corollary}[Signed resultant inequality for sampling degrees $3$ and $5$]
\label{cor:dedekind-consecutive-degree-resultant}
For every number field $K$ and every $m\ge0$,
\begin{equation}\label{eq:dedekind-consecutive-degree-resultant}
 Y_m\Bigl(
 9R_m^2Y_m-5X_m^2R_m
 -3\epsilon_mX_mR_mY_m-X_m^2Y_m
 \Bigr)<0.
\end{equation}
Equivalently,
\[
 Y_m\,\Resultant_x\!\left(
 \mathscr Q_{K,1}^{(m)},\mathscr Q_{K,2}^{(m)}
 \right)<0.
\]
For $m=0$, writing
\[
 X_K=\Xi_K(2)=2\Lambda_K(2),\qquad
 Y_K=\Xi_K(3)=6\Lambda_K(3),
\]
one obtains the positive completed-value inequality
\begin{equation}\label{eq:dedekind-consecutive-degree-residue}
 5\mathcal R_KX_K^2+X_K^2Y_K
 +3\mathcal R_KX_KY_K-9\mathcal R_K^2Y_K>0.
\end{equation}
\end{corollary}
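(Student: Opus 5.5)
The plan is to use Corollary~\ref{cor:dedekind-horizontal-strict} at $n=1$: the linear polynomial $\mathscr Q_{K,1}^{(m)}$ and the quadratic $\mathscr Q_{K,2}^{(m)}$ strictly interlace. Theorem~\ref{thm:dedekind-derivative-sturm} gives both the exact degrees ($X_m\ne0$, $Y_m\ne0$) and simple zeros in $(-2,2)$. For a linear $A$ with root $x_0$ and a quadratic $B$ with roots $\beta_0<\beta_1$, strict interlacing reads $\beta_0<x_0<\beta_1$. This is equivalent to $Y_mB(x_0)<0$, since $B(x)=Y_m(x-\beta_0)(x-\beta_1)$.

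Under the stated convention, $\Resultant_x(A,B)=a_A^{2}B(x_0)$, where $a_A=X_m$. Hence
\[
 Y_m\Resultant_x(A,B)=X_m^2\,Y_mB(x_0)<0,
\]
which is the second form of the claim.

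It remains to compute $X_m^2B(x_0)$ explicitly from \eqref{eq:dedekind-degree-three-Q}--\eqref{eq:dedekind-degree-five-Q}. The root of $A$ is $x_0=(\epsilon_mX_m-3R_m)/X_m$. I would substitute $X_mx_0=\epsilon_mX_m-3R_m$ and use $\epsilon_m^2=1$:
\[
 X_m^2B(x_0)=Y_m(\epsilon_mX_m-3R_m)^2+(5X_m-\epsilon_mY_m)X_m(\epsilon_mX_m-3R_m)+X_m^2(10R_m-5\epsilon_mX_m-Y_m).
\]
Expanding term by term:
\begin{itemize}
\item The $X_m^2Y_m$ terms are $Y_mX_m^2-X_m^2Y_m-X_m^2Y_m=-X_m^2Y_m$.
\item The $\epsilon_mX_mR_mY_m$ terms are $-6\epsilon_mX_mR_mY_m+3\epsilon_mX_mR_mY_m=-3\epsilon_mX_mR_mY_m$.
\item The $R_m^2Y_m$ term is $9R_m^2Y_m$.
\item The $X_m^2R_m$ terms are $-15X_m^2R_m+10X_m^2R_m=-5X_m^2R_m$.
\item The $\epsilon_mX_m^3$ terms are $5\epsilon_mX_m^3-5\epsilon_mX_m^3=0$.
\end{itemize}
This gives exactly the bracket in \eqref{eq:dedekind-consecutive-degree-resultant}, so multiplying by $Y_m$ yields that inequality.

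For $m=0$ we have $\epsilon_0=1$, and $\Xi_K(s)=s(s-1)\Lambda_K(s)$ gives $X_K=2\Lambda_K(2)$ and $Y_K=6\Lambda_K(3)$. To obtain \eqref{eq:dedekind-consecutive-degree-residue} we need the sign $Y_0>0$. This holds because $\Lambda_K(3)>0$: the $\Gamma$-factors and $D_K^{s/2}$ are positive, and the Euler product gives $\zeta_K(3)>0$. Dividing \eqref{eq:dedekind-consecutive-degree-resultant} by $Y_0>0$ and negating gives the positive inequality.

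The main obstacle is purely bookkeeping: the sign convention for the resultant, and confirming that strict interlacing for the linear–quadratic pair is exactly $Y_mB(x_0)<0$. The genuine input is Corollary~\ref{cor:dedekind-horizontal-strict}, which excludes equality.
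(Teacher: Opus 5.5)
Your proposal is correct and follows essentially the same route as the paper: both use the horizontal strict interlacing at $n=1$ to place the root $\xi_m=\epsilon_m-3R_m/X_m$ strictly between the two zeros of the quadratic, conclude $Y_m\,\mathscr Q_{K,2}^{(m)}(\xi_m)<0$, and identify $X_m^2\mathscr Q_{K,2}^{(m)}(\xi_m)$ with the resultant and the displayed bracket. Your expansion is correct, and for $m=0$ you rightly note that only $Y_0=6\Lambda_K(3)>0$ is needed to pass to the positive completed-value inequality.
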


\begin{proof}
Theorem~\ref{thm:main-dedekind-array}\textup{(ii)} implies that the unique
zero
\[
 \xi_m=\epsilon_m-\frac{3R_m}{X_m}
\]
of \eqref{eq:dedekind-degree-three-Q} lies strictly between the two zeros of
\eqref{eq:dedekind-degree-five-Q}.  In particular, $X_mY_m\ne0$, and a real
quadratic has sign opposite to its leading coefficient between its roots.
Hence
\[
 Y_m\,\mathscr Q_{K,2}^{(m)}(\xi_m)<0.
\]
By the resultant convention fixed above,
\[
 \Resultant_x\!\left(
 \mathscr Q_{K,1}^{(m)},\mathscr Q_{K,2}^{(m)}
 \right)
 =X_m^2\mathscr Q_{K,2}^{(m)}(\xi_m).
\]
Substitution of $\xi_m$ gives
\[
 X_m^2\mathscr Q_{K,2}^{(m)}(\xi_m)
 =9R_m^2Y_m-5X_m^2R_m
  -3\epsilon_mX_mR_mY_m-X_m^2Y_m,
\]
which proves \eqref{eq:dedekind-consecutive-degree-resultant}.  For $m=0$ the three
completed values $\mathcal R_K,X_K,Y_K$ are positive, so the same inequality
is equivalent to \eqref{eq:dedekind-consecutive-degree-residue}.
\end{proof}

\begin{remark}
The inequality \eqref{eq:dedekind-consecutive-degree-resultant} uses the relative
position of zeros for the two consecutive quotient indices $n=1$ and $n=2$ (sampling degrees $3$ and $5$).
It cannot be recovered merely from the fact that each quotient is separately
rooted in $[-2,2]$.
\end{remark}

\subsection{A lowering identity and the horizontal common-root criterion}
\label{subsec:horizontal-common-root-criterion}

The strict theorem gives the qualitative horizontal order.  The following
identity makes the common-root boundary explicit and converts the absence of
collisions into differential, resultant, circle, and finite-difference
criteria.  Unlike the fixed-degree nondegeneracy condition above, this is a
pointwise condition at one zero of a pair of adjacent sampling degrees.

Put
\(\Theta_z=z\,d/dz\).

\begin{proposition}[Lowering identity between sampling degrees $d$ and $d+2$]
\label{prop:consecutive-degree-lowering}
For every function \(F\) for which the samples are defined and every \(d\ge0\),
\begin{equation}\label{eq:consecutive-degree-lowering}
 \Theta_z(d+2-\Theta_z)B_{d+2}[F](z)
 =(d+2)(d+1)zB_d[F](z).
\end{equation}
Let \(n\ge0\), \(\sigma\in\{1,-1\}\), and suppose that
\[
 B_{2n+3}[F](z)=z^{n+1}(z+\sigma)P_{n+1}(x),
 \qquad
 B_{2n+1}[F](z)=z^n(z+\sigma)P_n(x),
 \qquad x=z+z^{-1}.
\]
Then
\begin{equation}\label{eq:consecutive-degree-differential-lowering}
 \begin{split}
 &(2n+3)(2n+2)P_n(x)\\
 &\quad=(4-x^2)P_{n+1}''(x)
       +2(\sigma-x)P_{n+1}'(x)
       +(n+1)(n+2)P_{n+1}(x).
 \end{split}
\end{equation}
In particular, if \(P_{n+1}\not\equiv0\), an interior zero \(\xi\) of
\(P_{n+1}\) is also a zero of \(P_n\) if and only if
\begin{equation}\label{eq:horizontal-curvature-obstruction}
 (4-\xi^2)P_{n+1}''(\xi)
 +2(\sigma-\xi)P_{n+1}'(\xi)=0.
\end{equation}
\end{proposition}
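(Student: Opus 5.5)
Both identities can be proved by direct computation on monomials.

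For \eqref{eq:consecutive-degree-lowering}, write $B_{d+2}[F](z)=\sum_{j=0}^{d+2}\binom{d+2}{j}F(j-(d+2)/2)z^j$. Since $\Theta_z z^j=jz^j$, the operator $\Theta_z(d+2-\Theta_z)$ multiplies the $j$-th term by $j(d+2-j)$. This kills $j=0$ and $j=d+2$. For $1\le j\le d+1$ we use
\[
 j(d+2-j)\binom{d+2}{j}=(d+2)(d+1)\binom{d}{j-1}.
\]
Substituting $i=j-1$, the centered site becomes $j-(d+2)/2=i-d/2$. The sum is therefore $(d+2)(d+1)z\sum_{i=0}^{d}\binom di F(i-d/2)z^i$, which is the claim. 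No property of $F$ is used beyond the samples being defined.

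For \eqref{eq:consecutive-degree-differential-lowering}, substitute the two factorizations into \eqref{eq:consecutive-degree-lowering} with $d=2n+1$. The right side becomes $(2n+3)(2n+2)z^{n+1}(z+\sigma)P_n(x)$. On the left, set $W(z)=z^{n+1}(z+\sigma)$ and $\Phi=P_{n+1}(x)$, and rewrite $\Theta_z(2n+3-\Theta_z)$ as $(n+\tfrac32)^2-(\Theta_z-n-\tfrac32)^2$. I would conjugate by $z^{n+3/2}$, so that $\Theta_z-n-\tfrac32$ becomes $\Theta_z$ acting on
\[
 z^{-n-3/2}W\Phi=(z^{1/2}+\sigma z^{-1/2})\Phi .
\]
The goal is then to compute $\Theta_z^2$ of this product.

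For this computation, use $\Theta_z x=z-z^{-1}$, $(z-z^{-1})^2=x^2-4$, and $\Theta_z(z-z^{-1})=x$. Put $w=z^{1/2}+\sigma z^{-1/2}$, so that $w^2=x+2\sigma$ and $\Theta_z w=\tfrac12(z^{1/2}-\sigma z^{-1/2})$. Then $\Theta_z^2 w=\tfrac14 w$, and $w\cdot\Theta_z w=\tfrac12(z-z^{-1})$. Expanding by Leibniz gives
\[
 \Theta_z^2(w\Phi)=w\bigl[(x^2-4)\Phi''+x\Phi'+\tfrac14\Phi\bigr]+2\,\Theta_z w\,(z-z^{-1})\Phi'.
\]
To eliminate the half-integer powers, multiply $\Theta_z w\,(z-z^{-1})$ by $w$ and use $w^2=x+2\sigma$. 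This gives
\[
 \Theta_z w\,(z-z^{-1})=\frac{w}{2}\cdot\frac{(z-z^{-1})^2}{x+2\sigma}=\frac{w}{2}\cdot\frac{x^2-4}{x+2\sigma}=\frac{w}{2}(x-2\sigma),
\]
because $\sigma^2=1$ gives $x^2-4=(x-2\sigma)(x+2\sigma)$. So $\Theta_z^2(w\Phi)=w[(x^2-4)\Phi''+(2x-2\sigma)\Phi'+\tfrac14\Phi]$. Subtracting this from $(n+\tfrac32)^2w\Phi$ and dividing out the common factor $w$ (equivalently $W$) yields
\[
 (4-x^2)P_{n+1}''+2(\sigma-x)P_{n+1}'+\bigl((n+\tfrac32)^2-\tfrac14\bigr)P_{n+1}=(2n+3)(2n+2)P_n .
\]
Since $(n+\tfrac32)^2-\tfrac14=(n+1)(n+2)$, this is \eqref{eq:consecutive-degree-differential-lowering}. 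The step I expect to require the most care is the $\Theta_z w$ bookkeeping and the cancellation through $x^2-4=(x-2\sigma)(x+2\sigma)$. I would check the result on $F\equiv1$, where both sides are explicit powers of $x+2$ (for $\sigma=1$, $P_n=(x+2)^n$ and the identity reduces to $(2n+3)(2n+2)=(n+1)(4n+6)$).

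The final criterion is now immediate. At a zero $\xi$ of $P_{n+1}$, \eqref{eq:consecutive-degree-differential-lowering} gives $(2n+3)(2n+2)P_n(\xi)=(4-\xi^2)P_{n+1}''(\xi)+2(\sigma-\xi)P_{n+1}'(\xi)$. Because $(2n+3)(2n+2)\neq0$, $P_n(\xi)=0$ holds if and only if \eqref{eq:horizontal-curvature-obstruction} holds.
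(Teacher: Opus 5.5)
Your proof is correct. The first identity and the final criterion are handled exactly as in the paper. The first identity comes from coefficientwise comparison using $j(d+2-j)\binom{d+2}{j}=(d+2)(d+1)\binom{d}{j-1}$, and the criterion is read off at a zero of $P_{n+1}$.

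For the reduced differential identity you take a different computational route.

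\emph{The paper's route.} It stays with single-valued rational functions of $z$. It expands $\Theta_z^2(AP)$ through the logarithmic derivative $a=\Theta_zA/A=n+1+z/(z+\sigma)$ of $A=z^{n+1}(z+\sigma)$. It then checks two rational simplifications by hand:
$(2n+3)a-a^2-\Theta_za=(n+1)(n+2)$ and $(2n+3-2a)\eta-\Theta_z\eta=2(\sigma-x)$.

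\emph{Your route.} You complete the square, $\Theta_z(2n+3-\Theta_z)=(n+\tfrac32)^2-(\Theta_z-n-\tfrac32)^2$, and conjugate by $z^{n+3/2}$, the centre of the exponents $0,\dots,2n+3$. This reduces everything to $\Theta_z^2$ acting on $w\Phi$, where $w=z^{1/2}+\sigma z^{-1/2}$ satisfies $\Theta_z^2w=\tfrac14 w$.

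\emph{What your route buys.} The constant $(n+1)(n+2)=(n+\tfrac32)^2-\tfrac14$ appears structurally rather than by simplification. All of the $\sigma$-dependence sits in the single relation $(z-z^{-1})\Theta_zw=\tfrac{w}{2}(x-2\sigma)$.

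\emph{Two small points to state.}
\begin{enumerate}
\item The half-integer powers need a fixed branch, for instance on $z>0$. The resulting equality is between Laurent polynomials, equivalently between polynomials in $x$ agreeing for infinitely many $x$, so it then holds identically.
\item Your division by $x+2\sigma$, which vanishes at $z=-\sigma$, can be avoided. Note that $(z^{1/2}-\sigma z^{-1/2})^2=x-2\sigma$ and $(z^{1/2}+\sigma z^{-1/2})(z^{1/2}-\sigma z^{-1/2})=z-z^{-1}$. These give $(z-z^{-1})\Theta_zw=\tfrac{w}{2}(x-2\sigma)$ directly.
\end{enumerate}
Neither point is a gap.
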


\begin{proof}
For \(1\le j\le d+1\), the coefficient of \(z^j\) on the left of
\eqref{eq:consecutive-degree-lowering} is
\[
 j(d+2-j)\binom{d+2}{j}
 F\left(j-\frac{d+2}{2}\right).
\]
Since
\[
 j(d+2-j)\binom{d+2}{j}
 =(d+2)(d+1)\binom d{j-1}
\]
and
\[
 (j-1)-\frac d2=j-\frac{d+2}{2},
\]
this is the coefficient of \(z^j\) on the right.  For \(j=0\) and
\(j=d+2\), both coefficients vanish.

For the reduced differential identity, put
\[
 A(z)=z^{n+1}(z+\sigma),\qquad P=P_{n+1},\qquad
 \eta(z)=\Theta_zx=z-z^{-1},
\]
and
\[
 a(z)=\frac{\Theta_zA(z)}{A(z)}
 =n+1+\frac{z}{z+\sigma}.
\]
Then
\[
 \Theta_z(AP)=A\bigl(aP+\eta P'\bigr)
\]
and a second differentiation gives
\[
 \Theta_z^2(AP)
 =A\Bigl((a^2+\Theta_za)P
 +(2a\eta+\Theta_z\eta)P'
 +\eta^2P''\Bigr).
\]
Here
\[
 \Theta_z\eta=z+z^{-1}=x,
 \qquad
 \Theta_z\!\left(\frac{z}{z+\sigma}\right)
 =\frac{\sigma z}{(z+\sigma)^2}.
\]
Since
\[
 \Theta_z\!\left(\frac{z}{z+\sigma}\right)
 =\frac{z}{z+\sigma}
  \left(1-\frac{z}{z+\sigma}\right)
\]
and $a=n+1+z/(z+\sigma)$, direct simplification gives
\[
 (2n+3)a-a^2-\Theta_za=(n+1)(n+2).
\]
Moreover,
\[
 \begin{aligned}
 (2n+3-2a)\eta-\Theta_z\eta
 &=\left(1-\frac{2z}{z+\sigma}\right)(z-z^{-1})-x\\
 &=\frac{\sigma-z}{z+\sigma}\,(z-z^{-1})-x\\
 &=2(\sigma-x),
 \end{aligned}
\]
where we used \(z^2-1=(z-\sigma)(z+\sigma)\), since
\(\sigma^2=1\).  Finally,
\[
 -\eta^2=-(z-z^{-1})^2=4-x^2.
\]
Consequently,
\[
 \Theta_z(2n+3-\Theta_z)(AP)
 =A\Bigl((4-x^2)P''+2(\sigma-x)P'
 +(n+1)(n+2)P\Bigr).
\]
The right side of \eqref{eq:consecutive-degree-lowering} is
\((2n+3)(2n+2)A(z)P_n(x)\).  Cancelling the nonzero Laurent-polynomial
factor \(A(z)\) proves
\eqref{eq:consecutive-degree-differential-lowering}.  Evaluating at a zero
of \(P_{n+1}\) proves the final equivalence.
\end{proof}

\begin{corollary}[Differential nonvanishing criterion for consecutive quotient indices]
\label{cor:dedekind-horizontal-strictness-criterion}
Fix a number field $K$, $m\ge0$, and $n\ge1$, and put
\[
 P(x)=\mathscr Q_{K,n+1}^{(m)}(x),
 \qquad \sigma=(-1)^m.
\]
At every zero $\xi$ of $P$,
\begin{equation}\label{eq:dedekind-horizontal-strictness-criterion}
 (4-\xi^2)P''(\xi)+2(\sigma-\xi)P'(\xi)\ne0.
\end{equation}
More generally, let $F$ be a function for which the centered samples are
defined, let $n\ge1$ and $\sigma\in\{\pm1\}$, and suppose that real
polynomials $P_n,P_{n+1}$ of exact degrees $n,n+1$ satisfy
\[
 B_{2n+3}[F](z)=z^{n+1}(z+\sigma)P_{n+1}(x),
 \qquad
 B_{2n+1}[F](z)=z^n(z+\sigma)P_n(x),
 \qquad x=z+z^{-1}.
\]
Assume that $P_n\preceq P_{n+1}$ and that all zeros of both polynomials are
simple and lie in $(-2,2)$.  Then the following are equivalent:
\begin{enumerate}[label=\textup{(\roman*)}]
\item $P_n$ and $P_{n+1}$ strictly interlace;
\item
\[
 (4-\xi^2)P_{n+1}''(\xi)
 +2(\sigma-\xi)P_{n+1}'(\xi)\ne0
\]
for every zero $\xi$ of $P_{n+1}$;
\item $\Resultant(P_n,P_{n+1})\ne0$.
\end{enumerate}
\end{corollary}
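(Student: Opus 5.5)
The corollary has two parts, and I will prove the general equivalence first and then specialize it to the Dedekind quotients. For the equivalence, take polynomials $P_n,P_{n+1}$ of exact degrees $n,n+1$. Assume $P_n\preceq P_{n+1}$ and that all zeros of both are simple and lie in $(-2,2)$.

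For (i)$\Leftrightarrow$(iii), weak interlacing with simple zeros and adjacent exact degrees fails to be strict exactly when the two zero sets share a point. By the resultant convention fixed before Corollary~\ref{cor:dedekind-consecutive-degree-resultant}, $\Resultant(P_n,P_{n+1})$ vanishes exactly when the two polynomials have a common complex zero. All zeros are real, so a common complex zero is a common real zero. Hence strict interlacing holds iff the zero sets are disjoint, iff the resultant is nonzero.

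For (i)$\Leftrightarrow$(ii), apply Proposition~\ref{prop:consecutive-degree-lowering}. Since $P_{n+1}\not\equiv0$, an interior zero $\xi$ of $P_{n+1}$ is a zero of $P_n$ iff \eqref{eq:horizontal-curvature-obstruction} holds. All zeros of $P_{n+1}$ lie in $(-2,2)$, so every zero is interior. Condition (ii) therefore says exactly that no zero of $P_{n+1}$ is a zero of $P_n$, which is disjointness of the zero sets and hence, by the first step, strict interlacing.

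For the Dedekind statement, set $P_n=\mathscr Q_{K,n}^{(m)}$, $P_{n+1}=\mathscr Q_{K,n+1}^{(m)}$ and $\sigma=(-1)^m$. By \eqref{eq:intro-dedekind-quotient}, i.e.\ Lemma~\ref{lem:parity-correct-factorization}, these are the endpoint quotients of $B_{2n+1}[F_K^{(m)}]$ and $B_{2n+3}[F_K^{(m)}]$, with the same $\sigma$ and with the powers $z^n$ and $z^{n+1}$ that Proposition~\ref{prop:consecutive-degree-lowering} requires. Theorem~\ref{thm:dedekind-derivative-sturm}, applied at indices $n$ and $n+1$ (both at least $1$), gives exact degrees $n,n+1$ and simple zeros in $(-2,2)$. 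Corollary~\ref{cor:dedekind-horizontal-strict} gives strict interlacing, which is (i) and in particular implies $P_n\preceq P_{n+1}$. The implication (i)$\Rightarrow$(ii) then yields \eqref{eq:dedekind-horizontal-strictness-criterion} at every zero of $P$.

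No step is a genuine obstacle. The one point to check is the bookkeeping that the power of $z$ and the endpoint sign $\sigma$ in \eqref{eq:intro-dedekind-quotient} match the hypotheses of Proposition~\ref{prop:consecutive-degree-lowering} at both indices. They do, because $\sigma=(-1)^m$ depends only on $m$ and not on $n$.
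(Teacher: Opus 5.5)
Your proof is correct and matches the paper's argument. The common-root criterion from Proposition~\ref{prop:consecutive-degree-lowering} turns (ii) into absence of a common zero, which under the weak relation with simple interior zeros of exact adjacent degrees is equivalent both to strict interlacing and to nonvanishing of the resultant; the Dedekind case then follows from the strict horizontal interlacing together with the simple interior zeros from Theorem~\ref{thm:dedekind-derivative-sturm}. The paper applies the common-root criterion directly to the Dedekind quotients instead of routing through the general equivalence, but this difference is only cosmetic.
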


\begin{proof}
By Theorem~\ref{thm:main-dedekind-array}, the Dedekind quotients have simple
interior zeros and no common zero.  The common-root criterion
\eqref{eq:horizontal-curvature-obstruction} then gives
\eqref{eq:dedekind-horizontal-strictness-criterion}.

For the general statement, the same common-root criterion shows that
condition~\textup{(ii)} is equivalent to the absence of a common zero of
$P_n$ and $P_{n+1}$.  For a weak pair of exact adjacent degrees with simple
interior zeros, absence of common zeros is equivalent to strict interlacing.
It is also equivalent to the nonvanishing of the resultant, which proves the
three-way equivalence.
\end{proof}

\begin{remark}[Circle and finite-difference forms of the horizontal obstruction]
\label{rem:horizontal-obstruction-alternative-forms}
\smallskip
\noindent\emph{Circle form.}
Retain the endpoint factorizations in
Proposition~\ref{prop:consecutive-degree-lowering}.  Each factorization
contains the forced root $z=-\sigma$.  For the larger sample considered
below, simplicity ensures that this root is not also contributed by its
quotient.  Suppose that $\mathcal B=B_{2n+3}[F]$ has exact degree $2n+3$
and simple zeros on $\T$, and let
$\zeta\in\T\setminus\{-\sigma\}$ be a zero of $\mathcal B$.  The lowering
identity shows that $\zeta$ is also a zero of $B_{2n+1}[F]$ if and only if
\begin{equation}\label{eq:horizontal-logder-obstruction}
 \frac{\zeta\mathcal B''(\zeta)}{\mathcal B'(\zeta)}=2n+2.
\end{equation}
If the other roots of $\mathcal B$ are denoted by $\omega$, then
\[
 \frac{\zeta\mathcal B''(\zeta)}{\mathcal B'(\zeta)}
 =2n+2+i\sum_{\omega\ne\zeta}
 \cot\left(\frac{\arg\omega-\arg\zeta}{2}\right).
\]
Thus the common-root condition is equivalently the vanishing of the displayed
cotangent sum.

\smallskip
\noindent\emph{Finite-difference form.}
For the scaled samples $B_{d,\delta}$, put
$H=\mathscr T_{\zeta,\delta}^{\,2n+1}F$.  Since
\[
 B_{2n+1,\delta}[F](\zeta)=H(0)
\]
and
\[
 B_{2n+3,\delta}[F](\zeta)
 =H(-\delta)+2\zeta H(0)+\zeta^2H(\delta),
\]
the same nonforced common-root condition is equivalent to
\begin{equation}\label{eq:horizontal-finite-difference-obstruction}
 H(0)=0,
 \qquad
 H(-\delta)+\zeta^2H(\delta)=0.
\end{equation}
This pointwise condition at one root is different from the requirement that a
centered finite-difference function be nonzero identically in the vertical
fixed-degree theorem.
\end{remark}

\subsection{Nested centered blocks at critical integers for self-dual newforms}
\label{sec:newform-blocks}

The same two comparisons apply to centered critical-integer blocks for
self-dual newforms.  The additional analytic input is an
arbitrary-spacing asymptotic that supplies both the fixed-degree nondegeneracy
and the nonpolynomiality required for strictness.  For a weight-$k$
holomorphic newform, the critical integers are $1,2,\ldots,k-1$.  We first
define the centered blocks sampled at these integers and then state the
required nondegeneracy result.

Let
\[
 f(\tau)=\sum_{r\ge1}a_f(r)e^{2\pi i r\tau}
 \in S_k^{\mathrm{new}}(\Gamma_0(N),\chi)
\]
be a normalized primitive holomorphic newform with real Fourier coefficients.
Here self-dual means that its completed $L$-function satisfies the real
functional equation displayed below.  With
\begin{equation}\label{eq:newform-completion}
 L(f,s)=\sum_{r\ge1}\frac{a_f(r)}{r^s},\qquad
 \Lambda(f,s)=\left(\frac{\sqrt N}{2\pi}\right)^s\Gamma(s)L(f,s),
\end{equation}
one has
\begin{equation}\label{eq:newform-functional-equation}
 \Lambda(f,s)=\varepsilon_f\Lambda(f,k-s),
 \qquad \varepsilon_f\in\{\pm1\}.
\end{equation}
The sign $\varepsilon_f$ is the root number.  The completion is entire of
order one.  Absolute convergence and nonvanishing of the Euler product to the
right of $(k+1)/2$, together with the functional equation, give
\begin{equation}\label{eq:newform-zero-strip}
 \Zset\!\left(u\longmapsto\Lambda\!\left(f,\frac{k}{2}+u\right)\right)
 \subseteq S_{1/2};
\end{equation}
see \cite{DeligneWeilI,IwaniecKowalski}.  Put
\[
 F_f(u)=\Lambda\!\left(f,\frac{k}{2}+u\right).
\]

For integers $\delta\ge1$ and $n\ge1$ satisfying
\begin{equation}\label{eq:intro-newform-admissibility}
 \delta(2n+1)\le k-2,
 \qquad
 k-\delta(2n+1)\equiv0\pmod2,
\end{equation}
every sampled argument is an integer in $\{1,2,\ldots,k-1\}$.
We call such a pair $(\delta,n)$, and the resulting block, \emph{admissible}.
Define the centered block at critical integers
\begin{equation}\label{eq:intro-newform-sample}
 \mathcal P_{f,\delta,n}^{(m)}(z)
 =\sum_{j=0}^{2n+1}\binom{2n+1}{j}
 \Lambda^{(m)}\!\left(
 f,\frac{k}{2}+\delta\left(j-n-\frac12\right)
 \right)z^j.
\end{equation}
The functional equation gives a unique real quotient
$\mathscr Q_{f,\delta,n}^{(m)}$ such that
\begin{equation}\label{eq:intro-newform-factorization}
 \mathcal P_{f,\delta,n}^{(m)}(z)
 =z^n\bigl(z+\varepsilon_f(-1)^m\bigr)
 \mathscr Q_{f,\delta,n}^{(m)}(z+z^{-1}).
\end{equation}
Increasing $n$ by one adds one sampled value of
$\Lambda^{(m)}(f,\mathord\cdot)$ at a critical integer at each end while
preserving the old sampling points.

\begin{corollary}[Nested blocks at critical integers for self-dual newforms]
\label{cor:main-newform-blocks}
Under the preceding assumptions, let $m\ge0$, $\delta\ge1$, and $n\ge1$, and
assume that the larger block is admissible:
\[
 \delta(2n+3)\le k-2,
 \qquad
 k-\delta(2n+3)\equiv0\pmod2.
\]
Then $\mathscr Q_{f,\delta,n}^{(m)}$ and
$\mathscr Q_{f,\delta,n+1}^{(m)}$ have degrees $n$ and $n+1$, respectively,
have simple zeros in $(-2,2)$, and strictly interlace.
\end{corollary}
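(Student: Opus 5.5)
The plan mirrors the Dedekind argument: first the vertical fixed-degree input, then the horizontal weak relation, then Theorem~\ref{thm:remote-orbit-strictification} for strictness. Rescale to unit spacing by putting $F_{f,\delta}(v)=F_f(\delta v)=\Lambda(f,k/2+\delta v)$. This is a real entire function of order one (real Fourier coefficients together with the real functional equation), with parity $\varepsilon_f$. By \eqref{eq:newform-zero-strip}, its zeros lie in $S_{1/(2\delta)}\subseteq S_{1/2}$. Differentiation in $s$ only multiplies by $\delta^{-m}$, and Lemma~\ref{lem:derivative-strip} keeps $F_{f,\delta}^{(m)}$ in $S_{1/2}$. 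Lemma~\ref{lem:parity-correct-factorization} identifies $\mathscr Q_{f,\delta,n}^{(m)}$ with $\delta^{m}\mathscr Q_{F_{f,\delta},n}^{(m)}$, with endpoint sign $\varepsilon_f(-1)^m$.

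\emph{Step 1: simple interior zeros at both indices.} Both blocks must be treated, so I would apply the fixed-degree input from \cite[Theorem~1.2]{JinSharpSampling} at degrees $d=2n+1$ and $d=2n+3$, spacing $\delta$, and $h=1/2<\delta\sqrt d/2$. Admissibility of the larger block implies admissibility of the smaller one, since $k-\delta(2n+1)$ has the same parity as $k-\delta(2n+3)$. This step needs the nondegeneracy $\mathscr T_{\zeta,\delta}^{d}F_f^{(m)}\not\equiv0$ for all $\zeta\in\T$. I expect this to be the main obstacle; it is the arbitrary-spacing analogue of Lemma~\ref{lem:dedekind-strict-nondeg}, deferred to Appendix~\ref{app:newform-nondegeneracy}. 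I would prove it by the rightmost-shift argument. On the positive real axis, Stirling's formula gives the growth of $\Gamma^{(m)}$-type terms, and the absolutely convergent Dirichlet series gives $L^{(j)}(f,s)\to$ a nonzero limit (dominated by $a_f(1)=1$, or by its log-derivative terms). Together these show that the translate $F_f^{(m)}(u+d\delta/2)$ dominates $\zeta^{j}\binom dj F_f^{(m)}(u+(j-d/2)\delta)$ for $j<d$, uniformly in $\zeta$, as $u\to+\infty$. Hence the finite difference is eventually nonzero. Given nondegeneracy, Proposition~\ref{prop:shifted-descent} yields exact degree $n$ (resp.\ $n+1$) and simple zeros in $(-2,2)$.

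\emph{Step 2: weak horizontal relation.} Define the even function $E=F_{f,\delta}^{(m)}$ if $\varepsilon_f(-1)^m=1$, and $E=F_{f,\delta}^{(m)}(v)/v$ otherwise. In both cases $E$ is even, real, entire of order at most one, and has zeros in $S_{1/2}$. Since $1/2<\sqrt{15/28}\le\mathfrak h_{n,3/2}$ and $\mathfrak h_{n,1/2}=1$, the even case uses $\mathscr Q(y-2)=\JacMult_{E,3/2}(y^n)$. The odd case uses Lemma~\ref{lem:endpoint-raising}, which gives $(n+\tfrac12)\JacMult_{E,1/2}(y^n)$. Theorem~\ref{thm:jacobi-family-endpoint-pencil}(i) then gives the weak relation between indices $n$ and $n+1$.

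\emph{Step 3: strictness.} The function $E$ is nonpolynomial: otherwise $\Lambda^{(m)}(f,\cdot)$, and hence $\Lambda(f,\cdot)$, would be a polynomial, contradicting its Stirling decay/growth along vertical lines (or its nonpolynomial order-one growth on the real axis). By Step 1 the two Jacobi images have simple zeros in $(0,4)$, so Theorem~\ref{thm:remote-orbit-strictification} gives strict interlacing. Translating back by $y=x+2$ and rescaling by the positive factor $\delta^{m}$ finishes the proof.
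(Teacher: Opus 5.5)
Your proposal is correct and follows the paper's proof essentially step for step. Both use the fixed-degree theorem at degrees $2n+1$ and $2n+3$, with nondegeneracy supplied by the rightmost-shift argument of Appendix~\ref{app:newform-nondegeneracy}; the weak horizontal relation, which the paper takes from Corollaries~\ref{cor:scaled-endpoint-sturm} and~\ref{cor:scaled-odd-endpoint-sturm} and which is equivalent to your rescaling to unit spacing; and Theorem~\ref{thm:remote-orbit-strictification}, with nonpolynomiality read off from real-axis growth.

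Two slips, neither of which affects the argument. First, the rescaled quotient is $\mathscr Q_{f,\delta,n}^{(m)}=\delta^{-m}\mathscr Q_{F_{f,\delta},n}^{(m)}$, not $\delta^{m}\mathscr Q_{F_{f,\delta},n}^{(m)}$. Second, in your nondegeneracy sketch, $L^{(j)}(f,s)$ tends to $0$ for $j\ge1$, not to a nonzero limit. The dominant term in the Leibniz expansion of $\Lambda^{(m)}(f,s)$ is the one in which all $m$ derivatives fall on $(\sqrt N/(2\pi))^s\Gamma(s)$, because $L(f,s)\to1$.
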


The proof uses the following arbitrary-spacing nondegeneracy lemma; its
asymptotic also supplies the nonpolynomiality needed in the strict interlacing
step.

\begin{lemma}[Arbitrary-spacing nondegeneracy for newforms]
\label{lem:newform-arbitrary-spacing-nondegeneracy}
For every fixed \(\delta>0\), every \(m,\ell\in\mathbb Z_{\ge0}\), and every
\(\zeta\in\T\),
\[
 \mathscr T_{\zeta,\delta}^{\,\ell}F_f^{(m)}\not\equiv0.
\]
More precisely, as \(x\to+\infty\),
\begin{equation}\label{eq:newform-arbitrary-spacing-dominance}
 \frac{\mathscr T_{\zeta,\delta}^{\,\ell}F_f^{(m)}(x)}
 {\zeta^{\ell}F_f^{(m)}(x+\delta\ell/2)}
 =1+O_{f,m,\ell,\delta}(x^{-\delta}),
\end{equation}
uniformly for \(\zeta\in\T\).
\end{lemma}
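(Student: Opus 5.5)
We argue exactly as in the Dedekind case (Appendix~\ref{app:dedekind-nondegeneracy}), by a rightmost-shift dominance argument on the positive real axis. The first assertion follows from the second: once \eqref{eq:newform-arbitrary-spacing-dominance} is known, the ratio tends to $1$, and $F_f^{(m)}(x+\delta\ell/2)\neq0$ for all large real $x$. Hence $\mathscr T_{\zeta,\delta}^{\,\ell}F_f^{(m)}(x)\ne0$ for large $x$. Expanding the iterated two-point operator gives
\[
 \mathscr T_{\zeta,\delta}^{\,\ell}\varphi(x)
 =\sum_{i=0}^{\ell}\binom{\ell}{i}\zeta^{i}\,\varphi\!\left(x+\delta\left(i-\frac{\ell}{2}\right)\right).
\]
The term $i=\ell$ is the claimed main term $\zeta^\ell\varphi(x+\delta\ell/2)$. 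Since $|\zeta|=1$, it therefore suffices to show that for each fixed $0<j\le\ell$,
\[
 \frac{F_f^{(m)}(x+\delta\ell/2-j\delta)}{F_f^{(m)}(x+\delta\ell/2)}=O(x^{-\delta}),
\]
and this is automatically uniform in $\zeta$.

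The first step is to obtain an asymptotic for $F_f^{(m)}(x)$ as $x\to+\infty$. Write $s=k/2+x$ and
\[
 \Lambda(f,s)=\exp\bigl(g(s)\bigr),\qquad
 g(s)=s\log\frac{\sqrt N}{2\pi}+\log\Gamma(s)+\log L(f,s).
\]
For real $s$ large, $L(f,s)=1+O(2^{-s+(k-1)/2+\epsilon})$ by Deligne's bound, and the same holds for all its $s$-derivatives. Stirling's formula gives
\[
 g'(s)=\log s+O(1),\qquad g^{(r)}(s)=O(s^{1-r})\quad(r\ge2).
\]
Applying Fa\`a di Bruno to $\Lambda^{(m)}=\Lambda\cdot Y_m(g',g'',\dots)$, where $Y_m$ is the complete Bell polynomial, the dominant term is $(g')^m$. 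This yields
\[
 \Lambda^{(m)}(f,s)=\Lambda(f,s)(\log s)^m\bigl(1+O(1/\log s)\bigr),
\]
which is positive for large $s$. In particular $\Lambda^{(m)}(f,s)\ne0$ for large real $s$, and the ratio of $m$-th derivatives at $s-j\delta$ and $s$ is $\Lambda(f,s-j\delta)/\Lambda(f,s)$ times a factor tending to $1$.

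The second step is the Gamma ratio. Using $\Gamma(s-j\delta)/\Gamma(s)\sim s^{-j\delta}$, the $L$-ratio $\to1$, and the bounded factor $(\sqrt N/2\pi)^{-j\delta}$, we get
\[
 \frac{\Lambda(f,s-j\delta)}{\Lambda(f,s)}=O(s^{-j\delta}),
\]
which is $O(x^{-\delta})$ for $j\ge1$. This gives \eqref{eq:newform-arbitrary-spacing-dominance}.

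The main obstacle is the derivative step. The logarithmic corrections $(\log s)^m$ must not spoil the power saving, since the ratio of the $(\log)$ factors at $s-j\delta$ and $s$ must be bounded. We would handle this cleanly by writing $\Lambda^{(m)}=\Lambda\cdot Y_m(g',\dots,g^{(m)})$ and noting that each $g^{(r)}$ varies by a factor $1+O(1/s)$ over shifts of bounded size. The Bell polynomial is then comparable at the two points once it is shown to be eventually positive, which holds because its leading term is $(g')^m\sim(\log s)^m$. If the precise error $x^{-\delta}$ is wanted rather than $x^{-\delta}$ times logs, we note that the comparability factor is bounded, so only the Gamma ratio contributes to the rate.
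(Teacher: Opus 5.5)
Your proposal is correct and follows essentially the same route as the paper. Both arguments derive the individual asymptotic $F_f^{(m)}(x+a)=\mathcal G(x+a+k/2)(\log(Ax))^m(1+o(1))$ from the Dirichlet series and polygamma estimates (you reach it via the Bell-polynomial form of Fa\`a di Bruno, as in the Dedekind appendix), then apply the gamma-ratio bound $(Ax)^{a-b}(1+o(1))$ and rightmost-shift dominance, with the nearest shift giving the $O(x^{-\delta})$ error; your appeal to Deligne's bound is harmless but unnecessary, since absolute convergence of $L(f,s)$ for large real $s$ already gives $L(f,s)\to1$ with exponentially small derivatives.
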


The proof is deferred to Appendix~\ref{app:newform-nondegeneracy}.

\begin{proof}[Proof of Corollary~\ref{cor:main-newform-blocks}]
The fixed-degree simplicity required below follows from Theorem~1.2 of
\cite{JinSharpSampling}.  The preceding lemma verifies its nondegeneracy hypothesis at spacing
\(\delta\).  Since \(\delta\ge1\) and
\(n\ge1\),
\[
 \frac12<\frac{\delta\sqrt{2n+1}}2,
 \qquad
 \frac12<\frac{\delta\sqrt{2n+3}}2,
\]
so the strict strip hypothesis of Theorem~1.2 in \cite{JinSharpSampling}
holds for both sampling degrees.  Applying that theorem to the samples of
degrees $2n+1$ and $2n+3$ shows that both sampled polynomials have
simple unit-circle zeros.  After the endpoint forced by parity is removed,
both quotients have simple zeros in \((-2,2)\).

It remains to compare the two quotients.  By
Lemma~\ref{lem:derivative-strip}, every derivative $F_f^{(m)}$ has its zeros
in $S_{1/2}$.  Since $\delta\ge1$,
\[
 \frac12\le\delta\sqrt{\frac{15}{28}},\qquad
 \frac12\le\delta,
\]
so the internal strip hypotheses are satisfied in both parity cases.  If the
derivative is even, Corollary~\ref{cor:scaled-endpoint-sturm} gives weak
interlacing between the consecutive quotients; if it is odd,
Corollary~\ref{cor:scaled-odd-endpoint-sturm} gives the analogous relation at
the opposite endpoint.

For strictness, scale to unit spacing.  In the even case consider
$u\mapsto F_f^{(m)}(\delta u)$.  In the odd case write
$F_f^{(m)}(\delta u)=uE(u)$, where $E$ is even and entire.  The asymptotic
\eqref{eq:newform-arbitrary-spacing-individual-growth} shows that the scaled
derivative is nonpolynomial; in the odd case, $E$ is therefore nonpolynomial
as well.

By Lemma~\ref{lem:derivative-strip}, the zeros of the scaled derivative, and
hence those of the relevant even source in either parity case, lie in
$S_{1/(2\delta)}$.  Since $\delta\ge1$,
\[
 \frac{1}{2\delta}
 \le\sqrt{\frac{15}{28}}
 =\mathfrak h_{1,3/2}
 \le\mathfrak h_{n,3/2},
 \qquad
 \frac{1}{2\delta}\le1=\mathfrak h_{n,1/2}.
\]
Thus this strip is contained in the fixed-\(n\) admissible strip for
\(\nu=3/2\) in the even case and for \(\nu=1/2\) in the odd case.  The
fixed-degree theorem has already supplied simple interior zeros for the two
adjacent images.  Theorem~\ref{thm:remote-orbit-strictification} now applies
with \(\nu=3/2\) in the even case and, using
Lemma~\ref{lem:endpoint-raising}, with \(\nu=1/2\) in the odd case.  Hence the
weak relation is strict, proving the corollary.
\end{proof}

\begin{example}[Nested blocks for the Ramanujan form]
\label{ex:ramanujan-nested-blocks}
For $f=\Delta$, $k=12$, and $\delta=2$, the indices $n=1$ and $n=2$
(corresponding to sampling degrees $3$ and $5$) give
\begin{align*}
 \mathcal P_{\Delta,2,1}^{(m)}(z)
 &=\Lambda^{(m)}(\Delta,3)
 +3\Lambda^{(m)}(\Delta,5)z
 +3\Lambda^{(m)}(\Delta,7)z^2
 +\Lambda^{(m)}(\Delta,9)z^3,\\
 \mathcal P_{\Delta,2,2}^{(m)}(z)
 &=\Lambda^{(m)}(\Delta,1)
 +5\Lambda^{(m)}(\Delta,3)z
 +10\Lambda^{(m)}(\Delta,5)z^2
 +10\Lambda^{(m)}(\Delta,7)z^3\\
 &\hspace{2.7em}
 +5\Lambda^{(m)}(\Delta,9)z^4
 +\Lambda^{(m)}(\Delta,11)z^5.
\end{align*}
Thus the quotients associated with sampling degrees $3$ and $5$ strictly
interlace in $(-2,2)$ for every $m\ge0$.
\end{example}

\section*{Concluding remarks and open threshold problems}
The results separate three levels of information for centered samples:
fixed-degree support, weak adjacent-degree order through endpoint-pencil
preservation, and strict order obtained from a global deformation of the
source.
The remaining threshold questions are quantitative and fall into three groups.
At the odd endpoint, the
optimal uniform half-width lies between the proved value $1$ and the exact
$n=1$ value $1.065615\ldots$.  At the even endpoint and fixed $n\ge3$, it lies
between $1$ and
\[
 \sqrt{\frac14+\frac{n(n+1)}{4n+3}}.
\]
For the general Jacobi family, the capped fixed-$n$ regime
$\mathfrak e_{n,\nu}>3/4$ and the uniform regime $0<\nu<10/11$ are also
undetermined.  These gaps concern interactions between an unpaired outer real
orbit and the remaining orbit factors beyond the total nonnegativity ranges
proved here.

\appendix
\section{Auxiliary proofs and calculations}
\label{app:algebraic-certificates}

Subsection~\ref{app:generic-approximation} proves the simultaneous
generic-approximation lemma used in Section~\ref{sec:variation-diminution}.
Subsections~\ref{app:dedekind-nondegeneracy} and
\ref{app:newform-nondegeneracy} establish the asymptotic nondegeneracy
inputs for the arithmetic applications, while
Subsection~\ref{app:first-pair-quartet-certificate} supplies the coefficient
certificate used in Section~\ref{sec:exact-first-pencil}.

\subsection{Closure and simultaneous generic approximation}
\label{app:generic-approximation}

\begin{proof}[Proof of Lemma~\ref{lem:generic-approximation}]
\medskip\noindent\emph{Closedness.}
Suppose that \(P_j\in\mathcal E_{N,q}\) converges coefficientwise to a nonzero
polynomial \(P\) of degree \(d\).  For all large \(j\), one has
\(\deg P_j\ge d\), so each \(P_j\) has at least \(d-q\) zeros in the compact
interval \([0,4]\).  Continuity of roots, with multiplicity, shows that at
least \(d-q\) zeros remain in that interval in the limit.  Thus
\(P\in\mathcal E_{N,q}\); the zero limit belongs to the class by convention.

\medskip\noindent\emph{Simultaneous generic approximation.}
For the approximation statement, perturb the roots of a nonzero \(P\) within
its exact-degree stratum.  Move every zero in \([0,4]\), including endpoint and
multiple zeros, to distinct nearby points of \((0,4)\); perturb the remaining
real zeros while keeping them outside \([0,4]\), and perturb the nonreal zeros
in conjugate pairs while keeping them nonreal.  The resulting polynomials remain in \(\mathcal E_{N,q}\), and the
numbers of roots in and outside $[0,4]$ are locally constant under further
small perturbations.  Fix \(d\ge2\) and work in the coefficient space
\(\R[y]_{\le d}\).  On its open exact-degree stratum \(\deg P=d\), the
polynomial discriminant \(\Disc(P)\) is nonzero exactly when all zeros of
\(P\) are simple.  The conditions
\[
 \Disc(P)P(0)P(4)\ne0,\qquad
 \Disc(\mathcal A P)(\mathcal A P)(0)(\mathcal A P)(4)\ne0
\]
are nonvanishing conditions for polynomial functions of the coefficients.
The second function is not identically zero: by invertibility of
$\mathcal A$ on $\R[y]_{\le d}$, choose a degree-$d$ polynomial $Q$
with simple zeros and $Q(0)Q(4)\ne0$, and put $R=\mathcal A^{-1}Q$.  The
polynomial $R$ has the same exact degree, since the induced map on
$\R[y]_{\le d}/\R[y]_{\le d-1}$ is invertible.  The first polynomial is
also nonzero, since it takes a nonzero value on any degree-\(d\) polynomial
with simple zeros avoiding \(0\) and \(4\).  Hence their product is a
nonzero polynomial in the coefficients.  Its zero set is therefore a proper
algebraic subset---the
zero set of a nonzero polynomial in the coefficients---and its complement is
dense.  In degrees zero and one, only the endpoint conditions are needed.
Consequently, the dense complement meets the small coefficient neighborhood
in which the preceding root locations are unchanged, so the perturbation may
be chosen to satisfy both conditions simultaneously.  Choosing such a
perturbation in the coefficient ball of radius $1/j$ about the original
polynomial produces the required sequence $P_j\to P$.
\end{proof}

\subsection{Dedekind finite-difference nondegeneracy}
\label{app:dedekind-nondegeneracy}

\begin{proof}[Proof of Lemma~\ref{lem:dedekind-strict-nondeg}]
To prove that the finite-difference iterate is not identically zero, it is
enough to show that it is nonzero for all sufficiently large positive real
arguments.

\medskip\noindent\emph{Fixed-shift derivative asymptotics.}
Write
\[
 \Lambda_K(s)=D_K^{s/2}\Gamma_{\R}(s)^{r_1}
 \Gamma_{\C}(s)^{r_2}\zeta_K(s),
 \qquad [K:\mathbb Q]=r_1+2r_2.
\]
Stirling's formula \cite[\S5.11]{NISTHandbook}, together with
\(\zeta_K(s)\to1\), gives for fixed real \(a<b\) that
\[
 \frac{\Xi_K(x+a)}{\Xi_K(x+b)}
 =O\left(x^{-[K:\mathbb Q](b-a)/2}\right)
 \qquad(x\to+\infty).
\]
For real $s>1$, one has $\Xi_K(s)>0$, so its real logarithm is defined.
The same asymptotic expansion and the absolutely convergent Euler product for
\(\zeta_K\) give, for every fixed \(j\ge1\),
\[
 \frac{d^j}{ds^j}\log \zeta_K(s)=O(e^{-cs})
\]
for some constant \(c>0\).  Hence
\[
 (\log\Xi_K)'(s)=\frac{[K:\mathbb Q]}2\log s+O(1),
 \qquad
 (\log\Xi_K)^{(j)}(s)=O(s^{1-j})\quad(j\ge2).
\]
Let \(\mathfrak B_m\) be the \(m\)-th complete exponential Bell polynomial,
characterized for every $C^m$ function $X$ by
\[
 e^{-X(s)}\frac{d^m}{ds^m}e^{X(s)}
 =\mathfrak B_m\bigl(X'(s),\ldots,X^{(m)}(s)\bigr).
\]
For \(m\ge1\), this form of Fa\`a di Bruno's formula
\cite{ComtetAdvanced} gives
\[
 \frac{\Xi_K^{(m)}(s)}{\Xi_K(s)}
 =\mathfrak B_m\bigl((\log\Xi_K)'(s),(\log\Xi_K)''(s),\ldots,(\log\Xi_K)^{(m)}(s)\bigr).
\]
Since \(\mathfrak B_m\) has leading monomial \(((\log\Xi_K)')^m\), while
every other monomial contains at least one \((\log\Xi_K)^{(j)}\) with \(j\ge2\)
and at most \(m-1\) factors in total, we obtain, for each fixed \(m\ge1\),
\[
 \Xi_K^{(m)}(s)=\Xi_K(s)\left(((\log\Xi_K)'(s))^m+O((\log s)^{m-1})\right).
\]
The case \(m=0\) is the preceding ratio for \(\Xi_K\) itself.  In particular,
for every fixed \(m\), the function \(\Xi_K^{(m)}(s)\) is nonzero (indeed
positive) for all sufficiently large real \(s\), so the following ratios are
well-defined.  Hence, for fixed \(a<b\),
\[
 \frac{\Xi_K^{(m)}(x+a)}{\Xi_K^{(m)}(x+b)}\to0
 \qquad(x\to+\infty,\quad a<b).
\]
Equivalently,
\[
 \frac{F_K^{(m)}(x+a)}{F_K^{(m)}(x+b)}\to0
 \qquad(a<b).
\]

\medskip\noindent\emph{Rightmost-shift dominance.}
Expanding the iterate gives
\[
 \mathscr T_{\zeta,1}^{\ell}F_K^{(m)}(x)
 =\sum_{j=0}^{\ell}\binom{\ell}{j}\zeta^j
 F_K^{(m)}\left(x+j-\frac{\ell}{2}\right).
\]
The term \(j=\ell\) has the largest real shift.  Dividing by \(\zeta^{\ell}F_K^{(m)}(x+\ell/2)\) gives
\[
 \frac{\mathscr T_{\zeta,1}^{\ell}F_K^{(m)}(x)}
 {\zeta^{\ell}F_K^{(m)}(x+\ell/2)}
 =1+\sum_{j=0}^{\ell-1}\binom{\ell}{j}\zeta^{j-\ell}
 \frac{F_K^{(m)}(x+j-\ell/2)}{F_K^{(m)}(x+\ell/2)}
 =1+o(1),
\]
uniformly for \(\zeta\in\T\).  Hence the finite-difference iterate is nonzero for all sufficiently large \(x\).  It cannot be identically zero.
\end{proof}

\subsection{Arbitrary-spacing nondegeneracy for newforms}
\label{app:newform-nondegeneracy}

\begin{proof}[Proof of Lemma~\ref{lem:newform-arbitrary-spacing-nondegeneracy}]
The fixed-shift derivative asymptotic follows from the rightmost-shift
argument of \cite[Lemma~5.4]{JinSharpSampling}; we use it here to treat an
arbitrary spacing \(\delta>0\).

\medskip\noindent\emph{Fixed-shift ratios.}
Put \(A=\sqrt N/(2\pi)\) and
\[
 \mathcal G(s)=A^s\Gamma(s),
 \qquad
 \Lambda(f,s)=\mathcal G(s)L(f,s).
\]
Absolute convergence of the Dirichlet series and estimates for the
derivatives of $\log\Gamma$ (the polygamma functions)
\cite[\S5.15]{NISTHandbook} give, uniformly for \(a\) in any prescribed
finite set,
\begin{equation}\label{eq:newform-arbitrary-spacing-individual-growth}
 F_f^{(m)}(x+a)
 =\mathcal G\left(x+a+\frac{k}{2}\right)
   \bigl(\log(Ax)\bigr)^m(1+o(1))
 \qquad(x\to+\infty),
\end{equation}
where the logarithmic factor is interpreted as \(1\) when \(m=0\).
Consequently, every member of this fixed finite family is nonzero for all
sufficiently large real \(x\).  Combining
\eqref{eq:newform-arbitrary-spacing-individual-growth} with the fixed-shift
gamma-ratio formula \cite[\S5.11(iii)]{NISTHandbook} now gives, uniformly
for \(a,b\) in the prescribed set,
\begin{equation}\label{eq:newform-arbitrary-spacing-shift-ratio}
 \frac{F_f^{(m)}(x+a)}{F_f^{(m)}(x+b)}
 =(Ax)^{a-b}(1+o(1)).
\end{equation}

\medskip\noindent\emph{Rightmost-shift dominance.}
Expanding the commuting translations gives
\[
 \mathscr T_{\zeta,\delta}^{\,\ell}F_f^{(m)}(x)
 =\sum_{j=0}^{\ell}\binom{\ell}{j}\zeta^j
 F_f^{(m)}\!\left(x+\delta\left(j-\frac{\ell}{2}\right)\right).
\]
The term \(j=\ell\) has the largest shift.  For \(j\le\ell-1\),
\eqref{eq:newform-arbitrary-spacing-shift-ratio} makes the ratio of the
\(j\)-th term to the largest-shift term
\[
 O_{f,m,\ell,\delta}\!\left(x^{-\delta(\ell-j)}\right),
\]
uniformly for \(\zeta\in\T\).  Summing the finitely many terms proves
\eqref{eq:newform-arbitrary-spacing-dominance}; the quotient tends to \(1\),
so the iterate is nonzero for all sufficiently large \(x\) and cannot vanish
identically.
\end{proof}

\subsection{Coefficient certificate for reflected quartets in the first
nontrivial pencil}
\label{app:first-pair-quartet-certificate}

\begin{proof}[Proof of Lemma~\ref{lem:first-pair-quartet-certificate}]
Recall \(\alpha_\sharp(\nu)=\max\{\alpha_{\mathrm{rep}}(\nu),
\alpha_{\mathrm{sing}}(\nu)\}\) and \(\mathfrak p_\nu\) from
Theorem~\ref{thm:exact-first-jacobi-pair}, and \(\mathfrak q_\nu\) and
\(\mathcal C_{\nu,\alpha}\) from the statement of the present lemma.
The leading two coefficients in
\eqref{eq:general-first-pair-quartet-polynomial} are positive because
$\alpha>-1$ and $0<\nu<2$.  Moreover,
\[
 \mathfrak p_\nu(-91/100)
 =-\frac{50965039\nu+8993917}{10^8}<0,
\]
so $\alpha_{\mathrm{rep}}(\nu)>-91/100$ and hence
$\alpha_\sharp(\nu)>-91/100$.  The coefficient of $B^2$ is concave in
$\alpha$ and has positive endpoint values
\[
 37-13\nu,
 \qquad
 \frac{881-269\nu}{8}
\]
on $[-1,1/4]$.  Next,
\[
 \mathfrak q_\nu(\alpha)
 =\left(1-\frac\nu2\right)
   (2\alpha^3+19\alpha^2+78\alpha+60)
 +\frac\nu2(6\alpha^3-13\alpha^2-6\alpha+10).
\]
The first cubic in parentheses is increasing and positive on
$[-91/100,1/4]$.  The second has only one critical point in this interval,
which is a maximum, and its endpoint values are
$86637/500000$ and $249/32$.  Thus $\mathfrak q_\nu>0$ there.  Finally,
$\mathfrak p_\nu(\alpha)\ge0$ for
$\alpha\ge\alpha_{\mathrm{rep}}(\nu)$.  The derivative calculation in
Step~1 of the proof of Theorem~\ref{thm:exact-first-jacobi-pair} shows that
\(\mathfrak p_\nu\) is increasing on \([-1,-3/4]\), which gives the claim on
\([\alpha_{\mathrm{rep}}(\nu),-3/4]\).  On
$[-3/4,1/4]$ it follows from
\[
 \mathfrak p_\nu(\alpha)
 =(1-\nu/2)(\alpha+2)^2(3\alpha^2+16\alpha+12)
 +\frac\nu2\alpha^2(5\alpha^2+28\alpha+20).
\]
The two remaining quadratic factors are positive on $[-3/4,1/4]$.
Therefore every coefficient of
$\mathcal C_{\nu,\alpha}(B)$ is nonnegative, and
$\mathcal C_{\nu,\alpha}(B)\ge0$ for $B\ge0$.
\end{proof}

\section*{Use of generative-AI tools}
The author used OpenAI ChatGPT to review notation and cross-references,
improve the organization and exposition, and edit \LaTeX\ and English prose.
The tool was not used as a source of mathematical results or as a formal
proof verifier.  All proofs, computations, statements, and references were
independently checked by the author, who takes full responsibility for the
manuscript.

\end{document}